\theoremstyle{plain}
\newcounter{theorem}[section]
\numberwithin{equation}{section}
\newtheorem{theorem}[theorem]{Theorem}
\newtheorem{corollary}[theorem]{Corollary}
\newtheorem{lemma}[theorem]{Lemma}
\newtheorem{proposition}[theorem]{Proposition}
\theoremstyle{definition}
\newtheorem{remark}[theorem]{Remark}
\newtheorem{algo}[theorem]{Algorithm}    
\newtheorem*{hyp}{Hypotheses}
\newcommand{\smod}{\!\!\!\mod}
\newcommand{\mbinom}[2]{\genfrac{[}{]}{0pt}{}{#1}{#2}}
\newcommand{\sectm}[2]{\section{\texorpdfstring{#1}{#2}}}
\newcommand{\subsectb}[2]{\subsection{\texorpdfstring{\boldmath#1}{#2}}}
\newcommand{\msp}{\mspace{1mu}}
\newcommand{\mspp}{\mspace{2mu}}
\newcommand{\tmod}[1]{\allowbreak\ ({\textup{mod}}\,\,#1)}
\renewcommand{\div}{\mathbin{\mid}}
\newcommand{\notdiv}{\mathbin{\nmid}}
\newcommand{\uppar}[1]{\textup{(}#1\textup{)}}
\newcommand{\ldss}[2]{#1,\,\ldots,\,#2}
\newcommand{\ldsss}[3]{#1,\,#2,\,\dots,\,#3}
\newcommand{\range}[3]{#1=\ldss{#2}{#3}}
\newcommand{\emphq}[1]{``#1''}
\newcommand{\tO}{O\tilde{\ }}
\newcommand{\tF}{\widetilde{F}}
\newcommand{\tf}{\tilde{f}}
\newcommand{\ts}{\tilde{s}}
\newcommand{\ta}{\tilde{a}}
\newcommand{\tmu}{\tilde{\mu}}
\newcommand{\tgamma}{\tilde{\gamma}}
\newcommand{\tsigma}{\tilde{\sigma}}
\newcommand{\p}{p\msp}
\newcommand{\toprec}[1]{to the precision $#1$}
\newcommand{\vdepth}[1]{\vrule width 0pt height 0pt depth #1}
\newcommand{\Cl}{\operatorname{Cl}}
\newcommand{\Gal}{\operatorname{Gal}}
\newcommand{\Ker}{\operatorname{Ker}}
\newcommand{\res}{\operatorname{res}}
\newcommand{\Coeff}{\operatorname{Coeff}}
\newcommand{\RC}{RC}
\newcommand{\PC}{PC}
\begin{document}

\title[Computing $\p$-adic $L$-functions of totally real number fields]
      {\boldmath Computing $p\mspp$-adic $L$-functions of totally real number fields}

\author{Xavier-François Roblot}

\address{Université de Lyon, Université Lyon 1, CNRS UMR5208, Institut Camille Jordan, 43 blvd du 11 novembre 1918, 69622 Villeurbanne Cedex, France}
\email{roblot@math.univ-lyon1.fr}
\address{Tokyo Institute of Technology, Department of Mathematics, 2-12-1 Ookayama, Meguro-ku, Tokyo 
152-8550, Japan}
\email{roblot@math.titech.ac.jp}

\thanks{Supported in part by the ANR \texttt{AlgoL} (ANR-07-BLAN-0248) and the JSPS Global COE \emph{CompView}.}

\date{\today \ (draft v.4)}

\maketitle

\begin{abstract}
We prove explicit formulas for the $p$-adic $L$-functions of totally real number fields and show how these formulas can be used to compute values and representations of $p$-adic $L$-functions. 
\end{abstract}

\section{Introduction}

The aim of this article is to present a general method for computing values and representations of $\p$-adic $L$-functions of totally real number fields.\footnote{One can prove that $\p$-adic $L$-functions of non-totally real number fields are identically zero.} These functions are the $\p$-adic analogues of the ``classical'' complex $L$-functions and are related to those by the fact that they agree, once the Euler factors at $p$ have been removed from the complex $L$-functions, at negative integers in some suitable congruence classes. The existence of $\p$-adic $L$-functions was first established in 1964 by Kubota-Leopoldt \cite{kubota-leopoldt} over $\mathbb{Q}$ and consequently over abelian extensions of $\mathbb{Q}$. It was proved in full generality, 15 years later, by Deligne-Ribet \cite{dr:zeta} and, independently, by Barsky \cite{barsky:zeta} and Cassou-Noguès \cite{cn:zeta}. The interested reader can find a summary of the history of their discovery in \cite{cn:zeta}. 

There have already appeared many works on the computation of $\p$-adic $L$-functions, starting with Iwasawa-Sims \cite{iwasawa-sims} in 1965 (although they are not explicitly mentioned in the paper) to the more recent computational study of their zeroes by Ernvall-Mets{\"a}nkyl{\"a} \cite{em1, em2} in the mid-1990's and the current work of Ellenberg-Jain-Venkatesh \cite{ejv} that provides a conjectural model for the behavior of the $\lambda$-invariant of $\p$-adic $L$-functions in terms of properties of $\p$-adic random matrices. However, most of these articles deal only with $L$-functions over $\mathbb{Q}$ or that can be written as a product of such $L$-functions. One remarkable exception is the work of Cartier-Roy \cite{cartier-roy}  in 1972 where computations were carried on to support the existence (at the time not yet proven) of $\p$-adic $L$-functions over some non-abelian fields of degree $3$, $4$ and~$5$. 

The method for computing $\p$-adic $L$-functions given in the present paper is derived from the construction found in \cite{cn:zeta, katz:measures, lang:cyclo}. It generalizes a previous work with Solomon \cite{RS1}. The idea is the following. First, using Shitani's cone decomposition (see Subsection~\ref{subsec:conedec}), we express $L$-functions in terms of cone zeta functions (see Subsection~\ref{subsec:zetcone}, Proposition~\ref{prop:lfunc} and Equation~\ref{eq:zetacones}). Then, for a given cone zeta function, its values at negative integers are encoded into a power series (see Subsections~\ref{subsec:zetcone} and \ref{subsec:omega}). Using the method of Section~\ref{sec:padic}, this power series is then interpreted as a $\p$-adic measure. The $\p$-adic cone zeta function is obtained by integrating suitable $\p$-adic continuous functions against this measure (see Theorem~\ref{th:procinterpol}) once it is proved that it satisfies the required properties (see Subsection~\ref{subsec:propF}). The main tool for the computation is an explicit formula for the power series associated with the cone zeta function, up to a given precision; see Theorem~\ref{th:main}. From this formula we give an explicit expression for the values of the cone zeta function at some $\p$-adic integer (Theorem~\ref{th:componeval}) and an explicit expression for the corresponding Iwasawa power series (Theorem~\ref{th:compiwas}). Note that these also are valid only up to a given precision. 

One shortcoming of our method is that it is not very efficient compared to the complex case (see Subsection~\ref{subsec:compL} for some complexity estimates). For example, in this simplest case of $\p$-adic $L$-functions over $\mathbb{Q}$, for a Dirichlet character of conductor $f$, the complexity in $f$ of the method presented here is $O(f^{1+\epsilon})$, whereas there exist methods to compute complex Dirichlet $L$-functions in $O(f^{1/2+\epsilon})$. Even in this simple case it remains an open problem whether methods as efficient exist in the $\p$-adic case.

\smallskip

\noindent\textbf{Note.} The construction presented in this paper was developed over several years and during that time was used in two previous works; see \cite{bbjr, rw}. It is worth noting that the method has evolved and therefore the brief description of it in these earlier articles does not necessarily match exactly the one that is finally presented here.  

\smallskip

\noindent\textbf{Acknowledgments.} I would like to express my gratitude to D.~Solomon for his invaluable help with this project and for introducing me to the fascinating world of $\p$-adic $L$-functions. I am also grateful to D.~Barsky for his help at the start of the project and to R.~de~Jeu and A.~Weiss for providing with the motivation to complete this project. Finally, I thank heartily D.~Ford for reading carefully the earlier version of this article and for his many corrections and helpful comments.

\sectm{$\p$-adic interpolation}{p-adic interpolation}\label{sec:padic}

Let $p$ be a prime number. Denote by $\mathbb{Q}_p$ the field of rational $\p$-adic numbers. The subring of $\p$-adic integers is denoted by $\mathbb{Z}_p$, and $\mathbb{C}_p$ is the completion of the algebraic closure of $\mathbb{Q}_p$. Let $|\cdot|_p$ denote the $\p$-adic absolute value of $\mathbb{C}_p$ normalized so that $|p|_p = p^{-1}$ and $v_p(.)$ the corresponding valuation; thus $v_p(p) = 1$. For $f \geq 1$, an integer, let $W_f$ denote the subgroup of $f$-th roots of unity in $\mathbb{C}_p$. The torsion part $T_p$ of the group $\mathbb{Z}_p^\times$ of units in $\mathbb{Z}_p$ is equal to $W_{\varphi(q)}$ where $q := 4$ if $p = 2$, $q := p$ if $p$ is odd, and $\varphi$ is Euler totient function. We have $\mathbb{Z}_p^\times = T_p \times (1 + q\mathbb{Z}_p)$, and the projections $\omega : \mathbb{Z}_p^\times \to T_p$ and $\langle \cdot \rangle : \mathbb{Z}_p^\times \to 1 + q\mathbb{Z}_p^\times$ are such that $x = \omega(x) \langle x \rangle$ for all $x \in \mathbb{Z}_p^\times$. In particular, we have $x \equiv \omega(x) \tmod{q}$ for all $x \in \mathbb{Z}_p^\times$. 

\subsectb{Continuous $\p$-adic functions}{Continuous p-adic functions}

For $n \in \mathbb{N} := \{0, 1, 2, \dots\}$, the \emph{binomial polynomial} is defined by 
\begin{equation}\label{eq:binom}
\binom{x}{n} := 
\begin{cases}
1 & \text{if } n = 0, \\
\dfrac{x(x - 1) \cdots (x - (n - 1))}{n!} & \text{otherwise}.
\end{cases}
\end{equation}
The binomial polynomial takes integral values on $\mathbb{Z}$, hence, by continuity, it takes $\p$-adic integral values on $\mathbb{Z}_p$. Let $f$ be a function on $\mathbb{Z}_p$ with values in $\mathbb{C}_p$. One can easily construct by induction a sequence $(f_n)_{n \geq 0}$ of elements of $\mathbb{C}_p$ (see Subsection~\ref{subsec:compcontfunc}) such that
\begin{equation}\label{eq:mahlerexp}
f(x) = \sum_{n \geq 0} f_n \binom{x}{n} \quad \text{for all } x \in \mathbb{N}.
\end{equation}
(Note that this is in fact a finite sum.) The coefficients $f_n$'s are uniquely defined and are called the \emph{Mahler coefficients} of $f$.  We have the following fundamental result (see \cite[\S4.2.4]{robert}). 
\begin{theorem}[Mahler expansion]\label{th:mahler}
Let $f$ be a function on $\mathbb{Z}_p$ with values in $\mathbb{C}_p$. Then $f$ is continuous on $\mathbb{Z}_p$ if and only if 
\begin{equation*}
\lim\limits_{n \to \infty} |f_n|_p = 0.
\end{equation*}
If $f$ is continuous, then the sequence of continuous functions 
\begin{equation}\label{eq:seqfunc}
x \mapsto \sum_{n = 0}^N f_n \binom{x}{n}
\end{equation}
converges uniformly to $f$. Reciprocally, let $(f_n)_{n \geq 0}$ be a sequence of elements in $\mathbb{C}_p$ converging to zero. Then the sequence of functions in \eqref{eq:seqfunc} above 
converges to a continuous function.
\end{theorem}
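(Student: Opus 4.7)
The plan is to identify the Mahler coefficients through iterated forward differences. Writing $\Delta g(x) := g(x+1) - g(x)$, the triangular unipotent linear system obtained from \eqref{eq:mahlerexp} at $x = 0, 1, \ldots, n$ inverts to give
\begin{equation*}
f_n = (\Delta^n f)(0) = \sum_{k=0}^n (-1)^{n-k} \binom{n}{k} f(k),
\end{equation*}
while Newton's forward difference formula conversely recovers \eqref{eq:mahlerexp} on all of $\mathbb{N}$ from the $(\Delta^n f)(0)$'s. Throughout, I will use the bound $|\binom{x}{n}|_p \leq 1$ on $\mathbb{Z}_p$ already recorded in the excerpt.

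The reverse direction---and the last sentence of the theorem---is immediate. Assuming $|f_n|_p \to 0$, each term $x \mapsto f_n \binom{x}{n}$ is continuous on $\mathbb{Z}_p$ with sup-norm at most $|f_n|_p$, so the partial sums \eqref{eq:seqfunc} form a uniformly Cauchy sequence (the ultrametric inequality gives a tail bound of $\sup_{n \geq N}|f_n|_p$) and converge uniformly to a continuous limit $g$ on $\mathbb{Z}_p$. When the $(f_n)$ are the Mahler coefficients of a given continuous $f$, Newton's formula forces $g = f$ on $\mathbb{N}$, and continuity together with the density of $\mathbb{N}$ in $\mathbb{Z}_p$ then extends the equality to all of $\mathbb{Z}_p$, yielding the uniform-convergence assertion as well.

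The substance of the theorem is the direct implication: $f$ continuous $\Rightarrow$ $|f_n|_p \to 0$. Since $\mathbb{Z}_p$ is compact, $f$ is bounded and uniformly continuous; write $\|g\|_\infty := \sup_{\mathbb{Z}_p}|g|_p$ and $\eta_N(g) := \sup\{|g(x)-g(y)|_p : |x-y|_p \leq p^{-N}\}$. Since $|f_n|_p \leq \|\Delta^n f\|_\infty$, it suffices to show $\|\Delta^n f\|_\infty \to 0$. The technical heart---and the main obstacle---is the contraction lemma
\begin{equation*}
\|\Delta^{p^N} g\|_\infty \;\leq\; \max\bigl(p^{-1}\|g\|_\infty,\; \eta_N(g)\bigr),
\end{equation*}
proved by expanding $(\Delta^{p^N} g)(x) = \sum_{k=0}^{p^N}(-1)^{p^N-k}\binom{p^N}{k} g(x+k)$ and applying $v_p\bigl(\binom{p^N}{k}\bigr) = N - v_p(k) \geq 1$ for $0 < k < p^N$ to bound the interior terms by $p^{-1}\|g\|_\infty$. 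The boundary terms $k=0$ and $k=p^N$ combine into an expression of size at most $\eta_N(g)$---directly for odd $p$, and after absorbing an extra $2g(x)$ into the interior-term bound when $p=2$.

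Finally, since $\Delta$ commutes with translation, $\eta_N(\Delta^{p^N}g) \leq \eta_N(g)$, so iterating the lemma without any loss in the modulus of continuity yields
\begin{equation*}
\|\Delta^{m p^N} f\|_\infty \;\leq\; \max\bigl(p^{-m}\|f\|_\infty,\; \eta_N(f)\bigr).
\end{equation*}
Given $\epsilon > 0$, uniform continuity supplies $N$ with $\eta_N(f) \leq \epsilon$, after which any sufficiently large $m$ drives the other term below $\epsilon$ as well. Combined with the trivial monotonicity $\|\Delta^{n+1}f\|_\infty \leq \|\Delta^n f\|_\infty$, this forces $\|\Delta^n f\|_\infty \to 0$, and therefore $|f_n|_p = |(\Delta^n f)(0)|_p \to 0$ as required.
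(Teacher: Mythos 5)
Your proof is correct. Note that the paper does not actually prove this theorem: it cites \cite[\S4.2.4]{robert} and moves on, so there is nothing internal to compare against line by line. What you have written is a complete, self-contained version of the standard elementary (Bojani\'c-style) proof. The easy direction (uniform convergence of the partial sums when $|f_n|_p \to 0$, plus agreement with $f$ on the dense set $\mathbb{N}$ via Newton's formula) is handled cleanly, and the substantive direction rests on the contraction estimate $\|\Delta^{p^N} g\|_\infty \leq \max\bigl(p^{-1}\|g\|_\infty, \eta_N(g)\bigr)$, whose verification is sound: the identity $v_p\bigl(\binom{p^N}{k}\bigr) = N - v_p(k) \geq 1$ for $0 < k < p^N$ kills the interior terms, the two boundary terms assemble into $g(x+p^N)-g(x)$ up to the sign issue you correctly flag and absorb for $p=2$, and the observation that $\eta_N$ does not grow under $\Delta^{p^N}$ (a $\mathbb{Z}_p$-linear combination of translates) is exactly what lets the iteration close. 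Combined with the monotonicity $\|\Delta^{n+1}f\|_\infty \leq \|\Delta^n f\|_\infty$ and $f_n = (\Delta^n f)(0)$, the conclusion follows. The one thing worth making explicit if this were to be written out in full is the inversion $f_n = \sum_{k=0}^n (-1)^{n-k}\binom{n}{k} f(k)$ identifying the paper's inductively defined coefficients with iterated differences, but you state it and it is immediate from the unipotent triangular system you mention.
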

Denote by $\mathcal{C}(\mathbb{Z}_p, \mathbb{C}_p)$ the set of continuous functions on $\mathbb{Z}_p$ with values in $\mathbb{C}_p$. For $f \in \mathcal{C}(\mathbb{Z}_p, \mathbb{C}_p)$, we define the \emph{norm} of $f$ by
\begin{equation*}
\|f\|_p := \max_{x \in \mathbb{Z}_p} |f(x)|_p.
\end{equation*}
The norm of $f$ is a finite quantity since $\mathbb{Z}_p$ is compact and in fact, if $(f_n)_{n \geq 0}$ are the Mahler coefficients of $f$, we have
\begin{equation}\label{eq:normfromcoeff}
\|f\|_p = \max_{n \geq 0}|f_n|_p.
\end{equation}

\subsection{A family of continuous functions}

We define a family of continuous functions that will be useful later on. For $s \in \mathbb{Z}_p$, we would like to define $x \mapsto x^s$, where $x$ is an $\p$-adic number, in such a way to extend the definition of $x \mapsto x^k$ when $s = k \in \mathbb{Z}$. In general, it is not possible. However, when $x \in 1 + q\mathbb{Z}_p$, one can set
\begin{equation*}
x^s := \sum_{n \geq 0} (x - 1)^n \binom{s}{n}.
\end{equation*}
The series converges since $|x-1|_p < 1$, and by Theorem~\ref{th:mahler}, the function $s \mapsto x^s$ is continuous.\footnote{Actually, for $p = 2$ we only need $x \in 1 + 2\mathbb{Z}_2$. But in order to have an analytic function it is necessary to assume $x \in 1 + 4\mathbb{Z}_2$; see Subsection~\ref{subsec:analyticity}.}
Furthermore, when $s = k \in \mathbb{N}$ we recover the usual definition of $x^k$ by the binomial theorem, and it is easy to see that this function has the expected properties. With that in mind, for $s\in \mathbb{Z}_p$ we define the function $\phi_s$ in $\mathcal{C}(\mathbb{Z}_p, \mathbb{C}_p)$ by
\begin{equation*}
\phi_s(x) := 
\begin{cases}
0 & \text{ if } x \in p\mathbb{Z}_p, \\
\langle x \rangle^s & \text{ if } x \in \mathbb{Z}_p^\times.
\end{cases}
\end{equation*}
It it easy to see that $\phi_s$ is a continuous function and that its restriction to $\mathbb{Z}_p^\times$ is a group homomorphism. We state two results concerning the properties of $\phi_s$. The first one follows directly from construction.

\begin{lemma}\label{lem:expfact}
Let $k$ be a integer. Then, for all $x \in 1+q\mathbb{Z}_p$, we have
\begin{equation*}
\phi_k(x) = x^k.  \tag*{\hspace{-1em}\qed}
\end{equation*}
\end{lemma}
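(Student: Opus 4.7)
The proof reduces to unwinding definitions. Since $x \in 1 + q\mathbb{Z}_p$ lies in $\mathbb{Z}_p^\times$, the definition of $\phi_k$ gives $\phi_k(x) = \langle x \rangle^k$ (interpreted via the series). Next, the direct product $\mathbb{Z}_p^\times = T_p \times (1 + q\mathbb{Z}_p)$ has unique factorizations, so the trivial factorization $x = 1 \cdot x$ with $1 \in T_p$ and $x \in 1+q\mathbb{Z}_p$ is forced, giving $\omega(x) = 1$ and $\langle x \rangle = x$. Hence it suffices to show
$$\sum_{n \geq 0} (x-1)^n \binom{k}{n} \;=\; x^k$$
for every integer $k$, the right-hand side being read in the usual multiplicative sense.

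For $k \in \mathbb{N}$, this is the binomial-theorem observation already made just before the lemma: $\binom{k}{n}$ vanishes for $n > k$, and the finite sum that remains equals $(1 + (x-1))^k = x^k$. For $k$ a negative integer, I would first establish the multiplicativity identity $x^{s+t} = x^s x^t$ for all $s, t \in \mathbb{Z}_p$. Both sides are continuous functions of $(s,t)$ by Theorem~\ref{th:mahler}, and they coincide on the dense subset $\mathbb{N} \times \mathbb{N}$ by the previous case, hence everywhere. (Alternatively one can get the identity directly from the Chu--Vandermonde relation $\binom{s+t}{n} = \sum_{i+j=n} \binom{s}{i}\binom{t}{j}$ applied to the Cauchy product of the defining series.) Specializing to $s = k$ and $t = -k \in \mathbb{N}$ yields $x^k \cdot x^{-k} = x^0 = 1$, so the series value at $s = k$ is the multiplicative inverse of $x^{-k} = x^{|k|}$, which is exactly $x^k$ in the usual sense.

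The only point requiring any real thought is the negative-$k$ case, since the text's "binomial theorem" justification is written only for $k \in \mathbb{N}$; the density-plus-continuity argument above (or a one-line Cauchy-product calculation) patches this immediately. Everything else is pure bookkeeping, which is presumably why the paper labels the statement as following \emph{directly from construction}.
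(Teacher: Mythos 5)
Your proof is correct and takes the route the paper intends: the paper omits the argument entirely, asserting the lemma ``follows directly from construction'' after having already observed the case $k \in \mathbb{N}$ via the binomial theorem and that $s \mapsto x^s$ ``has the expected properties.'' Your treatment of negative $k$ via the homomorphism identity $x^{s+t} = x^s x^t$ (by density of $\mathbb{N}\times\mathbb{N}$ plus continuity, or by Chu--Vandermonde on the Cauchy product) is precisely the ``expected property'' the paper invokes implicitly, so this is a faithful filling-in of the same argument rather than a different one.
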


\begin{lemma}\label{lem:phicont}
The map $s \mapsto \phi_s$ from $\mathbb{Z}_p$ to $\mathcal{C}(\mathbb{Z}_p, \mathbb{C}_p)$ is continuous.
\end{lemma}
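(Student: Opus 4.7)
The plan is to estimate $\|\phi_s - \phi_{s_0}\|_p$ directly for $s$ close to $s_0$, using the defining series expansion of $\langle x\rangle^s$. Since $\phi_s$ and $\phi_{s_0}$ both vanish identically on $p\mathbb{Z}_p$, only the values on $\mathbb{Z}_p^\times$ matter, and the computation reduces to controlling
\begin{equation*}
\langle x\rangle^s - \langle x\rangle^{s_0} = \sum_{n\geq 1}(\langle x\rangle-1)^n\biggl[\binom{s}{n}-\binom{s_0}{n}\biggr]
\end{equation*}
uniformly in $x\in\mathbb{Z}_p^\times$. The crucial structural facts are that $\langle x\rangle \in 1+q\mathbb{Z}_p$, so $|\langle x\rangle-1|_p\leq|q|_p<1$, and that $\binom{s}{n},\binom{s_0}{n}\in\mathbb{Z}_p$, so the bracketed difference is trivially bounded in absolute value by $1$.

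Given $\epsilon>0$, I would first exploit the geometric decay $|\langle x\rangle-1|_p^n\leq|q|_p^n$ to choose $N$ with $|q|_p^N<\epsilon$; then every term with $n>N$ contributes at most $|q|_p^{N+1}<\epsilon$, regardless of $s$. This handles the tail uniformly in both $x$ and $s$. For the finitely many remaining indices $n=1,\ldots,N$, the map $s\mapsto\binom{s}{n}$ is a polynomial in $s$, hence continuous on $\mathbb{Z}_p$, so there is $\delta_n>0$ with $|\binom{s}{n}-\binom{s_0}{n}|_p<\epsilon$ whenever $|s-s_0|_p<\delta_n$. Setting $\delta:=\min_{1\leq n\leq N}\delta_n$ and applying the ultrametric inequality to the series then yields $|\langle x\rangle^s-\langle x\rangle^{s_0}|_p<\epsilon$ for every $x\in\mathbb{Z}_p^\times$, and therefore $\|\phi_s-\phi_{s_0}\|_p<\epsilon$.

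There is really no serious obstacle: the only point requiring any care is separating the \emph{uniform} control of the tail (achieved by the ultrametric bound on $|\langle x\rangle-1|_p$ together with the integrality of binomial coefficients on $\mathbb{Z}_p$) from the \emph{pointwise} control of finitely many polynomial terms in $s$. The ultrametric inequality is what makes this clean: it turns the sum into a maximum, so one only needs each term to be small rather than summable smallness. No appeal to Theorem~\ref{th:mahler} or \eqref{eq:normfromcoeff} is strictly necessary, though one could alternatively phrase the argument by computing the Mahler expansion of $x\mapsto\phi_s(x)-\phi_{s_0}(x)$ and invoking \eqref{eq:normfromcoeff}.
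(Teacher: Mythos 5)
Your argument is correct: the term-by-term subtraction of the two binomial series, the uniform tail bound via $|\langle x\rangle-1|_p\leq|q|_p<1$ together with $\binom{s}{n}\in\mathbb{Z}_p$, and the pointwise control of the finitely many polynomial head terms fit together without a gap, and the resulting bound is uniform in $x$, so it does give $\|\phi_s-\phi_{s_0}\|_p<\epsilon$. The paper takes a shorter and sharper route. It first uses the multiplicativity of $s\mapsto\langle x\rangle^s$ to write $\phi_s(x)-\phi_{s'}(x)=\langle x\rangle^s\bigl(1-\langle x\rangle^{s'-s}\bigr)$, and then expands only $\langle x\rangle^{s'-s}$ as $\sum_{n\geq 0}\binom{s'-s}{n}(\langle x\rangle-1)^n$: the $n=0$ term cancels the $1$, and for $n\geq 1$ the identity $\binom{t}{n}=\tfrac{t}{n}\binom{t-1}{n-1}$ extracts a factor $|s'-s|_p$ from every remaining term (the loss of $|n|_p^{-1}$ being absorbed by $|\langle x\rangle-1|_p^n\leq|q|_p^n$). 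This yields the quantitative Lipschitz estimate $\|\phi_s-\phi_{s'}\|_p\leq|s-s'|_p$ in one line, with no head/tail split and no $\epsilon$--$\delta$ bookkeeping. Your approach buys nothing beyond qualitative continuity, which is all the lemma asserts, but it is worth noting that the divisibility of $\binom{s'-s}{n}$ by $s'-s$ is exactly the structural fact your version discards by bounding $\bigl|\binom{s}{n}-\binom{s_0}{n}\bigr|_p$ only by $1$ in the tail; keeping it gives the uniform modulus of continuity for free, which is occasionally useful elsewhere (e.g.\ when one wants explicit precision estimates of the kind used in Section~4).
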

\begin{proof}
Let $s, s'$ in $\mathbb{Z}_p$. If $x \in p\mathbb{Z}_p$, then $\phi_s(x) = \phi_{s'}(x) = 0$. If $x \in \mathbb{Z}_p^\times$, then 
\begin{align*}
|\phi_s(x) - \phi_{s'}(x)|_p
& = \big|\langle x \rangle^s \big(1 - \langle x \rangle^{s'-s}\big)\big|_p
   = \biggl|1 - \sum_{n \geq 0} \binom{s' - s}{n} (x - 1)^n\biggr|_p \\ 
& \leq |s' - s|_p \left|\frac{(x-1)^n }{n!}\right|_p \leq |s' - s|_p.
\qedhere
\end{align*}
\end{proof}

\subsectb{Integration of $\p$-adic continuous functions}{Integration of p-adic continuous functions}\label{subsec:integration}

A \emph{measure} $\mu$ on $\mathbb{Z}_p$ is a bounded linear functional on the $\mathbb{C}_p$-vector space $\mathcal{C}(\mathbb{Z}_p, \mathbb{C}_p)$. That is, there exists a constant $B > 0$ satisfying
\begin{equation}\label{eq:mubnd}
|\mu(f)|_p \leq B \, \|f\|_p \quad \text{for all } f \in \mathcal{C}(\mathbb{Z}_p, \mathbb{C}_p).
\end{equation}
The smallest possible $B$ is called the \emph{norm} of the measure $\mu$ and is denoted $\|\mu\|_p$. With this norm, the set $\mathcal{M}(\mathbb{Z}_p, \mathbb{C}_p)$ of measures on $\mathcal{C}(\mathbb{Z}_p, \mathbb{C}_p)$ becomes a $\mathbb{C}_p$-Banach space. From now on, we will write
\begin{equation*}
\int f(x) \, d\mu(x) := \mu(f).
\end{equation*}
Usually we will drop the $x$ to simplify the notation when the context is clear. 

\begin{lemma}\label{lem:intcont}
The function $\mu$ is a continuous map from $\mathcal{C}(\mathbb{Z}_p, \mathbb{C}_p)$ to $\mathbb{Z}_p$.
\end{lemma}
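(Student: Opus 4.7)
The plan is very short: this lemma is the standard fact that a bounded linear functional between normed vector spaces is (Lipschitz) continuous. The measure $\mu$ was defined precisely as a bounded linear functional satisfying the bound \eqref{eq:mubnd}, so everything needed is already in hand.

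First, I would use linearity of $\mu$ to write, for any two $f, g \in \mathcal{C}(\mathbb{Z}_p, \mathbb{C}_p)$,
\begin{equation*}
\mu(f) - \mu(g) = \mu(f - g).
\end{equation*}
Then I would apply the defining bound \eqref{eq:mubnd} with the optimal constant $\|\mu\|_p$ to the function $f - g$, obtaining
\begin{equation*}
|\mu(f) - \mu(g)|_p = |\mu(f - g)|_p \leq \|\mu\|_p \cdot \|f - g\|_p.
\end{equation*}
Since $\|\mu\|_p$ is a fixed finite quantity, this is a Lipschitz estimate, which immediately yields continuity: given $\varepsilon > 0$, take $\delta = \varepsilon / (\|\mu\|_p + 1)$ (or simply note uniform continuity directly).

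There is essentially no obstacle here; the only subtlety worth a comment is the apparent typo in the target space (the codomain should be $\mathbb{C}_p$ rather than $\mathbb{Z}_p$, since $\mu(f) \in \mathbb{C}_p$ in general). If one genuinely needed the image to lie in $\mathbb{Z}_p$, one would need the additional hypothesis $\|\mu\|_p \leq 1$ together with $\|f\|_p \leq 1$, which is not assumed here; I would therefore simply prove the statement with $\mathbb{C}_p$ as the target, which is what the proof actually gives.
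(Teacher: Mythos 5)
Your proof is correct and is exactly the argument the paper intends (the paper simply says the lemma is clear from \eqref{eq:mubnd}); your Lipschitz estimate via linearity and the bound $\|\mu\|_p$ fills in the standard details. Your observation that the codomain should read $\mathbb{C}_p$ rather than $\mathbb{Z}_p$ is also correct; that is a typo in the statement.
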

\begin{proof}
This is clear by \eqref{eq:mubnd}.
\end{proof}

\begin{lemma}\label{lem:dointeg}
Let $f \in \mathcal{C}(\mathbb{Z}_p, \mathbb{C}_p)$ with Mahler coefficients $(f_n)_{n \geq 0}$. Then we have
\begin{equation}
\int f \, d\mu = \sum_{n \geq 0} f_n \int \binom{x}{n} \, d\mu.
\end{equation}
\end{lemma}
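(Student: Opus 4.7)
The plan is to combine the Mahler expansion (Theorem~\ref{th:mahler}) with the continuity of $\mu$ (Lemma~\ref{lem:intcont}) and its linearity. Setting $f_N(x) := \sum_{n=0}^{N} f_n \binom{x}{n}$, Theorem~\ref{th:mahler} tells us that $f_N$ converges uniformly to $f$ on $\mathbb{Z}_p$, i.e. $\|f - f_N\|_p \to 0$ as $N \to \infty$.

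First I would apply Lemma~\ref{lem:intcont}, or equivalently the defining bound \eqref{eq:mubnd}, to conclude that $\mu(f_N) \to \mu(f)$, since
\begin{equation*}
|\mu(f) - \mu(f_N)|_p = |\mu(f - f_N)|_p \leq \|\mu\|_p\,\|f - f_N\|_p \xrightarrow[N\to\infty]{} 0.
\end{equation*}
Next, because $\mu$ is a linear functional on $\mathcal{C}(\mathbb{Z}_p, \mathbb{C}_p)$ and each partial sum $f_N$ is a finite $\mathbb{C}_p$-linear combination of the binomial polynomials, we have
\begin{equation*}
\mu(f_N) = \sum_{n=0}^{N} f_n\,\mu\!\left(\binom{x}{n}\right) = \sum_{n=0}^{N} f_n \int \binom{x}{n}\, d\mu.
\end{equation*}
Passing to the limit in $N$ gives the claimed identity, and as a byproduct shows that the series on the right-hand side converges in $\mathbb{C}_p$.

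There is essentially no obstacle: the argument is the standard ``extend by continuity and linearity'' trick, and the only nontrivial ingredient, namely the uniform convergence of the Mahler series, is already provided by Theorem~\ref{th:mahler}. One might briefly remark, for bookkeeping, that convergence of the series $\sum_n f_n \int \binom{x}{n}\,d\mu$ is automatic from $|f_n|_p \to 0$ (Theorem~\ref{th:mahler}) together with the uniform bound $\bigl|\int \binom{x}{n}\,d\mu\bigr|_p \leq \|\mu\|_p\,\bigl\|\binom{x}{n}\bigr\|_p \leq \|\mu\|_p$, but this is not needed for the proof itself.
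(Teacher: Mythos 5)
Your proof is correct and follows exactly the paper's argument: the paper's one-line proof likewise invokes the uniform convergence of the Mahler partial sums together with the continuity (Lemma~\ref{lem:intcont}) and linearity of $\mu$. You have simply written out the details that the paper leaves implicit.
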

\begin{proof} 
This is clear since $f = \lim\limits_{N \to \infty} \sum\limits_{n = 0}^N f_n \dbinom{x}{n}$ and $\mu$ is continuous by the previous lemma.
\end{proof}
For $\mu \in \mathcal{M}(\mathbb{Z}_p, \mathbb{C}_p)$ and $g \in \mathcal{C}(\mathbb{Z}_p, \mathbb{C}_p)$, the measure $g\mu$ is defined, for  any $f \in \mathcal{C}(\mathbb{Z}_p, \mathbb{C}_p)$, by 
\begin{equation*}
\int f(x) \, dg\mu(x) := \int f(x)g(x) \, d\mu(x).
\end{equation*}
When $g = \chi_A$, the characteristic function  of an open and closed subset $A$ of $\mathbb{Z}_p$, we will use the notation
\begin{equation*}
\int_A f \, d\mu := \int f \, d\chi_A\mu.
\end{equation*}
A measure $\mu$ is said to have \emph{support in} $A$ if $\mu = \chi_A\mu$. In other words, for all $f \in  \mathcal{C}(\mathbb{Z}_p, \mathbb{C}_p)$ we have
\begin{equation*}
\int_A f \, d\mu := \int f \, d\mu.
\end{equation*}

\subsection{Measures and power series}

Let $\mathbb{C}_p[[T]]^{bd}$ be the $\mathbb{C}_p$-algebra of power series whose coefficients are in $\mathbb{C}_p$ and are bounded in absolute value. Let $\mu$ be a measure in $\mathcal{M}(\mathbb{Z}_p, \mathbb{C}_p)$. One associates with $\mu$ a power series $F_\mu \in \mathbb{C}_p[[T]]^{bd}$ defined by
\begin{equation*}
F_\mu(T) := \sum_{n \geq 0} \int \binom{x}{n} \, d\mu(x) \, T^n.
\end{equation*}
Reciprocally, given a power series $F \in \mathbb{C}_p[[T]]^{bd}$ with coefficients $F_n$ ($n \geq 0$), one associates with $F$ a measure $\mu_F$ defined by
\begin{equation}\label{eq:Ftomu}
\sum_{n \geq 0} \int \binom{x}{n} \, d\mu_F(x) := F_n.
\end{equation}
Indeed, by Lemma~\ref{lem:dointeg}, these equations uniquely determine the measure $\mu_F$. These maps define isometric isomorphisms of $\mathbb{C}_p$-Banach space between $\mathcal{M}(\mathbb{Z}_p, \mathbb{C}_p)$ and $\mathbb{C}_p[[T]]^{bd}$ where the norm on $\mathbb{C}_p[[T]]^{bd}$ is defined to be the maximum of the absolute values of the coefficients; see \cite[Chap.~4]{lang:cyclo}.
\begin{remark}\label{rem:uniqfmu}
Another characterization of the correspondence between measures and bounded power series is that the power series $F_\mu$ is the unique power series in $\mathbb{C}_p[[T]]^\text{bd}$ such that 
\begin{equation*}	
\int (1 + t)^x \, d\mu(x) = F_\mu(t) \quad\text{for all } t \in \mathbb{C}_p \text{ such that } |t|_p < 1.
\end{equation*}	
\end{remark}

The measures corresponding to powers of $1+T$ form an important class. The result below follows directly from \eqref{eq:seqfunc}, Lemma~\ref{lem:dointeg} and \eqref{eq:Ftomu} (or simply the remark above).
\begin{lemma}\label{lem:dirac}
Let $a \in \mathbb{Z}_p$. Then the measure associated with the power series 
\begin{equation}
(1+T)^a := \sum_{n \geq 0} \binom{a}{n} T^n
\end{equation}
is the Dirac measure at $a$, that is, the measure $\mu_a$ such that
\begin{equation*}
\int f \, d\mu_a = f(a)
\end{equation*}
for all $f \in \mathcal{C}(\mathbb{Z}_p, \mathbb{C}_p)$. \qed
\end{lemma}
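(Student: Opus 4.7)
The plan is to verify the defining property $\int f \, d\mu_a = f(a)$ directly from the correspondence between bounded power series and measures. The statement really just amounts to recognizing that the construction in \eqref{eq:Ftomu}, when fed the power series $(1+T)^a$, reproduces the evaluation-at-$a$ functional, so the key work is to arrange the identifications cleanly.

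First, I would read off the coefficients of the power series. By the generalized binomial series (which converges as a formal power series for any $a \in \mathbb{Z}_p$, and is bounded since $\bigl|\binom{a}{n}\bigr|_p \leq 1$), we have
\begin{equation*}
(1+T)^a = \sum_{n \geq 0} \binom{a}{n} T^n,
\end{equation*}
so its $n$-th coefficient is $\binom{a}{n}$. Plugging this into \eqref{eq:Ftomu}, the associated measure $\mu_a$ is uniquely characterized by
\begin{equation*}
\int \binom{x}{n} \, d\mu_a(x) = \binom{a}{n} \qquad \text{for all } n \geq 0.
\end{equation*}

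Next I would extend this from binomial polynomials to arbitrary continuous functions by linearity and continuity. Given $f \in \mathcal{C}(\mathbb{Z}_p, \mathbb{C}_p)$ with Mahler coefficients $(f_n)_{n \geq 0}$, Lemma~\ref{lem:dointeg} allows the exchange of integral and infinite sum, yielding
\begin{equation*}
\int f \, d\mu_a = \sum_{n \geq 0} f_n \int \binom{x}{n} \, d\mu_a(x) = \sum_{n \geq 0} f_n \binom{a}{n}.
\end{equation*}
The rightmost expression is precisely the Mahler expansion \eqref{eq:mahlerexp} evaluated at $x = a \in \mathbb{Z}_p$, which by Theorem~\ref{th:mahler} converges to $f(a)$. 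This gives $\int f \, d\mu_a = f(a)$, as required.

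There is no real obstacle in this argument; the only subtlety is noting that $(1+T)^a$ genuinely lies in $\mathbb{C}_p[[T]]^{bd}$ so that \eqref{eq:Ftomu} applies, and that Lemma~\ref{lem:dointeg} legitimates the term-by-term integration. Both are immediate from what has already been established.
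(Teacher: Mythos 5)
Your argument is correct and is exactly the route the paper intends: it expands the sketch given just before the lemma (\emph{``follows directly from \eqref{eq:seqfunc}, Lemma~\ref{lem:dointeg} and \eqref{eq:Ftomu}''}), reading off $\int \binom{x}{n}\,d\mu_a = \binom{a}{n}$ from \eqref{eq:Ftomu} and then summing the Mahler series at $x=a$ via Lemma~\ref{lem:dointeg} and Theorem~\ref{th:mahler}. No gaps; the boundedness check $\bigl|\binom{a}{n}\bigr|_p \leq 1$ is the right thing to note to justify applying the correspondence.
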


\subsection{The interpolation principle}

Let $\Delta$ be the linear operator $(1+T) \dfrac{d}{dT}$ acting on $\mathbb{C}_p[[T]]^{bd}$. Let $\mu \in \mathcal{M}(\mathbb{Z}_p, \mathbb{C}_p)$ be a measure. We have
\begin{align*}
\Delta F_\mu(T) & = (1+T) \sum_{n \geq 1} n \int \binom{x}{n} \, d\mu \cdot T^{n-1} = \sum_{n \geq 0} \int \left[(n+1) \binom{x}{n+1} + n \binom{x}{n} \right] d\mu \cdot T^n \\
& = \sum_{n \geq 0} \int x \binom{x}{n} d\mu \cdot T^n = F_{x\mu}(T).
\end{align*}
Thus we have proved the first part of the result below; the second follows from Remark~\ref{rem:uniqfmu}.
\begin{lemma}\label{lem:intdeltak}
Let $\mu$ be a measure with associated power series $F_\mu$. Then the measure associated with the power series $\Delta F_\mu$ is $x \mu$. In particular,
\begin{equation*}
\Delta^k F_\mu(T)_{|T=0} = \int x^k \, d\mu
\end{equation*} 
for any integer $k \geq 0$. \qed
\end{lemma}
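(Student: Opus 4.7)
The first assertion is already essentially contained in the display preceding the lemma, so my plan is simply to unpack that computation and then derive the evaluation-at-zero formula from it. For the first part, I would start from the definition
\[
F_\mu(T) = \sum_{n \geq 0} \int \binom{x}{n}\, d\mu(x)\; T^n,
\]
differentiate term-by-term, and multiply by $(1+T)$. The coefficient of $T^n$ in $\Delta F_\mu$ is then
\[
(n+1)\int \binom{x}{n+1} d\mu + n\int \binom{x}{n} d\mu
= \int\!\Bigl[(n+1)\binom{x}{n+1} + n\binom{x}{n}\Bigr] d\mu,
\]
and the elementary identity $(n+1)\binom{x}{n+1} + n\binom{x}{n} = x\binom{x}{n}$ (which is immediate from $(n+1)\binom{x}{n+1} = (x-n)\binom{x}{n}$) turns this into $\int x\binom{x}{n}\,d\mu$. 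By the defining recipe \eqref{eq:Ftomu}, this is exactly the $n$-th coefficient of $F_{x\mu}$, so $\Delta F_\mu = F_{x\mu}$. One should also remark that $x\mu \in \mathcal{M}(\mathbb{Z}_p,\mathbb{C}_p)$ (the function $x$ is continuous on $\mathbb{Z}_p$, so $x\mu$ is a bona fide measure and the associated power series is bounded), which is what makes the identification legitimate.

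For the second assertion I would iterate: applying the first part $k$ times yields $\Delta^k F_\mu = F_{x^k \mu}$. Evaluating at $T=0$ simply reads off the constant coefficient of this power series, which by definition is
\[
\int \binom{x}{0}\, d(x^k \mu)(x) = \int x^k\, d\mu(x),
\]
giving the stated formula.

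An equally short alternative, closer in spirit to the hint in the paper, is to use Remark~\ref{rem:uniqfmu}. Since $\Delta(1+t)^x = (1+t)\cdot x(1+t)^{x-1} = x(1+t)^x$, one expects
\[
\Delta F_\mu(t) = \Delta \int (1+t)^x\, d\mu(x) = \int x(1+t)^x\, d\mu(x),
\]
and the right-hand side is the characterizing identity for $F_{x\mu}$. The only step that requires any care is the interchange of $\Delta$ with the integral; this is where I expect the main (minor) obstacle to lie. It can be justified by writing $\Delta$ as a limit of finite differences in $t$ and invoking the continuity of $\mu$ (Lemma~\ref{lem:intcont}) together with uniform convergence on the disc $|t|_p < 1$, or more concretely by simply differentiating the coefficient series of $F_\mu$, which converges coefficient-wise and is bounded. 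Either route reduces the second statement to a one-line specialization at $T=0$.
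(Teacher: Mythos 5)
Your proof is correct and follows essentially the same route as the paper: the first assertion is exactly the paper's term-by-term computation using the identity $(n+1)\binom{x}{n+1}+n\binom{x}{n}=x\binom{x}{n}$ to identify the coefficients of $\Delta F_\mu$ with those of $F_{x\mu}$, and the second assertion is obtained, as in the paper, by iterating and reading off the value at $T=0$ (equivalently via Remark~\ref{rem:uniqfmu}). Your added remarks that $x\mu$ is indeed a bounded measure and your alternative derivation are consistent with, but not substantively different from, the paper's argument.
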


We can now state the main result of this section. 
\begin{theorem}\label{th:procinterpol}
Let $(a_n)_{n \geq 0}$ be a sequence of elements of $\mathbb{C}_p$. Assume there exists a power series $F \in \mathbb{C}_p[[T]]^{bd}$ such that for all $k \geq 0$ we have
\begin{equation*}
\Delta^k F(T)_{|T = 0} = a_k
\end{equation*}
and that the associated measure $\mu_F$ has support in $1 + q\mathbb{Z}_p$. Let $f : \mathbb{Z}_p \to \mathbb{C}_p$ be defined by
\begin{equation*}
f(s) := \int \phi_s(x) \, d\mu_F(x).
\end{equation*}
Then $f$ is a continuous function such that
\begin{equation}
f(k) = a_k
\end{equation}
for all $k \in \mathbb{N}$.
\end{theorem}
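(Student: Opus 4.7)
The plan is to verify separately the two assertions of the theorem---continuity of $f$ and the equality $f(k)=a_k$ for $k\in\mathbb{N}$---both of which follow directly from the machinery already developed in the section.

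For the continuity claim, I would observe that $f$ is the composition of the map $\mathbb{Z}_p \to \mathcal{C}(\mathbb{Z}_p,\mathbb{C}_p)$ sending $s$ to $\phi_s$ with the integration map $\mathcal{C}(\mathbb{Z}_p,\mathbb{C}_p) \to \mathbb{C}_p$ sending $g$ to $\int g\,d\mu_F$. The first factor is continuous by Lemma~\ref{lem:phicont}, and the second by Lemma~\ref{lem:intcont}, so $f$ is a continuous function on $\mathbb{Z}_p$.

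For the interpolation at a nonnegative integer $k$, the key point is that although $\phi_k$ and the monomial $x\mapsto x^k$ disagree on $p\mathbb{Z}_p$, Lemma~\ref{lem:expfact} says that they coincide on $1+q\mathbb{Z}_p$. Since $\mu_F$ is assumed to have support in $1+q\mathbb{Z}_p$, we have $\mu_F = \chi_{1+q\mathbb{Z}_p}\mu_F$ and therefore
\[
f(k) \;=\; \int \phi_k(x)\,d\mu_F(x) \;=\; \int \phi_k(x)\chi_{1+q\mathbb{Z}_p}(x)\,d\mu_F(x) \;=\; \int x^k\chi_{1+q\mathbb{Z}_p}(x)\,d\mu_F(x) \;=\; \int x^k\,d\mu_F(x).
\]
By Lemma~\ref{lem:intdeltak} the last integral equals $\Delta^k F(T)_{|T=0}$, which by hypothesis is $a_k$, and we are done.

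There is really no substantial obstacle here; the result is a clean packaging of three ingredients already in hand: the continuity of $s\mapsto\phi_s$, the agreement $\phi_k(x)=x^k$ on $1+q\mathbb{Z}_p$, and the formula $\int x^k\,d\mu = \Delta^k F_\mu(T)_{|T=0}$. The support hypothesis on $\mu_F$ is essential precisely so that one may trade $\phi_k$ for $x^k$ under the integral sign in the computation above; without it, the discrepancy on $p\mathbb{Z}_p$ would break the link to $\Delta^k F(T)_{|T=0}$.
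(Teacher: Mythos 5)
Your argument is correct and is essentially identical to the paper's own proof: continuity via Lemmas~\ref{lem:phicont} and \ref{lem:intcont}, then the support hypothesis to replace $\phi_k$ by $x^k$ under the integral (Lemma~\ref{lem:expfact}), and finally Lemma~\ref{lem:intdeltak} to identify $\int x^k\,d\mu_F$ with $\Delta^k F(T)_{|T=0}=a_k$. Nothing to add.
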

\begin{proof}
It is clear from Lemmas~\ref{lem:phicont} and \ref{lem:intcont} that $f$ is continuous. For $k \in \mathbb{N}$ we compute
\begin{align*}
f(k) & = \int_{1 + q\mathbb{Z}_p} \phi_k(x) \, d\mu_F(x) && \text{since $\mu$ has support in $1+q\mathbb{Z}_p$} \\
& =  \int_{1+_q\mathbb{Z}_p} x^k \, d\mu_F && \text{by Lemma~\ref{lem:expfact}} \\
& =  \int x^k \, d\mu_F && \text{since $\mu$ has support in $1+q\mathbb{Z}_p$} \\
& = \Delta^k F(T)_{|T=0} && \text{by Lemma~\ref{lem:intdeltak}} \\[1\jot]
& = a_k. && \qedhere
\end{align*}
\end{proof}

\section{Values of zeta functions at negative integers}

Let $E$ be a totally real number field of degree $d$ with ring of integers $\mathbb{Z}_E$. We consider $E$, and all other number fields, as subfields of the algebraic closure $\bar{\mathbb{Q}}$ of $\mathbb{Q}$ contained in $\mathbb{C}$. We also fix once and for all an embedding of $\bar{\mathbb{Q}}$ into $\mathbb{C}_p$. For $\alpha$ in $E$, we denote by $\alpha^{(i)} \in \mathbb{R}$, $i=1,\dots,d$, its conjugates. An element $\alpha \in E$ is \emph{totally positive} if $\alpha^{(i)} > 0$ for $i=0,\dots,d$. We write $\alpha \gg 0$. The subgroup of totally positive numbers in $E^\times$ is denoted $E^+$ and we let $\mathbb{Z}_E^+ := E^+ \cap \mathbb{Z}_E$ be the set of totally positive algebraic integers in $E$. Let $\mathcal{N} = \mathcal{N}_{E/\mathbb{Q}}$ denote the absolute norm of the group $I(E)$ of ideals of $E$. By abuse, for $\alpha$ a non-zero element in $E$ we write $\mathcal{N}(\alpha) := \mathcal{N}(\alpha\mathbb{Z}_E)$. When $\alpha$ is totally positive $\mathcal{N}(\alpha)$ equals the absolute norm of~$\alpha$.

Let $\mathfrak{m} := \mathfrak{f} \mathfrak{z}$ be a \emph{modulus} of $E$, that is, the formal product of an integral ideal $\mathfrak{f}$ of $E$ (the finite part) and a subset $\mathfrak{z}$ of the set of infinite places of $E$ (the infinite part). We use the notations
\begin{itemize}
\item[]
\begin{itemize}
\item[$E_\mathfrak{m}$]
for the subgroup of elements of $E$ that are congruent (multiplicatively) \\ to $1$ modulo $\mathfrak{m}$,
\smallskip
\item[$I_{\mathfrak{m}}(E)$]
for the subgroup of fractional ideals of $E$ that are relatively prime 
to
$\mathfrak{f}$, 
\smallskip
\item[$\Cl_\mathfrak{m}(E)$]
for the \emph{ray class group of $E$ modulo $\mathfrak{m}$},
that is, the quotient of $I_{\mathfrak{m}}(E)$ by \\ the subgroup of principal ideals generated by elements of $E_\mathfrak{m}$, and
\smallskip
\item[$h_\mathfrak{m}(E)$]
for the cardinality of $\Cl_\mathfrak{m}(E)$.
\smallskip
\end{itemize}
\end{itemize}
Finally, we set $U_\mathfrak{m}(E):= U(E) \cap E_\mathfrak{m}$ where $U(E)$ is the unit group of $E$.

\subsection{Twisted partial zeta functions}\label{subsec:tzf} 

Let $\mathfrak{a}$ be a fractional ideal of $E$, relatively prime to $\mathfrak{f}$. The \emph{partial zeta function} is defined, for $s \in \mathbb{C}$ with $\Re(s) > 1$, by 
\begin{equation*}
Z_{\mathfrak{m}}(\mathfrak{a}^{-1}; s) := \sum_{\mathfrak{b} \sim_\mathfrak{m} \mathfrak{a}^{-1}} \mathcal{N}(\mathfrak{b})^{-s} 
\end{equation*}
where the sum is over all the integral ideals $\mathfrak{b}$ that are in the same class of $\Cl_{\mathfrak{m}}(E)$ as the inverse of $\mathfrak{a}$. For $\mathfrak{c}$, an ideal of $E$, relatively prime with $\mathfrak{f}$, the \emph{twisted partial zeta function} is defined, for $\Re(s) > 1$, by 
\begin{equation}\label{eq:deftz}
Z_{\mathfrak{m}}(\mathfrak{a}^{-1}, \mathfrak{c}; s) := \mathcal{N}(\mathfrak{c})^{1-s} Z_\mathfrak{m}((\mathfrak{ac})^{-1}; s) - Z_\mathfrak{m}(\mathfrak{a}^{-1}; s). 
\end{equation}

The partial zeta functions have meromorphic continuation to the complex plane with a simple pole at $s = 1$. Since the partial zeta functions all have the same residue at $s = 1$, they cancel out in \eqref{eq:deftz} and the twisted partial zeta functions have analytic continuation to the whole complex plane.

Let $\chi$ be a character on the ray class group $\Cl_\mathfrak{m}(E)$, and let $L_{\mathfrak{m}}(\chi; s)$ be the corresponding Hecke $L$-function defined, for $\Re(s) > 1$, by
\begin{equation}\label{eq:defLcplx}
L_\mathfrak{m}(\chi; s) := \prod_{\mathfrak{q} \notdiv \mathfrak{m}} \left(1 - \chi(\mathfrak{q}) \mathcal{N}(\mathfrak{q})^{-s}\right)^{-1}
\end{equation}
where the product is over all the prime ideals of $E$ not dividing the finite part $\mathfrak{f}$ of the modulus $\mathfrak{m}$. The link between Hecke $L$-functions and partial zeta function provides a way to express the former in terms of twisted partial zeta functions. 

\begin{proposition}\label{prop:lfunc} 
For all $s \in \mathbb{C}$ we have
\begin{equation*}
\left(\chi(\mathfrak{c})\mathcal{N}(\mathfrak{c})^{1-s} - 1\right) L_\mathfrak{m}(\chi; s) = \sum_{i=1}^{h_\mathfrak{m}(E)} \bar\chi(\mathfrak{a}_i) Z_{\mathfrak{m}}(\mathfrak{a}_i^{-1}, \mathfrak{c}; s)
\end{equation*}
where the sum is over ideals $\mathfrak{a}_i$ representing all the classes of $\Cl_\mathfrak{m}(E)$. 
\end{proposition}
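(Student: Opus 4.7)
The plan is to start from the Euler product on the half-plane $\Re(s) > 1$, expand it as a Dirichlet series, and regroup the terms according to ray classes, after which the identity will be essentially a bookkeeping exercise; analytic continuation then extends it to all of $\mathbb{C}$.

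First I would expand the Euler product \eqref{eq:defLcplx} and use unique factorization of ideals coprime to $\mathfrak{f}$ to write
\begin{equation*}
L_\mathfrak{m}(\chi; s) = \sum_{\mathfrak{b} \in I_\mathfrak{m}(E),\,\mathfrak{b}\,\text{integral}} \chi(\mathfrak{b})\,\mathcal{N}(\mathfrak{b})^{-s}.
\end{equation*}
Partitioning this sum by the ray class of $\mathfrak{b}$ and using that $\chi$ is constant on such a class, say equal to $\bar\chi(\mathfrak{a}_i)$ on the class of $\mathfrak{a}_i^{-1}$, I obtain
\begin{equation*}
L_\mathfrak{m}(\chi; s) = \sum_{i=1}^{h_\mathfrak{m}(E)} \bar\chi(\mathfrak{a}_i)\, Z_\mathfrak{m}(\mathfrak{a}_i^{-1}; s).
\end{equation*}

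Next I would multiply this identity by $\chi(\mathfrak{c})\mathcal{N}(\mathfrak{c})^{1-s}$. The key manipulation is the reindexing: as $\mathfrak{a}_i$ runs through a set of representatives of $\Cl_\mathfrak{m}(E)$, so does $\mathfrak{a}_i\mathfrak{c}^{-1}$. Choosing this second family as representatives and relabeling (which is legitimate since $Z_\mathfrak{m}$ depends only on the class), I rewrite
\begin{equation*}
\chi(\mathfrak{c})\mathcal{N}(\mathfrak{c})^{1-s} L_\mathfrak{m}(\chi; s) = \sum_{i=1}^{h_\mathfrak{m}(E)} \bar\chi(\mathfrak{a}_i)\,\mathcal{N}(\mathfrak{c})^{1-s}\, Z_\mathfrak{m}((\mathfrak{a}_i\mathfrak{c})^{-1}; s),
\end{equation*}
using that $\bar\chi(\mathfrak{a}_i\mathfrak{c}^{-1})\chi(\mathfrak{c}) = \bar\chi(\mathfrak{a}_i)$. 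Subtracting the expression for $L_\mathfrak{m}(\chi; s)$ and recognizing the bracketed difference as the twisted partial zeta function from \eqref{eq:deftz} yields
\begin{equation*}
\bigl(\chi(\mathfrak{c})\mathcal{N}(\mathfrak{c})^{1-s} - 1\bigr)L_\mathfrak{m}(\chi; s) = \sum_{i=1}^{h_\mathfrak{m}(E)} \bar\chi(\mathfrak{a}_i)\, Z_\mathfrak{m}(\mathfrak{a}_i^{-1}, \mathfrak{c}; s),
\end{equation*}
valid a priori for $\Re(s) > 1$.

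Finally, I would extend to all $s \in \mathbb{C}$ by analytic continuation. The right-hand side is entire, as noted just before the proposition. The left-hand side is meromorphic with at worst a simple pole at $s = 1$ coming from $L_\mathfrak{m}(\chi; s)$, but this pole occurs only when $\chi$ is the trivial character, in which case the factor $\chi(\mathfrak{c})\mathcal{N}(\mathfrak{c})^{1-s} - 1 = \mathcal{N}(\mathfrak{c})^{1-s} - 1$ vanishes at $s = 1$ and cancels it, so the left-hand side is entire as well. The identity on $\Re(s) > 1$ then propagates to all of $\mathbb{C}$. The only mildly subtle step is the reindexing by $\mathfrak{c}^{-1}$ and the verification that the pole of $L_\mathfrak{m}$ is neutralized; neither presents a real obstacle.
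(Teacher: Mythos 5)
Your proof is correct and follows essentially the same route as the paper's: decompose $L_\mathfrak{m}(\chi;s)$ into partial zeta functions, reindex the representatives by $\mathfrak{c}$ to absorb the factor $\chi(\mathfrak{c})$, and recognize the twisted partial zeta functions from \eqref{eq:deftz}. The paper runs the computation from the right-hand side to the left and leaves the extension from $\Re(s)>1$ to all of $\mathbb{C}$ implicit, whereas you spell out the analytic continuation and the cancellation of the pole at $s=1$ for trivial $\chi$; this is a welcome extra detail but not a different argument.
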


\begin{proof}
We have
\begin{align*}
\sum_{i=1}^{h_\mathfrak{m}(E)} \bar\chi(\mathfrak{a}_i) Z_{\mathfrak{m}}(\mathfrak{a}_i^{-1}, \mathfrak{c}; s) 
& = \mathcal{N}(\mathfrak{c})^{1-s}\sum_{i=1}^{h_\mathfrak{m}(E)} \bar\chi(\mathfrak{a}_i) Z_{\mathfrak{m}}((\mathfrak{a_ic})^{-1}; s) - \sum_{i=1}^{h_\mathfrak{m}(E)} \bar\chi(\mathfrak{a}_i) Z_{\mathfrak{m}}(\mathfrak{a}_i^{-1}; s) \\
& = \left(\chi(\mathfrak{c})\mathcal{N}(\mathfrak{c})^{1-s} - 1\right) \sum_{i=1}^{h_\mathfrak{m}(E)} \bar\chi(\mathfrak{a}_i) Z_{\mathfrak{m}}(\mathfrak{a}_i^{-1}; s) \\
& =  \left(\chi(\mathfrak{c})\mathcal{N}(\mathfrak{c})^{1-s} - 1\right) L_\mathfrak{m}(\chi; s). \qedhere
\end{align*}
\end{proof}

\medskip

We now make some important additional hypotheses. 
\begin{hyp}\  
\begin{enumerate}
\smallskip
\item[(H1)] the finite part $\mathfrak{f}$ of the modulus $\mathfrak{m}$ is divisible by $q$;
\smallskip
\item[(H2)] the infinite part $\mathfrak{z}$ of the modulus $\mathfrak{m}$ contains all the infinite (real) places of $E$;
\smallskip
\item[(H3)] $\mathfrak{c}$ is a prime ideal of residual degree $1$.
\end{enumerate}
\end{hyp}

We will denote by $c$ the prime number below $\mathfrak{c}$; therefore $c = \mathcal{N}(\mathfrak{c})$ by (H3). 

\begin{remark}
If $\mathfrak{m}$ does not satisfy both (H1) and (H2) we can enlarge the modulus so that it does satisfy these conditions and we can lift $\chi$ to a character of the new modulus. Adding all the infinite places to $\mathfrak{z}$ to satisfy (H2) does not actually change the $L$-function. Replacing $\mathfrak{f}$ by the lcm of $\mathfrak{f}$ and $q$ to satisfy (H1) has the effect of removing the Euler factors of prime ideals above $p$ in \eqref{eq:defLcplx}. This is necessary to be able to do the $\p$-adic interpolation. Another way to achieve this would be to drop (H1) and to require instead that only the elements coprime to $q$ are kept in the cone decompositions (see Subsection~\ref{subsec:conedec}). From a computational point of view these two possibilities are basically the same. 
\end{remark}

\begin{remark}
The construction of Cassou-Noguès \cite{cn:zeta} additionally requires $\mathfrak{c}$ to be relatively prime to the co-different of $E$. However, as we will see in the next subsection, this is not actually necessary. 
\end{remark}

\subsectb{Additive characters modulo $\mathfrak{c}$}{Additive characters modulo c}

An \emph{additive character} modulo $\mathfrak{c}$ is a group homomorphism $\xi$ from the additive group $\mathbb{Z}_E$ to the multiplicative group $\mathbb{C}^\times$ whose kernel contains $\mathfrak{c}$. We denote by $X(\mathfrak{c})$ the set of all these characters.

\begin{lemma}\label{lem:charc}
$X(\mathfrak{c})$ is a finite group of order $c$ and all elements in $X(\mathfrak{c})$ but the trivial character have kernel $\mathfrak{c}$. Furthermore, for $x \in \mathbb{Z}_E$ we have
\begin{equation*}
\sum_{\xi \in X(\mathfrak{c})} \xi(x) =
\begin{cases}
\,c & \text{ if $x \in \mathfrak{c}$}, \\
\,0 & \text{ otherwise}.
\end{cases}
\end{equation*}
\end{lemma}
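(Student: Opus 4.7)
The plan is to identify $X(\mathfrak{c})$ with the Pontryagin dual of the quotient ring $\mathbb{Z}_E/\mathfrak{c}$ and then appeal to standard character theory of finite abelian groups. Since every $\xi \in X(\mathfrak{c})$ factors through $\mathbb{Z}_E/\mathfrak{c}$ by definition, there is a natural bijection between $X(\mathfrak{c})$ and the group $\widehat{\mathbb{Z}_E/\mathfrak{c}}$ of homomorphisms from the additive group $\mathbb{Z}_E/\mathfrak{c}$ to $\mathbb{C}^\times$. Hypothesis (H3) gives $\mathbb{Z}_E/\mathfrak{c} \cong \mathbb{F}_c$, so this dual group has order $c$, which establishes the cardinality claim.

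For the second statement, I would use the fact that the kernel of any $\xi \in X(\mathfrak{c})$ corresponds to a subgroup of $\mathbb{Z}_E/\mathfrak{c}$. But $\mathbb{Z}_E/\mathfrak{c}$ is a cyclic group of prime order $c$, so its only subgroups are $\{0\}$ and the whole group. If $\xi$ is non-trivial, the kernel of the induced character on $\mathbb{Z}_E/\mathfrak{c}$ must be $\{0\}$, i.e.\ the kernel of $\xi$ on $\mathbb{Z}_E$ is exactly $\mathfrak{c}$.

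For the orthogonality relation, I would treat the two cases separately. If $x \in \mathfrak{c}$, then $\xi(x) = 1$ for every $\xi \in X(\mathfrak{c})$, so the sum equals $|X(\mathfrak{c})| = c$. If $x \notin \mathfrak{c}$, then the reduction $\bar{x} \in \mathbb{Z}_E/\mathfrak{c}$ is non-zero, so by the preceding paragraph there exists some $\xi_0 \in X(\mathfrak{c})$ with $\xi_0(x) \neq 1$. Multiplication by $\xi_0$ permutes $X(\mathfrak{c})$, so
\begin{equation*}
\xi_0(x) \sum_{\xi \in X(\mathfrak{c})} \xi(x) = \sum_{\xi \in X(\mathfrak{c})} (\xi_0 \xi)(x) = \sum_{\xi \in X(\mathfrak{c})} \xi(x),
\end{equation*}
which forces the sum to vanish. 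The only mildly subtle point is the existence of enough characters on $\mathbb{Z}_E/\mathfrak{c}$ to separate points, but this follows either from the structure theorem (cyclic groups of order $n$ admit $n$ distinct characters via any embedding of $\mathbb{Z}/n\mathbb{Z}$ into the $n$-th roots of unity in $\mathbb{C}^\times$) or from the general Pontryagin duality for finite abelian groups; I expect this to be the only part that requires any argument beyond a one-line citation.
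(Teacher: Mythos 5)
Your proposal is correct and follows essentially the same route as the paper: both reduce to $\mathbb{Z}_E/\mathfrak{c} \cong \mathbb{Z}/c\mathbb{Z}$ via (H3), count the $c$ characters, deduce from the primality of $c$ that non-trivial characters have kernel exactly $\mathfrak{c}$, and invoke the classical orthogonality relation (which you spell out slightly more explicitly than the paper does). Nothing further is needed.
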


\begin{proof}
Let $\chi$ be a non-trivial character in $X(\mathfrak{c})$. Then $\mathbb{Z}_E/\Ker(\chi)$ is a quotient group of $\mathbb{Z}_E/\mathfrak{c} \cong \mathbb{Z}/c\mathbb{Z}$ and thus $\xi$ is completely determined by its value on $1$, which can be an arbitrary non-trivial $c$-th root of unity. Therefore there are $c-1$ non-trivial characters and each has kernel $\mathfrak{c}$ since the non-trivial $c$-th roots of unity all have order $c$. The last statement is the classical orthogonality relation for characters.
\end{proof}

\begin{proposition}\label{prop:1}
Let $\mathfrak{a}$ be an integral ideal coprime to $\mathfrak{fc}$. Let $A$ be a set of representatives of the elements of $\mathfrak{a} \cap E_\mathfrak{m}$ under the {\rm(}multiplicative{\rm)} action of $U_{\mathfrak{m}}(E)$. Then for $\Re(s) > 1$
\begin{equation}\label{eq:defzxi}
Z_{\mathfrak{m}}(\mathfrak{a}^{-1}, \mathfrak{c}; s)
= \mathcal{N}(\mathfrak{a})^s \sum_{\substack{\xi \in X(\mathfrak{c}) \\ \xi \ne 1}}\,\sum_{\alpha \in A} \xi(\alpha) \mathcal{N}(\alpha)^{-s}.  
\end{equation}
\end{proposition}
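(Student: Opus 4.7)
The strategy is to expand both summands on the right-hand side of the definition
\[
Z_{\mathfrak{m}}(\mathfrak{a}^{-1},\mathfrak{c};s) = \mathcal{N}(\mathfrak{c})^{1-s}Z_{\mathfrak{m}}((\mathfrak{ac})^{-1};s) - Z_{\mathfrak{m}}(\mathfrak{a}^{-1};s)
\]
as Dirichlet series over totally positive generators, and then recognize the combinatorial coefficient that appears as exactly the inner sum $\sum_{\xi\neq 1}\xi(\alpha)$ coming from Lemma~\ref{lem:charc}.

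First I would rewrite $Z_\mathfrak{m}(\mathfrak{a}^{-1};s)$. Any integral ideal $\mathfrak{b}$ in the class of $\mathfrak{a}^{-1}$ modulo $\mathfrak{m}$ can be written uniquely (up to multiplication by an element of $U_\mathfrak{m}(E)$) as $\mathfrak{b}=\alpha\mathfrak{a}^{-1}$ with $\alpha\in\mathfrak{a}\cap E_\mathfrak{m}$; here hypothesis (H2) guarantees that such $\alpha$ is totally positive, so $\mathcal{N}(\mathfrak{b})=\mathcal{N}(\alpha)/\mathcal{N}(\mathfrak{a})$. Letting $A$ be a set of representatives of $\mathfrak{a}\cap E_\mathfrak{m}$ under $U_\mathfrak{m}(E)$, this gives
\[
Z_{\mathfrak{m}}(\mathfrak{a}^{-1};s) = \mathcal{N}(\mathfrak{a})^{s}\sum_{\alpha\in A}\mathcal{N}(\alpha)^{-s},
\]
with absolute convergence for $\Re(s)>1$. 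Since $\mathfrak{a}$ is coprime to $\mathfrak{c}$ and $\mathfrak{a}\cap\mathfrak{c}=\mathfrak{ac}$, the subset $A':=A\cap\mathfrak{c}$ is a system of representatives of $\mathfrak{ac}\cap E_\mathfrak{m}$ under $U_\mathfrak{m}(E)$, and the same reasoning yields
\[
Z_{\mathfrak{m}}((\mathfrak{ac})^{-1};s) = \mathcal{N}(\mathfrak{ac})^{s}\sum_{\alpha\in A'}\mathcal{N}(\alpha)^{-s}.
\]
Substituting into the definition of the twisted partial zeta function and using $c=\mathcal{N}(\mathfrak{c})$ (hypothesis (H3)) collapses the prefactors and gives
\[
Z_{\mathfrak{m}}(\mathfrak{a}^{-1},\mathfrak{c};s) = \mathcal{N}(\mathfrak{a})^{s}\left[c\sum_{\alpha\in A'}\mathcal{N}(\alpha)^{-s} - \sum_{\alpha\in A}\mathcal{N}(\alpha)^{-s}\right].
\]

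Next I would use Lemma~\ref{lem:charc} to repackage the bracket. By orthogonality,
\[
\sum_{\xi\in X(\mathfrak{c})}\xi(\alpha) = c\cdot\mathbf{1}_{\alpha\in\mathfrak{c}},
\]
so isolating the trivial character gives $\sum_{\xi\neq 1}\xi(\alpha)=c\cdot\mathbf{1}_{\alpha\in\mathfrak{c}}-1$ for every $\alpha\in A$. Multiplying by $\mathcal{N}(\alpha)^{-s}$ and summing over $\alpha\in A$ (the sum being absolutely convergent, so one may freely interchange it with the finite sum over $\xi$) yields exactly
\[
\sum_{\substack{\xi\in X(\mathfrak{c})\\ \xi\neq 1}}\sum_{\alpha\in A}\xi(\alpha)\mathcal{N}(\alpha)^{-s} = c\sum_{\alpha\in A'}\mathcal{N}(\alpha)^{-s} - \sum_{\alpha\in A}\mathcal{N}(\alpha)^{-s}.
\]
Multiplying by $\mathcal{N}(\mathfrak{a})^{s}$ matches the expression obtained above for $Z_\mathfrak{m}(\mathfrak{a}^{-1},\mathfrak{c};s)$, proving \eqref{eq:defzxi}.

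The only real point requiring care is the identification $A\cap\mathfrak{c}$ as a set of representatives for $\mathfrak{ac}\cap E_\mathfrak{m}$ under $U_\mathfrak{m}(E)$; this relies on $\mathfrak{a}$ being coprime to $\mathfrak{c}$ so that $\mathfrak{a}\cap\mathfrak{c}=\mathfrak{ac}$, together with the observation that if $\alpha\in A$ lies in $\mathfrak{c}$ then so does its entire $U_\mathfrak{m}(E)$-orbit (and conversely each orbit meeting $\mathfrak{ac}$ meets it in exactly one element of $A$). Apart from this bookkeeping step, everything is a direct computation.
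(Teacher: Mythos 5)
Your proof is correct and follows essentially the same route as the paper's: expand both partial zeta functions as Dirichlet series over the representatives $A$ (and $A\cap\mathfrak{c}$ for the ideal $\mathfrak{ac}$, using coprimality of $\mathfrak{a}$ and $\mathfrak{c}$), then apply the orthogonality relation of Lemma~\ref{lem:charc} to identify the bracket $c\cdot\mathbf{1}_{\alpha\in\mathfrak{c}}-1$ with $\sum_{\xi\neq 1}\xi(\alpha)$. No gaps.
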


\begin{proof}
An ideal $\mathfrak{b}$ is equivalent to $\mathfrak{a}^{-1}$ modulo $\mathfrak{m}$ if and only if there exists $\alpha \in E_\mathfrak{m}$ such that $\mathfrak{b} = \alpha\mathfrak{a}^{-1}$. Furthermore, $\mathfrak{b}$ is integral if and only if $\alpha$ belongs to $\mathfrak{a} \cap E_\mathfrak{m}$, and two elements $\alpha$ and $\alpha'$ of $\mathfrak{a} \cap E_\mathfrak{m} $ yield the same ideal $\alpha\mathfrak{a}^{-1} = \alpha'\mathfrak{a}^{-1}$ if and only if there exists a unit $\epsilon \in U_\mathfrak{m}$ such that $\alpha = \epsilon\alpha'$. Therefore there is a one-to-one correspondence between the integral ideals equivalent to $\mathfrak{a}^{-1}$ modulo $\mathfrak{m}$ and the elements of $A$. Thus we have
\begin{equation}\label{eq:1}
Z_{\mathfrak{m}}(\mathfrak{a}^{-1}; s) 
    = \sum_{\alpha \in A}\mathcal{N}(\alpha\mathfrak{a}^{-1})^{-s} 
    = \mathcal{N}(\mathfrak{a})^s \sum_{\alpha \in A}\mathcal{N}(\alpha)^{-s}.
\end{equation}
We now compute
\begin{align*}
Z_{\mathfrak{m}}(\mathfrak{a}^{-1}, \mathfrak{c}; s) & = \mathcal{N}(\mathfrak{c})^{1-s} 
Z_{\mathfrak{m}}((\mathfrak{a}\mathfrak{c})^{-1}; s) - Z_{\mathfrak{m}}(\mathfrak{a}^{-1}; s) \\
& = \mathcal{N}(\mathfrak{a})^s
\biggl(
\mathcal{N}(\mathfrak{c}) \sum_{\alpha \in A \cap \mathfrak{c}} \mathcal{N}(\alpha)^{-s} - \sum_{\alpha \in A} \mathcal{N}(\alpha)^{-s}
\biggr)
\end{align*}
using \eqref{eq:1} and the fact that $A \cap \mathfrak{c}$ is a set of representatives of $\mathfrak{ac} \cap E_\mathfrak{m}$ modulo $U_\mathfrak{m}(E)$, since $\mathfrak{a}$ and $\mathfrak{c}$ are coprime. Finally, we obtain the conclusion using Lemma~\ref{lem:charc}.
\end{proof}

\subsection{Cone decomposition}\label{subsec:conedec} 

Let $\beta, \lambda_1, \dots, \lambda_g$ be elements in $\mathfrak{a} \cap \mathbb{Z}_E^+$, with $1 \leq g \leq d$, such that the $\lambda_i$'s are linearly independent. We define the \emph{discrete cone\footnote{We will say simply \emph{cone}.} with base point $\beta$ and generators $\lambda_1, \dots, \lambda_g$}
as the following subset of~$\mathfrak{a} \cap \mathbb{Z}_E^+\,$:
\begin{equation}\label{eq:defcone}
C(\beta; \lambda_1, \dots, \lambda_g) := \biggl\{\beta + \sum_{i=1}^g n_i \lambda_i \text{ with } n_i \in \mathbb{N} \text{ for } 1 \leq i \leq g \biggr\}.
\end{equation}
Following the work of Shintani \cite{shintani}, we have the following result of Pi.~Cassou-Noguès.

\begin{theorem}[Cassou-Noguès]\label{th:conedec}
There exists a finite family $\{C_1, \dots, C_m\}$ of disjoint discrete cones of $E$ with base points belonging to $\mathfrak{a} \cap E_\mathfrak{m}$ and generators belonging to $(\mathfrak{af} \cap \mathbb{Z}_E^+)$ such that a set of representatives of $\mathfrak{a} \cap E_\mathfrak{m}$ under the action of $U_\mathfrak{m}(E)$ is given by the union $C_1 \cup \cdots \cup C_m$.
\end{theorem}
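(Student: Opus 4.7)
My plan is to reduce the statement to Shintani's cone decomposition theorem for units acting on the totally positive orthant, followed by a discrete refinement inside each resulting cone. First, observe that by (H2) every element of $U_\mathfrak{m}(E)$ is totally positive and that $U_\mathfrak{m}(E)$ has finite index in the group $U^+(E)$ of totally positive units, because the quotient embeds into the finite group $(\mathbb{Z}_E/\mathfrak{f})^\times$. I therefore invoke Shintani's theorem \cite{shintani} (applicable to any finite-index subgroup of $U^+(E)$) to obtain a finite family of open simplicial cones $\Sigma_j = \{\sum_{i=1}^{g_j} t_i v_{j,i} : t_i > 0\}$, $1 \le j \le r$, with linearly independent generators $v_{j,i} \in \mathbb{Z}_E^+$, whose disjoint union is a fundamental domain in $(E\otimes \mathbb{R})_{>0}$ for the multiplicative action of $U_\mathfrak{m}(E)$.

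Next I promote the generators so that they land in $\mathfrak{af}$. Pick a positive rational integer $N$ belonging to $\mathfrak{af}\cap\mathbb{Z}$, and set $\lambda_{j,i} := N v_{j,i}$; these lie in $\mathfrak{af}\cap\mathbb{Z}_E^+$. Since rescaling leaves the open cone $\Sigma_j$ unchanged, every $\alpha \in \Sigma_j \cap \mathfrak{a}$ admits a unique expression $\alpha = \beta + \sum_{i=1}^{g_j} n_i \lambda_{j,i}$ with $n_i \in \mathbb{N}$ and $\beta$ lying in the necessarily finite set
\begin{equation*}
B_j := \Sigma_j \cap \mathfrak{a} \cap \Bigl\{\textstyle\sum_{i=1}^{g_j} s_i v_{j,i} : 0 < s_i \le N\Bigr\}.
\end{equation*}
Because $\lambda_{j,i} \in \mathfrak{f}$, we have $\alpha \equiv \beta \pmod{\mathfrak{f}}$, and $\alpha$ is totally positive iff $\beta$ is. Combined with (H2) this gives $\alpha \in E_\mathfrak{m} \iff \beta \in E_\mathfrak{m}$, so
\begin{equation*}
\Sigma_j \cap \mathfrak{a} \cap E_\mathfrak{m} \;=\; \bigsqcup_{\beta \in B_j \cap E_\mathfrak{m}} C(\beta;\, \lambda_{j,1}, \ldots, \lambda_{j,g_j})
\end{equation*}
is a disjoint decomposition into discrete cones of the required form.

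Assembling these pieces over all $j$ and relabelling produces a finite family $\{C_1,\ldots,C_m\}$ of pairwise disjoint discrete cones with base points in $\mathfrak{a} \cap E_\mathfrak{m}$ and generators in $\mathfrak{af}\cap\mathbb{Z}_E^+$. Any element of $\mathfrak{a} \cap E_\mathfrak{m}$ has a unique $U_\mathfrak{m}(E)$-translate in $\bigsqcup_j \Sigma_j$ by the Shintani step, and that translate appears exactly once in $C_1\sqcup\cdots\sqcup C_m$ by the discrete step; hence $C_1 \cup \cdots \cup C_m$ furnishes the requested set of orbit representatives. The principal obstacle is the Shintani decomposition itself in the strong disjoint half-open form needed here. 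Rather than reproving it, I would cite Shintani \cite{shintani} together with Cassou-Noguès' refinement \cite{cn:zeta}, which already allows one to prescribe that generators lie in a given ideal and that base points satisfy a congruence condition; once that is granted, the parallelepiped argument above is routine.
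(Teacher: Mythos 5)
Your argument is correct and follows essentially the same route as the paper, which simply cites Cassou-Noguès's Lemma~1 (itself a refinement of Shintani's decomposition); the only genuinely hard ingredient, the Shintani fundamental domain for a finite-index subgroup of the totally positive units, is deferred to the same references in both cases. The additional parallelepiped-and-congruence step you supply (rescaling generators into $\mathfrak{af}$, noting $\alpha\equiv\beta\pmod{\mathfrak{f}}$ so that membership in $E_\mathfrak{m}$ transfers to the base points) is sound and is exactly the discretization that the cited lemma carries out.
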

\begin{proof}
This is essentially \cite[Lemma~1]{cn:zeta}.
\end{proof}
 
A finite set $\{C_1, \dots, C_m\}$ of cones satisfying Theorem~\ref{th:conedec} is called a \emph{cone decomposition of $\mathfrak{a}$ modulo $\mathfrak{m}$}. A cone $C$ is \emph{$\mathfrak{c}$-admissible} if none of its generators belong to $\mathfrak{c}$. A cone decomposition $\{C_1, \dots, C_m\}$ is $\mathfrak{c}$-admissible if all the cones $C_i$ are $\mathfrak{c}$-admissible (see Remark~\ref{rk:choicec} on the existence of such a decomposition). From Proposition~\ref{prop:1}, we have, for $\Re(s) > 1$,
\begin{equation}\label{eq:zetacones}
Z_{\mathfrak{m}}(\mathfrak{a}^{-1}, \mathfrak{c}; s) = \mathcal{N}(\mathfrak{a})^s \sum_{\substack{\xi \in X(\mathfrak{c}) \\ \xi \ne 1}} \sum_{j = 1}^m \sum_{\alpha \in C_j} \xi(\alpha) \mathcal{N}(\alpha)^{-s}.  
\end{equation}

\subsection{Cone zeta functions}\label{subsec:zetcone}

Let $C := C(\beta; \lambda_1, \dots, \lambda_g)$ be a $\mathfrak{c}$-admissible cone and $\xi$ be a non-trivial element of $X(\mathfrak{c})$. We define the \emph{zeta function of the pair} $(C, \xi)$, for $\Re(s) > 1$, by
\begin{equation}\label{eq:defzcxi}
Z(C, \xi; s) := \sum_{\alpha \in C} \xi(\alpha) \mathcal{N}(\alpha)^{-s}.
\end{equation}
We associate with the same data a power series $F(C, \xi; T_1, \dots, T_d)$ in $\bar{\mathbb{Q}}[[\underline{T}]] := \bar{\mathbb{Q}}[[T_1, \dots, T_d]]$ in the following way. First, for $r \in \mathbb{C}$, we define a power series in $\mathbb{C}[[T]]$ by\footnote{This, of course, gives the usual definition when $r \in \mathbb{N}$.} 
\begin{equation*}
(1 + T)^r := \sum_{n \geq 0} \binom{r}{n} T^n.
\end{equation*}
Then, for $\alpha \in E$, we define the following power series in $\bar{\mathbb{Q}}[[\underline{T}]]$ 
\begin{equation}
(1 + \underline{T})^\alpha := \prod_{i=1}^d (1 + T_i)^{\alpha^{(i)}}. 
\end{equation}
And finally, we set
\begin{equation}\label{eq:defGcxi}
G(C, \xi; \underline{T}) := \frac{\xi(\beta) (1+\underline{T})^\beta}{\prod\limits_{i=1}^g \left(1 - \xi(\lambda_i)(1 + \underline{T})^{\lambda_i}\right)}.
\end{equation}

\begin{remark}\label{rem:cstterm}
From Lemma~\ref{lem:charc} and the fact that $C$ is $\mathfrak{c}$-admissible it follows that $\xi(\lambda_i)$ is a non-trivial $c$-th root of unity for all $i$'s and thus the constant term of the denominator is non-zero. Therefore $G(C, \xi; \underline{T})$ is indeed a power series. In fact, its constant term is an algebraic integer divisible only by primes above $c$.
\end{remark}

The \emph{cone zeta function} of $C$ (twisted by $\mathfrak{c}$) is defined by 
\begin{equation}\label{eq:defzetac}
Z(C, \mathfrak{c}; s) := \sum_{\substack{\xi \in X(\mathfrak{c}) \\ \xi \ne 1}} Z(C, \xi; s),
\end{equation}
and in a similar way
\begin{equation}\label{eq:defGc}
G(C, \mathfrak{c}; T) := \sum_{\substack{\xi \in X(\mathfrak{c}) \\ \xi \ne 1}} G(C, \xi; T).
\end{equation}
Finally, we define the \emph{$\underline{\Delta}$-operator} acting on $\bar{\mathbb{Q}}[[\underline{T}]]$ by 
\begin{equation}\label{eq:delta}
\underline{\Delta} := \prod_{i=1}^d (1 + T_i) \frac{\partial}{\partial T_i}.
\end{equation}

\begin{theorem}[Shintani]\label{th:interpol}
The function $Z(C, \mathfrak{c}; s)$ admits an analytic continuation to $\mathbb{C}$, and, for any integer $k \geq 0$, we have
\begin{equation}
Z(C, \mathfrak{c}; -k) = \underline{\Delta}^k G(C, \mathfrak{c}; \underline{T})_{|\underline{T}=\underline{0}}.
\end{equation}
\end{theorem}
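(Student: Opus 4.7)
The plan is to use Shintani's Mellin-transform approach, adapted to the twist by $\mathfrak{c}$. Starting from the identity $\mathcal{N}(\alpha)^{-s}\,\Gamma(s)^d = \int_{(0,\infty)^d} \prod_j t_j^{s-1}\, e^{-\alpha\cdot\underline{t}}\,d\underline{t}$ (valid for $\alpha \gg 0$ and $\Re(s) > 0$, writing $\alpha\cdot\underline{t} := \sum_j \alpha^{(j)} t_j$), I sum over $\alpha \in C$; absolute convergence for $\Re(s) > 1$ lets me exchange sum and integral, and a geometric series in each generator direction evaluates the inner sum. Summing also over non-trivial $\xi \in X(\mathfrak{c})$ and setting
\[
\Phi(\underline{t}) \ := \ \sum_{\substack{\xi\in X(\mathfrak{c})\\ \xi\ne 1}} \frac{\xi(\beta)\,e^{-\beta\cdot\underline{t}}}{\prod_{i=1}^g \bigl(1 - \xi(\lambda_i)\,e^{-\lambda_i\cdot\underline{t}}\bigr)},
\]
I obtain the Mellin representation $Z(C,\mathfrak{c};s)\,\Gamma(s)^d = \int_{(0,\infty)^d} \prod_j t_j^{s-1}\,\Phi(\underline{t})\,d\underline{t}$ for $\Re(s) > 1$.

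Next I relate $\Phi$ to $G(C,\mathfrak{c};\underline{T})$ via the substitution $1 + T_j = e^{-t_j}$: under this change of variables, $(1+\underline{T})^\alpha$ becomes $e^{-\alpha\cdot\underline{t}}$ for every $\alpha \in E$, so $G(C,\mathfrak{c};\underline{T})$ becomes precisely $\Phi(\underline{t})$, and by the chain rule $(1+T_j)\partial_{T_j}$ becomes $-\partial_{t_j}$, whence $\underline{\Delta}^k$ acts as $(-1)^{dk}\prod_j \partial_{t_j}^k$. By $\mathfrak{c}$-admissibility together with Remark~\ref{rem:cstterm}, each factor $1 - \xi(\lambda_i)\,e^{-\lambda_i\cdot\underline{t}}$ is nonzero at $\underline{t} = \underline{0}$, so $\Phi$ is real-analytic on a neighborhood of $\underline{0}$, with Taylor expansion $\Phi(\underline{t}) = \sum_{\underline{n}\in\mathbb{N}^d} c_{\underline{n}}\,\underline{t}^{\underline{n}}$. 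Consequently
\[
\underline{\Delta}^k G(C,\mathfrak{c};\underline{T})\big|_{\underline{T}=\underline{0}} \ = \ (-1)^{dk}\,(k!)^d\,c_{(k,\ldots,k)}.
\]

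For the analytic continuation of $Z(C,\mathfrak{c};s)$, I invoke the standard multidimensional Mellin-transform machinery: combining the exponential decay of $\Phi$ at infinity (driven by $e^{-\beta\cdot\underline{t}}$) with its analyticity near $\underline{0}$ and the one-variable Taylor-subtraction trick applied iteratively in each coordinate, the function $M\Phi(\underline{s}) := \int_{(0,\infty)^d} \prod_j t_j^{s_j-1}\,\Phi(\underline{t})\,d\underline{t}$ extends to a meromorphic function on $\mathbb{C}^d$ with simple polar hyperplanes only along $s_j = -n$, $n \in \mathbb{N}$. Restricted to the diagonal $s_1 = \dots = s_d = s$, the singularity at $s = -k$ has order at most $d$, and its leading Laurent coefficient $c_{(k,\ldots,k)}/(s+k)^d$ arises exclusively from the index $\underline{n} = (k,\ldots,k)$, since any $\underline{n}$ with some $n_j \neq k$ contributes a pole of order at most $d-1$ on the diagonal. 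Since $\Gamma(s)^d$ has a pole of order exactly $d$ at $s=-k$ with leading Laurent coefficient $\bigl((-1)^k/k!\bigr)^d/(s+k)^d$, the quotient $Z(C,\mathfrak{c};s) = M\Phi(s,\ldots,s)/\Gamma(s)^d$ is regular at $s=-k$ with value $(-1)^{dk}(k!)^d\,c_{(k,\ldots,k)}$, matching the $\underline{\Delta}^k G$ computation. The same pole-order comparison at every non-positive integer establishes that $Z(C,\mathfrak{c};s)$ is entire.

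The hard part is the multidimensional pole-order accounting on the diagonal: one must pin down that, despite the rich multi-hyperplane singular structure of $M\Phi$, its restriction to $s_1 = \cdots = s_d = s$ has leading singularity at $s = -k$ of order exactly $d$ with coefficient $c_{(k,\ldots,k)}$, and that the lower-order contributions from off-isotropic indices are absorbed by the $d$-fold zero of $1/\Gamma(s)^d$. This rigidity — that only the isotropic Taylor index $(k,\ldots,k)$ survives the division by $\Gamma(s)^d$ — is precisely what produces the closed-form identity with $\underline{\Delta}^k G$.
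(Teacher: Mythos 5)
Your argument is correct and follows essentially the same route as the paper: a Mellin-transform representation of $Z(C,\xi;s)$ obtained by summing the geometric series over the cone, followed by the substitution $1+T_i=e^{-t_i}$ identifying $G(C,\xi;\underline{T})$ with the integrand and $(1+T_i)\partial/\partial T_i$ with $-\partial/\partial t_i$. The only difference is that where you carry out the multidimensional continuation and the diagonal pole-order accounting by hand, the paper outsources exactly this step to a lemma of Colmez, which already packages the analytic continuation of $M(f;s_1,\dots,s_d)$ (with the $\Gamma$-factors divided out) together with the value formula $M(f;-k_1,\dots,-k_d)=\prod_i(-\partial/\partial z_i)^{k_i}f_{|\underline{z}=\underline{0}}$.
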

\begin{proof}
We use the following lemma from Colmez \cite[Lemma~3.2]{colmez:residue}.
\begin{lemma}
For $z_1, \dots, z_d \in \mathbb{R}^+$ let $f(z_1, \dots, z_d)$ be a $C^\infty$-function such that it and all its derivatives tend to $0$ rapidly at infinity. For all $(s_1, \dots, s_d) \in \mathbb{C}^d$ such that $\Re(s_i) > 0$ for $i = 1, \dots, d$ define the function 
\begin{equation}
M(f; s_1, \dots, s_d) := \int_{(\mathbb{R}^{+*})^d} f(z_1, \dots, z_d)\,\frac{z_1^{s_1} \cdots z_d^{s_d}}{\Gamma(s_1) \cdots \Gamma(s_d)}\,\frac{dz_1}{z_1} \cdots \frac{dz_d}{z_d}.
\end{equation}
Then $M(f; \cdot)$ admits an analytic continuation to $\mathbb{C}^d$ and satisfies
\begin{equation*}
M(f; -k_1, \dots, -k_d) = \prod_{i=1}^d \left(-\frac{\partial}{\partial z_i}\right)^{k_i} f(z_1, \dots, z_d)_{|z_1 = \dots = z_d = 0}
\end{equation*}
for all $(k_1, \dots, k_d) \in \mathbb{N}^d$. \qed
\end{lemma}
Let $\xi \in X(\mathfrak{c})$ with $\xi \ne 1$. We define a function $f_\xi$ by
\begin{equation}
f_\xi(z_1, \dots, z_d) := G(C, \xi; e^{-z_1} - 1, \dots, e^{-z_d} - 1) = \frac{\xi(\beta) e^{- T_{\underline{z}}(\beta)}}{\prod\limits_{i=1}^g \big(1 - \xi(\lambda_i) e^{-T_{\underline{z}}(\lambda_i)}\big)}
\end{equation}
where, for $\alpha \in \mathbb{Z}_E$ and $\underline{z} := (z_1, \dots, z_d) \in \mathbb{C}^d$, we set $T_{\underline{z}}(\alpha) := z_1\alpha^{(1)} + \dots + z_d\alpha^{(d)}$. It is clear that this function satisfies the hypothesis of the lemma. Furthermore, for $\underline{z} \in (\mathbb{R}^{+*})^n$ and $\alpha \gg 0$ we have $0 < e^{-T_{\underline{z}}(\alpha)} < 1$, and therefore, by expanding the numerator and by \eqref{eq:defcone},
\begin{equation*}
f_\xi(z_1, \dots, z_d) = \sum_{\underline{n} \in \mathbb{N}^g} \xi(\beta + n_1\lambda_1 + \cdots + n_g\lambda_g) e^{-T_{\underline{z}}(\beta + n_1\lambda_1 + \cdots + n_g\lambda_g)} = \sum_{\alpha \in C} \xi(\alpha) e^{-T_{\underline{z}}(\alpha)}.
\end{equation*}
Thus we find that
\begin{equation*}
M(f_\xi; s_1, \dots, s_d) = \sum_{\alpha \in C} \xi(\alpha) \int_{\mathbb{R}^{+*}} e^{-z_1\alpha^{(1)}} \frac{z_1^{s_1}}{\Gamma(s_1)} \frac{dz_1}{z_1}\;\cdots \int_{\mathbb{R}^{+*}} e^{-z_d\alpha^{(d)}} \frac{z_d^{s_d}}{\Gamma(s_d)} \frac{dz_d}{z_d}.
\end{equation*}
Since 
\begin{equation*}
\int_{\mathbb{R}^{+*}} e^{-z a} z^{s} \frac{dz}{z} = \Gamma(s) \, a^{-s}
\end{equation*}
for $a \in \mathbb{R}^{+*}$, it follows that
\begin{equation*}
M(f_\xi; s, \dots, s) = \sum_{\alpha \in C} \xi(\alpha) (\alpha^{(1)})^s \cdots (\alpha^{(d)})^s = Z(C, \xi; s)
\end{equation*}
for $\Re(s) > 1$.
Lastly, we find that
\begin{equation*}
-\frac{\partial}{\partial z_i} f_\xi(z_1, \dots, z_d) = \Bigl((1+T_i) \frac{\partial}{\partial T_i} G\Bigr) (C, \xi; e^{-z_1} - 1, \dots, e^{-z_d} - 1)
\end{equation*}
for all $i$'s. The conclusion now follows from the lemma, \eqref{eq:defzetac}, and \eqref{eq:defGc}.
\end{proof}

Recall that we have embedded $\bar{\mathbb{Q}}$ into $\mathbb{C}_p$. Thus we can see $G(C, \mathfrak{c}; \underline{T})$ as having coefficients in $\mathbb{C}_p$ and, generalizing what we did in the first part to higher dimensions, we can try to interpret $G(C, \mathfrak{c}; \underline{T})$ as a measure over $\mathbb{Z}_p^d$. There are two problems. First, the power series $G(C, \mathfrak{c}; \underline{T})$ having several variables complicates things, at least from a computational point of view; it would be much easier to deal with a \emph{one-variable} power series.\footnote{This is not a big problem though; in fact the computations done in \cite{RS1} use two-variable power series.} Second, this power series might not have bounded coefficients, since $\binom{\beta}{n}$ has arbitrarily large $\p$-adic absolute value when $\beta$ is not a rational $\p$-adic integer. This problem
can be solved by making a change of variable in $G(C, \mathfrak{c}; \underline{T})$, as in \cite{solomon:cocycle2}, to transform it into a power series with bounded coefficients. In the next subsection we will see how to transform the power series $G(C, \mathfrak{c}; \underline{T})$ into a one-variable power series satisfying the direct analog of Theorem~\ref{th:interpol}. It will turn out that this power series has $\p$-adic integral coefficients, thus solving both problems at the same time. 

\subsectb{The $\Omega$ operator}{The Omega operator}\label{subsec:omega}

We now explain how to construct an operator that sends $G(C, \mathfrak{c}; \underline{T})$ to an one-variable power series satisfying properties analogous to that of Theorem~\ref{th:interpol}. From the definition of $\underline{\Delta}$, we see that $\Omega$ should satisfy 
\begin{equation*}
\Omega((1+T_1)^{a_1} \cdots (1+T_d)^{a_d}) = (1+T)^{a_1 \cdots a_d}.
\end{equation*}
Writing $T_i^{a_i} = ((1+T_i)-1)^{a_i}$ and developing, we get the following formal definition. Let $\Omega$ be the linear function from $\mathbb{C}[\underline{T}]$ to $\mathbb{C}[T]$ defined for $(a_1, \dots, a_d) \in \mathbb{N}^d$ by
\begin{equation*}
\Omega(T_1^{a_1} \cdots T_d^{a_d}) := (-1)^{a_1 + \cdots + a_d} \sum_{n_1=0}^{a_1} \cdots \sum_{n_d=0}^{a_d} \biggl(\,\prod_{i=1}^d (-1)^{n_i} \binom{a_i}{n_i}\!\biggr) (1 + T)^{n_1 \cdots n_d}.
\end{equation*}
The lemma below establishes that this application can be continuously extended to an application from $\mathbb{C}[[\underline{T}]]$ to $\mathbb{C}[[T]]$.
\begin{lemma}\label{lem:omval}
Let $(a_1, \dots, a_d) \in \mathbb{N}^d$. Then, $\Omega(T_1^{a_1} \cdots T_d^{a_d})$ is divisible by $T^{\max(a_1, \dots, a_d)}$.
\end{lemma}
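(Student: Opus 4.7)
The plan is to expand each term $(1+T)^{n_1\cdots n_d}$ appearing in the definition of $\Omega(T_1^{a_1}\cdots T_d^{a_d})$ as a power series in $T$, interchange the finite outer sums with the inner sum over the degree, and then show that for every $k<M:=\max(a_1,\dots,a_d)$ the coefficient of $T^k$ vanishes.

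Concretely, I would use $(1+T)^{N}=\sum_{k\geq 0}\binom{N}{k}T^k$ with $N=n_1\cdots n_d$ to rewrite
$$\Omega(T_1^{a_1}\cdots T_d^{a_d})=(-1)^{a_1+\cdots+a_d}\sum_{k\geq 0}c_k\,T^k,$$
where
$$c_k:=\sum_{n_1=0}^{a_1}\cdots\sum_{n_d=0}^{a_d}\Bigl(\,\prod_{i=1}^d(-1)^{n_i}\binom{a_i}{n_i}\Bigr)\binom{n_1\cdots n_d}{k}.$$
The goal then reduces to proving that $c_k=0$ for every $k<M$.

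Without loss of generality I would assume $M=a_1$ and fix $n_2,\dots,n_d$ in the ranges $0\leq n_i\leq a_i$. Since $\binom{x}{k}$ is a polynomial of degree $k$ in the single variable $x$, and $n_2\cdots n_d$ is constant with respect to $n_1$, the expression
$$P(n_1):=\binom{n_1\cdot n_2\cdots n_d}{k}$$
is a polynomial in $n_1$ of degree at most $k$. When $k<M=a_1$ this degree is strictly less than $a_1$, so the classical finite-difference identity
$$\sum_{n=0}^{a}(-1)^n\binom{a}{n}Q(n)=0\quad\text{whenever }\deg Q<a$$
yields $\sum_{n_1=0}^{a_1}(-1)^{n_1}\binom{a_1}{n_1}P(n_1)=0$. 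Summing this vanishing inner sum against the remaining weights $\prod_{i\geq 2}(-1)^{n_i}\binom{a_i}{n_i}$ will give $c_k=0$ for every $k<M$, which is exactly the asserted divisibility by $T^M$.

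The whole argument is essentially bookkeeping; the only external input is the finite-difference identity, and that follows in a couple of lines by expanding $Q$ in the basis of binomial polynomials $\binom{n}{j}$ with $j\leq\deg Q<a$ and using the standard fact $\sum_{n=0}^{a}(-1)^n\binom{a}{n}\binom{n}{j}=0$ for $j<a$. The degenerate case $M=0$ is trivial since divisibility by $T^0$ is automatic, and if some $n_j$ with $j\geq 2$ is zero then $P$ is constant (hence of degree $0<a_1$), so the argument still applies. I do not foresee a genuine obstacle.
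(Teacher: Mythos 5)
Your argument is correct. It differs from the paper's in execution, though the two are close cousins. The paper also singles out the index $i$ with $a_i$ maximal (say $a_d$), but instead of expanding in powers of $T$ it performs the inner sum over $n_d$ directly via the binomial theorem, collapsing it to $\bigl((1+T)^{n_1\cdots n_{d-1}}-1\bigr)^{a_d}$, which is visibly divisible by $T^{a_d}$ since $(1+T)^{c}-1 \in T\,\mathbb{C}[[T]]$; no coefficient-by-coefficient analysis is needed. You instead extract the coefficient $c_k$ of $T^k$ and kill it for $k<\max(a_i)$ using the fact that the $a_1$-th alternating binomial sum annihilates polynomials of degree less than $a_1$, applied to $n_1\mapsto\binom{n_1 n_2\cdots n_d}{k}$. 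The two key identities are essentially dual --- your finite-difference lemma is what the binomial-theorem collapse becomes after expanding $(1+T)^{n_1\cdots n_d}$ in the basis $\binom{\cdot}{k}T^k$ --- but the paper's version buys a shorter proof (one line, no interchange of sums, no degree count), while yours is marginally more self-contained in that it reduces everything to the single elementary vanishing statement $\sum_{n=0}^{a}(-1)^n\binom{a}{n}\binom{n}{j}=0$ for $j<a$. Your handling of the degenerate cases ($M=0$, some $n_j=0$) is also fine.
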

\begin{proof}
Assume without loss of generality that $a_d$ is the largest of the $a_i$'s. We have
\begin{align*}
\Omega(T_1^{a_1} \cdots T_d^{a_d}) & =
(-1)^{a_1 + \cdots + a_{d-1}} \sum_{n_1=0}^{a_1}\,\cdots\!\sum_{n_{d-1}=0}^{a_{d-1}} (-1)^{n_1 + \cdots + n_{d-1}} \binom{a_1}{n_1} \cdots\binom{a_{d-1}}{n_{d-1}} \\[-1\jot]
& \omit \hfill $\displaystyle {}\times \sum_{n_d=0}^{a_d} (-1)^{a_d - n_d} \binom{a_d}{n_d} \big((1 + T)^{n_1 \cdots n_{d-1}}\big)^{n_d}$\ignorespaces \\[+1\jot]
& = (-1)^{a_1 + \cdots + a_{d-1}} \sum_{n_1=0}^{a_1}\,\cdots\!\sum_{n_{d-1}=0}^{a_{d-1}} \biggl(\,\prod_{i=1}^{d-1} (-1)^{n_i} \binom{a_i}{n_i}\!\biggr) \big((1 + T)^{n_1 \cdots n_{d-1}} - 1\big)^{a_d}
\end{align*}
and every term in this sum is divisible by $T^{a_d}$.
\end{proof}
We now prove the main property of the $\Omega$ operator, that is, that it ``commutes'' with the operator $\underline{\Delta}$.

\begin{proposition}\label{prop:odcommute}
For $A \in \mathbb{C}[[\underline{T}]]$ we have
\begin{equation*}
\Omega(\underline{\Delta} (A)) = \Delta(\Omega(A)).
\end{equation*}
\end{proposition}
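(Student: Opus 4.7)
The plan is to reduce, by linearity and continuity, to checking the identity on the family $\{B_{\underline{a}}\}_{\underline{a}\in\mathbb{N}^d}$, where $B_{\underline{a}} := (1+T_1)^{a_1}\cdots(1+T_d)^{a_d}$. These elements are designed precisely to be simultaneous eigenvectors of the commuting operators $\{(1+T_i)\partial/\partial T_i\}_{i=1}^d$, and they $\mathbb{C}$-span the polynomial ring $\mathbb{C}[\underline{T}]$ via the invertible change of basis $T_i = (1+T_i)-1$. Since $\mathbb{C}[\underline{T}]$ is dense in $\mathbb{C}[[\underline{T}]]$, verifying the equality on this spanning set suffices, provided both composites are continuous.

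Step one is continuity. The operators $\underline{\Delta}$ and $\Delta$ clearly act continuously with respect to the adic topologies on $\mathbb{C}[[\underline{T}]]$ and $\mathbb{C}[[T]]$, since each factor $(1+T)\,\partial/\partial T$ sends a monomial of degree $n$ to a linear combination of monomials of degrees $n-1$ and $n$, so these operators shift the filtration by a bounded amount. For $\Omega$, Lemma~\ref{lem:omval} implies that any power series whose monomials all have total degree $\geq N$ is sent into $T^{\lceil N/d\rceil}\mathbb{C}[[T]]$, because for each such monomial $T_1^{a_1}\cdots T_d^{a_d}$ one has $\max(a_i) \geq N/d$. Hence $\Omega \circ \underline{\Delta}$ and $\Delta \circ \Omega$ are both continuous as linear operators $\mathbb{C}[[\underline{T}]] \to \mathbb{C}[[T]]$, and the reduction to $\mathbb{C}[\underline{T}]$ is legitimate.

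Step two is the computation on $B_{\underline{a}}$, which is immediate and amounts to matching eigenvalues. Each factor $(1+T_i)\,\partial/\partial T_i$ acts on $B_{\underline{a}}$ as multiplication by $a_i$, so $\underline{\Delta}(B_{\underline{a}}) = a_1\cdots a_d \cdot B_{\underline{a}}$; applying $\Omega$ and using the defining property $\Omega(B_{\underline{a}}) = (1+T)^{a_1\cdots a_d}$ (which follows from the explicit definition of $\Omega$ by expanding each $(1+T_i)^{a_i} = \sum_{n_i}\binom{a_i}{n_i}T_i^{n_i}$, applying $\Omega$ term by term, and regrouping) gives $a_1\cdots a_d \cdot (1+T)^{a_1\cdots a_d}$. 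On the other side, $\Omega(B_{\underline{a}}) = (1+T)^{a_1\cdots a_d}$ and $\Delta\bigl((1+T)^n\bigr) = n(1+T)^n$, yielding $\Delta(\Omega(B_{\underline{a}})) = a_1\cdots a_d \cdot (1+T)^{a_1\cdots a_d}$ as well.

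The main obstacle lies entirely in the continuity bookkeeping of step one: one needs to read Lemma~\ref{lem:omval} as a continuity statement for $\Omega$ between the appropriate adic topologies, and to confirm that $\underline{\Delta}$ and $\Delta$ preserve the respective filtrations up to a bounded shift. Once this is set up, the verification becomes the trivial observation that $\Omega$ intertwines two diagonalizable operators whose eigenvalues on the corresponding eigenvectors agree.
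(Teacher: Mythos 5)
Your proof is correct and follows essentially the same route as the paper: reduce by linearity and continuity (of $\Delta$, $\underline{\Delta}$, and $\Omega$, the latter via Lemma~\ref{lem:omval}) to the elements $(1+T_1)^{b_1}\cdots(1+T_d)^{b_d}$, on which both composites act as multiplication of $(1+T)^{b_1\cdots b_d}$ by $b_1\cdots b_d$. You merely spell out the continuity bookkeeping and the eigenvalue check that the paper leaves as ``direct by construction.''
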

\begin{proof}
By linearity and the fact that the operators $\Delta$, $\underline{\Delta}$, and $\Omega$ are linear and continuous,\footnote{We leave to the reader the verification that $\Delta$ and $\underline{\Delta}$ are continuous.} it is enough to prove the result for monomials $T_1^{a_1} \cdots T_d^{a_d}$ with $(a_1, \dots, a_d) \in \mathbb{N}^d$. But any such monomial can be written as a finite linear combination of $(1+T_1)^{b_1} \cdots (1+T_d)^{b_d}$ with $(b_1, \dots, b_d)^d \in \mathbb{N}$, for which the result is direct by construction.
\end{proof}

Let $C$ be a $\mathfrak{c}$-admissible cone. We set
\begin{equation*}
F(C, \mathfrak{c}; T) := \Omega(G(C, \mathfrak{c}; \underline{T})).
\end{equation*} 
Using Proposition~\ref{prop:odcommute}, the next result is a direct consequence of Theorem~\ref{th:interpol}.
\begin{theorem}
For any integer $k \geq 0$ we have
\begin{equation*}
Z(C, \mathfrak{c}; -k) = \Delta^k F(C, \mathfrak{c}; T)_{|T=0}. \tag*{\hspace{-1em}\qed}
\end{equation*}
\end{theorem}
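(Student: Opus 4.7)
The plan is to reduce the theorem to Shintani's Theorem~\ref{th:interpol} via the commutation relation of Proposition~\ref{prop:odcommute}. The two main ingredients I will need are: (i) iterating the commutation $\Omega \circ \underline{\Delta} = \Delta \circ \Omega$ to obtain $\Delta^k \circ \Omega = \Omega \circ \underline{\Delta}^k$, and (ii) the observation that $\Omega$ preserves constant terms, i.e.\ $\Omega(A)_{|T=0}$ equals $A_{|\underline{T}=\underline{0}}$ for any $A \in \mathbb{C}[[\underline{T}]]$.

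First I would prove (i) by induction on $k$, which is immediate: assuming $\Delta^{k-1} \Omega = \Omega \underline{\Delta}^{k-1}$, we apply Proposition~\ref{prop:odcommute} once more to get $\Delta^k \Omega = \Delta (\Omega \underline{\Delta}^{k-1}) = \Omega \underline{\Delta}^k$. Applied to $G(C, \mathfrak{c}; \underline{T})$ this gives
\begin{equation*}
\Delta^k F(C, \mathfrak{c}; T) = \Delta^k \Omega(G(C, \mathfrak{c}; \underline{T})) = \Omega\bigl(\underline{\Delta}^k G(C, \mathfrak{c}; \underline{T})\bigr).
\end{equation*}

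Next I would establish (ii) by inspecting the definition of $\Omega$ on monomials: $\Omega(1) = (1+T)^0 = 1$, so the constant monomial is sent to a power series with constant term $1$, while Lemma~\ref{lem:omval} shows that for any non-constant monomial $T_1^{a_1}\cdots T_d^{a_d}$, the image $\Omega(T_1^{a_1}\cdots T_d^{a_d})$ is divisible by $T^{\max(a_1,\dots,a_d)} \geq T$, hence vanishes at $T = 0$. By the linearity and continuity of $\Omega$, evaluating $\Omega(A)$ at $T=0$ therefore picks out exactly the constant term of $A$, which equals $A_{|\underline{T}=\underline{0}}$.

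Combining these observations, I would conclude
\begin{equation*}
\Delta^k F(C, \mathfrak{c}; T)_{|T=0} = \Omega\bigl(\underline{\Delta}^k G(C, \mathfrak{c}; \underline{T})\bigr)_{|T=0} = \underline{\Delta}^k G(C, \mathfrak{c}; \underline{T})_{|\underline{T}=\underline{0}} = Z(C, \mathfrak{c}; -k),
\end{equation*}
where the last equality is Theorem~\ref{th:interpol}. There is no real obstacle here since all the hard analytic work has already been carried out in Shintani's theorem; the only delicate point is the elementary but crucial remark (ii) that $\Omega$ preserves the constant term, which is where Lemma~\ref{lem:omval} is used.
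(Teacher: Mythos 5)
Your proposal is correct and follows exactly the route the paper intends: the paper simply declares the result a direct consequence of Proposition~\ref{prop:odcommute} and Theorem~\ref{th:interpol}, and you have filled in the two implicit details (iterating the commutation and checking via Lemma~\ref{lem:omval} that $\Omega$ preserves constant terms). Nothing to change.
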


Let $\mathcal{D} := \{C_1, \dots, C_m\}$ be a $\mathfrak{c}$-admissible cone decomposition of $\mathfrak{a}$ modulo $\mathfrak{m}$. We define
\begin{equation}\label{eq:defF}
F_\mathfrak{m}(\mathfrak{a}, \mathfrak{c}; T) := \sum_{j=1}^m F(C_j, \mathfrak{c}; T).
\end{equation}

\begin{corollary}\label{cor:interpolF}
For any integer $k \geq 0$ we have
\begin{equation}\label{eq:interpolF}
Z_{\mathfrak{m}}(\mathfrak{a}^{-1}, \mathfrak{c}; -k) = \mathcal{N}(\mathfrak{a})^{-k} \Delta^k F_{\mathfrak{m}}(\mathfrak{a}, \mathfrak{c}; T)_{|T=0}.
\end{equation}
\end{corollary}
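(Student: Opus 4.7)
The plan is to assemble the corollary by combining the cone decomposition formula \eqref{eq:zetacones} with the preceding (unnumbered) theorem giving $Z(C, \mathfrak{c}; -k) = \Delta^k F(C, \mathfrak{c}; T)_{|T=0}$ for a single $\mathfrak{c}$-admissible cone, together with the linearity of $\Delta^k$ and of evaluation at $T=0$. Since $F_\mathfrak{m}(\mathfrak{a}, \mathfrak{c}; T)$ is by definition \eqref{eq:defF} the sum over the cones in the chosen $\mathfrak{c}$-admissible cone decomposition $\mathcal{D} = \{C_1,\dots,C_m\}$ of $\mathfrak{a}$ modulo $\mathfrak{m}$, the right-hand side of \eqref{eq:interpolF} should split as a sum of the cone contributions.

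First, I would rewrite \eqref{eq:zetacones} using the definitions \eqref{eq:defzcxi} and \eqref{eq:defzetac}: for $\Re(s) > 1$,
\begin{equation*}
Z_{\mathfrak{m}}(\mathfrak{a}^{-1}, \mathfrak{c}; s)
= \mathcal{N}(\mathfrak{a})^s \sum_{j=1}^m \sum_{\substack{\xi \in X(\mathfrak{c}) \\ \xi \ne 1}} Z(C_j, \xi; s)
= \mathcal{N}(\mathfrak{a})^s \sum_{j=1}^m Z(C_j, \mathfrak{c}; s).
\end{equation*}
By the discussion following \eqref{eq:deftz}, $Z_\mathfrak{m}(\mathfrak{a}^{-1}, \mathfrak{c}; s)$ is entire, and by the theorem on cone zeta values each $Z(C_j, \mathfrak{c}; s)$ continues analytically to $\mathbb{C}$; so this identity extends to all of $\mathbb{C}$ by uniqueness of analytic continuation, and in particular we may specialize it at $s = -k$ for $k \geq 0$.

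Next I would apply the theorem preceding the corollary to each cone individually to obtain
\begin{equation*}
Z(C_j, \mathfrak{c}; -k) = \Delta^k F(C_j, \mathfrak{c}; T)_{|T=0}, \qquad j = 1, \dots, m.
\end{equation*}
Summing these identities over $j$ and using linearity of $\Delta^k$ (and of evaluation at $T = 0$) together with \eqref{eq:defF} gives
\begin{equation*}
\sum_{j=1}^m Z(C_j, \mathfrak{c}; -k)
 = \sum_{j=1}^m \Delta^k F(C_j, \mathfrak{c}; T)_{|T=0}
 = \Delta^k \biggl(\sum_{j=1}^m F(C_j, \mathfrak{c}; T)\biggr)_{\!|T=0}
 = \Delta^k F_{\mathfrak{m}}(\mathfrak{a}, \mathfrak{c}; T)_{|T=0}.
\end{equation*}
Multiplying by $\mathcal{N}(\mathfrak{a})^{-k}$ and inserting the previous identity yields exactly \eqref{eq:interpolF}.

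There is no real obstacle in this argument; it is a bookkeeping corollary. The only point one should verify in passing is that the identity of \eqref{eq:zetacones}, originally valid only for $\Re(s) > 1$, does propagate to $s = -k$, which is automatic from analytic continuation on both sides. Everything else reduces to linearity and the already-established interpolation formula for a single cone.
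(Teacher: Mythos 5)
Your argument is correct and is exactly the one the paper intends: its proof of Corollary~\ref{cor:interpolF} is the single line ``Clear from \eqref{eq:zetacones},'' and your write-up simply makes explicit the combination of \eqref{eq:zetacones}, the preceding theorem for a single cone, linearity of $\Delta^k$, the definition \eqref{eq:defF}, and the analytic continuation needed to specialize at $s=-k$. No discrepancy with the paper's approach.
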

\begin{proof}
Clear from \eqref{eq:zetacones}.
\end{proof}

To conclude this subsection, we prove that the power series $F_{\mathfrak{m}}(\mathfrak{a}, \mathfrak{c}; T)$ does not depend on the choice of the cone decomposition $\mathcal{D}$. Indeed, the previous corollary prescribes the values of $\Delta^k F_{\mathfrak{m}}(\mathfrak{a}, \mathfrak{c}; T)_{|T=0}$ for all $k \geq 0$ which, using the following result, ensures the unicity of $F_{\mathfrak{m}}(\mathfrak{a}, \mathfrak{c}; T)$.

\begin{lemma}
Let $F(T) \in \mathbb{C}[[T]]$ and let $k \geq 0$ be an integer such that $T^k \div \Delta F(T)$. Then $T^{k+1} \div F(T) - F(0)$. Furthermore, if $\Delta^k F(T)_{|T=0} = 0$ for all $k \geq 0$ then $F(T) = 0$.
\end{lemma}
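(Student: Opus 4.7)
\smallskip

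\noindent\textbf{Proof proposal.} Write $F(T) = \sum_{n \geq 0} a_n T^n$. The plan for the first statement is just to read off coefficients: since
\begin{equation*}
\Delta F(T) = (1+T) F'(T) = \sum_{n \geq 0} (n+1) a_{n+1} T^n + \sum_{n \geq 1} n a_n T^n,
\end{equation*}
the coefficient of $T^n$ in $\Delta F$ equals $a_1$ for $n = 0$ and $(n+1) a_{n+1} + n a_n$ for $n \geq 1$. The hypothesis $T^k \mid \Delta F(T)$ forces these coefficients to vanish for $n = 0, 1, \ldots, k-1$. A direct induction starting from $a_1 = 0$ and using $a_{n+1} = -\frac{n}{n+1} a_n$ yields $a_1 = a_2 = \cdots = a_k = 0$. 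Hence $F(T) - F(0) = \sum_{n \geq k+1} a_n T^n$ is divisible by $T^{k+1}$, as required.

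For the second statement, I would iterate the first one. Specifically, I would prove by induction on $N \geq 0$ the auxiliary claim: if $\Delta^j F(T)_{|T=0} = 0$ for $j = 0, 1, \ldots, N$, then $T^{N+1} \mid F(T)$. The base case $N = 0$ is immediate since $F(0) = \Delta^0 F(T)_{|T=0} = 0$. For the inductive step, I would apply the induction hypothesis to $\Delta F$ in place of $F$: the assumption gives $\Delta^j(\Delta F)(T)_{|T=0} = \Delta^{j+1} F(T)_{|T=0} = 0$ for $j = 0, \ldots, N-1$, so by induction $T^N \mid \Delta F(T)$. Applying the first part of the lemma to $F$ then yields $T^{N+1} \mid F(T) - F(0) = F(T)$, since $F(0) = 0$.

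Under the global hypothesis $\Delta^k F(T)_{|T=0} = 0$ for all $k \geq 0$, the auxiliary claim holds for every $N$, so $T^{N+1}$ divides $F(T)$ for all $N$, and therefore $F(T) = 0$. The whole argument is essentially bookkeeping; there is no serious obstacle, and the main thing to keep straight is the reduction from the iterated-$\Delta$ hypothesis to a single application of the first part through the induction on $N$.
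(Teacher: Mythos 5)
Your proof is correct and follows essentially the same route as the paper: read off the coefficients of $\Delta F$ to get the first assertion, then iterate the first part (your induction on $N$ is just a cleaner formalization of the paper's "repeating this process") for the second. As a minor aside, your coefficient count is the accurate one — $T^k \div \Delta F$ yields $a_1 = \cdots = a_k = 0$, which is exactly what $T^{k+1} \div F(T) - F(0)$ requires.
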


\begin{proof}
Write $F(T) := \sum\limits_{n \geq 0} f_n T^n$. We compute
\begin{equation*}
\Delta F(T) = f_1 + \sum_{n \geq 1} \big((n+1)f_{n+1} + nf_n\big) T^n.
\end{equation*}
From this it is easy to see that $T^k \div \Delta F(T)$ implies $f_1 = \cdots = f_{k+1} = 0$, which proves the first assertion. For the second, let $k \geq 1$. Since $\Delta^k F(T)_{|T=0} = 0$, that is, $T \div \Delta (\Delta^{k-1} F(T))$, we have $T^2 \div \Delta^{k-1} F(T)$ by the first part, as $\Delta^{k-1} F(T)_{|T=0}$ by hypothesis. Repeating this process (and using the fact that $F(0) = \Delta^0 F(T)_{|T=0} = 0$), we eventually get $T^{k+1} \div F(T)$. Since $k$ is arbitrary, it follows that $F(T) = 0$.
\end{proof}

\sectm{$\p$-adic $L$-functions}{p-adic L-functions}\label{sec:interpolation} 

We put together the results of the last two sections to construct the $\p$-adic $L$-functions. 

\subsectb{Some properties of $F_{\mathfrak{m}}(\mathfrak{a}, \mathfrak{c}; T)$}{Some properties of Fm[a,c;T]}\label{subsec:propF}

Let $\mathfrak{a}$ be an integral ideal coprime to $\mathfrak{c}$ and $\mathfrak{m}$. We prove that the power series $F_{\mathfrak{m}}(\mathfrak{a}, \mathfrak{c}; T)$ possesses the properties required to apply Theorem~\ref{th:procinterpol}. We start by proving a useful expression for $F(C, \xi; T)$ modulo powers of $T$.

\begin{theorem}\label{th:main}
For integers $k$ and $K$ with $0 \leq k \leq K$ define the rational function
\begin{equation}\label{eq:defB}
B_{k,K}(x) := (-1)^{k} \sum_{n=k}^K \binom{n}{k} \biggl(\frac{x}{x-1}\biggr)^{\!n} \in \mathbb{Q}(x).
\end{equation}
Let $C := C(\beta; \lambda_1, \dots, \lambda_g)$ be a $\mathfrak{c}$-admissible cone and let $\xi$ be a non-trivial element of $X(\mathfrak{c})$. For $N \geq 0$ define the polynomial $F_N(C, \xi; T) \in \mathbb{Q}(\xi)[T]$ by 
\begin{equation*}
F_N(C, \xi; T) := 
A(C, \xi)\!\sum_{k_1, \dots, k_g = 0}^{(N-1)d} (1 + T)^{\mathcal{N}(\beta + \underline{k} \cdot \underline{\lambda})}\,\prod_{i=1}^g B_{k_i, (N-1)d}(\xi(\lambda_i))
\end{equation*}
where $\underline{k} \cdot \underline{\lambda} := k_1 \lambda_1 + \cdots + k_g \lambda_g \in \mathbb{Z}_E$ and
\begin{equation*}
A(C, \xi) := \frac{\xi(\beta)}{\prod\limits_{i=1}^g (1 - \xi(\lambda_i))}.
\end{equation*} 
Then
\begin{equation*}
F(C, \xi; T) \equiv F_N(C, \xi; T) \pmod{T^N}.
\end{equation*}
\end{theorem}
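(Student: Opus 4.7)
The plan is to expand $G(C,\xi;\underline{T})$ as a formal sum of terms of the form $(1+\underline{T})^{\beta+\underline{k}\cdot\underline{\lambda}}$, apply $\Omega$ termwise using the identity $\Omega((1+\underline{T})^\alpha)=(1+T)^{\mathcal{N}(\alpha)}$ (the motivating property of $\Omega$ recalled at the start of Subsection~\ref{subsec:omega} for $\alpha\in\mathbb{N}^d$, and valid for arbitrary $\alpha\in E$ since, for each fixed $n$, the coefficient of $T^n$ on either side is a polynomial in the $d$ conjugates of $\alpha$ that agrees with the other on $\mathbb{N}^d$), and truncate so that the resulting sum is finite. The truncation is controlled by Lemma~\ref{lem:omval}, which implies that $\Omega$ maps the ideal $(T_1^N,\ldots,T_d^N)\subset\mathbb{C}[[\underline{T}]]$ into $T^N\mspp\mathbb{C}[[T]]$; thus $F(C,\xi;T)\bmod T^N$ depends only on $G(C,\xi;\underline{T})\bmod(T_1^N,\ldots,T_d^N)$.

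First write $1-\xi(\lambda_i)(1+\underline{T})^{\lambda_i}=(1-\xi(\lambda_i))\bigl(1-v_i\bigl((1+\underline{T})^{\lambda_i}-1\bigr)\bigr)$, where $v_i:=\xi(\lambda_i)/(1-\xi(\lambda_i))$ is well defined by the $\mathfrak{c}$-admissibility of $C$ (Remark~\ref{rem:cstterm}). Because $(1+\underline{T})^{\lambda_i}-1$ has zero constant term, each factor of $G$ admits a formal geometric expansion $\sum_{n_i\geq 0}v_i^{n_i}\bigl((1+\underline{T})^{\lambda_i}-1\bigr)^{n_i}$. Expanding further by the binomial theorem,
\begin{equation*}
\bigl((1+\underline{T})^{\lambda_i}-1\bigr)^{n_i}=\sum_{k_i=0}^{n_i}(-1)^{n_i-k_i}\binom{n_i}{k_i}(1+\underline{T})^{k_i\lambda_i},
\end{equation*}
and interchanging the summation order to pull the indices $\underline{k}=(k_1,\ldots,k_g)$ outside yields
\begin{equation*}
G(C,\xi;\underline{T})=A(C,\xi)\sum_{\underline{k}\geq\underline{0}}(1+\underline{T})^{\beta+\underline{k}\cdot\underline{\lambda}}\prod_{i=1}^{g}(-1)^{k_i}\sum_{n_i\geq k_i}\binom{n_i}{k_i}\biggl(\frac{\xi(\lambda_i)}{\xi(\lambda_i)-1}\biggr)^{\!n_i},
\end{equation*}
after simplifying the signs via $(-1)^{n_i-k_i}v_i^{n_i}=(-1)^{k_i}\bigl(\xi(\lambda_i)/(\xi(\lambda_i)-1)\bigr)^{n_i}$.

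The last step is truncation. The power $\bigl((1+\underline{T})^{\lambda_i}-1\bigr)^{n_i}$ lies in $(T_1,\ldots,T_d)^{n_i}$; since any monomial $T_1^{a_1}\cdots T_d^{a_d}$ of total degree $a_1+\cdots+a_d>(N-1)d$ must, by the pigeonhole principle, have some $a_j\geq N$ and hence lie in $(T_1^N,\ldots,T_d^N)$, all terms with some $n_i>(N-1)d$ vanish modulo this ideal and thus contribute zero to $F(C,\xi;T)\bmod T^N$. Truncating the inner sums at $n_i\leq(N-1)d$ forces $k_i\leq(N-1)d$ and makes the inner factor equal to exactly $B_{k_i,(N-1)d}(\xi(\lambda_i))$. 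Applying $\Omega$ termwise and using $\Omega((1+\underline{T})^{\beta+\underline{k}\cdot\underline{\lambda}})=(1+T)^{\mathcal{N}(\beta+\underline{k}\cdot\underline{\lambda})}$ (valid since $\beta+\underline{k}\cdot\underline{\lambda}\in\mathbb{Z}_E^+$) delivers the claimed formula modulo $T^N$. The main subtlety I expect to encounter is the justification of the $\Omega$-identity for $\alpha$ whose conjugates are not rational integers; once that is granted, the remaining sign-bookkeeping to reach the $B_{k,K}$ notation is routine.
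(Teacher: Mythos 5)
Your argument is correct and follows essentially the same route as the paper: factor out $1-\xi(\lambda_i)$, expand geometrically in $(1+\underline{T})^{\lambda_i}-1$, truncate using Lemma~\ref{lem:omval} (your pigeonhole observation is exactly why $n_i>(N-1)d$ forces membership in the paper's ideal $\mathcal{I}$), interchange sums to produce the $B_{k,K}$'s, and apply $\Omega$ termwise. Your polynomial-identity justification of $\Omega((1+\underline{T})^{\alpha})=(1+T)^{\mathcal{N}(\alpha)}$ for $\alpha\in\mathbb{Z}_E^+$ is a detail the paper leaves implicit, and it is a welcome addition.
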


\begin{proof}
To simplify the notation we write $a_i := \xi(\lambda_i)/(1-\xi(\lambda_i))$ and $A := A(C, \xi)$. We compute
\begin{align*}
G(C, \xi; \underline{T})
& = \frac{\xi(\beta) (1+\underline{T})^{\beta}}{\prod\limits_{i=1}^g \left(1 - \xi(\lambda_i)(1 + \underline{T})^{\lambda_i}\right)} = A \, \frac{(1+\underline{T})^{\beta}}{\prod\limits_{i=1}^g \left(1 - a_i \left((1 + \underline{T})^{\lambda_i}-1\right)\right)} \\
& = A \, (1+\underline{T})^{\beta}\!\sum_{n_1, \dots, n_g \geq 0}\,\prod_{i=1}^g a_i^{n_i} \left((1 + \underline{T})^{\lambda_i}-1\right)^{n_i}.
\end{align*}
Let $\mathcal{I}$ be the ideal of $\mathbb{C}_p[[\underline{T}]]$ generated by the monomials $T_1^{a_1} \cdots\,T_d^{a_d}$ where $\max(a_1, \dots, a_d) \geq N$. For any $P(\underline{T}) \in \mathcal{I}$, it follows by Lemma~\ref{lem:omval} that $\Omega(P) \in T^N \mathbb{C}_p[[T]]$. Furthermore, for $i = 1, \dots, g$ we have
\begin{equation*}
((1+\underline{T})^{\lambda_i}-1)^{n_i} \in \mathcal{I} \quad\text{if $n_i \geq (N-1)d+1$}.
\end{equation*}
It follows that
\begin{alignat*}{2}
G(C, \xi; \underline{T})
& \equiv A \, (1+\underline{T})^{\beta} \sum_{n_1, \dots, n_g = 0}^{(N-1)d}\,\prod_{i=1}^g a_i^{n_i} \left((1 + \underline{T})^{\lambda_i}-1\right)^{n_i} \\
& \equiv A \, (1+\underline{T})^{\beta} \sum_{n_1, \dots, n_g = 0}^{(N-1)d}\,\prod_{i=1}^g \biggl[a_i^{n_i} \sum_{k_i=0}^{n_i} (-1)^{n_i - k_i} \binom{n_i}{k_i} (1 + \underline{T})^{k_i\lambda_i}\biggr] \\
& \equiv A \sum_{k_1, \dots, k_g = 0}^{(N-1)d} (1 + \underline{T})^{\beta + \underline{k} \cdot \underline{\lambda}}\,\prod_{i=1}^g \sum_{n_i=k_i}^{(N-1)d} (-1)^{n_i-k_i} a_i^{n_i}\binom{n_i}{k_i} \pmod{\mathcal{I}}.
\end{alignat*}
Applying $\Omega$ to each side we get
\begin{align*}
F(C, \xi; T) & \equiv A \, \sum_{k_1, \dots, k_g = 0}^{(N-1)d} (1 + T)^{\mathcal{N}(\beta + \underline{k} \cdot \underline{\lambda})} \prod_{i=1}^g B_{k_i, (N-1)d}(\xi(\lambda_i)) \pmod{T^N}. \qedhere
\end{align*}
\end{proof}

\begin{corollary}\label{cor:intcoeff}
The power series $F(C, \mathfrak{c}; T)$ has coefficients in $\mathbb{Z}_p$.
\end{corollary}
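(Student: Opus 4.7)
My plan is to exploit Theorem~\ref{th:main} to reduce the claim to checking, for each fixed precision $N$, that the finite polynomial $\sum_{\xi \neq 1} F_N(C, \xi; T)$ lies in $\mathbb{Z}_p[T]$. Since the $F_N$'s approximate $F(C,\mathfrak{c};T)$ to arbitrary precision, establishing this for all $N$ gives the result.

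The first key observation is that hypothesis (H1) forces $p\,|\,q\,|\,\mathfrak{f}$, so $\mathfrak{c}$ is coprime to $p$ and thus the rational prime $c$ below $\mathfrak{c}$ satisfies $c \neq p$. Because $C$ is $\mathfrak{c}$-admissible, $\lambda_i \notin \mathfrak{c} = \ker \xi$ (using Lemma~\ref{lem:charc}), so $\xi(\lambda_i)$ is a primitive $c$-th root of unity for every non-trivial $\xi$. Since $c$ is a $p$-adic unit, so is $1 - \xi(\lambda_i)$ (its norm to $\mathbb{Q}$ divides a power of $c$). This immediately makes $A(C,\xi)$ a $p$-adic unit and $\xi(\lambda_i)/(\xi(\lambda_i)-1)$ a $p$-adic integer, so each rational number $B_{k,K}(\xi(\lambda_i))$ defined in \eqref{eq:defB} lies in $\mathbb{Z}_p[\zeta_c]$. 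Combined with the fact that $\mathcal{N}(\beta + \underline{k}\cdot\underline{\lambda})$ is a nonnegative integer (hence $(1+T)^{\mathcal{N}(\beta+\underline{k}\cdot\underline{\lambda})} \in \mathbb{Z}[T]$), this shows $F_N(C,\xi;T) \in \mathbb{Z}_p[\zeta_c][T]$ individually.

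To descend to $\mathbb{Z}_p$, I would use Galois averaging. Any $\sigma \in \Gal(\mathbb{Q}_p(\zeta_c)/\mathbb{Q}_p)$ acts on characters by $\xi \mapsto \sigma \circ \xi$, and this action permutes the set of non-trivial elements of $X(\mathfrak{c})$. Comparing the defining expression for $F_N(C,\xi;T)$, one checks that $\sigma$ sends $F_N(C,\xi;T)$ to $F_N(C,\sigma\circ\xi;T)$ (the map $\xi \mapsto \xi(\beta)$ and each $\xi \mapsto \xi(\lambda_i)$ are compatible with $\sigma$). Hence the full sum $\sum_{\xi \neq 1} F_N(C,\xi;T)$ is $\Gal(\mathbb{Q}_p(\zeta_c)/\mathbb{Q}_p)$-invariant, so its coefficients lie in $\mathbb{Q}_p$. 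Together with the already-established $p$-adic integrality, this places it in $\mathbb{Z}_p[T]$.

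The main subtlety is not really a technical obstacle but rather making sure one uses both (H1) (to ensure $c \neq p$ and hence the inverses of $1 - \xi(\lambda_i)$ cause no denominators) and the $\mathfrak{c}$-admissibility of $C$ (to guarantee $\xi(\lambda_i) \neq 1$ so that the denominators even make sense and are primitive $c$-th roots). Once both are in hand, integrality is immediate from the explicit formula in Theorem~\ref{th:main}, and the Galois argument removes the cyclotomic coefficients painlessly.
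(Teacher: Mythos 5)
Your proof is correct and follows essentially the same route as the paper: invertibility of $1-\xi(\lambda_i)$ in $\mathbb{Z}_p[\xi]$ (since it is divisible only by primes above $c\neq p$), hence $F_N(C,\xi;T)\in\mathbb{Z}_p[\xi][T]$, Galois descent of the sum over non-trivial $\xi$ to $\mathbb{Z}_p[T]$, and the congruences modulo $T^N$ from Theorem~\ref{th:main} to conclude for $F(C,\mathfrak{c};T)$. Your write-up is in fact slightly more explicit than the paper's, notably in spelling out why (H1) forces $c\neq p$.
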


\begin{proof} 
Let $\xi$ be a non-trivial element of $X(\mathfrak{c})$ and write $C = C(\beta; \lambda_1, \dots, \lambda_g)$. For $i = 1, \dots, g$ we know $1 - \xi(\lambda_i)$ is divisible only by primes above $c$ and hence is invertible in $\mathbb{Z}_p[\xi]$. Therefore the polynomials $F_N(C, \xi; T)$ have coefficients in $\mathbb{Z}_p[\xi]$, and it follows from Galois theory (see also Subsection~\ref{subsec:comzetcone}) that 
\begin{equation*}
F_N(C, \mathfrak{c}; T) := \sum_{\substack{\xi \in X(\mathfrak{c}) \\ \xi \ne 1}} F_N(C, \xi; T)
\end{equation*}
has coefficients in $\mathbb{Z}_p$.
By the theorem we have $F(C, \mathfrak{c}; T) \equiv F_N(C, \mathfrak{c}; T) \pmod{T^N}$ for all $N \geq 0$,
and therefore $F(C, \mathfrak{c}; T)$ has coefficients in $\mathbb{Z}_p$. 
\end{proof}
 
Since $F_{\mathfrak{m}}(\mathfrak{a}^{-1}, \mathfrak{c}; T)$ is the sum of the $F(C_i, \mathfrak{c}; T)$'s where $\{C_1, \dots, C_m\}$ is a $\mathfrak{c}$-admissible cone decomposition of $\mathfrak{a}$ modulo $\mathfrak{m}$, it follows from the corollary that $F_{\mathfrak{m}}(\mathfrak{a}^{-1}, \mathfrak{c}; T)$ has coefficients in $\mathbb{Z}_p$. In particular, $F_{\mathfrak{m}}(\mathfrak{a}^{-1}, \mathfrak{c}; T)$ defines a $\mathbb{Z}_p$-valued measure, which we will denote $\mu_{p,\mathfrak{m}}^{\mathfrak{a},\mathfrak{c}}$. To be able to apply Theorem~\ref{th:procinterpol} to this measure we need to prove that $\mu_{p,\mathfrak{m}}^{\mathfrak{a},\mathfrak{c}}$ has support in $1 + q\mathbb{Z}_p$. We will actually prove a stronger statement that will be useful later. Let $\mathbb{Q}_\infty$ be the cyclotomic $\mathbb{Z}_p$-extension of $\mathbb{Q}$. Denote by $\mathbb{Q}_0 := \mathbb{Q},\,\mathbb{Q}_1,\,\mathbb{Q}_2,\,\ldots$ the subfields of $\mathbb{Q}_\infty/\mathbb{Q}$ with $\Gal(\mathbb{Q}_n/\mathbb{Q}) \simeq \mathbb{Z}/p^n\mathbb{Z}$. Let $E_\infty$ be the cyclotomic $\mathbb{Z}_p$-extension of $E$ and let $E_0 := E,\,E_1,\,E_2,\,\ldots$ be the subfields of $E_\infty/E$ with $\Gal(E_n/E) \simeq \mathbb{Z}/p^n\mathbb{Z}$. Define integers $m_0, m_1 \geq 0$ by $\mathbb{Q}_{m_0} = E \cap \mathbb{Q}_\infty$ and $\mathbb{Q}_{m_0+m_1} = E(\mathfrak{m}) \cap \mathbb{Q}_\infty$. By construction $E_{m_1} = E \, \mathbb{Q}_{m_0+m_1}$ is the intersection of $E(\mathfrak{m})$ and $E_\infty$. The commutative diagram 
\begin{equation}\label{eq:cd}\raisebox{5ex}{\xymatrix{
\Cl_\mathfrak{m}(E) \ar[r]^-{\mathcal{N}} \ar[d] & (1 + qp^{m_0}\mathbb{Z})/(1 + qp^{m_0+m_1}\mathbb{Z}) \ar[d] \\
\Gal(E_{m_1}/E) \ar[r]^{\res\quad\;} & \Gal(\mathbb{Q}_{m_0+m_1}/\mathbb{Q}_{m_0}) 
}}\end{equation}
comes from Class Field Theory, where the bottom map is the restriction map, the top map is induced by the map $\mathfrak{a} \mapsto \langle \mathcal{N}(\mathfrak{a})\rangle$, and the vertical maps are the respective Artin maps. Define $e \geq 1$ to be the largest integer such that $W_{p^e} \subset E(W_q)$. It is clear that we have $e = m_0 + v_p(q)$. We note in passing the lemma below, which will be useful later and which is a direct consequence of the diagram.

\begin{lemma}\label{lem:normcong}
For any fractional ideal  $\mathfrak{a}$ of $E$ coprime to $p$ we have $\langle \mathcal{N}(\mathfrak{a}) \rangle \in 1 + p^e\mathbb{Z}_p$. \qed
\end{lemma}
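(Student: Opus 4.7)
The plan is to read the conclusion directly off the commutative diagram \eqref{eq:cd}. First, observe that the target group $1 + p^e\mathbb{Z}_p$ coincides with $1 + qp^{m_0}\mathbb{Z}_p$, because by definition $e = m_0 + v_p(q)$. The claim thus reduces to showing that for $\mathfrak{a}$ coprime to $p$, the element $\langle \mathcal{N}(\mathfrak{a})\rangle$ of $1 + q\mathbb{Z}_p$ in fact lies in the subgroup $1 + qp^{m_0}\mathbb{Z}_p$. Since both $\mathcal{N}$ and $\langle \cdot \rangle$ are multiplicative, I may assume $\mathfrak{a}$ is an integral ideal.

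Given such an $\mathfrak{a}$, the approach is to pick any modulus $\mathfrak{m}$ whose finite part $\mathfrak{f}$ is coprime to $\mathfrak{a}$ and satisfies (H1); for instance, take $\mathfrak{f} = q$ and let $\mathfrak{z}$ be the full set of real places so that (H2) also holds. Then $\mathfrak{a}$ defines a class in $\Cl_\mathfrak{m}(E)$, and the top arrow of \eqref{eq:cd} sends this class to the image of $\langle \mathcal{N}(\mathfrak{a})\rangle$ in $(1+q\mathbb{Z}_p)/(1+qp^{m_0+m_1}\mathbb{Z}_p)$, which by the very definition of that arrow lies inside the subgroup $(1 + qp^{m_0}\mathbb{Z})/(1 + qp^{m_0+m_1}\mathbb{Z})$. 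Pulling this membership back to $1 + q\mathbb{Z}_p$ yields
\begin{equation*}
\langle \mathcal{N}(\mathfrak{a})\rangle - 1 \;\in\; qp^{m_0}\mathbb{Z}_p + qp^{m_0+m_1}\mathbb{Z}_p \;=\; qp^{m_0}\mathbb{Z}_p,
\end{equation*}
the sum collapsing because $m_1 \geq 0$. This is exactly $\langle \mathcal{N}(\mathfrak{a})\rangle \in 1 + p^e\mathbb{Z}_p$.

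No substantive obstacle is expected, since the entire content of the lemma is already packaged in the class field theory diagram \eqref{eq:cd}, which the paper invokes without proof; the remaining work is purely bookkeeping around the identification $p^e = qp^{m_0}$ and the fact that the smaller subgroup is absorbed in the sum. The conceptual reason behind the diagram itself is that the Artin symbol $\sigma_\mathfrak{a} \in \Gal(E_\infty/E)$ acts on any $p$-power root of unity by $\zeta \mapsto \zeta^{\mathcal{N}(\mathfrak{a})}$, so its restriction to $\mathbb{Q}_\infty$ corresponds under the cyclotomic character to $\langle \mathcal{N}(\mathfrak{a})\rangle$; since this restriction fixes $E \cap \mathbb{Q}_\infty = \mathbb{Q}_{m_0}$, it lies in $\Gal(\mathbb{Q}_\infty/\mathbb{Q}_{m_0}) \cong 1 + p^e\mathbb{Z}_p$, which is exactly the desired conclusion read off through the cyclotomic character.
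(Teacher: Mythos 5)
Your proposal is correct and follows exactly the route the paper intends: the lemma is stated as a direct consequence of the commutative diagram \eqref{eq:cd}, whose top arrow by construction takes values in $(1+qp^{m_0}\mathbb{Z})/(1+qp^{m_0+m_1}\mathbb{Z})$, and $qp^{m_0}\mathbb{Z}_p = p^e\mathbb{Z}_p$ since $e = m_0 + v_p(q)$. Your extra bookkeeping (reducing to integral ideals and passing to an auxiliary modulus with $\mathfrak{f}=q$ so that an arbitrary $\mathfrak{a}$ coprime to $p$ defines a ray class, which works because $m_0$ depends only on $E$) is a legitimate filling-in of details the paper leaves implicit, not a different argument.
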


We now prove our result on the support of the measure $\mu_{p,\mathfrak{m}}^{\mathfrak{a},\mathfrak{c}}$.

\begin{proposition}\label{prop:propmu}
The measure $\mu_{p,\mathfrak{m}}^{\mathfrak{a},\mathfrak{c}}$ has support in $1 + p^{e+m_1} \mathbb{Z}_p$.
\end{proposition}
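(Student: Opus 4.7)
The plan is to realize $\mu_{p,\mathfrak{m}}^{\mathfrak{a},\mathfrak{c}}$ as a limit of measures that are $\mathbb{C}_p$-linear combinations of Dirac masses at points of $1 + p^{e+m_1}\mathbb{Z}_p$, and then to transfer the support condition to the limit. To this end, I fix a $\mathfrak{c}$-admissible cone decomposition $\{C_1, \dots, C_m\}$ of $\mathfrak{a}$ modulo $\mathfrak{m}$ and, for each $N \geq 1$, set
\[
F_{N, \mathfrak{m}}(\mathfrak{a}, \mathfrak{c}; T) := \sum_{j=1}^m \sum_{\substack{\xi \in X(\mathfrak{c}) \\ \xi \neq 1}} F_N(C_j, \xi; T).
\]
As in the proof of Corollary~\ref{cor:intcoeff}, this is a polynomial with coefficients in $\mathbb{Z}_p$, and it satisfies $F_\mathfrak{m}(\mathfrak{a}, \mathfrak{c}; T) \equiv F_{N, \mathfrak{m}}(\mathfrak{a}, \mathfrak{c}; T) \pmod{T^N}$ by Theorem~\ref{th:main}. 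By Lemma~\ref{lem:dirac}, the measure $\mu_N$ associated with $F_{N, \mathfrak{m}}(\mathfrak{a}, \mathfrak{c}; T)$ is therefore a finite $\mathbb{C}_p$-linear combination of Dirac measures concentrated at points $\mathcal{N}(\alpha)$, where $\alpha := \beta_j + \underline{k} \cdot \underline{\lambda}$ ranges over the elements built from the base points $\beta_j$ and the generators of the cones $C_j$.

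The heart of the argument, and what I expect to be the main obstacle, is to prove that each such $\mathcal{N}(\alpha)$ lies in $1 + p^{e+m_1}\mathbb{Z}_p$. Since $\beta_j \in \mathfrak{a} \cap E_\mathfrak{m}$ is totally positive and every generator lies in $\mathfrak{a}\mathfrak{f} \cap \mathbb{Z}_E^+$, the element $\alpha$ is itself totally positive and satisfies $\alpha \equiv \beta_j \equiv 1 \pmod \mathfrak{f}$; hence $\alpha \in E_\mathfrak{m}$, so the principal ideal $\alpha \mathbb{Z}_E$ is trivial in $\Cl_\mathfrak{m}(E)$. Reading this off the commutative diagram~\eqref{eq:cd} then gives $\langle \mathcal{N}(\alpha) \rangle \in 1 + qp^{m_0+m_1}\mathbb{Z}_p$. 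On the other hand, hypothesis~(H1) yields $\alpha = 1 + q\gamma$ for some $\gamma \in \mathbb{Z}_E$, and expanding the product $\mathcal{N}(\alpha) = \prod_{i=1}^d (1 + q\gamma^{(i)})$ shows $\mathcal{N}(\alpha) \equiv 1 \pmod q$, which forces $\omega(\mathcal{N}(\alpha)) = 1$. Using $q = p^{v_p(q)}$ and $e = m_0 + v_p(q)$, these two facts combine to give $\mathcal{N}(\alpha) = \langle \mathcal{N}(\alpha) \rangle \in 1 + p^{e+m_1}\mathbb{Z}_p$, as required.

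It remains to transfer the support property from the $\mu_N$ to $\mu_{p, \mathfrak{m}}^{\mathfrak{a}, \mathfrak{c}}$. Let $a + p^k\mathbb{Z}_p$ be any coset disjoint from $1 + p^{e+m_1}\mathbb{Z}_p$ and let $\chi$ be its characteristic function. Since every Dirac mass appearing in $\mu_N$ is concentrated at a point of $1 + p^{e+m_1}\mathbb{Z}_p$, we have $\int \chi\, d\mu_N = 0$ for every $N$. The Mahler coefficients $(h_n)_{n \geq 0}$ of $\chi$ satisfy $|h_n|_p \to 0$ by Theorem~\ref{th:mahler}, so Lemma~\ref{lem:dointeg}, together with the congruence modulo $T^N$ above and the fact that both $F_\mathfrak{m}(\mathfrak{a},\mathfrak{c};T)$ and $F_{N,\mathfrak{m}}(\mathfrak{a},\mathfrak{c};T)$ have $\mathbb{Z}_p$-coefficients, will yield
\[
\Bigl| \int \chi\, d\mu_{p, \mathfrak{m}}^{\mathfrak{a}, \mathfrak{c}} - \int \chi\, d\mu_N \Bigr|_p \leq \sup_{n \geq N} |h_n|_p,
\]
which tends to $0$ as $N \to \infty$. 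Hence $\int \chi \, d\mu_{p, \mathfrak{m}}^{\mathfrak{a}, \mathfrak{c}} = 0$, and since every clopen subset of $\mathbb{Z}_p \setminus (1 + p^{e+m_1}\mathbb{Z}_p)$ is a finite disjoint union of such cosets, it follows that $\mu_{p, \mathfrak{m}}^{\mathfrak{a}, \mathfrak{c}}$ is supported in $1 + p^{e+m_1}\mathbb{Z}_p$.
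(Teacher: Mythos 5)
Your proof is correct and follows essentially the same route as the paper's: truncate to the polynomials $F_N$, recognize them as combinations of Dirac masses at the norms $\mathcal{N}(\beta + \underline{k}\cdot\underline{\lambda})$, show via the congruence modulo $q$ and the class-field-theory diagram \eqref{eq:cd} that these norms lie in $1+p^{e+m_1}\mathbb{Z}_p$, and pass to the limit. The only difference is cosmetic — you sum over all cones at once and spell out the weak-convergence step that the paper merely asserts.
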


\begin{proof} 
Let $C$ be a cone in a $\mathfrak{c}$-admissible cone decomposition of $\mathfrak{a}$ modulo $\mathfrak{m}$ and let $\xi$ be a non-trivial element of $X(\mathfrak{c})$. Denote by $\mu_{C, \xi}$, respectively $\mu_{C, \xi}^N$ with $N \geq 1$, the measure associated with the power series $F(C, \xi; T)$, respectively $F_N(C, \xi; T)$, and write $C = C(\beta; \lambda_1, \dots, \lambda_g)$. For $\underline{k} \in \mathbb{N}^g$ the algebraic integers $\beta + \underline{k} \cdot \underline {\lambda}$, are all congruent to $1$ modulo $q$, and thus we have $\mathcal{N}(\beta + \underline{k} \cdot \underline {\lambda}) \equiv 1 \pmod{q}$. It follows that $\mathcal{N}(\beta + \underline{k} \cdot \underline {\lambda}) = \langle \mathcal{N}(\beta + \underline{k} \cdot \underline {\lambda}) \rangle \in 1 + p^{e+m_1} \mathbb{Z}_p$ by the diagram above, since $\beta + \underline{k} \cdot \underline {\lambda} \in E_\mathfrak{m}$. Thus, by Lemma~\ref{lem:dirac}, the measures $\mu_{\xi,C}^N$ have support in $1 + p^{e+m_1}\mathbb{Z}_p$. The same is true for $\mu_{C,\xi}$ since the measures $\mu_{\xi,C}^N$ converge (weakly) to $\mu_{C, \xi}$. The conclusion follows as well for $\mu_{p,\mathfrak{m}}^{\mathfrak{a},\mathfrak{c}}$, it being the sum of finitely many such measures. 
\end{proof}

\begin{remark}
Replacing $\mathfrak{f}$ by $\mathfrak{f}p^a$ for some $a \geq 1$ does not change the $\p$-adic $L$-function, as we will see from the interpolation property \eqref{eq:interpolL} it satisfies (and the unicity statement that follows from it; see Remark~\ref{rk:unicity}). In particular, by taking $a$ large enough we can force the measures $\mu_{p,\mathfrak{m}}^{\mathfrak{a},\mathfrak{c}}$ to have support in $1 + p^b\mathbb{Z}_p$ for $b \geq 1$ arbitrarily large. The proposition shows that (H1) is enough to imply that $b$ is already quite large.
\end{remark}

\subsectb{Construction of $\p$-adic $L$-functions}{Construction of p-adic L-functions}\label{subsec:construction}

We are now ready to define $\p$-adic $L$-functions. The first step is to define the $\p$-adic equivalent of twisted partial functions.

\begin{proposition}\label{prop:defzp}
For $m$ an integer and $s \in \mathbb{Z}_p$ define $\mathcal{Z}_{p,\mathfrak{m}}^{(m)}(\mathfrak{a}^{-1}, \mathfrak{c}; s)$ to be the $\p$-adic twisted partial zeta function given by 
\begin{equation}\label{eq:defzpxi}
\mathcal{Z}_{p,\mathfrak{m}}^{(m)}(\mathfrak{a}^{-1}, \mathfrak{c}; s) := \omega(\mathcal{N}(\mathfrak{a}))^m
\langle \mathcal{N}(\mathfrak{a}) \rangle^s \int \phi_{-s}(x) \, d\mu_{p, \mathfrak{m}}^{\mathfrak{a}, \mathfrak{c}}.
\end{equation}
Then $\mathcal{Z}_{p,\mathfrak{m}}^{(m)}(\mathfrak{a}^{-1}, \mathfrak{c}; s)$ is a continuous function on $\mathbb{Z}_p$, and
\begin{equation*}
\mathcal{Z}_{p,\mathfrak{m}}^{(m)}(\mathfrak{a}^{-1}, \mathfrak{c}; -k) = Z_{\mathfrak{m}}(\mathfrak{a}^{-1}, \mathfrak{c}; -k)
\end{equation*}
for all $k \in \mathbb{N}$ such that $k+m \equiv 0 \pmod{\varphi(q)}$.
\end{proposition}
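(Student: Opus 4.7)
The plan is to verify the two assertions in order: continuity of $\mathcal{Z}_{p,\mathfrak{m}}^{(m)}(\mathfrak{a}^{-1},\mathfrak{c};s)$ as a function of $s$, and the interpolation identity at negative integers $-k$ satisfying the congruence $k+m \equiv 0 \pmod{\varphi(q)}$. The overarching idea is that the defining formula \eqref{eq:defzpxi} is engineered precisely so that a direct application of Theorem~\ref{th:procinterpol} to the measure $\mu_{p,\mathfrak{m}}^{\mathfrak{a},\mathfrak{c}}$, combined with Corollary~\ref{cor:interpolF}, yields the expected values, after one keeps careful track of the Teichm\"uller/$\langle\cdot\rangle$ decomposition of $\mathcal{N}(\mathfrak{a})$.

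For the continuity statement, the factor $\omega(\mathcal{N}(\mathfrak{a}))^m$ is a constant (recall $\mathfrak{a}$ is coprime to $\mathfrak{m}$, hence to $p$ by (H1), so $\mathcal{N}(\mathfrak{a}) \in \mathbb{Z}_p^\times$). The factor $\langle \mathcal{N}(\mathfrak{a})\rangle^s$ is continuous in $s$ by the construction of $x^s$ for $x \in 1+q\mathbb{Z}_p$ given earlier. Finally, the map $s \mapsto \int \phi_{-s}\,d\mu_{p,\mathfrak{m}}^{\mathfrak{a},\mathfrak{c}}$ is continuous: this is the composition of the continuous map $s \mapsto \phi_{-s}$ (Lemma~\ref{lem:phicont}) with the bounded linear functional $\mu_{p,\mathfrak{m}}^{\mathfrak{a},\mathfrak{c}}$ (Lemma~\ref{lem:intcont}).

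For the interpolation at $s = -k$, I would proceed as follows. By Proposition~\ref{prop:propmu}, the measure $\mu_{p,\mathfrak{m}}^{\mathfrak{a},\mathfrak{c}}$ has support in $1+p^{e+m_1}\mathbb{Z}_p \subset 1+q\mathbb{Z}_p$ (since $e \geq v_p(q)$), so the hypothesis of Theorem~\ref{th:procinterpol} is satisfied. Applying that theorem, together with Corollary~\ref{cor:interpolF}, gives
\begin{equation*}
\int \phi_k(x)\,d\mu_{p,\mathfrak{m}}^{\mathfrak{a},\mathfrak{c}}
= \Delta^k F_\mathfrak{m}(\mathfrak{a},\mathfrak{c};T)_{|T=0}
= \mathcal{N}(\mathfrak{a})^k\,Z_\mathfrak{m}(\mathfrak{a}^{-1},\mathfrak{c};-k).
\end{equation*}
Substituting $s=-k$ in \eqref{eq:defzpxi} (so $\phi_{-s}=\phi_k$) and using $\mathcal{N}(\mathfrak{a}) = \omega(\mathcal{N}(\mathfrak{a}))\langle\mathcal{N}(\mathfrak{a})\rangle$ yields
\begin{equation*}
\mathcal{Z}_{p,\mathfrak{m}}^{(m)}(\mathfrak{a}^{-1},\mathfrak{c};-k)
= \omega(\mathcal{N}(\mathfrak{a}))^{m+k}\,Z_\mathfrak{m}(\mathfrak{a}^{-1},\mathfrak{c};-k).
\end{equation*}

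The remaining step is purely arithmetic: since $\omega(\mathcal{N}(\mathfrak{a})) \in T_p = W_{\varphi(q)}$, its order divides $\varphi(q)$, so the hypothesis $k+m \equiv 0 \pmod{\varphi(q)}$ forces the leading factor to be $1$, giving the claimed equality. I do not anticipate any real obstacle here; the only delicate point is confirming that the support condition of Theorem~\ref{th:procinterpol} is met, which is exactly the content of Proposition~\ref{prop:propmu}, and that $\mathcal{N}(\mathfrak{a}) \in \mathbb{Z}_p^\times$ so that its Teichm\"uller decomposition is well-defined.
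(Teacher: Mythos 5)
Your proof is correct and follows essentially the same route as the paper: a direct application of Theorem~\ref{th:procinterpol} (with $s$ replaced by $-s$) to the measure $\mu_{p,\mathfrak{m}}^{\mathfrak{a},\mathfrak{c}}$, using Proposition~\ref{prop:propmu} for the support condition, Corollary~\ref{cor:interpolF} for the values $\Delta^k F_{|T=0}$, and the identity $\omega(\mathcal{N}(\mathfrak{a}))^{m}\langle\mathcal{N}(\mathfrak{a})\rangle^{-k}=\mathcal{N}(\mathfrak{a})^{-k}$ when $k+m\equiv 0 \pmod{\varphi(q)}$. You have merely spelled out the details that the paper leaves implicit.
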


\begin{remark}
Using Proposition~\ref{prop:propmu} we could restrict the domain of integration in \eqref{eq:defzpxi} to $1+p^{e+m_1}\mathbb{Z}_p$ and then replace $\phi_{-s}(x)$ by $x^{-s}$. This would give a somewhat neater formula, and we will actually use this expression several times in the next subsection. However, from a computational point view it is better to express things as in \eqref{eq:defzpxi}, since that is how the computation will actually be done.
\end{remark}

\begin{proof}
This is a direct application of Theorem~\ref{th:procinterpol} (changing $s$ to $-s$) with $a_n = Z_{\mathfrak{m}}(\mathfrak{a}^{-1}, \mathfrak{c}; -n)$ and $F = F_{\mathfrak{m}}(\mathfrak{a}^{-1}, \mathfrak{c}; T)$, using Corollary~\ref{cor:interpolF} and Proposition~\ref{prop:propmu} and the fact that for $k \in \mathbb{N}$ with $k+m \equiv 0 \tmod{\varphi(q)}$ we have
\begin{equation*}
\omega(\mathcal{N}(\mathfrak{a}))^m \langle \mathcal{N}(\mathfrak{a}) \rangle^{-k} =  \mathcal{N}(\mathfrak{a})^{-k}.  \qedhere
\end{equation*}
\end{proof}

Let $\chi$ be a complex character on $\Cl_\mathfrak{m}(E)$. Recall that we have embedded $\bar{\mathbb{Q}}$ into $\mathbb{C}_p$ and thus we can view $\chi$ also as a $\p$-adic character. Define the character $\kappa$ of $\Cl_\mathfrak{m}(E)$ by the composition 
\begin{equation*}\xymatrix{
\kappa : \Cl_\mathfrak{m}(E) \ar@{->>}[r] & \Cl_q(E) \ar[r]^-\simeq & \Gal(E(q)/E) \ar[r] & (\mathbb{Z}/q\mathbb{Z})^\times \ar[r]^-\simeq & T_p. 
}\end{equation*}
The first map is the natural surjection coming from the fact that $q$ divides $\mathfrak{m}$ by (H1). The second sends a class $\mathcal{C}$ to its Artin symbol $\sigma_\mathcal{C}$, where $E(q)$ is the ray-class field of $E$ modulo~$q$. The next comes from the fact that $E(q)$ contains the $q$-th root of unity, and thus we can associate with any $\sigma \in \Gal(E(q)/E)$ a class $\bar{a}$ in $(\mathbb{Z}/q\mathbb{Z})^\times$ such that $\sigma(\zeta) = \zeta^a$ for all $\zeta \in W_q$. The last map sends $\bar{a}$ to $\omega(a) \in T_p$. For a fractional ideal $\mathfrak{a}$ of $E$, relatively prime to $\mathfrak{m}$, it follows from the definition of the Artin map that $\kappa(\mathfrak{a}) = \omega(\mathcal{N}(\mathfrak{a}))$. As a consequence of the previous result and of Proposition~\ref{prop:lfunc}, we recover the construction of $\p$-adic $L$-functions. 

\begin{theorem}[Deligne-Ribet, Barsky, Cassou-Noguès]
Let $m$ be an integer, and if $\chi \ne \kappa^{1-m}$ assume further that $\mathfrak{c}$ is such that $\chi(\mathfrak{c}) \ne \kappa^{1-m}(\mathfrak{c})$. Define a $\p$-adic $L$-function by
\begin{equation}\label{def:padicL}
L^{(m)}_{p, \mathfrak{m}}(\chi; s) := 
\left(\frac{\chi(\mathfrak{c})}{\omega(c)^{m-1} \langle c \rangle^{s-1}} - 1 \right)^{-1}
\,\sum_{i=1}^{h_\mathfrak{m}(E)} \chi(\mathfrak{a}_i^{-1}) \mathcal{Z}^{(m)}_{p,\mathfrak{m}}(\mathfrak{a}_i^{-1}, \mathfrak{c}; s)
\end{equation}
where the sum is over integral ideals $\mathfrak{a}_i$, relatively prime to $\mathfrak{m}$ and $\mathfrak{c}$, representing all the classes of $\Cl_\mathfrak{m}(E)$. Then $L^{(m)}_{p, \mathfrak{m}}(\chi; s)$ is a continuous function on $\mathbb{Z}_p$  \uppar{respectively $\mathbb{Z}_p \smallsetminus \{1\}$ if $\chi = \kappa^{1-m}$} and
\begin{equation}\label{eq:interpolL}
\vdepth{8pt}
L^{(m)}_{p,\mathfrak{m}}(\chi; -k) = L_\mathfrak{m}(\chi; -k)
\end{equation}
for all $k \in \mathbb{N}$ such that $k+m \equiv 0 \tmod{\varphi(q)}$.
\end{theorem}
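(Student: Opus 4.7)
Both assertions---continuity on the indicated domain and the interpolation identity---should follow essentially formally from Propositions~\ref{prop:defzp} and \ref{prop:lfunc}, so the plan is simply to combine those two ingredients. The only non-formal point is to locate the zeros (if any) of the Euler-type prefactor
\[
\mathcal{E}(s):=\left(\frac{\chi(\mathfrak{c})}{\omega(c)^{m-1}\langle c\rangle^{s-1}}-1\right)^{-1}
\]
on $\mathbb{Z}_p$, and this is where I expect the only real obstacle.

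For continuity, I would first observe that each $\mathcal{Z}^{(m)}_{p,\mathfrak{m}}(\mathfrak{a}_i^{-1},\mathfrak{c};s)$ is continuous on $\mathbb{Z}_p$ by Proposition~\ref{prop:defzp}, so their finite linear combination $\Sigma(s):=\sum_i\chi(\mathfrak{a}_i^{-1})\mathcal{Z}^{(m)}_{p,\mathfrak{m}}(\mathfrak{a}_i^{-1},\mathfrak{c};s)$ is continuous. The map $s\mapsto\langle c\rangle^{s-1}$ is continuous (indeed analytic) since $\langle c\rangle\in 1+q\mathbb{Z}_p$, hence so is the denominator of $\mathcal{E}$. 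To find the zeros of that denominator I would rewrite the defining equation as $\langle c\rangle^{1-s}=(\chi(\mathfrak{c})\omega(c)^{1-m})^{-1}$: the right-hand side is a root of unity in $\mathbb{C}_p^\times$, while the left-hand side traces the closed subgroup of $1+q\mathbb{Z}_p$ topologically generated by $\langle c\rangle$, which is a torsion-free pro-$\p$ group. Hence a solution can exist only when both sides equal~$1$, i.e.\ precisely at the coincidence of character values on $\mathfrak{c}$ excluded by the hypothesis on $\chi$ and $\mathfrak{c}$ (recalling $\kappa(\mathfrak{c})=\omega(c)$); under that hypothesis $\mathcal{E}$, and therefore $L^{(m)}_{p,\mathfrak{m}}(\chi;\cdot)$, is continuous on all of $\mathbb{Z}_p$. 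In the exceptional case $\chi=\kappa^{1-m}$ the coincidence holds and forces $s=1$ (since $\langle c\rangle$ generates an infinite pro-$\p$ subgroup), with a simple zero there, yielding continuity on $\mathbb{Z}_p\smallsetminus\{1\}$.

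For the interpolation identity I would fix $k\in\mathbb{N}$ with $k+m\equiv 0\pmod{\varphi(q)}$. Proposition~\ref{prop:defzp} yields $\mathcal{Z}^{(m)}_{p,\mathfrak{m}}(\mathfrak{a}_i^{-1},\mathfrak{c};-k)=Z_\mathfrak{m}(\mathfrak{a}_i^{-1},\mathfrak{c};-k)$. Using the congruence together with $\omega(c)^{\varphi(q)}=1$ to rewrite $\omega(c)^{m-1}=\omega(c)^{-k-1}$, I obtain
\[
\frac{\chi(\mathfrak{c})}{\omega(c)^{m-1}\langle c\rangle^{s-1}}\bigg|_{s=-k}=\chi(\mathfrak{c})\,\omega(c)^{k+1}\langle c\rangle^{k+1}=\chi(\mathfrak{c})\,c^{k+1}=\chi(\mathfrak{c})\,\mathcal{N}(\mathfrak{c})^{1-s}\big|_{s=-k}.
\]
Substituting this and the identity $\chi(\mathfrak{a}_i^{-1})=\bar\chi(\mathfrak{a}_i)$ into the definition of $L^{(m)}_{p,\mathfrak{m}}(\chi;s)$ reproduces exactly the right-hand side of Proposition~\ref{prop:lfunc} evaluated at $s=-k$, whose value is $L_\mathfrak{m}(\chi;-k)$. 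The required identity follows.
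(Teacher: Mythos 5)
Your argument is correct and follows the paper's proof essentially step for step: continuity of the sum comes from Proposition~\ref{prop:defzp}, the prefactor is handled by observing that $\langle c\rangle^{1-s}$ lies in the torsion-free group $1+q\mathbb{Z}_p$ while the other side is a root of unity (the paper phrases this as ``$\langle c\rangle$ has infinite order''), and the interpolation identity combines Propositions~\ref{prop:defzp} and~\ref{prop:lfunc} with the evaluation of the prefactor at $s=-k$. One small point in your favor: your computation $\chi(\mathfrak{c})\,\omega(c)^{k+1}\langle c\rangle^{k+1}=\chi(\mathfrak{c})\,c^{k+1}=\chi(\mathfrak{c})\,\mathcal{N}(\mathfrak{c})^{1-s}\big|_{s=-k}$ is the correct form of the identity, which appears in the paper's proof with a sign slip as $\chi(\mathfrak{c})/\bigl(\omega(c)^{m-1}\langle c\rangle^{k-1}\bigr)=\chi(\mathfrak{c})\,c^{1-k}$.
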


\begin{remark}\label{rk:constc}
Assume $\chi\kappa^{m-1}$ is not the trivial character and let $N$ be the Galois closure of $E(\mathfrak{m})/\mathbb{Q}$. Let $\tsigma \in \Gal(E(\mathfrak{m})/E)$ be such that $\chi\kappa^{m-1}(\tsigma) \ne 1$.\footnote{We see $\chi\kappa^{m-1}$ as a character on $\Gal(E(\mathfrak{m})/E)$ via the Artin map.} Lift $\tsigma$ to an arbitrary element $\sigma$ of $\Gal(N/\mathbb{Q})$. By the theorem of \u{C}ebotarev \cite[Chap.~VIII, Th.~13.4]{neukirch} there exists a positive proportion of prime ideals of $N$ whose Frobenius in $N/\mathbb{Q}$ is $\sigma$. Let $\mathfrak{C}$ be one of these prime ideals with $\mathfrak{C}$ coprime to $\mathfrak{f}\mathbb{Z}_N$. Then $\mathfrak{c} := \mathfrak{C} \cap \mathbb{Z}_E$ is a prime ideal of $E$, coprime to $\mathfrak{f}$, of residual degree $1$, such that $\chi\kappa^{m-1}(\mathfrak{c}) \ne 1$.
\end{remark}
 
\begin{proof}
By the previous result the sum in the RHS of \eqref{def:padicL} is a continuous function on $\mathbb{Z}_p$. We now look at the factor before the sum. It is continuous at $s \in \mathbb{Z}_p$ unless $\chi(\mathfrak{c}) = \omega(c)^{m-1} \langle c\rangle^{s-1}$. Since $\langle c\rangle$ has infinite order, this can happen only for $s = 1$. Thus, $L^{(m)}_{p, \mathfrak{m}}(\chi; s)$ is continuous on $\mathbb{Z}_p \smallsetminus \{1\}$, and also at $s = 1$ if $\chi(\mathfrak{c}) \ne \omega(c)^{1-m}$. We get the first result by noting that $ \omega(c) = \kappa(\mathfrak{c})$. For the second, let $k \in \mathbb{N}$ be such that $k + m \equiv 0 \tmod{q}$. The interpolation property follows from Proposition~\ref{prop:defzp}, Proposition~\ref{prop:lfunc}, and the fact that 
\begin{equation*}
\frac{\chi(\mathfrak{c})}{\omega(c)^{m-1} \langle c \rangle^{k-1}} =  \chi(\mathfrak{c}) c^{1-k}.  \qedhere
\end{equation*}
\end{proof}

\begin{remark}\label{rk:chiczero}
Assume $\chi\kappa^{m-1}$ is non-trivial but $\chi\kappa^{m-1}(\mathfrak{c}) = 1$. Then \eqref{def:padicL} still defines a continuous function on $\mathbb{Z}_p \smallsetminus \{1\}$. Moreover, that function is equal to $L^{(m)}_{p, \mathfrak{m}}(\chi; s)$ if $s \ne 1$, and therefore, by continuity, it converges to $L^{(m)}_{p, \mathfrak{m}}(\chi; 1)$ as $s$ tends to $1$.
\end{remark}

\begin{remark}\label{rk:unicity}
Assume $p$ is odd. Then the set $\{-k$ with $k \in \mathbb{N}$ and $k + m \equiv 0 \tmod{p-1}\}$ is dense in $\mathbb{Z}_p$, and there is a unique continuous $\p$-adic function satisfying \eqref{eq:interpolL}. This proves that $L^{(m)}_{p,\mathfrak{m}}(\chi; s)$ does not depend on the choice of $\mathfrak{c}$. For $p = 2$ the closure of $\{-k$ with $k \in \mathbb{N}$ and $k + m \equiv 0 \tmod{2}\}$ is either $2\mathbb{Z}_2$ or $1 + 2\mathbb{Z}_2$. Thus \eqref{eq:interpolL} is not enough to prove unicity. In that case we can use Proposition~\ref{prop:Lpanal} below\footnote{The proposition only applies to $m \equiv 1 \tmod{2}$, but it is straightforward to generalize.} to conclude that $L^{(m)}_{2,\mathfrak{m}}(\chi; s)$ does not depend on the choice of $\mathfrak{c}$.
\end{remark}

There are actually $\varphi(q)$ twisted partial zeta functions or $L$-functions defined by these two results, depending on the choice of the congruence class of $m$ modulo $\varphi(q)$. However, when $\chi$ is the trivial character we see that only for $m \equiv 1 \tmod{\varphi(q)}$ does the corresponding $\p$-adic zeta function possibly have a pole at $s = 1$. (See \cite{colmez:residue} for the computation of the residue; the fact that it is non-zero is equivalent to the Leopoldt conjecture.) Therefore, \emph{the} $\p$-adic $L$-function, denoted simply $L_{p,\mathfrak{m}}(\chi; s)$, is defined to be the function corresponding to the choice $m \equiv 1 \tmod{\varphi(q)}$. From now on we will focus uniquely on that case, dropping the exponent in the notation and writing $\mathcal{Z}_{p,\mathfrak{m}}(\mathfrak{a}^{-1}, \mathfrak{c}; s)$ instead of $\mathcal{Z}_{p,\mathfrak{m}}^{(1)}(\mathfrak{a}^{-1}, \mathfrak{c}; s)$, and so on. We will see below (see Proposition~\ref{prop:changem}) that the different $L$-functions for various $m$ can be recovered from $L_{p,\mathfrak{m}}(\chi; s)$ by twisting the character $\chi$ by some appropriate power of $\kappa$.

\subsectb{Some properties of $\p$-adic $L$-functions}{Some properties of p-adic L-functions} 

In this subsection we prove some well-known results about $\p$-adic $L$-functions that will be useful later. The first is a direct consequence of Proposition~\ref{prop:defzp} (and the remark that follows it).
\begin{proposition}
The Mahler expansion of the $\p$-adic twisted partial zeta function is
\begin{equation*}
\mathcal{Z}_{p,\mathfrak{m}}(\mathfrak{a}^{-1}, \mathfrak{c}; s) = \omega(\mathcal{N}(\mathfrak{a})) \sum_{n \geq 0} \int_{1+p^{e+m_1}\mathbb{Z}_p} \left(\big\langle \mathcal{N}(\mathfrak{a})\big\rangle x^{-1} - 1\right)^n \, d\mu_{p, \mathfrak{m}}^{\mathfrak{a}, \mathfrak{c}} \, \binom{s}{n}. \tag*{\hspace{-1em}\qed}
\end{equation*}
\end{proposition}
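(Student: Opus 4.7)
The plan is to unwind the defining formula for $\mathcal{Z}_{p,\mathfrak{m}}(\mathfrak{a}^{-1}, \mathfrak{c}; s)$ given in Proposition~\ref{prop:defzp} (with $m=1$) and recognize the expansion \eqref{eq:mahlerexp} for $s \mapsto y^s$ inside the integral, once the support of the measure is exploited. By uniqueness of Mahler coefficients, the resulting series is automatically the Mahler expansion.

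More precisely, I would proceed as follows. Start from
\begin{equation*}
\mathcal{Z}_{p,\mathfrak{m}}(\mathfrak{a}^{-1}, \mathfrak{c}; s) = \omega(\mathcal{N}(\mathfrak{a}))\,\langle \mathcal{N}(\mathfrak{a}) \rangle^{s} \int \phi_{-s}(x) \, d\mu_{p, \mathfrak{m}}^{\mathfrak{a}, \mathfrak{c}}(x).
\end{equation*}
By Proposition~\ref{prop:propmu}, the measure $\mu_{p, \mathfrak{m}}^{\mathfrak{a}, \mathfrak{c}}$ has support in $1+p^{e+m_1}\mathbb{Z}_p$, so the integral may be restricted to that set. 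For $x \in 1+p^{e+m_1}\mathbb{Z}_p \subseteq 1+q\mathbb{Z}_p$ we have $\omega(x)=1$ and $\langle x \rangle = x$, hence $\phi_{-s}(x) = x^{-s}$ by definition. Moving the scalar $\langle \mathcal{N}(\mathfrak{a}) \rangle^{s}$ inside the integral gives
\begin{equation*}
\mathcal{Z}_{p,\mathfrak{m}}(\mathfrak{a}^{-1}, \mathfrak{c}; s) = \omega(\mathcal{N}(\mathfrak{a})) \int_{1+p^{e+m_1}\mathbb{Z}_p} \bigl(\langle \mathcal{N}(\mathfrak{a})\rangle\, x^{-1}\bigr)^{s} \, d\mu_{p, \mathfrak{m}}^{\mathfrak{a}, \mathfrak{c}}(x).
\end{equation*}

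The key observation is that $\langle \mathcal{N}(\mathfrak{a})\rangle\, x^{-1}$ lies in $1+q\mathbb{Z}_p$ for every $x$ in the domain of integration: by Lemma~\ref{lem:normcong} we have $\langle \mathcal{N}(\mathfrak{a})\rangle \in 1+p^{e}\mathbb{Z}_p$, and $x^{-1} \in 1+p^{e+m_1}\mathbb{Z}_p$, so their product lies in $1+p^{e}\mathbb{Z}_p \subseteq 1+q\mathbb{Z}_p$. Hence the definition of $y^s$ for $y \in 1+q\mathbb{Z}_p$ applies and yields
\begin{equation*}
\bigl(\langle \mathcal{N}(\mathfrak{a})\rangle\, x^{-1}\bigr)^{s} = \sum_{n \geq 0} \binom{s}{n}\bigl(\langle \mathcal{N}(\mathfrak{a})\rangle\, x^{-1} - 1\bigr)^{n}.
\end{equation*}

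Finally, I would interchange the sum and the integral. This is legitimate because the partial sums converge uniformly in $x$ (the $n$-th term has $\p$-adic absolute value at most $|\langle \mathcal{N}(\mathfrak{a})\rangle\, x^{-1} - 1|_p^n \leq p^{-en}$), so by continuity of $\mu_{p,\mathfrak{m}}^{\mathfrak{a},\mathfrak{c}}$ (Lemma~\ref{lem:intcont}) one may integrate term by term. This produces exactly the stated expansion. The coefficients appearing are then the Mahler coefficients of $s \mapsto \mathcal{Z}_{p,\mathfrak{m}}(\mathfrak{a}^{-1}, \mathfrak{c}; s)$ by the uniqueness part of Theorem~\ref{th:mahler}. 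No step presents a genuine obstacle; the only delicate point is to check that $\langle \mathcal{N}(\mathfrak{a})\rangle\, x^{-1}$ lies in $1+q\mathbb{Z}_p$ so that the $\p$-adic exponentiation is available, and this is where Proposition~\ref{prop:propmu} together with Lemma~\ref{lem:normcong} is essential.
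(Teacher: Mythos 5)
Your proof is correct and is exactly the argument the paper intends: the paper dispatches this proposition as "a direct consequence of Proposition~\ref{prop:defzp} and the remark that follows it," and that remark is precisely your combination of Proposition~\ref{prop:propmu} (to restrict the integral to $1+p^{e+m_1}\mathbb{Z}_p$ and replace $\phi_{-s}(x)$ by $x^{-s}$) with Lemma~\ref{lem:normcong} and the binomial expansion of $y^s$ for $y\in 1+q\mathbb{Z}_p$. Your explicit verification that $\langle\mathcal{N}(\mathfrak{a})\rangle x^{-1}\in 1+p^e\mathbb{Z}_p$ and the term-by-term integration are the right details to check, and they go through as you describe.
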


From this, we deduce the analyticity of $\p$-adic $L$-functions.

\begin{proposition}\label{prop:Lpanal}
Let $\mathcal{B}_e$ be the open ball in $\mathbb{C}_p$ of center $0$ and radius $p^{e-1/(p-1)}$. Then the $\p$-adic $L$-function $L_{p,\mathfrak{m}}(\chi; s)$ can be extended to an analytic function on $\mathcal{B}_e$, if $\chi$ is non-trivial, and to  a meromorphic function on $\mathcal{B}_e$ with a pole of order at most $1$ at $s=1$, if $\chi$ is trivial.
\end{proposition}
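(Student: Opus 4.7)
The starting point is the integral formula of Proposition~\ref{prop:defzp} together with the support statement Proposition~\ref{prop:propmu}: the measure $\mu := \mu_{p,\mathfrak{m}}^{\mathfrak{a},\mathfrak{c}}$ is concentrated on $1 + p^{e+m_1}\mathbb{Z}_p$. The idea is to rewrite $\phi_{-s}(x) = x^{-s}$ on the support (valid by Lemma~\ref{lem:expfact}, since $p^e\mathbb{Z}_p \subset q\mathbb{Z}_p$) as $\exp_p(-s\log_p x)$ and expand in $s$. For $x$ in the support, $|\log_p x|_p \leq p^{-(e+m_1)}$, so for $|s|_p < p^{e-1/(p-1)}$ the product $s\log_p x$ lies in the disk of convergence of $\exp_p$, uniformly in $x$. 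The continuity of $\mu$ (Lemma~\ref{lem:intcont}) licenses the exchange
\begin{equation*}
\int \phi_{-s}(x)\,d\mu \;=\; \sum_{n \geq 0} \frac{(-s)^n}{n!}\int (\log_p x)^n\,d\mu,
\end{equation*}
and the coefficient bound $\|\mu\|_p\, p^{n/(p-1) - n(e+m_1)}$ shows the series converges throughout $\mathcal{B}_e$. The same exponential device handles $\langle\mathcal{N}(\mathfrak{a})\rangle^s = \exp_p(s\log_p\langle\mathcal{N}(\mathfrak{a})\rangle)$ using Lemma~\ref{lem:normcong}; combining these yields the analytic continuation of $\mathcal{Z}_{p,\mathfrak{m}}(\mathfrak{a}^{-1},\mathfrak{c};s)$ to $\mathcal{B}_e$.

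For the $L$-function I then analyze the Euler-type factor $(\chi(\mathfrak{c})\langle c\rangle^{1-s} - 1)^{-1}$ in \eqref{def:padicL}. The map $s \mapsto \langle c\rangle^{1-s}$ is analytic on $\mathcal{B}_e$ by the same exponential construction. A zero of the factor on $\mathcal{B}_e$ would exhibit the root of unity $\chi(\mathfrak{c})^{-1}$ as $\exp_p((1-s)\log_p\langle c\rangle)$ in the image of $\exp_p$ on its disk of convergence; but $\log_p \circ \exp_p$ is the identity there and $\log_p$ of any root of unity vanishes, so such an equality forces $\chi(\mathfrak{c}) = 1$ and $(1-s)\log_p\langle c\rangle = 0$. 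Since no rational prime $c$ coincides with its own Teichm\"uller lift, $\langle c\rangle \neq 1$ is a nontrivial principal unit and $\log_p\langle c\rangle \neq 0$; hence the factor vanishes on $\mathcal{B}_e$ only at $s=1$ and only when $\chi(\mathfrak{c}) = 1$. If $\chi$ is non-trivial, the choice of $\mathfrak{c}$ from Remark~\ref{rk:constc} ensures $\chi(\mathfrak{c}) \neq 1$, so the reciprocal is analytic and $L_{p,\mathfrak{m}}(\chi;\,\cdot\,)$ is analytic on $\mathcal{B}_e$. If $\chi$ is trivial, the zero at $s=1$ is simple (derivative $-\log_p\langle c\rangle \neq 0$), producing the claimed simple pole.

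The main technical point is the uniform-convergence justification for swapping sum and integral in the first paragraph; everything else reduces to bookkeeping with the estimates $|\log_p x|_p \leq p^{-(e+m_1)}$ and $|n!|_p \geq p^{-n/(p-1)}$, plus the elementary observation that a rational prime cannot equal its own Teichm\"uller lift.
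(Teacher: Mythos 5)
Your proof is correct, and the second half (the analysis of the factor $\chi(\mathfrak{c})\langle c\rangle^{1-s}-1$ via $\exp_p$ and $\log_p$, the choice of $\mathfrak{c}$ from Remark~\ref{rk:constc} when $\chi$ is non-trivial, and the simple zero at $s=1$ in the trivial case) is essentially identical to the paper's. The first half, however, takes a genuinely different route. The paper does not expand the integrand directly: it starts from the Mahler expansion of $\mathcal{Z}_{p,\mathfrak{m}}(\mathfrak{a}^{-1},\mathfrak{c};s)$ in the preceding proposition, bounds the $n$-th Mahler coefficient by $p^{-en}$ using Lemma~\ref{lem:normcong} and the integrality of $F_{\mathfrak{m}}(\mathfrak{a}^{-1},\mathfrak{c};T)$, and then invokes Amice's criterion (Corollaire 2(c) of \cite[p.~162]{amice}, cf.\ Theorem~\ref{th:amice}) to convert that decay rate into analyticity of radius $p^{e-1/(p-1)}$. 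You instead write $\phi_{-s}(x)=\exp_p(-s\log_p x)$ on the support of the measure and integrate term by term, which is more elementary and self-contained (no appeal to Amice), and in fact gives the partial zeta functions a slightly larger radius $p^{e+m_1-1/(p-1)}$ before the factor $\langle\mathcal{N}(\mathfrak{a})\rangle^s$ and the Euler-type factor cut it back down to $\mathcal{B}_e$. The price is that you must justify the interchange of $\sum$ and $\int$, which you do correctly via the uniform bound $\bigl|\tfrac{(-s)^n}{n!}(\log_p x)^n\bigr|_p\le\bigl(|s|_p\,p^{1/(p-1)-(e+m_1)}\bigr)^n$ and the boundedness of the functional $\mu$; one pedantic point you could add is that each $x\mapsto(\log_p x)^n$, extended by $0$ off the open-and-closed set $1+p^{e+m_1}\mathbb{Z}_p$, is genuinely in $\mathcal{C}(\mathbb{Z}_p,\mathbb{C}_p)$. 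The paper's route has the advantage of reusing the Mahler-coefficient machinery that drives the computational sections, but both arguments are sound.
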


\begin{proof}
We first prove that the $\p$-adic twisted partial zeta functions $\mathcal{Z}_{p,\mathfrak{m}}(\mathfrak{a}^{-1}, \mathfrak{c}; s)$ can be extended to analytic functions of radius\footnote{We say that an analytic function has radius $r$ if it converges on the \emph{open} ball in $\mathbb{C}_p$ of center $0$ and radius~$r$.} $p^{e-1/(p-1)}$. For $x \in 1+p^{e+m_1}\mathbb{Z}_p$ it follows from Lemma~\ref{lem:normcong} that $\left|\left(\big\langle \mathcal{N}(\mathfrak{a})x^{-1} \big\rangle - 1\right)^n \right|_p \leq p^{-en}$, and since the measure $\mu_{p, \mathfrak{m}}^{\mathfrak{a}, \mathfrak{c}}$ is of norm $\leq 1$ and $F_\mathfrak{m}(\mathfrak{a}^{-1}, \mathfrak{c}; T)$ has coefficients in $\mathbb{Z}_p$ we conclude by the previous proposition that the $n$-th Malher coefficient of $\mathcal{Z}_{p,\mathfrak{m}}(\mathfrak{a}^{-1}, \mathfrak{c}; s)$ has $\p$-adic absolute value $\leq p^{-en}$. The result then follows from Corollaire 2(c) of \cite[p.~162]{amice}.  (See also Theorem~\ref{th:amice}.)

Denoting by $g(s)$ the inverse of the factor before the sum in the RHS of \eqref{def:padicL} we have
\begin{equation*}
g(s) := \chi(\mathfrak{c}) \langle c \rangle^{1-s} - 1 = \chi(\mathfrak{c}) \, \exp_p((1-s) \log_p\langle c \rangle) - 1 
\end{equation*}
where $\exp_p$ and $\log_p$ are respectively the $\p$-adic exponential and logarithm functions (see \cite[\S5.4]{robert}). The function $g(s)$ is analytic on $\mathcal{B}_e$ since $|\log_p  \langle c \rangle| \leq q^{-e}$, using again Lemma~\ref{lem:normcong} and the fact that the $\p$-adic exponential function has radius $p^{-1/(p-1)}$. Furthermore, from the properties of the $\p$-adic exponential and logarithm functions we see that $g$ has a simple zero at $s = 1$ if $\chi(\mathfrak{c}) = 1$, and does not vanish otherwise. Now if $\chi$ is non-trivial we can proceed as in Remark~\ref{rk:constc} and choose $\mathfrak{c}$ such that $\chi(\mathfrak{c}) \ne 1$; this proves the result for the first case. Otherwise we write $g(s) = (s-1) h(s)$ where $h(s)$ is an analytic function non-vanishing on $\mathcal{B}_e$ and the second case follows.
\end{proof}

The next result establishes that the $\p$-adic $L$-function $L^{(m)}_{p,\mathfrak{m}}(\chi; s)$ for any $m$ in $\mathbb{Z}$ can be recovered from \emph{the} $\p$-adic $L$-function (corresponding to $m = 1$).

\begin{proposition}\label{prop:changem}
For any integer $m$ and any $s \in \mathbb{Z}_p$, assuming $s \ne 1$ if $\chi = \kappa^{1-m}$, we have
\begin{equation*} 
L^{(m)}_{p,\mathfrak{m}}(\chi; s) = L_{p,\mathfrak{m}}(\chi\kappa^{1-m}; s).
\end{equation*}
\end{proposition}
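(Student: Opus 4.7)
The plan is to show the identity by comparing the two sides term-by-term in the defining expression \eqref{def:padicL}. The key observation, already noted in the text immediately before the theorem, is that $\kappa(\mathfrak{a}) = \omega(\mathcal{N}(\mathfrak{a}))$ for every fractional ideal $\mathfrak{a}$ coprime to $\mathfrak{m}$; in particular $\kappa(\mathfrak{c}) = \omega(c)$. This identity lets us absorb every power of $\omega(\mathcal{N}(\mathfrak{a}))$ that distinguishes $\mathcal{Z}^{(m)}_{p,\mathfrak{m}}$ from $\mathcal{Z}_{p,\mathfrak{m}}$ into the character, converting $\chi$ into $\chi\kappa^{1-m}$.

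First I would rewrite the twisted partial zeta function. Starting from \eqref{eq:defzpxi} and factoring out one copy of $\omega(\mathcal{N}(\mathfrak{a}))$, one immediately obtains
\begin{equation*}
\mathcal{Z}^{(m)}_{p,\mathfrak{m}}(\mathfrak{a}^{-1}, \mathfrak{c}; s) = \omega(\mathcal{N}(\mathfrak{a}))^{m-1}\,\mathcal{Z}_{p,\mathfrak{m}}(\mathfrak{a}^{-1}, \mathfrak{c}; s) = \kappa(\mathfrak{a})^{m-1}\,\mathcal{Z}_{p,\mathfrak{m}}(\mathfrak{a}^{-1}, \mathfrak{c}; s).
\end{equation*}
Substituting this into the sum in \eqref{def:padicL}, each term becomes
\begin{equation*}
\chi(\mathfrak{a}_i^{-1})\,\mathcal{Z}^{(m)}_{p,\mathfrak{m}}(\mathfrak{a}_i^{-1}, \mathfrak{c}; s) = (\chi\kappa^{1-m})(\mathfrak{a}_i^{-1})\,\mathcal{Z}_{p,\mathfrak{m}}(\mathfrak{a}_i^{-1}, \mathfrak{c}; s),
\end{equation*}
which is exactly the summand appearing in the definition of $L_{p,\mathfrak{m}}(\chi\kappa^{1-m};s) = L^{(1)}_{p,\mathfrak{m}}(\chi\kappa^{1-m};s)$. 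Next I would check that the ``Euler-like'' prefactor in \eqref{def:padicL} also matches: using $\omega(c) = \kappa(\mathfrak{c})$, the prefactor for $L^{(m)}_{p,\mathfrak{m}}(\chi;s)$ can be rewritten as
\begin{equation*}
\left(\frac{\chi(\mathfrak{c})}{\omega(c)^{m-1}\langle c\rangle^{s-1}} - 1\right)^{-1} = \left(\frac{(\chi\kappa^{1-m})(\mathfrak{c})}{\langle c\rangle^{s-1}} - 1\right)^{-1},
\end{equation*}
which is precisely the prefactor in the definition of $L_{p,\mathfrak{m}}(\chi\kappa^{1-m};s)$. Multiplying the two equalities gives the claimed identity.

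There is essentially no obstacle; the only point to note is that the hypothesis excluding $s=1$ when $\chi = \kappa^{1-m}$ matches exactly the exclusion $s=1$ in the definition of $L_{p,\mathfrak{m}}(\chi\kappa^{1-m};s)$ when $\chi\kappa^{1-m}$ is trivial, so both sides are defined on the same domain and the proof is pure bookkeeping.
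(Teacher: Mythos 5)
Your proof is correct and is essentially the paper's own argument: the paper likewise observes that $\mathcal{Z}^{(m)}_{p,\mathfrak{m}}(\mathfrak{a}^{-1},\mathfrak{c};s) = \kappa(\mathfrak{a})^{m-1}\,\mathcal{Z}_{p,\mathfrak{m}}(\mathfrak{a}^{-1},\mathfrak{c};s)$ via $\kappa(\mathfrak{a}) = \omega(\mathcal{N}(\mathfrak{a}))$ and then substitutes into \eqref{def:padicL}. You have simply written out the substitution (both the summands and the prefactor) in more detail than the paper does.
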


\begin{proof} 
Let $\mathfrak{a}$ be an integral ideal coprime to $p$. Then $\mathcal{Z}^{(m)}_{p,\mathfrak{m}}(\mathfrak{a}^{-1}, \mathfrak{c}; s) = \kappa(\mathfrak{a})^{m-1} \mathcal{Z}_{p,\mathfrak{m}}(\mathfrak{a}^{-1}, \mathfrak{c}; s)$, since $\kappa(\mathfrak{a}) = \omega(\mathcal{N}(\mathfrak{a}))$, and substitution in \eqref{def:padicL} yields the result.  
\end{proof}

A key property of $\p$-adic $L$-functions is that they are Iwasawa analytic functions (see \cite{ribet}); we will show this in the proof of the next theorem. Let $E_\chi$ be the subextension of $E(\mathfrak{m})/E$ fixed by the kernel of $\chi$. After Greenberg \cite{greenberg} we say that $\chi$ is of \emph{type $W$} if $E_\chi \subset E_\infty$.\footnote{See the discussion after Corollary~\ref{cor:intcoeff} for the notation used in the theorem and its proof.}

\begin{theorem}[Deligne-Ribet]\label{thm:iwasawaseries}
Fix a topological generator $u$ of $1 + p^e\mathbb{Z}_p$. Then there exists a unique power series $\mathfrak{I}_{p,\mathfrak{m}}(\chi; X)$ in $\mathbb{Q}_p(\chi)[[X]]$\,---\,or $X^{-1}\, \mathbb{Q}_p(\chi)[[X]]$ if $\chi$ is trivial\,---\,called the \emph{Iwasawa power series of $\chi$} \uppar{with respect to $u$} such that, for all $s \in \mathbb{Z}_p$ \uppar{excepting $s = 0$ if $\chi$ is trivial} we have
\begin{equation}\label{eq:iwasawaseries}
L_{p,\mathfrak{m}}(\chi; 1-s) = \mathfrak{I}_{p,\mathfrak{m}}(\chi; u^s-1).
\end{equation}
Moreover,
if $\chi$ is trivial or not of type $W$ then $\mathfrak{I}_{p,\mathfrak{m}}(\chi; X)$ has coefficients in $\mathbb{Z}_p[\chi]$. Otherwise there exists a non-trivial root of unity $\xi$ of order dividing $p^{m_1}\!$ such that $(\xi(1+X)-1) \mathfrak{I}_{p,\mathfrak{m}}(\chi; X)$ has coefficients in $\mathbb{Z}_p[\chi]$.
\end{theorem}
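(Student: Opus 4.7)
The plan is to construct $\mathfrak{I}_{p,\mathfrak{m}}(\chi;X)$ as an explicit quotient of power series built from the integral representation of Proposition~\ref{prop:defzp}, then read off its pole structure and integrality from a case analysis of the denominator. Since $u$ topologically generates $1+p^e\mathbb{Z}_p$, Lemma~\ref{lem:normcong} gives $\langle\mathcal{N}(\mathfrak{a})\rangle = u^{\alpha(\mathfrak{a})}$ with $\alpha(\mathfrak{a})\in\mathbb{Z}_p$, and Proposition~\ref{prop:propmu} gives $x=u^{\beta(x)}$ with $\beta(x)\in p^{m_1}\mathbb{Z}_p$ for $x$ in the support of $\mu^{\mathfrak{a},\mathfrak{c}}_{p,\mathfrak{m}}$; write also $\langle c\rangle=u^\gamma$. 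By Remark~\ref{rk:constc} I arrange $\chi(\mathfrak{c})\neq 1$ when $\chi$ is non-trivial and, by a further Chebotarev argument, $\gamma$ a $p$-adic unit (take $c\not\equiv 1\pmod{p^{e+1}}$). Setting $X:=u^s-1$, so that $u^{-\alpha(\mathfrak{a})s}=(1+X)^{-\alpha(\mathfrak{a})}$ and $u^{s\beta(x)}=(1+X)^{\beta(x)}$, Proposition~\ref{prop:defzp} becomes
\begin{equation*}
\mathcal{Z}_{p,\mathfrak{m}}(\mathfrak{a}^{-1},\mathfrak{c};1-s) = \omega(\mathcal{N}(\mathfrak{a}))\langle\mathcal{N}(\mathfrak{a})\rangle\,(1+X)^{-\alpha(\mathfrak{a})} \!\int_{1+p^{e+m_1}\mathbb{Z}_p}\! x^{-1}(1+X)^{\beta(x)}\,d\mu^{\mathfrak{a},\mathfrak{c}}_{p,\mathfrak{m}}(x).
\end{equation*}
Expanding $(1+X)^{\beta(x)}$ via the binomial theorem, each coefficient $\int x^{-1}\binom{\beta(x)}{n}d\mu^{\mathfrak{a},\mathfrak{c}}_{p,\mathfrak{m}}$ lies in $\mathbb{Z}_p$ since the integrand is bounded by $1$ in $p$-adic absolute value and $\|\mu^{\mathfrak{a},\mathfrak{c}}_{p,\mathfrak{m}}\|_p\leq 1$ (Corollary~\ref{cor:intcoeff}); combined with $(1+X)^{-\alpha(\mathfrak{a})}\in\mathbb{Z}_p[[X]]$ this yields $\mathcal{Z}_{p,\mathfrak{m}}(\mathfrak{a}^{-1},\mathfrak{c};1-s) = \omega(\mathcal{N}(\mathfrak{a}))\langle\mathcal{N}(\mathfrak{a})\rangle\, R_\mathfrak{a}(X)$ with $R_\mathfrak{a}(X)\in\mathbb{Z}_p[[X]]$.

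Summing over representatives $\mathfrak{a}_i$ of $\Cl_\mathfrak{m}(E)$ weighted by $\chi(\mathfrak{a}_i^{-1})$ produces a numerator $N(X)\in\mathbb{Z}_p[\chi][[X]]$, while the denominator from \eqref{def:padicL} is $g(X):=\chi(\mathfrak{c})(1+X)^\gamma-1\in\mathbb{Z}_p[\chi][[X]]$. I define $\mathfrak{I}_{p,\mathfrak{m}}(\chi;X):=N(X)/g(X)$, and the interpolation \eqref{eq:iwasawaseries} is immediate from \eqref{def:padicL} with $m=1$. The key identity $(1+Y)^\gamma-1 = Y\cdot h(Y)$ with $h(0)=\gamma$ a $p$-unit shows that $h$ is a unit power series, and this drives the three cases. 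If $\chi$ is trivial, $g(X)=X\cdot h(X)$, so $\mathfrak{I}(\chi;X)\in X^{-1}\mathbb{Z}_p[[X]]$. If $\chi$ is non-trivial of type $W$, the diagram \eqref{eq:cd} gives $\chi(\mathfrak{a})=\xi^{\alpha(\mathfrak{a})}$ for a non-trivial root of unity $\xi$ of order dividing $p^{m_1}$, so $g(X)=(\xi(1+X))^\gamma-1=(\xi(1+X)-1)\cdot v(X)$ where $v(X)$ comes from $h$ via the invertible substitution $Y=\xi(1+X)-1$ (whose derivative at $0$ is $\xi$, a unit) and is hence a unit in $\mathbb{Z}_p[\chi][[X]]$; therefore $(\xi(1+X)-1)\mathfrak{I}(\chi;X)=v(X)^{-1}N(X)\in\mathbb{Z}_p[\chi][[X]]$. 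If $\chi$ is non-trivial not of type $W$, decomposing $\chi=\chi_{(p)}\chi_{(p')}$ into $p$-part and prime-to-$p$ part, a Chebotarev choice of $\mathfrak{c}$ with $\chi_{(p)}(\mathfrak{c})=1$ and $\chi_{(p')}(\mathfrak{c})\neq 1$ makes $\chi(\mathfrak{c})$ a non-trivial root of unity of order coprime to $p$, so $g(0)=\chi(\mathfrak{c})-1$ is a unit in $\mathbb{Z}_p[\chi]$, $g(X)$ is a unit in $\mathbb{Z}_p[\chi][[X]]$, and $\mathfrak{I}(\chi;X)\in\mathbb{Z}_p[\chi][[X]]$.

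Uniqueness follows from the $p$-adic identity principle: the set $\{u^s-1:s\in\mathbb{Z}_p\}=\{y-1:y\in 1+p^e\mathbb{Z}_p\}$ accumulates at $0\in\mathbb{C}_p$, and Proposition~\ref{prop:Lpanal} ensures both $\mathfrak{I}$ and any competitor are analytic on a positive-radius disk, so agreement on this set via \eqref{eq:iwasawaseries} forces equality. The main obstacle in the plan is the last step of the case analysis: the Chebotarev trick making $\chi(\mathfrak{c})-1$ a unit requires $\chi_{(p')}$ to be non-trivial, whereas there exist non-type-$W$ characters of pure $p$-power order (fixed field has $p$-power degree over $E$ but is not contained in $E_\infty$). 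For such $\chi$, no choice of $\mathfrak{c}$ makes $g(X)$ a unit in $\mathbb{Z}_p[\chi][[X]]$, and the integrality of $\mathfrak{I}(\chi;X)$ must instead be established by showing that $N(X)$ is automatically divisible in $\mathbb{Z}_p[\chi][[X]]$ by the non-unit part of $g(X)$\,---\,ultimately reducing to the congruences on twisted partial zeta values from Deligne-Ribet \cite{dr:zeta}.
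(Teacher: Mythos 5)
Your construction is essentially the paper's own: the three factors you call $\omega(\mathcal{N}(\mathfrak{a}))\langle\mathcal{N}(\mathfrak{a})\rangle(1+X)^{-\alpha(\mathfrak{a})}$, $\int x^{-1}(1+X)^{\beta(x)}d\mu$, and $g(X)$ are exactly the paper's $N(\mathfrak{a};X)$, $A(\mathfrak{a},\mathfrak{c};X)$ and $C(\mathfrak{c},\chi;X)$, with $\mathcal{L}_u$ in place of your $\alpha,\beta,\gamma$, and your handling of the trivial case, of the type-$W$ case (your direct factorization $(\xi(1+X))^{\gamma}-1=(\xi(1+X)-1)v(X)$ is equivalent to the paper's change-of-variable identity), and of characters whose order is not a $p$-power all go through. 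But the case you flag at the end is a genuine gap, not a loose end: for a non-trivial $\chi$ of $p$-power order that is not of type $W$, your argument establishes nothing about the integrality of $\mathfrak{I}_{p,\mathfrak{m}}(\chi;X)$, and deferring to ``congruences from Deligne--Ribet'' defeats the purpose, since the theorem being proved is precisely such a statement and the paper derives it self-containedly from the cone construction.

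The paper closes this case with two ideas absent from your proposal. First, for non-trivial $\chi$ with $E_\chi\cap E_\infty=E$ (a class that contains characters of $p$-power order), one does not try to make $g(0)$ a unit; instead one chooses $\mathfrak{c}$ by \u{C}ebotarev with Frobenius trivial on $E_\chi$ but non-trivial on $E_1$, so that $\chi(\mathfrak{c})=1$ \emph{and} $\gamma=\mathcal{L}_u(c)\in\mathbb{Z}_p^\times$. Then $g(X)=X\,U(X)$ with $U$ invertible, whence $\mathfrak{I}_{p,\mathfrak{m}}(\chi;X)\in X^{-1}\mathbb{Z}_p[\chi][[X]]$ a priori; since $L_{p,\mathfrak{m}}(\chi;s)$ has a finite limit as $s\to 1$ for non-trivial $\chi$ (Remark~\ref{rk:chiczero}), the coefficient of $X^{-1}$ must vanish, and integrality follows. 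Second, an arbitrary non-type-$W$ character is factored as $\chi=\psi\rho^v$ with $\rho$ the generator of the type-$W$ characters and $E_\psi\cap E_\infty=E$, and the substitution identity $\mathfrak{I}_{p,\mathfrak{m}}(\psi\rho^v;X)=\mathfrak{I}_{p,\mathfrak{m}}(\psi;\xi_0^v(1+X)-1)$ --- proved by checking that $A$ is invariant under $X\mapsto\xi_0^v(1+X)-1$ while $N$ picks up $\rho^{-v}(\mathfrak{a})$ and $C(\mathfrak{c},\psi;\cdot)$ becomes $C(\mathfrak{c},\psi\rho^v;\cdot)$ --- transports integrality from $\psi$ to $\chi$. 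Note that both steps use different auxiliary primes $\mathfrak{c}$ in different cases, which is legitimate only because uniqueness (established first) shows $\mathfrak{I}_{p,\mathfrak{m}}(\chi;X)$ is independent of $\mathfrak{c}$; you should make that logical dependence explicit rather than fixing one $\mathfrak{c}$ at the outset.
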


\begin{proof}
Unicity is clear by \eqref{eq:iwasawaseries} since the set $\{u^s - 1$ with $s \in \mathbb{Z}_p\} = p^e \mathbb{Z}_p$ admits $0$ as a limit point. In particular, if the Iwasawa power series exits it does not depend on the choice of $\mathfrak{c}$. We will use this fact below by choosing prime ideals $\mathfrak{c}$ satisfying additional properties. We now prove existence. For $x \in \mathbb{Z}_p^\times$ with $\langle x \rangle \in 1 + p^e \mathbb{Z}_p$ define 
\begin{equation*}
\mathcal{L}_u(x) := \frac{\log_p \langle x \rangle}{\log_p u} \in \mathbb{Z}_p,
\end{equation*}
so that
\begin{equation*}
\langle x\rangle^s = \big(u^{\mathcal{L}_u(x)}\big)^{s} = \sum_{\ell \geq 0} (u^s-1)^\ell \binom{\mathcal{L}_u(x)}{\ell}.
\end{equation*}
In particular, we can use this equation with $x := \mathcal{N}(\mathfrak{a})$, for some ideal $\mathfrak{a}$ coprime to $p$, by Lemma~\ref{lem:normcong}. We define three power series, with coefficients in $\mathbb{Z}_p$, $\mathbb{Z}_p$, and $\mathbb{Z}_p[\chi]$ respectively, as follows.
\begin{align*}
N(\mathfrak{a}; X) & := \mathcal{N}(\mathfrak{a}) \sum_{\ell \geq 0} \binom{-\mathcal{L}_u(\mathcal{N}(\mathfrak{a}))}{\ell} \, X^\ell  = \mathcal{N}(\mathfrak{a}) (1+X)^{-\mathcal{L}_u(\mathcal{N}(\mathfrak{a}))}, \\
A(\mathfrak{a}, \mathfrak{c}; X) & := \sum_{\ell \geq 0} \int_{1+p^{e+m_1} \mathbb{Z}_p} x^{-1} \binom{\mathcal{L}_u(x)}{\ell} \, d\mu_{p, \mathfrak{m}}^{\mathfrak{a}, \mathfrak{c}} \,\, X^\ell = \int_{1+p^{e+m_1} \mathbb{Z}_p} x^{-1} (1+X)^{\mathcal{L}_u(x)} \, d\mu_{p,\mathfrak{m}}^{\mathfrak{a}, \mathfrak{c}}, \\
C(\mathfrak{c}, \chi; X) & := \chi(\mathfrak{c}) \sum_{\ell \geq 0} \binom{\mathcal{L}_u(c)}{\ell} X^\ell - 1 = \chi(\mathfrak{c}) (1+X)^{\mathcal{L}_u(c)} - 1.
\end{align*}
Then for all $s \in \mathbb{Z}_p$, using \eqref{eq:defzpxi} for the first equality, we have
\begin{align*}
N(\mathfrak{a}; u^s - 1) A(\mathfrak{a}, \mathfrak{c}; u^s - 1) & = \mathcal{Z}_{p,\mathfrak{m}}(\mathfrak{a}^{-1}, \mathfrak{c}; 1-s), \\ 
C(\mathfrak{c}, \chi; u^s - 1) & = \chi(\mathfrak{c}) \langle c \rangle^s - 1.
\end{align*}
We define
\begin{equation}\label{eq:compiwaseries}
\mathfrak{I}_{p,\mathfrak{m}}(\chi; X) := C(\mathfrak{c}, \chi; X)^{-1} \sum\limits_{i=1}^{h_\mathfrak{m}(E)} \chi(\mathfrak{a}_i^{-1}) N(\mathfrak{a}_i; X) A(\mathfrak{a}_i, \mathfrak{c}; X) 
\end{equation}
where the sum is over integral ideals $\mathfrak{a}_i$, relatively prime to $\mathfrak{c}$, representing all the classes of $\Cl_\mathfrak{m}(E)$. For all $s \in \mathbb{Z}_p$ such that $C(\mathfrak{c}, \chi; u^s - 1) \ne 0$ it follows from \eqref{def:padicL} and the equalities above that \eqref{eq:iwasawaseries} holds.

We now consider several cases, not necessarily disjoint. Assume first that the order of $\chi$ is not a power of $p$. We reason as in Remark~\ref{rk:constc} with $\tsigma \in \Gal(E(\mathfrak{m})/E)$ such that the order of $\chi(\tsigma)$ is not a power of $p$. Then $\chi(\mathfrak{c}) - 1$, the constant coefficient of $C(\mathfrak{c}, \chi; X)$, is a $\p$-adic unit. Therefore $C(\mathfrak{c}, \chi; X)$ does not vanish on $p^e\mathbb{Z}_p$ and is invertible in $\mathbb{Z}_p[\chi][[X]]$. This proves that $\mathfrak{I}_{p,\mathfrak{m}}(\chi; X) \in \mathbb{Z}_p[\chi][[X]]$ and satisfies \eqref{eq:iwasawaseries} for all $s \in \mathbb{Z}_p$.

Assume now that $\chi$ is such that $E_\chi \cap E_\infty = E$. Let $N$ be the Galois closure over $\mathbb{Q}$ of the compositum of $E_\chi$ and $E_1$. Then there exists $\sigma \in \Gal(N/\mathbb{Q})$ such that $\sigma_{|E_\chi}$ is trivial but $\sigma_{|E_1}$ is non-trivial. Let $\mathfrak{C}$ be a prime ideal of $N$, coprime to $\mathfrak{f}\mathbb{Z}_N$, whose Frobenius is equal to $\sigma$, and let $\mathfrak{c} := \mathfrak{C} \cap \mathbb{Z}_E$. Then $\mathfrak{c}$ has residual degree $1$, and $\chi(\mathfrak{c}) = 1$ so the constant term of $C(\mathfrak{c}, \chi; X)$ is zero. Also, by construction we have $\mathcal{L}_u(c) \in \mathbb{Z}_p^\times$,
and since this is the coefficient of $X$ in $C(\mathfrak{c}, \chi; X)$ it follows that $C(\mathfrak{c}, \chi; X) = X U(X)$, where $U(X) \in \mathbb{Z}_p[\chi][[X]]$ is an invertible power series. Therefore $C(\mathfrak{c}, \chi; X)$ vanishes only at $0$, that is, for $s = 0$, and $\mathfrak{I}_{p,\mathfrak{m}}(\chi; X) \in X^{-1} \mathbb{Z}_p[\chi][[X]]$. This proves the result when $\chi$ is trivial. When $\chi$ is non-trivial, the limit of $\mathfrak{I}_{p,\mathfrak{m}}(\chi; X)$ when $X \to 0$ exists and is finite.\footnote{It is equal to $L_{p,\mathfrak{m}}(\chi; 1)$.} Therefore the coefficient of $X^{-1}$ in $\mathfrak{I}_{p,\mathfrak{m}}(\chi; X)$ is zero. This proves the result in the non-trivial case.

Before looking at the other cases we need additional notation and results. Recall that $m_0$ and $m_1$ are such that $\mathbb{Q}_{m_0} = E \cap \mathbb{Q}_\infty$ and $\mathbb{Q}_{m_0+m_1} = E(\mathfrak{m}) \cap \mathbb{Q}_\infty$. Let $\xi_0$ be a root of unity of order $p^{m_1}$. Define a function on $1 + qp^{m_0}\mathbb{Z}_p$ by
\begin{equation*}
a \mapsto \xi_0^{(\log_p a)/(qp^{m_0})}.
\end{equation*}
This map is a group homomorphism with kernel $1 + qp^{m_0+m_1}\mathbb{Z}_p$. Composing with the function on the top in diagram \eqref{eq:cd} we get a character 
\begin{equation*}
\mathfrak{a} \mapsto \xi_0^{\log_p \langle \mathcal{N}(\mathfrak{a})\rangle/(qp^{m_0})}
\end{equation*}
on the ray-class group $\Cl_\mathfrak{m}(E)$. It is easy to see from its construction that this character generates the subgroup of characters of $\Cl_\mathfrak{m}(E)$ of type $W$. Since $e = m_0 +v_p(q)$, we have $\log_p u/(qp^{m_0}) \in \mathbb{Z}_p^\times$ and without loss of generality we can replace $\xi_0$ by $\xi_0^{\log_p u/(qp^{m_0})}$ to get the character $\rho : \mathfrak{a} \mapsto \xi_0^{\mathcal{L}_u(\mathcal{N}(\mathfrak{a}))}\!$, which still generates the group of characters of $\Cl_\mathfrak{m}(E)$ of type~$W$. For $v \in \mathbb{Z}$ we compute
\begin{equation*}
A(\mathfrak{a}, \mathfrak{c}; \xi_0^v(1+X)-1) = \int_{1+p^{e+m_1} \mathbb{Z}_p} x^{-1} (\xi_0^v(1+X))^{\mathcal{L}_u(x)} \, d\mu_{p,\mathfrak{m}}^{\mathfrak{a}, \mathfrak{c}} = A(\mathfrak{a}, \mathfrak{c}; X). 
\end{equation*}
We also have $N(\mathfrak{a}; \xi_0^v(1+X) - 1) = \rho^{-v}(\mathfrak{a}) N(\mathfrak{a}; X)$ and, for any character $\psi$ of $\Cl_\mathfrak{m}(E)$, $C(\mathfrak{c}, \psi; \xi_0^v(1+X)-1) = C(\mathfrak{c}, \psi\rho^v; X)$. We conclude that
\begin{equation}\label{eq:chgvariwa}
\mathfrak{I}_{p,\mathfrak{m}}(\psi\rho^v; X) = \mathfrak{I}_{p,\mathfrak{m}}(\psi; \xi_0^v(1+X) - 1).
\end{equation}

Assume now that $\chi$ is of type $W$. Then $\chi = \rho^v$ for some $v \in \mathbb{Z}$. Using \eqref{eq:chgvariwa} with $\psi$ the trivial character, we find that $(\xi_0^v(1+X)-1) \mathfrak{I}_{p,\mathfrak{m}}(\chi; X) \in \mathbb{Z}_p[\chi][[X]]$, which proves the result in this case, taking $\xi := \xi_0^v$.

Finally we consider the case $E_\chi \not\subset E_\infty$, in which $\chi$ is not of type $W$ and, in particular, $E_\chi \cap E_\infty \ne E$. In this case we can write $\chi = \psi\rho^v$ for some non-trivial character $\psi$ of $\Cl_\mathfrak{m}(E)$ satisfying $E_\psi \cap E_\infty = E$ and some $v \in \mathbb{Z}$. Since the Iwasawa power series for $\psi$ is in $\mathbb{Z}_p[\chi][[X]]$ by the argument above, it follows from \eqref{eq:chgvariwa} that the same is true for $\mathfrak{I}_{p,\mathfrak{m}}(\chi; X)$.
\end{proof}

As a first application we use this result to bound the size of the values of $\p$-adic $L$-functions.

\begin{corollary}\label{cor:boundc}
Let $s \in \mathbb{Z}_p$, with $s \ne 1$ if $\chi$ is trivial. Then
\begin{equation*}
|L_{p, \mathfrak{m}}(\chi; s)|_p \leq
\begin{cases}
1 & \text{if $\chi$ is not of type $W$}, \\
p^{-1/(p-1)} &  \text{if $\chi$ is of type $W$ and non-trivial}, \\
p^e |1-s|_p^{-1} &  \text{if $\chi$ is trivial}.
\end{cases}
\end{equation*}
\end{corollary}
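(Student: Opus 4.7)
The plan is to invoke Theorem~\ref{thm:iwasawaseries}, which gives $L_{p,\mathfrak{m}}(\chi;s)=\mathfrak{I}_{p,\mathfrak{m}}(\chi;u^{1-s}-1)$, and to bound the Iwasawa series at $X:=u^{1-s}-1$ using the integrality statements in each of the three cases. A preliminary calculation controls the size of $X$: since $u$ is a topological generator of $1+p^e\mathbb{Z}_p$ one has $|\log_p u|_p=p^{-e}$, and because $e>1/(p-1)$ (this holds since $e\ge 1$ for $p$ odd and $e\ge 2$ for $p=2$), the element $(1-s)\log_p u$ lies strictly inside the disk of convergence of $\exp_p$, so $|X|_p=|(1-s)\log_p u|_p=|1-s|_p\,p^{-e}$; in particular $|X|_p<1$.

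Two of the three cases are then essentially immediate. If $\chi$ is not of type $W$, Theorem~\ref{thm:iwasawaseries} gives $\mathfrak{I}_{p,\mathfrak{m}}(\chi;X)\in\mathbb{Z}_p[\chi][[X]]$: the coefficients have $p$-adic absolute value at most $1$, so $|\mathfrak{I}(X)|_p\le 1$. If $\chi$ is trivial, write $\mathfrak{I}_{p,\mathfrak{m}}(\chi;X)=c_{-1}/X+h(X)$ with $c_{-1}\in\mathbb{Z}_p$ and $h\in\mathbb{Z}_p[[X]]$; then
\begin{equation*}
|\mathfrak{I}(X)|_p\le\max\bigl(|c_{-1}|_p/|X|_p,\;|h(X)|_p\bigr)\le 1/|X|_p=p^e/|1-s|_p.
\end{equation*}

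The type-$W$ non-trivial case is the main obstacle. Theorem~\ref{thm:iwasawaseries} furnishes a non-trivial root of unity $\xi$ of order $p^a$ with $1\le a\le m_1$ such that $G(X):=(\xi(1+X)-1)\mathfrak{I}_{p,\mathfrak{m}}(\chi;X)\in\mathbb{Z}_p[\chi][[X]]$, hence $|G(X)|_p\le 1$ on $|X|_p<1$. Decomposing $\xi(1+X)-1=(\xi-1)+\xi X$ and using the well-known formula $|\xi-1|_p=p^{-1/(p^{a-1}(p-1))}\ge p^{-1/(p-1)}$ together with $|X|_p\le p^{-e}<|\xi-1|_p$, the ultrametric inequality yields $|\xi(1+X)-1|_p=|\xi-1|_p$, so
\begin{equation*}
|L_{p,\mathfrak{m}}(\chi;s)|_p=|\mathfrak{I}(X)|_p=|G(X)|_p/|\xi-1|_p.
\end{equation*}
The delicate point is to refine this into the stated sharp bound; I would attack this by unpacking the explicit structure of $G$ coming from the identity $\mathfrak{I}_{p,\mathfrak{m}}(\chi;X)=\mathfrak{I}_{p,\mathfrak{m}}(\psi;\xi(1+X)-1)$ used in the proof of Theorem~\ref{thm:iwasawaseries} (with $\psi$ trivial), writing $G(X)=c_{-1}+(\xi(1+X)-1)\,h(\xi(1+X)-1)$ with $c_{-1}$ the principal-part coefficient of $\mathfrak{I}_{p,\mathfrak{m}}(\psi;\cdot)$, and tracking carefully how $|c_{-1}|_p$ interacts with $|\xi-1|_p$.
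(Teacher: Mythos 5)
Your treatment of the first and third cases is exactly the paper's: for $\chi$ not of type $W$ one evaluates a power series with coefficients in $\mathbb{Z}_p[\chi]$ at $X=u^{1-s}-1$ with $|X|_p<1$, and for trivial $\chi$ one uses $v_p(u^{1-s}-1)=v_p(1-s)+e$ together with $\mathfrak{I}\in X^{-1}\mathbb{Z}_p[[X]]$. For the type-$W$ non-trivial case, the computation you carry out is also precisely the paper's: the author writes $\xi u^{1-s}-1=(\xi-1)u^{1-s}+(u^{1-s}-1)$ and concludes $v_p(\xi u^{1-s}-1)=v_p(\xi-1)\leq 1/(p-1)$, which combined with $|G(X)|_p\leq 1$ gives exactly your bound $|L_{p,\mathfrak{m}}(\chi;s)|_p\leq |\xi-1|_p^{-1}\leq p^{1/(p-1)}$ --- and the paper stops there.

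The ``delicate point'' you set out to resolve is therefore not a gap in your argument but an inconsistency in the statement: the exponent in the second case of Corollary~\ref{cor:boundc} should almost certainly read $p^{1/(p-1)}$ rather than $p^{-1/(p-1)}$. This is corroborated elsewhere in the paper: Theorem~\ref{thm:compiwa} and Corollary~\ref{cor:compvalues2} record that in the type-$W$ case one loses precisely a factor $p^{1/(p-1)}$ of precision, coming from division by a quantity $\gamma$ with $|\gamma|_p\geq p^{-1/(p-1)}$. Moreover the sharper bound is false in general, so the refinement you propose cannot succeed: writing $\mathfrak{I}_{p,\mathfrak{m}}(\chi;X)=Y^{-1}H(Y)$ with $Y=\xi(1+X)-1$ and $H\in\mathbb{Z}_p[[Y]]$, the constant term $H(0)$ is (up to a unit) the residue of the $p$-adic zeta function of $E$, which need not be divisible by $p$; when it is a unit one gets $|L_{p,\mathfrak{m}}(\chi;s)|_p=|\xi-1|_p^{-1}>1$. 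So you should not attempt to track $|c_{-1}|_p$ against $|\xi-1|_p$; instead, record the bound $p^{1/(p-1)}$ that your (and the paper's) argument actually yields.
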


\begin{proof}
The result is clear if $\chi$ is not of type $W$.\footnote{In this case we also get the well-known statement: $L_{p, \mathfrak{m}}(\chi; s) \equiv L_{p, \mathfrak{m}}(\chi; 0) \pmod{p^e}$.} Assume $\chi$ is of type $W$ and non-trivial. In the notation of the theorem we have $v_p(\xi - 1) \leq 1/(p-1)$ and $v_p(u^{1-s} - 1) \geq v_p(q)$. From the identity $ab - 1 = (a-1)b + b - 1$ we get $v_p(\xi u^{1-s} - 1) \leq 1/(p-1)$ and the result follows. For trivial $\chi$ we have $v_p(u^{1-s}-1) = v_p(1-s) + e$, which proves the result.
\end{proof}

\section{Computational methods}

In this last section we show how to use the results of the previous sections to compute values and representations of $\p$-adic $L$-functions explicitly. Note that all computations will involve only $\p$-adic integers\,---\,approximated by integers as we explain below\,---\,and that we will need to deal with $\p$-adic rational numbers only in the last subsection (and we will do it somewhat indirectly).

In discussing the computation of $\p$-adic approximations we will follow certain terminological conventions. Let $M \geq 1$ be an integer. For $a \in \mathbb{Z}_p$ we denote by $a \smod p^M$ the unique integer $\ta$ with $0 \leq \ta < p^M$ such that $|a-\ta|_p \leq p^{-M}$. By \emphq{computing $a$ \toprec{p^M}} we mean computing $a \smod p^M$. Let $L$ be a finite dimensional $\mathbb{Z}_p$-lattice with $\ldss{v_1}{v_n}$ a (fixed) basis of $L$. For $\alpha \in L$, by \emphq{computing $\alpha$ \toprec{p^M\!} (with respect to the basis $\ldss{v_1}{v_n}$)} we mean computing the $\p$-adic numbers $\ldss{a_1}{a_n}$ \toprec{p^M}, where $\alpha = a_1 v_1 + \cdots + a_n v_n$. Note that in what follows the basis $\ldss{v_1}{v_n}$ is usually not stated explicitly but it should be clear from the context what it is. Let $N \geq 1$ be an integer and let $F(T) \in \mathbb{Z}_p[[T]]$ be a power series. By \emphq{computing $F$ \toprec{(p^M, T^N)}} we mean computing the first $N$ coefficients of $F$ \toprec{p^M}. Let $\mu \in \mathcal{M}(\mathbb{Z}_p, \mathbb{Z}_p)$ be a measure with values in $\mathbb{Z}_p$. Its associated power series $F_\mu(T)$ must therefore lie in $\mathbb{Z}_p[[T]]$. By \emphq{computing $\mu$ \toprec{(p^M, T^N)}} we mean computing the power series $F_\mu$ \toprec{(p^M, T^N)}. Finally, let $f$ be a continuous function in $\mathcal{C}(\mathbb{Z}_p, \mathbb{Z}_p)$. Then its Mahler coefficients $(f_n)_{n \geq 0}$ are all $\p$-adic integers and tend $\p$-adically to zero. For $M \geq 1$ we denote by $N_f(M)$ the smallest integer $N \geq 0$ such that $|f_n|_p \leq p^{-M}$ for all $n \geq N$. By \emphq{computing $f$ \toprec{p^M}} we mean finding an integer $N \geq N_f(M)$ and computing the coefficients $\ldss{f_0}{f_{N-1}}$ \toprec{p^M}.

In the complexity estimates below it is assumed that fast multiplication algorithms are used. Therefore, for example, it takes $\tO(M \log p)$ bit operations to multiply two rational $\p$-adic integers \toprec{p^M}, and it takes $\tO(N M \log p)$ bit operations to multiply two power series in $\mathbb{Z}_p[[T]]$ \toprec{(p^M, T^N)}.
Here, to simplify the complexity expressions, we have used \emph{$\tO$-notation}: $g \in \tO(f)$ if there exists $c > 0$ such that $g \in O(f (\log f)^c)$.

Finally, we will assume that the necessary data to work in the field $E$ have been computed. In particular, we assume that an integral basis, say $(\theta_1, \dots, \theta_d)$, is known. We will express the elements of $E$ with respect to this basis. Also, we assume that the class group, the group of units, and the ray-class group modulo $\mathfrak{m}$ are known. Algorithms to perform these tasks can be found in \cite{cohen:book1} and \cite{cohen:book2}; see also \cite{belabas:topics}.

\subsection{Computations with continuous functions}\label{subsec:compcontfunc}

Let $f \in \mathcal{C}(\mathbb{Z}_p, \mathbb{Z}_p)$. For $N \geq 1$, we compute the first $N$ Mahler coefficients of $f$ with the following algorithm.

\begin{algo}[Computation of Mahler coefficients]\label{algo:mahlercoeff}\ 
\begin{description}\itemsep2pt
\item[\textbf{Input:}] $f \in \mathcal{C}(\mathbb{Z}_p, \mathbb{Z}_p)$.
\item[\textbf{Output:}] The first $N$ Mahler coefficients of $f$ \toprec{p^M}.
\item[\textbf{1.}] For $n = 0$ to $N-1$, do $\tf_n \gets f(n) \smod p^M$.
\item[\textbf{2.}] For $j = 1$ to $N-1$, for $n = N-1$ to $j$ (decreasing), do $\tf_n \gets \tf_n - \tf_{n-1} \smod p^M$.
\item[\textbf{3.}] Return $\ldss{\tf_0}{\tf_{N-1}}$.
\end{description}
\end{algo}

\begin{lemma}\label{lem:costphiscoef}
Assume for $x \in \mathbb{Z}_p$ that it takes $O(C)$ bit operations to compute $f(x)$ \toprec{p^M}. Then Algorithm~\ref{algo:mahlercoeff} computes the first $N$ Mahler coefficients of $f$ \toprec{p^M} in $O(NC+N^2 M \log p)$ bit operations. In particular, for $s \in \mathbb{Z}_p$ it takes $\tO(N M^2 \log^2 p + N^2 M \log p)$ bit operations to compute the first $N$ Mahler coefficients of $\phi_s$ \toprec{p^M}.
\end{lemma}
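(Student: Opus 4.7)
The plan is to split the argument into three parts: correctness of Algorithm~\ref{algo:mahlercoeff}, the general complexity bound, and the specialization to $\phi_s$. For correctness, an easy induction on $j$ shows that after the $j$-th pass of Step~2 the stored value $\widetilde f_n$ equals $\sum_{k=0}^{j}(-1)^{j-k}\binom{j}{k}f(n-j+k)$ modulo $p^M$ for every $n\geq j$; since the inner loop no longer touches $\widetilde f_n$ once $j$ exceeds $n$, the final value is $\widetilde f_n\equiv (\Delta^n f)(0)\tmod{p^M}$. By the classical inversion of the Newton series---equivalently, evaluating \eqref{eq:mahlerexp} at $x=0,1,\dots,n$ gives a unit lower-triangular system whose inverse is the finite-difference transform---this iterated difference is precisely the $n$-th Mahler coefficient of $f$ modulo $p^M$.

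For the general bound I would simply count operations: Step~1 makes $N$ evaluations of $f$ \toprec{p^M} at total cost $O(NC)$, while Step~2 is a triangular double loop performing $O(N^2)$ subtractions in $\mathbb{Z}/p^M\mathbb{Z}$, each using $O(M\log p)$ bit operations, for $O(N^2 M\log p)$ in total. Summing gives the first claim.

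For the specialization to $f=\phi_s$, I would bound the per-evaluation cost $C$. If $p\mid n$ then $\phi_s(n)=0$; otherwise I would compute $\omega(n)\smod p^M$ via the congruence $\omega(n)\equiv n^{p^{M-1}}\tmod{p^M}$ (valid for odd $p$; the case $p=2$ is trivial since $T_2=\{\pm1\}$), which holds because $p$-th powering sends $1+p^i\mathbb{Z}_p$ into $1+p^{i+1}\mathbb{Z}_p$ while $p^{M-1}\equiv 1\tmod{p-1}$ fixes $\omega(n)$. Using fast exponentiation, this uses $O(M\log p)$ multiplications mod $p^M$, each of cost $\tO(M\log p)$, for $\tO(M^2\log^2 p)$ bit operations; then $\langle n\rangle = n\omega(n)^{-1}$ follows by one inversion. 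Next I would evaluate $\langle n\rangle^s = \sum_{k\geq 0}(\langle n\rangle-1)^k\binom{s}{k}$ iteratively, updating $\binom{s}{k}$ from $\binom{s}{k-1}$ by one multiplication and one division by $k$; since $v_p(\langle n\rangle-1)\geq v_p(q)\geq 1$ and $\binom{s}{k}\in\mathbb{Z}_p$, the sum truncates after $O(M)$ terms, giving $\tO(M^2\log p)$ for this step. Combining yields $C=\tO(M^2\log^2 p)$, and substitution into $O(NC+N^2 M\log p)$ produces the announced bound.

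The main technical subtlety I anticipate is the $p$-adic precision bookkeeping inside the iterative series evaluation: division by $k$ loses $v_p(k)$ digits at step $k$, for a cumulative loss bounded by $v_p(K!)=O(K/(p-1))=O(M)$. This is harmless provided all intermediate arithmetic is carried out at an inflated yet still $O(M)$-size working precision, which leaves the complexity estimate intact up to the polylogarithmic factors absorbed in $\tO$.
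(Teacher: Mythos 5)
Your proof is correct and, for the correctness of Algorithm~\ref{algo:mahlercoeff} and for the general bound $O(NC+N^2M\log p)$, follows exactly the same route as the paper: the same loop invariant (after pass $j$ the stored value is the $j$-th finite difference $(\nabla^j f)(n-j)$ for $n\ge j$, and is frozen at $(\nabla^n f)(0)$ once $j>n$, the decreasing inner loop guaranteeing that old values are used), combined with the standard fact that iterated differences at $0$ are the Mahler coefficients, followed by a direct count of the triangular double loop. (A purely notational remark: the finite-difference operator is written $\nabla$ in the paper, while $\Delta$ is reserved for $(1+T)\frac{d}{dT}$.) The only genuine divergence is in bounding the per-evaluation cost $C$ for $\phi_s$: the paper precomputes the Teichm\"uller representatives $\omega(a)$ for $1\le a\le p-1$ once by Hensel lifting and then computes $\langle x\rangle^s$ by fast exponentiation in $\tO(M^2\log^2 p)$ bit operations, whereas you compute $\omega(n)\equiv n^{p^{M-1}}\tmod{p^M}$ on the fly and evaluate $\langle n\rangle^s$ by the truncated binomial series with $O(M)$ terms. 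Both yield $C=\tO(M^2\log^2 p)$ (in your variant the exponentiation for $\omega$ dominates the series summation, which costs only $\tO(M^2\log p)$), so the final estimate is identical; your version trades the paper's $p$-dependent precomputation for slightly more work per evaluation at no asymptotic cost, and your precision bookkeeping for the divisions by $k$ (cumulative loss $v_p(K!)=O(M)$, absorbed by an $O(M)$-size working precision) is sound.
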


\begin{proof}
Let $\tf_n^{(j)}$ denote the value of $\tf_n$ after $j$ iterations of the main loop in Step 2. We claim for $0 \leq j \leq N-1$ that
\begin{equation*}
\tf_n^{(j)} = 
\begin{cases}
(\nabla^n f)(0) \smod p^M & \text{ for } 0 \leq n \leq j, \\
(\nabla^j f)(n-j) \smod p^M & \text{ for } j \leq n \leq N-1,
\end{cases}
\end{equation*}
where $\nabla$ is the finite-difference operator defined by $(\nabla f)(s) := f(s+1) - f(s)$. The claim follows for $j=0$ by the initialization in Step 1 since $\nabla^0$ is the identity. Assume now that the claim holds for some $j$. If $0 \leq n \leq j$ then $\tf_n^{(j+1)} = \tf_n^{(j)}$ and the result is proved. If $n \geq j+1$ then
\begin{align*}
\tf_n^{(j+1)} & = \tf_n^{(j)} - \tf_{n-1}^{(j)} \smod p^M = (\nabla^j f)(n-j) - (\nabla^j f)(n-j-1) \smod p^M \\
& =  (\nabla (\nabla^j f))(n-j-1) \smod p^M = (\nabla^{j+1} f)(n-(j+1)) \smod p^M
\end{align*}
and the result follows by induction. In particular, at the end of the algorithm we have $\tf_n = (\nabla^n f)(0) \smod p^M = f_n \smod p^M$, where $(f_n)_{n \geq 0}$ are the Mahler coefficients of $f$ (see \cite[\S2.4]{robert}). This proves that the algorithm returns the correct result. We now estimate its complexity. The initial step takes $O(NC)$ bit operations by definition and the second step takes $O(N^2 M \log p)$.\footnote{The cost of computing 
the remainder modulo $p^M$ is also $O(M \log p)$, since $-p^M \leq \tf_n - \tf_{n-1} \leq p^M$.} This proves the first complexity statement. 

Now we turn to the computation of $\phi_s(x) \smod p^M$. We assume $s$ is given by its approximation $s \smod p^M$, which we will still denote $s$ by abuse. We can also replace $x$ by $x \smod p^M$  without loss of generality. If $p$ divides $x$ then $\phi_s(x) = 0$. We now suppose that $x \in \mathbb{Z}_p^\times$. For $p$ odd, assume we have computed and stored the values $\omega(a) \smod p^M$, for $a = 1, \dots, p-1$.\footnote{These can be easily computed using Hensel's Lemma.} Then we can compute $\omega(x) \smod p^M$ in $\tO(M \log p)$ bit operations, since $\omega(x) = \omega(a)$, where $a := x \smod p$. The computation of $\omega(x)$ for $p = 2$ is trivial. Then $\langle x \rangle = x/\omega(x)$ is computed \toprec{p^M} in $\tO(M \log p)$ bit operations. Assuming fast exponentiation, it takes $\tO(M^2 \log^2 p)$ bit operations to compute $\langle x \rangle^s$.\footnote{For $p$ odd, one could instead compute $x^t$ directly, with $t$ as in the proof of Proposition~\ref{prop:cfphis}.} Hence it takes $\tO(M^2 \log^2 p)$ bit operations to compute the value of $\phi_s(x)$ \toprec{p^M}. Combining this with the first complexity result completes the proof of the last statement.
\end{proof}

To use the Mahler expansion to compute values of a continuous function we need to compute binomials coefficients $\binom{s}{n}$ \toprec{p^M} for $\range{n}{0}{N-1}$. (We will also need these coefficients for the computation of measures and Iwasawa power series.) We use the following algorithm.

\begin{algo}[Computation of binomial coefficients]\label{algo:binom}\ 
\begin{description}\itemsep2pt
\item[\textbf{Input:}] $s \in \mathbb{Z}_p$.
\item[\textbf{Output:}] The binomial coefficients $\binom{s}{n}$, for $\range{n}{0}{N-1}$, \toprec{p^M}.
\item[\textbf{0.}] For $n = 1$ to $N$, do $v_n \gets v_p(n)$ and $u_n \gets (n p^{-v_n})^{-1} \smod p^M$.
\item[\textbf{1.}] Let $V$ be the largest integer $v \geq 0$ such that $p^v \leq N-1$. \\[1pt]
\hspace*{.0cm} Set $\ts \gets s \smod p^{M+V}$.
\item[\textbf{2.}] Set $A \gets 1$, $B \gets 0$, $b_0 \gets 1$.
\item[\textbf{3.}] For $n = 1$ to $N-1$, do
\begin{itemize}
\item[] If $\ts-n+1 = 0$, set $b_k \gets 0$ for $k = n$ to $N-1$ and go to Step 4.
\item[] $b \gets v_p(\ts-n+1)$, $a \gets (\ts-n+1) p^{-b} \smod p^M$,
\item[] $A \gets a u_n A \smod p^M$, $B \gets B + b - v_n$,
\item[] $b_n \gets A p^B \smod p^M$.
\end{itemize}
\item[\textbf{4.}] Return $\ldss{b_0}{b_{N-1}}$.
\end{description}
\end{algo}

\begin{remark} 
The precomputations in Step 0 need to be done only once for fixed $N$ and $M$. 
\end{remark}

\begin{lemma}\label{lem:compbin}
Algorithm~\ref{algo:binom} computes $\ldsss{\binom{s}{0}}{\binom{s}{1}}{\binom{s}{N-1}}$ \toprec{p^M\!} in $\tO(NM \log p)$ bit operations. 
\end{lemma}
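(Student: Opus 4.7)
The proof has two independent parts: verifying that Algorithm~\ref{algo:binom} returns $\binom{s}{n} \smod p^M$ for each $n$, and bounding the bit-operation count.

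For correctness, I would set up the invariant that, after step~$n$ of the main loop, $b_n \equiv \binom{s}{n} \pmod{p^M}$. Writing $\binom{s}{n} = A^{(n)} p^{B^{(n)}}$ with $A^{(n)} \in \mathbb{Z}_p^\times$ and $B^{(n)} = v_p(\binom{s}{n}) \geq 0$, the algorithm maintains approximations $\tilde A^{(n)}, \tilde B^{(n)}$ via the recurrence $\binom{s}{n} = \binom{s}{n-1}(s-n+1)/n$. The crux is that $\ts$ approximates $s$ only to precision $p^{M+V}$, whereas a naive evaluation of $\binom{s}{n}$ would need precision $p^{M+v_p(n!)}$, typically much larger; the algorithm sidesteps this by separating the unit part from the $p$-adic valuation. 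First I would show, using $v_p(\ts - s) \geq M + V$, that when $\ts - n + 1 \neq 0$ one has $\tilde b_n = v_p(s-n+1) =: b^{(n)}$ exactly and the additive error $\delta_n := \tilde a_n - (s-n+1)p^{-b^{(n)}}$ satisfies $v_p(\delta_n) \geq M + V - b^{(n)}$. Propagating these errors through the multiplicative update $\tilde A^{(n)} = \tilde A^{(n-1)} \tilde a_n u_n \smod p^M$ (in which $u_n, a^{(n)}$ are units) yields $v_p(\epsilon_n) \geq M + V - \max_{k \leq n} b^{(k)}$, where $\epsilon_n := \tilde A^{(n)} - A^{(n)}$.

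The main obstacle, and the step I expect to require genuine work, is the arithmetic inequality
$B^{(n)} + V \;\geq\; \max_{k \leq n} b^{(k)},$
which together with the error bound above guarantees $\epsilon_n p^{B^{(n)}} \equiv 0 \pmod{p^M}$. Let $j$ achieve the maximum $b := b^{(j)}$. If $b \leq V$ the inequality is trivial, so assume $b > V$. Since $|k - j| \leq N - 1 < p^{V+1}$, we have $v_p(k-j) \leq V < b$ for $k \in \{1,\dots,n\}\setminus\{j\}$, whence $v_p(s - k + 1) = v_p((s - j + 1) - (k - j)) = v_p(k - j)$ for such $k$. Summing these valuations using Legendre's formula gives $B^{(n)} = b - v_p(n) - v_p(\binom{n-1}{j-1})$, reducing the inequality to the combinatorial claim $v_p(n) + v_p(\binom{n-1}{j-1}) \leq V$. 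Writing $n = p^v m$ with $\gcd(m,p)=1$, the base-$p$ digits of $n-1$ are $p-1$ at positions $0,\dots,v-1$; by Kummer's theorem the carries in the addition $(j-1) + (n-j) = n - 1$ are forced to vanish at all positions below $v$, so the number of carries is at most $\lfloor \log_p(n-1)\rfloor - v \leq V - v_p(n)$, proving the claim.

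Finally I would handle the early-exit branch: if $\ts - n + 1 = 0$ as an integer then $s \equiv n-1 \pmod{p^{M+V}}$, and writing $s = n-1 + p^{M+V} t$ one expands $\binom{s}{k}$ by Vandermonde to get $\binom{s}{k} = \sum_{i \leq \min(k-1,n-1)} \binom{n-1}{i}\binom{p^{M+V}t}{k-i}$ for $k \geq n$ (the $i=k$ term drops since $k > n-1$). Each factor $\binom{p^{M+V}t}{\ell}$ with $\ell \geq 1$ has valuation $\geq M + V - v_p(\ell) \geq M$, so $\binom{s}{k} \equiv 0 \pmod{p^M}$, justifying Step~3's early termination.

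For the complexity, each of the $N-1$ iterations of Step~3 performs $O(1)$ multiplications and reductions in $\mathbb{Z}/p^M\mathbb{Z}$ together with handling a small power of $p$, costing $\tilde O(M \log p)$ bit operations using fast arithmetic; adding a constant-bounded integer shift in $B$ costs only $O(\log(M+V))$ per step. The precomputation in Step~0 computes $N$ inversions modulo $p^M$, each in $\tilde O(M \log p)$, for a total of $\tilde O(NM \log p)$. Summing yields the asserted bound $\tilde O(NM \log p)$.
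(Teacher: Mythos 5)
Your proof is correct, but the correctness half takes a genuinely different and considerably heavier route than the paper's. The paper splits the argument cleanly in two: first, a straightforward induction showing that the loop maintains exactly the unit part and the valuation of $\binom{\ts}{n}$ for the \emph{integer} $\ts$ (so the algorithm's output is $\binom{\ts}{n} \smod p^M$ on the nose, with no error to propagate); second, the congruence $\binom{s}{n} \equiv \binom{\ts}{n} \pmod{p^M}$ for $n < N$, obtained in two lines from the elementary bound $|\binom{x}{n}|_p \leq |s|_p/|n|_p$ applied to the factor $(1+T)^{s-\ts}$ in $(1+T)^s = (1+T)^{\ts}(1+T)^{s-\ts}$. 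You instead propagate the perturbation $s - \ts$ through the recurrence, which forces you to prove the inequality $v_p\bigl(\binom{s}{n}\bigr) + V \geq \max_{k \leq n} v_p(s-k+1)$ via Kummer's theorem on carries. Your argument checks out (the identity $B^{(n)} = b - v_p(n) - v_p(\binom{n-1}{j-1})$ and the carry count both hold, and your Vandermonde treatment of the early-exit branch is fine, the paper handling it implicitly by the same generating-function congruence), but it re-derives by hand a fact that the multiplicative structure of $(1+T)^x$ gives for free; the only thing your route buys is that it never leaves the level of the recurrence. Two small imprecisions: the bound $v_p(\delta_n) \geq M+V-b^{(n)}$ should be $\min(M,\, M+V-b^{(n)})$ because of the reduction modulo $p^M$ (harmless for the conclusion), and in the complexity count you gloss over the one point the paper is careful about, namely that $v_p(\ts - n + 1)$ can be as large as $M+V-1$, so extracting it by repeated division is not obviously cheap; the paper uses a divide-and-conquer valuation computation costing $O(\log(M+V))$ divisions per iteration to keep Step~3 within $\tO(NM\log p)$.
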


\begin{proof}
For $n \geq 0$ and $x \in \mathbb{N}$ define 
\begin{equation*}
\mbinom{x}{n} = 
\begin{cases}
\;0                                   &\text{if $\binom{x}{n} = 0$}, \\[1\jot]
\displaystyle
\,\binom{x}{n} p^{-v_p(\binom{x}{n})} &\text{otherwise}.
\end{cases}
\end{equation*}
From the recurrence relation satisfied by binomial coefficients it follows that
\begin{equation*}
\mbinom{x}{n} = \frac{(x-n+1)p^{-v_p(x-n+1)}}{np ^{-v_p(n)}} \mbinom{x}{n-1}.
\end{equation*}
From this one can see by induction that at the end of $n$-th iteration of the loop in Step 3 we will have $A = \mbinom{\ts}{n} \smod p^M$ and $B = v_p(\binom{\ts}{n})$. Thus the algorithm returns $\binom{\ts}{n} \smod p^M$ for $\range{n}{0}{N-1}$. Now for $n \geq 1$ we can write $\binom{s}{n} = \frac{s}{n} \binom{s-1}{n-1}$ and therefore
\begin{equation*}
\left|\binom{s}{n}\right|_p \leq\, \frac{|s|_p}{|n|_p}.
\end{equation*}
It follows that $(1+T)^{p^{M+V}}\!\equiv 1 \tmod{p^M, T^N}$ and thus 
\begin{equation*}
(1+T)^s \equiv (1+T)^{\ts} \pmod{p^M, T^N}.
\end{equation*}
Therefore $\binom{s}{n} \equiv \binom{\ts}{n} \tmod{p^M}$ for $\range{n}{0}{N-1}$, and hence the algorithm returns the correct result.

Lastly we estimate the complexity of the algorithm. For an integer $n \geq 1$ we can compute $v_p(n)$ and $n p^{-v_p(n)}$ using $v_p(x) + 1$ divisions by $p$. Therefore Step 0 performs
\begin{equation*}
N + \lfloor N/p \rfloor + \lfloor N/p^2 \rfloor + \cdots \leq N/(p-1) 
\end{equation*}
divisions and therefore takes $\tO(N M \log p)$ bit operations. However, if we use the same method in Step 3 we may end up needing many divisions if $\ts-n+1$ has a very large $\p$-adic valuation. A better way to proceed is to use a divide-and-conquer algorithm, so that the computation of $a$ and $b$ can be done in $O(\log (M+V))$ divisions. The computation of $A$ and $b_n$ takes $\tO(M \log p)$ bit operations, the computation of $B$ is negligible, and, since $V \in O(\log(N))$, Step 3 takes $\tO(NM \log p)$ bit operations. This completes the proof.
\end{proof}

\begin{remark}
The algorithm can be improved in the following way. In Step 0 and Step 3 we can keep a counter, say \texttt{ct}, that we set to $p$ the first time we encounter an integer with a non-zero $\p$-adic valuation. Then at each step we decrease \texttt{ct} by $1$. If the value of \texttt{ct} is non-zero then the $\p$-adic valuation of the corresponding integer is zero. Otherwise we compute the valuation using the method explained above and reset \texttt{ct} to the value $p$. This gains a factor $p$ in computing the $\p$-adic valuation and the prime-to-$p$ part. It does not change the total computation cost estimate however. 
\end{remark}

Once we have computed sufficiently many Mahler coefficients of $f$ \toprec{p^M} we can use the algorithm below to compute values of $f$.

\begin{lemma}\label{lem:compvalues}
Let $f \in \mathcal{C}(\mathbb{Z}_p, \mathbb{Z}_p)$. Assume we have computed $f$ \toprec{p^M} with respect to $N \geq N_f(M)$. Then for all $s \in \mathbb{Z}_p$ we can compute $f(s)$ \toprec{p^M} in $\tO(N M \log p)$ bit operations. 
\end{lemma}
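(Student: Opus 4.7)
The plan is to evaluate $f(s)$ by a truncated Mahler expansion and bound each step separately.

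First, by hypothesis the Mahler coefficients satisfy $|f_n|_p \leq p^{-M}$ for every $n \geq N$, and by Theorem~\ref{th:mahler} the binomial coefficients $\binom{s}{n}$ all lie in $\mathbb{Z}_p$, so $|f_n \binom{s}{n}|_p \leq p^{-M}$ for all $n \geq N$. Consequently
\begin{equation*}
f(s) \equiv \sum_{n=0}^{N-1} f_n \binom{s}{n} \pmod{p^M},
\end{equation*}
which reduces the problem to computing this truncated sum to precision $p^M$.

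Next I would invoke Algorithm~\ref{algo:binom}, which by Lemma~\ref{lem:compbin} returns the values $\binom{s}{n}\smod p^M$ for $\range{n}{0}{N-1}$ in $\tO(NM\log p)$ bit operations. The Mahler coefficients $f_n\smod p^M$ are already given (by the assumption that $f$ has been computed to precision $p^M$ for this $N$), so we only need to form $N$ products and accumulate them modulo $p^M$.

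Each such multiplication of two elements of $\mathbb{Z}/p^M\mathbb{Z}$ costs $\tO(M\log p)$ bit operations under the fast multiplication assumption stated at the start of Section~4, and reduction of the partial sums modulo $p^M$ is negligible by comparison. Thus the final assembly costs $\tO(NM\log p)$ bit operations, which combined with the cost of the binomial coefficients gives the claimed total of $\tO(NM\log p)$. There is no real obstacle here\,---\,the argument is just the bound on the Mahler tail, a call to Algorithm~\ref{algo:binom}, and $N$ fast multiplications\,---\,so the proof will be only a few lines.
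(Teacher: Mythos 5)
Your proposal is correct and follows essentially the same route as the paper: truncate the Mahler expansion at $N$ (valid since $|f_n|_p \leq p^{-M}$ for $n \geq N$ and $\binom{s}{n} \in \mathbb{Z}_p$), compute the binomial coefficients via Algorithm~\ref{algo:binom} in $\tO(NM\log p)$ bit operations, and assemble the sum at the same cost. The only quibble is that the integrality of $\binom{s}{n}$ on $\mathbb{Z}_p$ comes from the continuity remark preceding Theorem~\ref{th:mahler} rather than from the theorem itself.
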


\begin{proof}
Let $\ldss{\tf_0}{\tf_{N-1}}$ be the first $N$ Mahler coefficients of $f$ \toprec{p^M}. Then
\begin{equation*}
f(s) \equiv \sum_{n=0}^{N-1} \tf_n \binom{s}{n} \pmod{p^M}.
\end{equation*}
The binomial coefficients are computed using Algorithm~\ref{algo:binom} in $\tO(N M \log p)$ bit operations,
and the computation of the sum also takes $\tO(N M\log p)$ bit operations.
\end{proof}

Another reason to compute Mahler coefficients is for approximating integrals.
\begin{lemma}\label{lem:compint}
Let  $f \in \mathcal{C}(\mathbb{Z}_p, \mathbb{Z}_p)$ and let $\mu \in \mathcal{M}(\mathbb{Z}_p, \mathbb{Z}_p)$.
Assume we have computed $f$ \toprec{p^M} with respect to $N \geq N_f(M)$ and have computed $\mu$ \toprec{(p^M, T^N)}. Then we can compute $\int f d\mu$ \toprec{p^M} in $\tO(N M \log p)$ bit operations. 
\end{lemma}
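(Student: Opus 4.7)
The plan is to combine Lemma~\ref{lem:dointeg} with the truncation that the hypotheses on $f$ and $\mu$ make possible, and then carry out the resulting finite sum.

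First, by Lemma~\ref{lem:dointeg} we have
\begin{equation*}
\int f \, d\mu = \sum_{n \geq 0} f_n \int \binom{x}{n} \, d\mu,
\end{equation*}
where $(f_n)_{n \geq 0}$ are the Mahler coefficients of $f$ and $F_{\mu,n} := \int \binom{x}{n} \, d\mu$ is, by construction, the $n$-th coefficient of the power series $F_\mu$ associated with $\mu$. Since $\mu$ takes values in $\mathbb{Z}_p$, its associated power series $F_\mu$ lies in $\mathbb{Z}_p[[T]]$, so $|F_{\mu,n}|_p \leq 1$ for every $n$. Because $N \geq N_f(M)$, we have $|f_n|_p \leq p^{-M}$ for $n \geq N$, and therefore
\begin{equation*}
\int f \, d\mu \equiv \sum_{n = 0}^{N-1} f_n \, F_{\mu, n} \pmod{p^M}.
\end{equation*}

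Next, the hypotheses provide exactly the data needed to evaluate this truncated sum modulo $p^M$: we have the values $\tf_n := f_n \smod p^M$ for $n = 0, \dots, N-1$ from the computation of $f$ \toprec{p^M}, and we have the values $\tF_{\mu,n} := F_{\mu,n} \smod p^M$ for the same range of $n$ from the computation of $\mu$ \toprec{(p^M, T^N)}. So the algorithm is simply to form
\begin{equation*}
S := \sum_{n=0}^{N-1} \tf_n \tF_{\mu, n} \smod p^M,
\end{equation*}
which agrees with $\int f \, d\mu$ \toprec{p^M}.

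Finally, there is really no obstacle: the complexity bound is immediate. Each of the $N$ multiplications $\tf_n \tF_{\mu,n} \smod p^M$ costs $\tO(M \log p)$ bit operations using fast multiplication of $\p$-adic integers, as recalled in the discussion preceding Subsection~\ref{subsec:compcontfunc}, and the $N$ additions modulo $p^M$ cost at most as much. The total is $\tO(NM \log p)$ bit operations, as claimed.
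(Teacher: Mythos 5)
Your proof is correct and follows exactly the paper's argument: truncate the Mahler expansion of $\int f\,d\mu$ at $N$ using $N \geq N_f(M)$ and the integrality of the coefficients of $F_\mu$, then evaluate the finite sum $\sum_{n=0}^{N-1}\tf_n\tF_{\mu,n}$ modulo $p^M$. The paper's own proof is just a terser version of the same computation, so there is nothing to add.
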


\begin{proof}
Write $\ldss{\tF_0}{\tF_{N-1}}$ (respectively $\ldss{\tf_0}{\tf_{N-1}}$) for the first $N$ coefficients of $F_\mu(T)$ (respectively Mahler coefficients of $f$) computed \toprec{p^M}. We have
\begin{equation*}
\int f \, d\mu \equiv \sum_{n=0}^{N-1} \tf_n \tF_n \pmod{p^M}.
\end{equation*}
This computation takes $\tO(N M \log p)$ bit operations.
\end{proof}

From these results it is obvious that having the best possible upper bounds on $N_f$ is crucial for getting the best complexity estimates. In the next subsection we consider this problem for the functions that interest us.

\subsection{Analyticity and Mahler coefficients}\label{subsec:analyticity}

A power series in $\mathbb{C}_p[[X]]$ is \emph{restricted} if its coefficients tend to zero or, equivalently, if it converges on $\mathbb{O}_p := \{x \in \mathbb{C}_p$ such that $|x|_p \leq 1\}$. Let $f : \mathbb{Z}_p \to \mathbb{C}_p$ be a function. We say $f$ is \emph{analytic} if there exists a restricted power series $\hat{f}(X) \in \mathbb{C}_p[[X]]$ such that 
\begin{equation*}
\vdepth{8pt}
f(x) = \hat{f}(x) \rlap{\qquad\text{for all $x \in \mathbb{Z}_p$}.}
\end{equation*}
For $h \geq 0$ we say $f$ is \emph{locally analytic of order $h$} if there exist restricted power series $\hat{f}_{a,h}(X)$, with $0 \leq a \leq p^h-1$, such that 
\begin{equation}\label{eq:locanal}
f(x) = \hat{f}_{a,h}((x-a)p^{-h})
\rlap{\qquad\text{for all $x \in a + p^h \mathbb{Z}_p$}.}
\end{equation}
Note that the series $\hat{f}_{a,h}$ are uniquely defined. An analytic function is therefore a locally analytic function of order $0$ and one can verify that, if $f$ is analytic of order $h_0$, then it is analytic of order $h$ for any $h \geq h_0$. Clearly a locally analytic function is continuous. We will see below that the fact that a continuous function is locally analytic has some important consequences for the rate of convergence to zero of its Mahler coefficients.\footnote{The main reference for these results is the article of Amice \cite{amice}; see also \cite{colmez:online} for a more accessible presentation.} The norm of a restricted power series $\hat{f}$, denoted $\|\hat{f}\|_{p,\infty}$, is defined as the maximum of the absolute values of its coefficients. It can be computed thanks to the following (see \cite[Prop.~1, \S6.1.4]{robert}):
\begin{equation}\label{eq:normal}
\|\hat{f}\|_{p,\infty} = \max_{x \in \mathbb{O}_p^\times} |\hat{f}(x)|_p.
\end{equation}
Let $f$ be a locally analytic function of order $h$. We define the $h$-norm of $f$ by 
\begin{equation*}
M_h(f) := \max_{0 \leq a \leq p^h-1} \|\hat{f}_{a,h}\|_{p,\infty}.
\end{equation*}
It follows from \eqref{eq:locanal} and \eqref{eq:normal} that 
\begin{equation}\label{eq:computemh}
M_h(f) = \max_{\substack{0 \leq a \leq p^h-1 \\ x \in \mathbb{O}_p^\times}} |f(a+p^h x)|_p.
\end{equation}

\begin{theorem}[Amice]\label{th:amice}
Let $f$ be a function in $\mathcal{C}(\mathbb{Z}_p, \mathbb{C}_p)$ with Mahler coefficients $(f_n)_{n \geq 0}$. Then $f$ is locally analytic of order $h$ if and only if 
\begin{equation*}
\left|\frac{f_n}{\lfloor n/p^h\rfloor!}\right|_p \to 0.
\end{equation*}
Moreover, if $f$ is locally analytic of order $h$ then for all $n \geq 0$ we have
\begin{equation*}
|f_n|_p \leq  M_h(f) \, \big|\lfloor n/p^h\rfloor!\big|_p.
\end{equation*}
\end{theorem}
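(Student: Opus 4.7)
The plan is to establish the inequality $|f_n|_p \leq M_h(f) \cdot |\lfloor n/p^h\rfloor!|_p$ first, then deduce the characterization: the \emph{only if} direction via a density argument and the \emph{if} direction by reconstructing the local power series from the Mahler data.

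For the inequality, the central tool is the generating-function identity
\begin{equation*}
(1+T)^{a + p^h y} = (1+T)^a \, (1+U(T))^y, \qquad U(T) := (1+T)^{p^h} - 1.
\end{equation*}
Extracting the coefficient of $T^n$ on both sides expresses the Mahler polynomial $\binom{a + p^h y}{n}$ as $\sum_{j \geq 0} \alpha_{n,j}^{(a)} \binom{y}{j}$, with $\alpha_{n,j}^{(a)}$ equal to the coefficient of $T^n$ in $(1+T)^a U(T)^j$. Substituting $x = a + p^h y$ in $f(x) = \sum_n f_n \binom{x}{n}$ and regrouping yields $\hat{f}_{a,h}(y) = \sum_j d_{a,j} \binom{y}{j}$ with $d_{a,j} = \sum_n f_n \alpha_{n,j}^{(a)}$. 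The $h = 0$ case of the theorem, which I would treat directly using the Stirling expansion $y^k = \sum_j S(k,j) j! \binom{y}{j}$ applied to $\hat{f}_{a,h}(y) = \sum_k c_{a,k} y^k$ (yielding $|d_{a,j}|_p \leq M_h(f) \cdot |j!|_p$), combined with an inversion of the change of basis giving coefficients $\gamma_{n,j}^{(a)}$ such that $f_n = \sum_{a,j} \gamma_{n,j}^{(a)} d_{a,j}$, and the valuation estimate
\begin{equation*}
v_p(\gamma_{n,j}^{(a)}) + v_p(j!) \geq v_p(\lfloor n/p^h\rfloor!),
\end{equation*}
then produces the desired bound on $|f_n|_p$.

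For the \emph{only if} direction of the characterization, the inequality alone gives only boundedness of $|f_n / \lfloor n/p^h\rfloor!|_p$; to obtain convergence to zero I would approximate: given $\epsilon > 0$, write each local power series $\hat{f}_{a,h}$ as a polynomial plus a tail of sup-norm less than $\epsilon$. The polynomial parts contribute only finitely many non-zero Mahler coefficients, while the tail contributes at most $\epsilon \cdot |\lfloor n/p^h\rfloor!|_p$ by the inequality, yielding the decay. The \emph{if} direction reverses the construction: given $(f_n)$ with $|f_n / \lfloor n/p^h\rfloor!|_p \to 0$, the same change of basis regroups the Mahler series on each ball into a power series in $y = (x-a)/p^h$ whose coefficients, by the decay hypothesis and the bounds on $\alpha_{n,j}^{(a)}$, tend to zero, producing the required restricted power series.

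The principal obstacle will be the combinatorial valuation analysis of $\alpha_{n,j}^{(a)}$ and its inverse coefficients $\gamma_{n,j}^{(a)}$. This requires the sharp identity $v_p\binom{p^h}{i} = h - v_p(i)$ for $1 \leq i \leq p^h$, Legendre's formula $v_p(m!) = (m - S_p(m))/(p-1)$, and careful bookkeeping of how factors of $p$ aggregate in the expansion of $(1+T)^a U(T)^j$ so that the bound comes out sharply in terms of $\lfloor n/p^h\rfloor!$ rather than a weaker power of $p$.
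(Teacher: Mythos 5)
Your outline follows the standard direct proof of Amice's theorem; the paper itself does not reprove the result but cites Amice: the equivalence is her Corollaire 2(b), and the inequality is deduced from her Corollaire 1, which asserts that $\bigl(\lfloor n/p^h\rfloor!\,\binom{x}{n}\bigr)_{n\geq 0}$ is a normal (orthonormal) basis of the space of locally analytic functions of order $h$ for the norm $M_h$ --- giving in fact the equality $M_h(f)=\sup_{n}\bigl|f_n/\lfloor n/p^h\rfloor!\bigr|_p$. Measured against what a self-contained proof requires, your proposal leaves the entire mathematical content unestablished. The two valuation estimates you invoke --- the bound on $\alpha_{n,j}^{(a)}$ needed to legitimize the regrouping $d_{a,j}=\sum_n f_n\alpha_{n,j}^{(a)}$ and the ``if'' direction, and the bound $v_p(\gamma_{n,j}^{(a)})+v_p(j!)\geq v_p(\lfloor n/p^h\rfloor!)$ on the inverse transition coefficients --- together say exactly that the change of basis between $\{\lfloor n/p^h\rfloor!\binom{x}{n}\}_n$ and the locally polynomial family $\{j!\,\chi_{a+p^h\mathbb{Z}_p}(x)\binom{(x-a)/p^h}{j}\}_{a,j}$ is an isometry; that \emph{is} Amice's theorem. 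Stating these bounds and labelling their verification ``the principal obstacle'' defers the proof rather than giving it: nothing in the proposal indicates how the Legendre/carry bookkeeping closes (one must exhibit an ordering in which the normalized transition matrix is triangular with unit diagonal modulo the maximal ideal, the diagonal corresponding roughly to $n\equiv a \pmod{p^h}$ and $j=\lfloor n/p^h\rfloor$; that triangularity argument is the whole proof).

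There is also a step that is false as stated. In the ``only if'' direction you claim the locally polynomial parts ``contribute only finitely many non-zero Mahler coefficients.'' A locally polynomial function is not a polynomial: already the characteristic function of $p\mathbb{Z}_p$, equal to $p^{-1}\sum_{\zeta^p=1}\zeta^x$, has $n$-th Mahler coefficient $p^{-1}\sum_{\zeta^p=1}(\zeta-1)^n$, which is non-zero for infinitely many $n$. The step can be repaired --- for a function $g$ that is polynomial of bounded degree on each ball $a+p^h\mathbb{Z}_p$ one can show $v_p(g_n)\geq n/(p^{h-1}(p-1))-O(1)$, which grows strictly faster than $v_p(\lfloor n/p^h\rfloor!)\sim n/(p^h(p-1))$, for instance by combining the Mahler coefficients of ball characteristic functions with the product lemma proved inside Proposition~\ref{prop:cfphis} --- but that is an additional argument you must supply, and without it the approximation step does not yield the required decay.
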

\begin{proof}
The first assertion is Corollaire 2(b) of \cite[p.~162]{amice}. For the second assertion we use the notation and results of Corollaire 1 of \cite[p.~157]{amice}. Since the sequence $u_n = n$ is very well distributed (\emph{très bien répartie}), it follows from Corollaire 1(c) that
\begin{equation*}
v_p(n!/s_{n,h}) = \sum_{k \geq 1} \lfloor n/p^k \rfloor - \sum_{k = 1}^h \lfloor n/p^k \rfloor = v_p(\lfloor n/p^h\rfloor!).
\end{equation*}
Using Corollaire 1(b), we find that $\left(\lfloor n/p^h\rfloor! \binom{x}{n} \right)_{n \geq 0}$ is a normal basis (\emph{base normale}) of the Banach space of locally analytic functions of order $h$. Hence $M_h(f) = \sup_{n \geq 0} | f_n /\lfloor n/p^h\rfloor! |_p$, and the result follows.
\end{proof}

We are interested in finding optimal upper bounds for $N_{\phi_s}$. We remark that the function $\phi_s$ is locally analytic of order $1$ if $p$ is odd and of order $2$ if $p = 2$. Indeed, for $x \in  a + q\mathbb{Z}_p$ with $a \in \mathbb{Z}_p^\times$ we have
\begin{equation}\label{eq:diamondanalytic}
\langle x \rangle^s = \langle a\rangle^s \left(\frac{x-a}{a} + 1\right)^s = \langle a\rangle^s \sum_{n \geq 0} \binom{s}{n} \frac{q^n}{a^n} \left(\frac{x-a}{q}\right)^n.
\end{equation} 
In this way Theorem~\ref{th:amice} provides bounds on the Mahler coefficients of $\phi_s$. But we can do better with the following result.

\begin{proposition}\label{prop:cfphis}
Fix $s \in \mathbb{Z}_p$ and let $(\phi_n)_{n \geq 0}$ be the Mahler coefficients of $\phi_s$. Then for all $n \geq 0$ we have
\begin{equation*}
|\phi_n|_p \leq 
\begin{cases}
2^{-\lfloor n/2 \rfloor + 1} & \text{ if } p = 2, \\
|n!|_p & \text{ if $p$ is odd}.
\end{cases}
\end{equation*}
\end{proposition}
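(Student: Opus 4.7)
The plan is to approximate $\phi_s$ by $\phi_t$ for positive integers $t$ converging to $s$ in $\mathbb{Z}_p$, and to bound each $|(\phi_t)_n|_p$ by comparing $\phi_t$ with a polynomial in $x$. Since $s \mapsto \phi_s$ is continuous by Lemma~\ref{lem:phicont} and \eqref{eq:normfromcoeff} shows that $f \mapsto f_n$ is a continuous map from $(\mathcal{C}(\mathbb{Z}_p, \mathbb{C}_p), \|\cdot\|_p)$ to $\mathbb{C}_p$, any bound holding for a $p$-adically dense set of positive integers $t$ will pass to the limit.

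For $p$ odd I would take $t$ a large positive multiple of $p-1$; these are dense in $\mathbb{Z}_p$ since $p-1 \in \mathbb{Z}_p^\times$. Then $\omega(x)^t = 1$ on $\mathbb{Z}_p^\times$, so $\phi_t(x) = \langle x\rangle^t = x^t$ there, while on $p\mathbb{Z}_p$ both $\phi_t$ and $x^t$ lie in $p^t \mathbb{Z}_p$, giving $\|\phi_t - x^t\|_p \leq p^{-t}$. Writing $x^t = \sum_n n!\,S(t,n)\,\binom{x}{n}$ with $S(t,n)$ a Stirling number of the second kind shows $|(x^t)_n|_p \leq |n!|_p$, whence $|(\phi_t)_n|_p \leq \max(|n!|_p, p^{-t}) = |n!|_p$ as soon as $t \geq v_p(n!)$. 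The exact same argument handles $p=2$ with $s \in 2\mathbb{Z}_2$ (approximating by positive even integers) and in fact produces the sharper bound $|(\phi_s)_n|_2 \leq |n!|_2 \leq 2^{-\lfloor n/2\rfloor}$.

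The remaining case is $p=2$ with $s \in 1 + 2\mathbb{Z}_2$, which must be approached by positive odd integers $t$. Here $\omega(x)^t = \omega(x)$ so $\phi_t$ is no longer a polynomial; using $\omega(2l+1) = (-1)^l$, the Mahler-coefficient formula collapses to $(\phi_t)_n = (-1)^{n+1} T_n(t)$ with
\begin{equation*}
T_n(t) := \sum_{l=0}^{\lfloor (n-1)/2 \rfloor} (-1)^l \binom{n}{2l+1}(2l+1)^t \ \in\ \mathbb{Z}.
\end{equation*}
To bound $|T_n(t)|_2$, I would pass to the ramified extension $\mathbb{Q}_2(i)$, where $v_2(1-i) = 1/2$, and write $T_n(t) = (U(i) - U(-i))/(2i)$ for $U(z) := \sum_k \binom{n}{k} z^k k^t = (z\, d/dz)^t (1+z)^n$. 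The operator identity $(z\, d/dz)^t = \sum_j S(t,j)\, z^j\, (d/dz)^j$ together with $(1+i)^{n-j} = i^{n-j}(1-i)^{n-j}$ (from $(1+i)/(1-i) = i$) then yields
\begin{equation*}
T_n(t) = \frac{1}{2i} \sum_{j \geq 1} S(t,j)\, j!\, \binom{n}{j}\, (1-i)^{n-j}\, (i^n - (-i)^j),
\end{equation*}
whose summands have $v_2$-valuation at least $-1 + v_2(j!) + (n-j)/2$, minimised at $j=1$ with value $(n-3)/2$. Since $T_n(t) \in \mathbb{Z}$, rounding this up to an integer valuation gives $v_2(T_n(t)) \geq \lceil (n-3)/2\rceil = \lfloor n/2\rfloor - 1$, i.e.\ $|(\phi_t)_n|_2 \leq 2^{-\lfloor n/2\rfloor + 1}$, and the bound passes to the limit. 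The main obstacle is precisely this last step: Amice's theorem (Theorem~\ref{th:amice}) with $h=2$, which is forced because $\phi_s$ is only locally analytic of order~$2$ when $p=2$, yields merely $|(\phi_s)_n|_2 \leq |\lfloor n/4 \rfloor !|_2$, and the sharp factor $2^{-\lfloor n/2\rfloor + 1}$ emerges only from the integrality rounding in the ramified extension $\mathbb{Q}_2(i)$.
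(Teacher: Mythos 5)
Your proof is correct, and for $p$ odd and for $p=2$ with $s$ even it coincides with the paper's argument (approximate $s$ by positive integers $t$ divisible by $p-1$, compare $\phi_t$ with $x^t$, pass to the limit via Lemma~\ref{lem:phicont} and \eqref{eq:normfromcoeff}). In the delicate case $p=2$, $s$ odd, both arguments ultimately rest on the fourth root of unity $i$ and the fact that $v_2(1-i)=1/2$, but the mechanisms differ. The paper keeps the factorization $\phi_t=\omega(x)\,x^t$, computes the Mahler coefficients of $\omega$ alone from $\omega(x)=\tfrac{i}{2}((-i)^x-i^x)$ to get $v_2(w_n)\geq\lfloor n/2\rfloor$, and then invokes a general lemma on the Mahler coefficients of a product of two continuous functions, losing exactly the factor $2$ in the estimate $\min_k(\lfloor k/2\rfloor+\lfloor(n-k)/2\rfloor)\geq\lfloor n/2\rfloor-1$. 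You instead compute the $n$-th Mahler coefficient of $\omega(x)x^t$ in closed form as $(-1)^{n+1}T_n(t)$, resum it through $U(\pm i)$ with the Stirling operator identity $(z\,d/dz)^t=\sum_j S(t,j)z^j(d/dz)^j$, and recover the integer bound $\lfloor n/2\rfloor-1$ by rounding the half-integer valuation $(n-3)/2$ up, using that $T_n(t)\in\mathbb{Z}$. Your route is more computational but self-contained (no product lemma needed), and it makes visible exactly which terms ($j=1$ and $j=3$) are responsible for the loss of the factor $2$; the paper's route is softer and reusable, since the product lemma applies to any twist by a finite-order character. Both give the same final bound, and your observation that Amice's theorem at order $h=2$ is insufficient matches the paper's motivation for proving this proposition separately.
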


\begin{remark}
This result is close to optimal for odd $p$. Indeed, it implies that $|\phi_n/n!|_p \leq 1$ for all $n \geq 0$. On the other hand, we know this quantity is also bounded from below; otherwise Theorem~\ref{th:amice} would imply that $\phi_s$ is analytic, and in general it is not. 
\end{remark}

\begin{proof}
Assume $p$ is odd and let $B$ be a positive integer. By the Chinese Remainder Theorem we can find a positive integer $t$ such that 
\begin{equation*}
\begin{cases}
t \equiv s \pmod{p^B}, \\
t \equiv 0 \pmod{p-1}, \\
t \geq B.
\end{cases}
\end{equation*}
Then $x^t = \omega(x)^t \langle x\rangle^t \equiv \langle x\rangle^s \tmod{p^B}$ for $x \in \mathbb{Z}_p^\times$ and $x^t \equiv 0 \tmod{p^B}$ for $x \in p\mathbb{Z}_p$. Hence $|\phi_s(x) - x^t|_p \leq p^{-B}$ for all $x \in \mathbb{Z}_p$. It follows from \eqref{eq:normfromcoeff} and Theorem~\ref{th:amice} that
\begin{equation*}
|\phi_n|_p \leq \max\bigl\{|n!|_p,\ p^{-B}\bigr\}
\end{equation*}
and we obtain the result by taking $B$ large enough.

For the case $p = 2$ a similar proof works, provided $s$ is even. But for odd $s$ we need to use another approach. So we assume $s \in 1+2\mathbb{Z}_2$. Let $B$ and $t$ be positive integers such that $s \equiv t \tmod{2^B}$. As above we have
\begin{equation}\label{eq:case2}
\vdepth{8pt}
|\phi_s(x) - \omega(x) x^t|_2 \leq 2^{-B}
\end{equation}
where $\omega(x) := 0$ if $x \in 2\mathbb{Z}_2$. We turn now to the computation of bounds on the Mahler coefficients $(a_n)_{n \geq 0}$ of $\omega(x) x^t$. Let $i$ be a fixed square root of $-1$ in $\bar{\mathbb{Q}}_2$. Then $x \mapsto (\pm i)^x$ are continuous functions\footnote{But they are not analytic functions.} on $\mathbb{Z}_2$ and 
\begin{align*}
\omega(x) & = \frac{i}{2} ((-i)^x - i^x) = \frac{i}{2} \sum_{n \geq 0} (i-1)^n (i^n - 1) \binom{x}{n}.
\end{align*}
Thus the Mahler coefficients $(w_n)_{n \geq 0}$ of $\omega$ satisfy 
\begin{equation*}
v_2(w_n) = 
\begin{cases}
+\infty & \text{ if } n \equiv 0 \pmod{4}, \\
n/2 & \text{ if } n \equiv 2 \pmod{4}, \\
(n-1)/2 & \text{ if } n \equiv 1,3 \pmod{4}.
\end{cases}
\end{equation*}
In particular, $v_2(w_n) \geq \lfloor n/2\rfloor$ for all $n \geq 0$. We now apply the following lemma.
\begin{lemma}
Let $f$ and $g$ be two continuous functions with Mahler coefficients $(f_n)_{n \geq 0}$ and $(g_n)_{n \geq 0}$.
Then the Mahler coefficients $(c_n)_{n \geq 0}$ of $fg$ satisfy 
\begin{equation*}
v_p(c_n) \geq \min_{0 \leq k \leq n} \Big(v_p(f_k) + \min_{n-k \leq m \leq n} v_p(g_{m})\Big).
\end{equation*}
\end{lemma}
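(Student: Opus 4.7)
The strategy is to reduce everything to a single combinatorial identity expressing a product of two binomial polynomials as a non-negative integer combination of binomial polynomials, and then to read off the Mahler expansion of $fg$ coefficient by coefficient.

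Concretely, I would first establish the identity
\begin{equation*}
\binom{x}{k}\binom{x}{m} = \sum_{j=\max(k,m)}^{k+m} N(k,m,j)\binom{x}{j}
\end{equation*}
where the $N(k,m,j)$ are non-negative integers. Both sides are polynomials in $x$ of degree $k+m$, so it suffices to verify the identity for $x\in\mathbb{N}$; there $\binom{x}{k}\binom{x}{m}$ counts pairs $(A,B)$ of subsets of $\{1,\ldots,x\}$ with $|A|=k$ and $|B|=m$, and grouping them by $j:=|A\cup B|$ expresses the product as claimed with $N(k,m,j)\in\mathbb{Z}_{\geq 0}$ (its $p$-adic valuation is therefore non-negative, which is all I need).

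Next I would expand $fg$ using the Mahler expansions of $f$ and $g$, which converge uniformly on $\mathbb{Z}_p$ by Theorem~\ref{th:mahler}:
\begin{equation*}
f(x)g(x) = \sum_{k,m\geq 0} f_k g_m \binom{x}{k}\binom{x}{m} = \sum_{j\geq 0}\Bigg(\sum_{\substack{k,m\\ \max(k,m)\leq j\leq k+m}} N(k,m,j)f_k g_m\Bigg)\binom{x}{j}.
\end{equation*}
The rearrangement is legitimate because $|f_k g_m|_p\to 0$ as $k+m\to\infty$ and, for each fixed $j$, only finitely many pairs $(k,m)$ contribute. By uniqueness of Mahler coefficients I then identify
\begin{equation*}
c_n = \sum_{\substack{k,m\\ \max(k,m)\leq n\leq k+m}} N(k,m,n)f_k g_m.
\end{equation*}

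Finally, reindexing by $k\in\{0,\ldots,n\}$ turns the constraints into $n-k\leq m\leq n$, giving
\begin{equation*}
c_n = \sum_{k=0}^{n}\,\sum_{m=n-k}^{n} N(k,m,n)\,f_k g_m.
\end{equation*}
Applying the ultrametric inequality and using $v_p(N(k,m,n))\geq 0$ then yields the stated bound. The only genuinely delicate point is the combinatorial identity with its non-negativity assertion; once that is in hand, the rest is a routine regrouping of sums and an application of the ultrametric inequality.
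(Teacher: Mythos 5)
Your proposal is correct and follows essentially the same route as the paper: the key identity $\binom{x}{k}\binom{x}{m}=\sum_{j=\max(k,m)}^{k+m}\nu_j(k,m)\binom{x}{j}$ with integral coefficients, followed by the regrouping of the double sum and the ultrametric inequality. The only difference is that you supply a combinatorial proof of the integrality (indeed non-negativity) of the coefficients, which the paper simply asserts.
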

\begin{proof}[Proof of the lemma]
Let $k, m \geq 0$ be two integers. Then
\begin{equation*}
\binom{x}{k}\binom{x}{m} = \sum_{n = \max(k,m)}^{k+m} \nu_n(k,m) \binom{x}{n}
\end{equation*}
for some $\nu_n(k,m) \in \mathbb{Z}$. Hence
\begin{align*}
f(x)g(x) & = \sum_{k,m \geq 0} f_k g_m \binom{x}{k}\binom{x}{m} = \sum_{n \geq 0} \left(\sum_{k=0}^n f_k \sum_{m=n-k}^n g_m \, \nu_n(k,m) \right)\binom{x}{n}
\end{align*}
and the result follows from the expression of $c_n$ that can be derived from this equality.
\end{proof}
We take $f(x) = \omega(x)$ and $g(x) = x^t$. Then
\begin{align*}
v_2(a_n) & \geq \min_{0 \leq k \leq n} \Big(v_2(w_k) + \min_{n-k \leq m \leq n} v_2(m!)\Big) = \min_{0 \leq k \leq n} \Big(v_2(w_k) + v_2((n-k)!)\Big) \\
& \geq \min_{0 \leq k \leq n} (\lfloor k/2\rfloor + \lfloor (n-k)/2\rfloor) \geq \lfloor n/2 \rfloor - 1
\end{align*}
and the result follows from this estimate, taking $B$ sufficiently large in \eqref{eq:case2} as before.
\end{proof}

\begin{corollary}\label{cor:Nphis}
For every positive integer $M$ we have
\begin{equation*}
N_{\phi_s}(M) \leq 
\begin{cases}
2M + 2 & \text{ if $p = 2$}, \\
p M & \text{ if $p$ is odd}. 
\end{cases}
\end{equation*}
\end{corollary}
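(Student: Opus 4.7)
The plan is to derive the corollary directly from the bounds on the Mahler coefficients $(\phi_n)_{n\geq 0}$ given in Proposition~\ref{prop:cfphis}. Since $N_{\phi_s}(M)$ is by definition the smallest $N\geq 0$ such that $|\phi_n|_p \leq p^{-M}$ for all $n \geq N$, it suffices to find, in each case, a threshold beyond which the estimates of the previous proposition force the Mahler coefficients into $p^{-M}\mathbb{Z}_p$.

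For the case $p=2$, I would just unwrap the bound $|\phi_n|_2 \leq 2^{-\lfloor n/2\rfloor + 1}$. The inequality $2^{-\lfloor n/2\rfloor + 1} \leq 2^{-M}$ is equivalent to $\lfloor n/2\rfloor \geq M+1$, which holds as soon as $n \geq 2M+2$. This gives $N_{\phi_s}(M) \leq 2M+2$ with no further work.

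For odd $p$, the bound is $|\phi_n|_p \leq |n!|_p = p^{-v_p(n!)}$. To conclude I need $v_p(n!) \geq M$ for all $n \geq pM$. I would invoke Legendre's formula $v_p(n!) = \sum_{k\geq 1} \lfloor n/p^k\rfloor$, from which the crude lower bound $v_p(n!) \geq \lfloor n/p\rfloor$ is immediate by dropping all but the first term. Hence $n \geq pM$ implies $v_p(n!) \geq M$, giving $N_{\phi_s}(M) \leq pM$.

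There is no real obstacle here; the work is all in Proposition~\ref{prop:cfphis}, and the corollary is just a translation of those bounds into the threshold language of $N_{\phi_s}$. The only minor care needed is to make sure the constants ``$+2$'' in the $p=2$ case and the factor $p$ in the odd case match exactly what the two cases of the inequality produce, which is why I pick the crude bound $v_p(n!) \geq \lfloor n/p\rfloor$ rather than a sharper estimate.
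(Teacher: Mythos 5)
Your proposal is correct and follows essentially the same route as the paper: the paper dismisses the $p=2$ case as clear (you simply spell out the arithmetic) and, for odd $p$, likewise reduces to $v_p((pM)!)\geq M$ via the first term of Legendre's formula. No issues.
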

\begin{proof}
The result is clear for $p = 2$. For $p$ odd, it is enough to prove that the $\p$-adic valuation of $(pM)!$ is at least $M$. But $v_p((pM)!) = \sum\limits_{k \geq 1} \lfloor pM/p^k \rfloor \geq M$, and the result follows.
\end{proof}

Recall that for $x \in \mathbb{Z}_p^\times$ with $\langle x \rangle \in 1 + p^e\mathbb{Z}_p$ we have set
\begin{equation*}
\mathcal{L}_u(x) := \frac{\log_p \langle x\rangle}{\log_p u}
\end{equation*}
where $u$ is a fixed topological generator of $1 + p^e\mathbb{Z}_p$. For an integer $\ell \geq 0$ we define a continuous function $\psi_\ell$ in $\mathcal{C}(\mathbb{Z}_p,\mathbb{Z}_p)$ by 
\begin{equation*}
\psi_\ell(x) := 
\begin{cases}
x^{-1} \dbinom{\mathcal{L}_u(x)}{\ell} & \text { if } \langle x\rangle \in 1+p^e\mathbb{Z}_p, \\
0 & \text{ otherwise}.
\end{cases}
\end{equation*}
We have $x^{-1} (1+S)^{\mathcal{L}_u(x)} = \sum_{\ell \geq 0} \psi_\ell(x) S^\ell$ if $\langle x\rangle \in 1+p^e\mathbb{Z}_p$. These functions appear in the construction of the Iwasawa power series and will play an important part in their computations.

\begin{proposition} 
The Mahler coefficients $(\psi_{\ell,n})_{n \geq 0}$ of $\psi_\ell$ satisfy 
\begin{equation*}
|\psi_{\ell,n}|_p \leq \frac{|\lfloor n/p^e \rfloor!|_p}{|\ell!|_p}
\end{equation*}
for all $n \geq 0$.
\end{proposition}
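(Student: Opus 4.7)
The plan is to invoke Theorem~\ref{th:amice} at local-analyticity order $h = e$, after verifying that $\psi_\ell$ is locally analytic of order $e$ with $e$-norm bounded by $|\ell!|_p^{-1}$.

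First I would check local analyticity coset by coset. Fix $a \in \{0, 1, \dots, p^e-1\}$. If $p \mid a$ then $p \mid x$ for every $x \in a + p^e\mathbb{Z}_p$ (as $e \geq 1$), so $\psi_\ell$ vanishes on this coset. If $\gcd(a,p) = 1$ then $\omega$ is constant on $a + p^e\mathbb{Z}_p$ (since $p^e\mathbb{Z}_p \subset q\mathbb{Z}_p$), hence $\langle x \rangle = x/\omega(a)$ for all $x$ in the coset; in particular the condition $\langle x\rangle \in 1 + p^e\mathbb{Z}_p$ holds either for all $x$ or for none, and in the latter case $\psi_\ell$ again vanishes there. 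In the remaining case, write $a/\omega(a) = 1 + p^e\beta$ with $\beta \in \mathbb{Z}_p$. For $x = a + p^e y$ with $y \in \mathbb{O}_p$ one has
$$\langle x \rangle \;=\; 1 + p^e\bigl(\beta + y/\omega(a)\bigr),$$
so the logarithm series shows $\log_p\langle x\rangle$, and hence $\mathcal{L}_u(x) = \log_p\langle x\rangle/\log_p u$ and $\binom{\mathcal{L}_u(x)}{\ell}$, is analytic in $y$ on $\mathbb{O}_p$. Combined with the analyticity of $x^{-1} = (a+p^e y)^{-1}$ this yields local analyticity of order $e$.

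The crucial estimate is $M_e(\psi_\ell) \leq |\ell!|_p^{-1}$. Since $u$ is a topological generator of $1+p^e\mathbb{Z}_p$ we have $v_p(\log_p u) = e$, and term-by-term inspection of the series for $\log_p\langle x\rangle$ (whose $k$-th term has valuation $ek - v_p(k) \geq e$) gives $|\log_p\langle x\rangle|_p \leq p^{-e}$ for all $y \in \mathbb{O}_p$. Therefore $|\mathcal{L}_u(x)|_p \leq 1$ on the analytic extension. Writing
$$\binom{\mathcal{L}_u(x)}{\ell} \;=\; \frac{\mathcal{L}_u(x)\bigl(\mathcal{L}_u(x)-1\bigr)\cdots\bigl(\mathcal{L}_u(x)-\ell+1\bigr)}{\ell!}$$
and using that each factor in the numerator lies in $\mathbb{O}_p$, one obtains $|\binom{\mathcal{L}_u(x)}{\ell}|_p \leq |\ell!|_p^{-1}$. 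Since $|x^{-1}|_p = 1$ for $x \in \mathbb{O}_p^\times$, formula \eqref{eq:computemh} delivers $M_e(\psi_\ell) \leq |\ell!|_p^{-1}$.

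Applying the second assertion of Theorem~\ref{th:amice} with $h = e$ then gives
$$|\psi_{\ell,n}|_p \;\leq\; M_e(\psi_\ell)\cdot \big|\lfloor n/p^e\rfloor!\big|_p \;\leq\; \frac{|\lfloor n/p^e\rfloor!|_p}{|\ell!|_p},$$
as required. The main obstacle is the bound $|\log_p\langle x\rangle|_p \leq p^{-e}$ on the analytic extension to $\mathbb{O}_p$; once that is secured, the factor $|\ell!|_p^{-1}$ arises naturally because $\mathcal{L}_u$ takes values merely in $\mathbb{O}_p$ (not $\mathbb{Z}_p$) after extension, so the naive estimate $|\binom{\mathcal{L}_u(x)}{\ell}|_p \leq 1$ is unavailable and one must keep the $\ell!$ in the denominator.
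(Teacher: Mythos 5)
Your proof is correct and follows exactly the route the paper takes (which it states in three terse sentences): verify that $\psi_\ell$ is locally analytic of order $e$, bound the $e$-norm by $|\ell!|_p^{-1}$ via \eqref{eq:computemh}, and apply the second assertion of Theorem~\ref{th:amice}. Your write-up simply supplies the coset-by-coset details and the key observation that $\mathcal{L}_u$ only lands in $\mathbb{O}_p$ (not $\mathbb{Z}_p$) on the analytic extension, which is precisely where the $\ell!$ comes from.
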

\begin{proof}
It is clear that the function $\psi_\ell$ is locally analytic of order $e$. The $e$-norm is $1/|\ell!|_p$, by \eqref{eq:computemh}. The result follows from Theorem~\ref{th:amice}.
\end{proof}

\begin{corollary}\label{cor:Npsil}
For every positive integer $M$ we have
\begin{equation*}
N_{\psi_\ell}(M) \leq p^e (pM + \ell).
\end{equation*}
\end{corollary}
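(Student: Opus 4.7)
The plan is to combine the bound from the preceding proposition with a short calculation using Legendre's formula for $v_p(n!)$. The statement to prove is purely arithmetic once that bound is in hand.

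First, by the preceding proposition we already know
\begin{equation*}
|\psi_{\ell,n}|_p \leq \frac{|\lfloor n/p^e \rfloor!|_p}{|\ell!|_p}
\end{equation*}
for all $n \geq 0$. So it suffices to check that whenever $n \geq p^e(pM+\ell)$ the right-hand side is $\leq p^{-M}$. Since division by $p^e$ followed by taking the floor is monotone, $n \geq p^e(pM+\ell)$ gives $\lfloor n/p^e\rfloor \geq pM+\ell$, hence $|\lfloor n/p^e\rfloor!|_p \leq |(pM+\ell)!|_p$. The claim therefore reduces to the estimate
\begin{equation*}
v_p((pM+\ell)!) - v_p(\ell!) \geq M.
\end{equation*}

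The second step is this estimate. Using Legendre's formula $v_p(m!) = \sum_{k\geq 1} \lfloor m/p^k\rfloor$ together with the inequality $\lfloor a+b\rfloor \geq \lfloor a\rfloor + \lfloor b\rfloor$ applied termwise with $a = pM/p^k$ and $b = \ell/p^k$, I would write
\begin{equation*}
v_p((pM+\ell)!) - v_p(\ell!) = \sum_{k\geq 1}\bigl(\lfloor (pM+\ell)/p^k\rfloor - \lfloor \ell/p^k\rfloor\bigr) \geq \sum_{k\geq 1}\lfloor pM/p^k\rfloor = M + \sum_{j\geq 1}\lfloor M/p^j\rfloor \geq M,
\end{equation*}
where the reindexing $j := k-1$ isolates the leading term $\lfloor pM/p\rfloor = M$. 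This gives the desired bound.

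There is no real obstacle here: the entire argument is a short manipulation of the Legendre formula, and the only thing to be careful about is the monotonicity step that converts the hypothesis $n \geq p^e(pM+\ell)$ into a lower bound on $\lfloor n/p^e\rfloor$. The result then reads off immediately, showing $N_{\psi_\ell}(M) \leq p^e(pM+\ell)$.
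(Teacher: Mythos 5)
Your proof is correct and follows essentially the same route as the paper: both reduce the claim to the inequality $v_p((pM+\ell)!) \geq M + v_p(\ell!)$, which the paper deduces from the super-additivity $v_p((a+b)!) \geq v_p(a!) + v_p(b!)$ together with $v_p((pM)!) \geq M$, while you simply unpack that same fact via Legendre's formula. Your version is slightly more explicit about the monotonicity step converting $n \geq p^e(pM+\ell)$ into a bound on $\lfloor n/p^e \rfloor$, which the paper leaves implicit, but the substance is identical.
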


\begin{proof}
It is enough to prove that $v_p((pM+\ell)!) \geq  M + v_p(\ell!)$. But this is clear from the facts that $v_p((a+b)!) \geq v_p(a!) + v_p(b!)$ for any two non-negative integers $a$ and $b$, and that $v_p((pM)!) \geq M$.
\end{proof}

\begin{remark}
This upper bound is in general quite far from optimal. For example, with $p = 3$, $e = 1$, and $\ell = 10$ the corollary gives an upper bound of $210$ for $M = 20$, but computations give $N_{\psi_{10}}(20) = 85$. As the complexity of the computation of Iwasawa power series depends heavily upon the size of $N_{\psi_\ell}(M)$\,---\,see for example Theorem~\ref{th:compiwas}\,---\,it is a good idea to precompute the values $N_{\psi_\ell}(M)$ for small $\ell$ and $M$ and to use these instead in the computations.
\end{remark}

\subsectb{Computation of $\p$-adic cone zeta functions}{Computation of p-adic cone zeta functions}\label{subsec:comzetcone}

From \eqref{eq:defF} we see that, once a cone decomposition has been computed,\footnote{This will be the topic of the next subsection.} the computation of the $\p$-adic twisted partial zeta functions, and in turn that of the $\p$-adic $L$-functions, boils down to the computation of $\p$-adic cone zeta functions. We now turn to this computation, but first explaining how to deal with the several zeta functions associated with different additive characters all at once. Define the étale algebra 
\begin{equation*}
\vdepth{8pt}
\mathcal{R} := \mathbb{Q}_p[X]/(X^{c-1} + \cdots + 1)
\end{equation*}
and an additive character $\Xi$ from $\mathbb{Z}_E$ to $\mathcal{R}$ by setting
\begin{equation}
\Xi(\alpha) := \eta^a
\end{equation}
for each $\alpha \in \mathbb{Z}_E$, where $\eta$ is the image of $X$ in $\mathcal{R}$ and $a$ is any positive integer such that $\alpha \equiv a \tmod{\mathfrak{c}}$. The next result is straightforward.

\begin{lemma}\label{lem:xxyyzz}
Let $T_\mathcal{R}$ be the trace of $\mathcal{R}/\mathbb{Q}_p$ and let $\alpha \in \mathbb{Z}_E$. Then
\begin{equation*}
T_\mathcal{R}(\Xi(\alpha)) = \sum_{\substack{\xi \in X(\mathfrak{c}) \\ \xi \ne 1}} \xi(\alpha). \tag*{\hspace{-1em}\qed}
\end{equation*}
\end{lemma}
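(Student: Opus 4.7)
The plan is to verify the identity by direct computation, making essential use of the fact that $c$ is prime.

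First I would observe that since $c$ is prime, the polynomial $X^{c-1}+\cdots+1$ is exactly the $c$-th cyclotomic polynomial $\Phi_c(X)$, so $\eta$ is a primitive $c$-th root of unity, $\mathcal{R}$ is an étale $\mathbb{Q}_p$-algebra of dimension $c-1$, and the $\mathbb{Q}_p$-algebra homomorphisms $\sigma:\mathcal{R}\to\bar{\mathbb{Q}}_p$ are in bijection with the primitive $c$-th roots of unity $\zeta\in\bar{\mathbb{Q}}_p$ via $\sigma\mapsto\sigma(\eta)$. Recall that for an étale algebra the trace of any element equals the sum of its images under these embeddings, so for $\alpha\in\mathbb{Z}_E$ with $a\equiv\alpha\pmod{\mathfrak{c}}$,
\begin{equation*}
T_\mathcal{R}(\Xi(\alpha))=T_\mathcal{R}(\eta^a)=\sum_{\zeta}\zeta^a,
\end{equation*}
where $\zeta$ ranges over the primitive $c$-th roots of unity in $\bar{\mathbb{Q}}_p$.

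Next I would identify the RHS. By Lemma~\ref{lem:charc}, a non-trivial $\xi\in X(\mathfrak{c})$ has kernel exactly $\mathfrak{c}$ and hence factors through $\mathbb{Z}_E/\mathfrak{c}\cong\mathbb{Z}/c\mathbb{Z}$; it is therefore determined by $\xi(1)$, which can be any non-trivial $c$-th root of unity. Using the fixed embedding of $\bar{\mathbb{Q}}$ into $\mathbb{C}_p$ to view the values of $\xi$ in $\bar{\mathbb{Q}}_p$, the assignment $\xi\mapsto\xi(1)$ is a bijection from $X(\mathfrak{c})\smallsetminus\{1\}$ to the primitive $c$-th roots of unity in $\bar{\mathbb{Q}}_p$, and $\xi(\alpha)=\xi(1)^a$. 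Summing over non-trivial $\xi$ thus gives the same $\sum_\zeta\zeta^a$, matching the LHS.

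Alternatively, and perhaps even more cleanly, I would simply split into two cases. If $\alpha\in\mathfrak{c}$, then $a\equiv 0\pmod c$, so $\Xi(\alpha)=1$ and $T_\mathcal{R}(1)=\dim_{\mathbb{Q}_p}\mathcal{R}=c-1$; Lemma~\ref{lem:charc} gives $\sum_{\xi\in X(\mathfrak{c})}\xi(\alpha)=c$, and subtracting the trivial term $1$ yields $c-1$. If $\alpha\notin\mathfrak{c}$, then $\gcd(a,c)=1$, so the map $\zeta\mapsto\zeta^a$ permutes the primitive $c$-th roots of unity; hence $\sum_\zeta\zeta^a=\sum_\zeta\zeta=-1$ (the sum of all $c$-th roots of unity is $0$ and the non-primitive one is $1$), while Lemma~\ref{lem:charc} gives $\sum_{\xi\in X(\mathfrak{c})}\xi(\alpha)=0$, so the non-trivial-character sum equals $-1$ as well.

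The statement is essentially a bookkeeping exercise, so there is no real obstacle; the only subtlety is making sure the trace on the étale algebra $\mathcal{R}$ is correctly computed (as a sum over $\bar{\mathbb{Q}}_p$-embeddings of $\eta\mapsto\zeta$), and that the bijection between non-trivial additive characters modulo $\mathfrak{c}$ and primitive $c$-th roots of unity is used consistently with the chosen embedding $\bar{\mathbb{Q}}\hookrightarrow\mathbb{C}_p$.
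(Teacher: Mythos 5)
Your argument is correct and is exactly the straightforward verification the paper intends (it states the lemma without proof, and the displayed formula $T_\mathcal{R}(\eta^a)=c-1$ if $c\div a$ and $-1$ otherwise that immediately follows the lemma is precisely your case-split). Both of your routes work; the only points needing care — that $c\ne p$ so $\mathcal{R}$ is étale, that the trace is the sum over embeddings $\eta\mapsto\zeta$ with $\zeta$ a primitive $c$-th root of unity, and that nontrivial $\xi\in X(\mathfrak{c})$ correspond bijectively to such $\zeta$ via Lemma~\ref{lem:charc} — are all handled.
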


Note that $T_\mathcal{R}$ is trivial to compute since it is $\mathbb{Q}_p$-linear and we have
\begin{equation*}
T_\mathcal{R}(\eta^a) = 
\begin{cases}
c - 1 & \text{if $c \div a$}, \\
-1    & \text{otherwise}.
\end{cases}
\end{equation*}
Let $C := C(\beta; \lambda_1, \dots, \lambda_g)$ be a $\mathfrak{c}$-admissible cone. For $N \geq 1$ we define
\begin{equation} \label{eq:defFNC}
F_N(C, \mathfrak{c}; T) := T_\mathcal{R} \biggl[A(C, \Xi) \sum_{k_1, \dots, k_g = 0}^{(N-1)d} (1 + T)^{\mathcal{N}(\beta + \underline{k} \cdot \underline{\lambda})} \prod_{i=1}^g B_{k_i, (N-1)d}\big(\Xi(\lambda_i)\big)\biggr]
\end{equation}
where $A(C, \Xi) := \Xi(\beta)/\prod_{i=1}^g (1 - \Xi(\lambda_i))$ and $T_\mathcal{R}$ is extended in the natural way to $\mathcal{R}[[T]]$. It follows from Lemma~\ref{lem:xxyyzz} and Theorem~\ref{th:main} that $F_N(C, \mathfrak{c}; T) \equiv F(C, \mathfrak{c}; T) \tmod{T^N}$. We will use the expression above to compute approximations of $F(C, \mathfrak{c}; T)$ and of its associated measure $\mu_{p,C}^\mathfrak{c}$. We define 
\begin{equation}\label{eq:intzpC}
\mathcal{Z}_p(C, \mathfrak{c}; s) := \int \phi_{-s}(x) \, d\mu_{p,C}^\mathfrak{c}.
\end{equation}

\smallskip

We now determine some computation costs. These results, or at least their proofs, will be useful later to estimate the complexity of the computation of $\p$-adic $L$-functions; see Subsection~\ref{subsec:compL}. The first step is the computation of values of the rational functions $B_{k,K}$.

\begin{proposition}\label{prop:bk}
If $K$ is a non-negative integer then
\begin{equation*}
B_{0,K}(x) = x \left(\frac{x}{x-1}\right)^K - x + 1
\end{equation*}
and for $\,0 \leq k < K$ we have the recurrence formula 
\begin{equation*}
B_{k+1,K}(x) = x \left[(-1)^{k+1} \binom{K+1}{k+1} \left(\frac{x}{x-1}\right)^K + B_{k,K}\right].
\end{equation*}
\end{proposition}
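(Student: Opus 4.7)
The plan is to set $y := x/(x-1)$ throughout so that $B_{k,K}(x) = (-1)^k \sum_{n=k}^{K} \binom{n}{k} y^n$, and to note the two algebraic identities
\begin{equation*}
x = \frac{y}{y-1}, \qquad 1 - y^{-1} = \frac{1}{x},
\end{equation*}
which will be the only facts about the change of variable that we need.

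For the formula for $B_{0,K}$, I would simply sum the geometric series $\sum_{n=0}^{K} y^n = (y^{K+1}-1)/(y-1)$. Since $1/(y-1) = x-1$, this gives $B_{0,K}(x) = (x-1)(y^{K+1}-1) = x y^K - x + 1$, using $(x-1)y = x$ to simplify $(x-1)y^{K+1}$.

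For the recurrence, the main idea is to apply Pascal's identity in the form $\binom{n}{k} = \binom{n+1}{k+1} - \binom{n}{k+1}$ inside the sum for $B_{k,K}$ and then reindex. Concretely,
\begin{equation*}
\sum_{n=k}^{K} \binom{n}{k} y^n = \sum_{n=k}^{K} \binom{n+1}{k+1} y^n - \sum_{n=k}^{K} \binom{n}{k+1} y^n.
\end{equation*}
Shifting $n \mapsto n-1$ in the first sum (and using $\binom{k}{k+1}=0$) gives $y^{-1} \sum_{n=k+1}^{K+1} \binom{n}{k+1} y^n$, and splitting off the $n=K+1$ term yields
\begin{equation*}
\sum_{n=k}^{K} \binom{n}{k} y^n = \binom{K+1}{k+1} y^K + (y^{-1}-1) \sum_{n=k+1}^{K} \binom{n}{k+1} y^n.
\end{equation*}
Multiplying by $(-1)^k$ and using $1-y^{-1} = 1/x$, this rearranges to
\begin{equation*}
B_{k,K}(x) = (-1)^k \binom{K+1}{k+1} y^K + \frac{1}{x}\, B_{k+1,K}(x),
\end{equation*}
which, after multiplying through by $x$ and solving for $B_{k+1,K}$, is exactly the asserted recurrence.

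I do not anticipate a real obstacle here: the whole argument is a one-line geometric sum followed by a Pascal/reindexing manipulation. The only place to be slightly careful is keeping track of signs and the boundary terms in the index shift, and verifying that the two algebraic reductions $x-1 = 1/(y-1)$ and $1-y^{-1} = 1/x$ are applied in the right places.
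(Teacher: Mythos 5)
Your proof is correct, and both of your computations check out: the geometric-sum evaluation of $B_{0,K}$ (using $y-1 = 1/(x-1)$ and $(x-1)y = x$) and the Pascal/reindexing derivation of the recurrence (with the signs and the boundary terms $\binom{k}{k+1}=0$ and the split-off $n=K+1$ term handled correctly). Your route differs from the paper's in presentation: the paper first proves an auxiliary lemma identifying $B_{k,K}(x)$ with $\bigl(\tfrac{-x}{x-1}\bigr)^k$ times the coefficient of $X^k$ in the rational function $A_K(X) = \bigl(\bigl(\tfrac{x}{x-1}+X\bigr)^{K+1}-1\bigr)/\bigl(\tfrac{1}{x-1}+X\bigr)$, obtained by summing the geometric series $\sum_n (y+X)^n$ in an auxiliary variable $X$; the recurrence then falls out by comparing coefficients of $X^{k+1}$ in the identity $A_K(X)\bigl(\tfrac{1}{x-1}+X\bigr) = \bigl(\tfrac{x}{x-1}+X\bigr)^{K+1}-1$. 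Your argument replaces this generating-function bookkeeping with a direct application of Pascal's rule $\binom{n}{k}=\binom{n+1}{k+1}-\binom{n}{k+1}$ inside the defining sum, followed by an index shift. The two are algebraically equivalent (the Pascal identity is exactly what the coefficient comparison encodes), but yours is more elementary and self-contained, at the cost of slightly more delicate sign and index tracking; the paper's version packages both the $k=0$ case and the recurrence into a single functional equation.
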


\begin{proof}
We first establish another expression for the $B_{k,K}(x)$'s. 

\begin{lemma}
For $k \geq 0$ let $\,\Coeff_k$ denote the linear map that sends a polynomial in $\mathbb{Q}(x)[X]$ to the coefficient of its monomial of degree $k$. Then
\begin{equation*}
B_{k,K}(x) = \left(\frac{-x}{x-1}\right)^k \Coeff_k \left[\frac{\left(\dfrac{x}{x-1}+X\right)^{K+1}\!-1}{\dfrac{1}{x-1}+X}\right].
\end{equation*}
\end{lemma}
\begin{proof}[Proof of the lemma]
We compute
\begin{equation*}
B_{k,K}(x) = \left(\frac{-x}{x-1}\right)^k \sum_{n=k}^K \binom{n}{k} \left(\frac{x}{x-1}\right)^{n-k}
= \left(\frac{-x}{x-1}\right)^k \sum_{n=k}^K  \Coeff_k \left[\left(\frac{x}{x-1} + X\right)^n\right]
\end{equation*}
and the conclusion follows by evaluating the sum.
\end{proof}

Now define
\begin{equation*}
A_K(X) := \biggl( \biggl(\dfrac{x}{x-1}+X\biggr)^{K+1}\!- 1 \biggr) \!\Biggm/\! \biggl( \dfrac{1}{x-1}+X \biggr)
\end{equation*}
and let $a_k$ denote the coefficient of $X^k$ in $A_K(X)$. Then $B_{0,K} = A_K(0)$, which gives the first assertion. Since
\begin{equation*}
\vdepth{8pt}
A_K(X) \biggl(\dfrac{1}{x-1}+X\biggr) = \biggl(\dfrac{x}{x-1}+X\biggr)^{K+1}\!- 1
\end{equation*}
we see that
\begin{equation*}
\vdepth{16pt}
\frac{a_{k+1}}{x-1} + a_k = \binom{K+1}{k+1} \biggl(\frac{x}{x-1}\biggr)^{K-k}
\end{equation*}
for $0 \leq k < K$. The second assertion follows by induction.
\end{proof}

\begin{corollary}
Let $\alpha \in \mathbb{Z}_E$, coprime with $\mathfrak{c}$, and let $K \geq 1$ be an integer. Then we can compute $\ldss{B_{0,K}(\Xi(\alpha))}{B_{K,K}(\Xi(\alpha))} \in \mathcal{R}$ \toprec{p^M} in $\tO(K M c \log p+d \log c)$ binary operations. 
\end{corollary}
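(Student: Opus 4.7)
The plan is to execute the recurrence of Proposition~\ref{prop:bk} inside $\mathcal{R}$, representing elements in the basis $1, \eta, \eta^2, \dots, \eta^{c-2}$ so that each element modulo $p^M$ occupies $O(cM \log p)$ bits and each generic multiplication costs $\tO(cM \log p)$ via fast polynomial multiplication. The setup is to compute $a := \alpha \smod c$ using precomputed reductions $b_i := \theta_i \smod c$ (independent of $\alpha$), giving $a \equiv \sum a_i b_i \pmod{c}$ in $\tO(d \log c)$ bit operations; then $\Xi(\alpha) = \eta^a$ is read off directly (it is a basis element if $1 \le a \le c-2$, and $-(1 + \eta + \cdots + \eta^{c-2})$ if $a = c-1$). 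Since $c$ is prime by (H3) and $a \not\equiv 0$ because $\alpha \notin \mathfrak{c}$, $\eta^a$ is a primitive $c$-th root of unity in each component of the étale algebra $\mathcal{R}$, and $\eta^a - 1$ is a unit there.

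Next I would form $y := \eta^a (\eta^a - 1)^{-1}$ by one extended-Euclidean inversion followed by one multiplication, both $\tO(cM \log p)$, and then raise $y$ to the power $K$ by repeated squaring in $\tO(cM\log p \cdot \log K)$ bit operations. In parallel, Algorithm~\ref{algo:binom} produces the integers $\binom{K+1}{k+1} \smod p^M$ for $0 \le k \le K-1$ at a combined cost of $\tO(KM \log p)$. All these preliminary steps already fit inside the target bound.

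The main loop initializes $B_{0,K}(\eta^a) := \eta^a \cdot y^K - \eta^a + 1$ and then iterates
\begin{equation*}
B_{k+1, K}(\eta^a) = \eta^a \Bigl[(-1)^{k+1} \binom{K+1}{k+1} y^K + B_{k,K}(\eta^a) \Bigr]
\end{equation*}
for $k = 0, \dots, K-1$. Each pass consists of one scaling of the fixed element $y^K$ by a rational $\p$-adic integer, one addition in $\mathcal{R}$, and one multiplication by $\eta^a$. The point to verify carefully, and the only place where a naive estimate would be too weak, is the cost of multiplying by $\eta^a$: in coordinates this is simply a cyclic shift of the $c-1$ components followed by a single reduction via $\eta^{c-1} = -(1 + \eta + \cdots + \eta^{c-2})$, so it is $\tO(cM \log p)$ and not the cost of a generic multiplication. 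Summing over the $K$ iterations gives $\tO(KcM \log p)$, which together with the $\tO(d \log c)$ setup yields the announced total $\tO(KMc \log p + d \log c)$.
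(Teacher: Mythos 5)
Your proposal is correct and follows essentially the same route as the paper: precompute the binomial coefficients $\binom{K+1}{k+1}$ via Algorithm~\ref{algo:binom}, reduce $\alpha$ modulo $\mathfrak{c}$ to an integer $a$ using the precomputed residues of the integral basis (possible since $\mathfrak{c}$ has residual degree $1$), precompute $(\eta^a/(\eta^a-1))^K$ by fast exponentiation, and then run the recurrence of Proposition~\ref{prop:bk} at a cost of $\tO(Mc\log p)$ per step. The additional implementation details you supply (the coordinate representation of $\mathcal{R}$, invertibility of $\eta^a-1$, the cyclic-shift nature of multiplication by $\eta^a$) are correct and merely make explicit what the paper leaves implicit.
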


\begin{proof}
We apply Lemma~\ref{lem:compbin} to precompute the binomial coefficients $\ldss {\binom{K+1}{0}} {\binom{K+1}{K+1}}$ in $\tO(K M \log p)$ bit operations. Since the prime ideal $\mathfrak{c}$ is of degree $1$ there exist integers $\ldss{t_1}{t_d} \in \{\ldss{0}{c-1}\}$ such that $\theta_i \equiv t_i \tmod{\mathfrak{c}}$ for $\range{i}{1}{d}$. We assume $\ldss{t_1}{t_d}$ have been precomputed (using, say, \cite[Algo.~1.4.12]{cohen:book2}). Write $\alpha := a_1 \theta_1 + \cdots + a_d \theta_d \in \mathbb{Z}_E$. Then $\alpha \equiv a \tmod{\mathfrak{c}}$, where $a := a_1 t_1 + \cdots + a_d t_d \mod c$ is computed in $\tO(d \log c)$ bit operations. Next we precompute $(\eta^a/(\eta^a-1))^K$ in $\tO(M c \log K \log p)$ bit operations. From this we get $B_{0,K}(\eta^a)$ at negligible additional cost. Then, using the induction formula and the precomputed values, the computation of $B_{k+1,K}(\eta^a)$ from $B_{k,K}(\eta^a)$ takes only $\tO(Mc \log p)$ bit operations. 
\end{proof}

We now can give our first estimate. As we want our results to be valid for several different cones at once, we will express these estimates using $d$ and not $g$ (the number of generators), using the fact that $g \leq d$.

\begin{theorem}\label{th:compmeas}
For any positive integers $M$ and $N$ we can compute the measure $\mu_{p,C}^\mathfrak{c}$ \toprec{(p^M, T^N)} in $\tO(N^{d+1}d^{d+3} Mc \log p)$ bit operations. 
\end{theorem}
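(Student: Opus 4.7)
The plan is to evaluate the explicit polynomial $F_N(C, \mathfrak{c}; T)$ of \eqref{eq:defFNC}, which by Theorem~\ref{th:main} and Lemma~\ref{lem:xxyyzz} agrees with $F(C, \mathfrak{c}; T)$ modulo $T^N$; computing this polynomial modulo $(p^M, T^N)$ thereby determines $\mu_{p,C}^\mathfrak{c}$ to the required precision. So the whole task reduces to executing the summation in \eqref{eq:defFNC} efficiently.

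I would first carry out three kinds of precomputations, all independent of the main summation: (i) the scalars $B_{k,(N-1)d}(\Xi(\lambda_i)) \in \mathcal{R}$ for $1 \leq i \leq g$ and $0 \leq k \leq (N-1)d$, obtained via Proposition~\ref{prop:bk} and the preceding corollary at total cost $\tO(N d^2 M c \log p + d^2 \log c)$, using $g \leq d$; (ii) the scalar $A(C, \Xi) \in \mathcal{R}$, where the inverses $(1 - \Xi(\lambda_i))^{-1}$ exist since the cone is $\mathfrak{c}$-admissible and $c$ is prime (cf.\ Remark~\ref{rem:cstterm}), at cost $\tO(d M c \log p)$; and (iii) representatives $t_1, \dots, t_d \in \{0, \dots, c-1\}$ of the basis elements $\theta_1, \dots, \theta_d$ modulo $\mathfrak{c}$, which are needed to evaluate $\Xi$.

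The main loop iterates over the $g$-tuples $(k_1, \dots, k_g) \in \{0, \dots, (N-1)d\}^g$, a set of cardinality at most $((N-1)d+1)^g \leq (Nd)^d$. For each tuple I would: (a) form the algebraic integer $\alpha := \beta + \underline{k} \cdot \underline{\lambda}$ in the basis $(\theta_j)$; (b) compute $\mathcal{N}(\alpha)$ modulo $p^{M+V}$, with $V \in O(\log N)$ as required for the subsequent application of Algorithm~\ref{algo:binom}, as the determinant of the $d \times d$ matrix of multiplication by $\alpha$, at cost $\tO(d^3 M \log p)$; (c) compute $(1 + T)^{\mathcal{N}(\alpha)}$ modulo $(p^M, T^N)$ via Algorithm~\ref{algo:binom}, at cost $\tO(N M \log p)$; (d) multiply this power series by the precomputed scalar $\prod_{i=1}^g B_{k_i,(N-1)d}(\Xi(\lambda_i)) \in \mathcal{R}$ and add the result into a running accumulator in $\mathcal{R}[T]/T^N$, at cost $\tO(N M c \log p)$. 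Finally, multiplying the accumulator by $A(C, \Xi)$ and applying $T_\mathcal{R}$ coefficient-wise, each at cost $\tO(N M c \log p)$, yields $F_N(C, \mathfrak{c}; T)$ modulo $(p^M, T^N)$.

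Each iteration costs $\tO((d^3 + Nc) M \log p)$, and summing over the at most $(Nd)^d$ tuples yields the claimed total of $\tO(N^{d+1} d^{d+3} M c \log p)$ bit operations, absorbing the $d^3$ term into the tuple exponent and dominating both the precomputations and the final post-processing. The main obstacle is really just careful complexity bookkeeping; no new mathematical idea is needed beyond the explicit formula of Theorem~\ref{th:main}. The only subtlety that requires verification is that $V \in O(\log N)$ extra $\p$-adic digits of $\mathcal{N}(\alpha)$ indeed suffice for step (c), which follows directly from the mechanism analyzed in the proof of Lemma~\ref{lem:compbin}.
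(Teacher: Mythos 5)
Your proposal is correct and follows essentially the same route as the paper: evaluate the explicit formula \eqref{eq:defFNC} term by term, precomputing the $B_{k,(N-1)d}(\Xi(\lambda_i))$ via Proposition~\ref{prop:bk}, computing each norm as a $d\times d$ determinant modulo $p^{M+V}$ and each $(1+T)^{\mathcal{N}(\alpha)}$ via Algorithm~\ref{algo:binom}, and bounding the number of tuples by $O(N^d d^d)$. The only (harmless) imprecision is calling the product $\prod_i B_{k_i,(N-1)d}(\Xi(\lambda_i))$ ``precomputed'' --- it must be formed per tuple at cost $\tO(dMc\log p)$, as the paper notes, but this is absorbed by the stated bound.
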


\begin{proof}
We compute $F(C; T)$ using \eqref{eq:defFNC}. Applying the previous proposition we precompute $B_{k,(N-1)d}(\Xi(\lambda_i))$, for $\range{k}{0}{(N-1)d}$ and $\range{i}{1}{d}$, in $\tO(Nd^2 Mc \log p)$ binary operations. With these values precomputed each computation of the inner product in \eqref{eq:defFNC} takes $\tO(d Mc \log p)$ bit operations. Now let $a := \mathcal{N}(\beta + \underline{k} \cdot \underline{\lambda})$. We compute $(1+T)^a$ \toprec{(p^M, T^N)}, using Lemma~\ref{lem:compbin}, in $\tO(N M\log p)$ bit operations, after having computed $a$ to the precision of $p^{M+V}$ (in the notation of Lemma~\ref{lem:compbin}). The main part of the computation of $a$ is the computation of the norm, which boils down to the computation modulo $p^{M+V}$ of the determinant of a $d \times d$ matrix; see \cite[\S 4.3]{cohen:book1}. This takes $\tO(d^3 (M+V)\log p)$ bit operations. The multiplication of the power series with the inner product takes $\tO(N Mc \log p)$ bit operations. The sum has $O(N^d d^d)$ terms so the computation of the sum requires $\tO(N^d d^d (N + d) Mc \log p)$ bit operations. The multiplication by $A(C, \Xi)$ and the computation of the trace take negligible time compared to the computation of the sum. The conclusion follows by putting everything together and simplifying.
\end{proof}

\begin{remark}
It is a good idea to retain the values of $B_{k,K}(\eta^a)$ in order to reuse them if several generators have the same image under $\Xi$. This does not affect the estimate of the cost of the computation since that estimate is dominated by the cost of computing the sum.
\end{remark}

Once the measure $\mu_{p,C}$ has been computed we can use it to compute values of $\mathcal{Z}_p(C, \mathfrak{c}; s)$.
  
\begin{corollary}\label{cor:compzc}
With a precomputation of cost $\tO(p^{d+1} d^{d+3} M^{d+2}c)$ bit operations, for any given $s$ in $\mathbb{Z}_p$ we can compute $\mathcal{Z}_p(C, \mathfrak{c}; s)$ \toprec{p^M} in $\tO(p^2M^3)$ bit operations.
\end{corollary}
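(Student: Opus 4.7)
The plan is to combine Theorem~\ref{th:compmeas} with the Mahler-integration machinery of Subsection~\ref{subsec:compcontfunc}, using Corollary~\ref{cor:Nphis} to fix a truncation level that is \emph{uniform} in $s$. The integral \eqref{eq:intzpC} defining $\mathcal{Z}_p(C, \mathfrak{c}; s)$ can be approximated \toprec{p^M} by truncating both the Mahler expansion of $\phi_{-s}$ and the power series associated with $\mu_{p,C}^\mathfrak{c}$ at any $N \geq N_{\phi_{-s}}(M)$, so that a single computation of $\mu_{p,C}^\mathfrak{c}$ feeds every subsequent query.

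Concretely, I would take $N := pM$ for odd $p$, or $N := 2M+2$ for $p = 2$; by Corollary~\ref{cor:Nphis} this bound satisfies $N \geq N_{\phi_{-s}}(M)$ for every $s \in \mathbb{Z}_p$, and in either case $N \in O(pM)$. As precomputation, I would compute $\mu_{p,C}^\mathfrak{c}$ \toprec{(p^M, T^N)} by invoking Theorem~\ref{th:compmeas}. Substituting $N = O(pM)$ into the cost $\tO(N^{d+1}d^{d+3}Mc\log p)$ of that theorem, and absorbing the remaining $\log p$ factor into the $\tO$-notation, yields exactly the claimed precomputation cost $\tO(p^{d+1}d^{d+3}M^{d+2}c)$.

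For each query $s \in \mathbb{Z}_p$, I would first compute the $N$ first Mahler coefficients of $\phi_{-s}$ \toprec{p^M} using Algorithm~\ref{algo:mahlercoeff}. By Lemma~\ref{lem:costphiscoef} this costs $\tO(NM^2\log^2 p + N^2 M\log p)$, which for $N = O(pM)$ evaluates to $\tO(pM^3 + p^2M^3) = \tO(p^2M^3)$. Then Lemma~\ref{lem:compint} produces $\int \phi_{-s}\,d\mu_{p,C}^\mathfrak{c}$ \toprec{p^M} in the additional cost $\tO(NM\log p) = \tO(pM^2)$, which is dominated; by \eqref{eq:intzpC} this integral is precisely $\mathcal{Z}_p(C, \mathfrak{c}; s)$ to the required precision.

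I do not expect any genuine obstacle: the argument is essentially a bookkeeping assembly of the complexity estimates already established in Subsections~\ref{subsec:compcontfunc}, \ref{subsec:analyticity}, and~\ref{subsec:comzetcone}. The only substantive point to verify is that the truncation threshold $N$ supplied by Corollary~\ref{cor:Nphis} is independent of $s$, so that the expensive computation of $\mu_{p,C}^\mathfrak{c}$ need only be performed once in order to serve every subsequent evaluation of $\mathcal{Z}_p(C, \mathfrak{c}; \cdot)$ \toprec{p^M}; this is immediate from the statement of that corollary.
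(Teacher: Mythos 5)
Your proposal is correct and follows essentially the same route as the paper's proof: fix $N$ of size $O(pM)$ via Corollary~\ref{cor:Nphis} (the paper simply takes $N=pM+2$ to cover both parities of $p$ at once), precompute the measure with Theorem~\ref{th:compmeas}, then for each $s$ compute the Mahler coefficients of $\phi_{-s}$ via Lemma~\ref{lem:costphiscoef} and integrate via Lemma~\ref{lem:compint}. The cost accounting matches the paper's in every step.
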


\begin{proof}
We use the integral expression \eqref{eq:intzpC} for $\mathcal{Z}_p(C, \mathfrak{c};s)$. For this we need to compute $\mu_{p,C}^\mathfrak{c}$ \toprec{(p^M, T^N)} with $N > N_f(\phi_s)$. By Corollary~\ref{cor:Nphis} we can take $N = pM+2$; the cost of the precomputation comes from the previous theorem. To perform the integration we need to compute the first $N$ ($ = pM$) Mahler coefficients of $\phi_s$; by Lemma~\ref{lem:costphiscoef} the cost is $\tO(p^2M^3)$. The cost of computing the integral, as given by Lemma~\ref{lem:compint}, is $\tO(pM^2)$.
\end{proof}

Recall that $e$ is the largest positive integer such that $W_{p^e} \subset E(W_q)$ and that $u$ is a fixed topological generator of $1 + p^e \mathbb{Z}_p$. Denote by $\mathfrak{I}_p(C, \mathfrak{c}; X)$ the Iwasawa power series of $\mathcal{Z}_p(C, \mathfrak{c}; s)$, that is, the (unique) power series in $\mathbb{Z}_p[[X]]$ such that $\mathcal{Z}_p(C, \mathfrak{c}; 1-s) = \mathfrak{I}_p(C, \mathfrak{c}; u^s-1)$ for all $s \in \mathbb{Z}_p$. It is easy to adapt the proof of Theorem~\ref{thm:iwasawaseries} to show that this power series exists.\footnote{Actually, the definition of $\mathfrak{I}_p(C, \mathfrak{c}; X)$ is given in the proof of the next theorem.} We now give an explicit formula for $\mathfrak{I}_p(C, \mathfrak{c}; X)$ modulo $(p^M, X^L)$.

\begin{theorem}\label{th:compiwas}
Let $L$ and $M$ be positive integers and let $K = (p^e(pM+L)-1)d$.  Then
\begin{align*}
& \mathfrak{I}_p(C, \mathfrak{c}; X) \equiv \\
& T_\mathcal{R} \biggl[A(C, \Xi) \sum_{k_1, \dots, k_g = 0}^K \mathcal{N}(\beta + \underline{k} \cdot \underline{\lambda})^{-1}(1+X)^{\mathcal{L}_u(\mathcal{N}(\beta + \underline{k} \cdot \underline{\lambda}))}\prod_{i=1}^g B_{k_i, K}\big(\Xi(\lambda_i)\big)\biggr] \tmod{p^M, X^L}.
\end{align*}
Hence we can compute $\mathfrak{I}_p(C, \mathfrak{c}; X)$ \toprec{(p^M, X^L)} in $\tO(p^{ed}d^{d+3} (pM + L)^d M^2 Lc)$ bit operations.
\end{theorem}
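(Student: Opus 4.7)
The strategy is to imitate Step~2 of the proof of Theorem~\ref{thm:iwasawaseries} at the cone level, then approximate the resulting integral using Theorem~\ref{th:main}. First, the measure $\mu_{p,C}^\mathfrak{c}$ has support in $1+p^e\mathbb{Z}_p$: arguing as in Proposition~\ref{prop:propmu}, each truncation $F_N(C,\mathfrak{c};T)$ is a $T_\mathcal{R}$-weighted sum of Dirac masses at the values $\mathcal{N}(\beta+\underline{k}\cdot\underline{\lambda}) \in 1+p^e\mathbb{Z}_p$ (Lemma~\ref{lem:normcong}), and passing to the limit preserves the support. On that support $\phi_{s-1}(x) = x^{-1}\langle x\rangle^s = \sum_{\ell\geq 0} \psi_\ell(x)(u^s-1)^\ell$, so the defining relation $\mathcal{Z}_p(C,\mathfrak{c};1-s) = \mathfrak{I}_p(C,\mathfrak{c};u^s-1)$ unfolds into
$$\mathfrak{I}_p(C,\mathfrak{c};X) = \int x^{-1}(1+X)^{\mathcal{L}_u(x)}\,d\mu_{p,C}^\mathfrak{c}(x) = \sum_{\ell\geq 0}\Bigl(\int\psi_\ell\,d\mu_{p,C}^\mathfrak{c}\Bigr)X^\ell.$$

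Next I would set $N := p^e(pM+L)$, noting that $(N-1)d = K$. By Theorem~\ref{th:main}, $F_N(C,\mathfrak{c};T) \equiv F(C,\mathfrak{c};T) \pmod{T^N}$, so the associated measure $\mu_N$\,---\,a finite sum of Diracs by Lemma~\ref{lem:dirac}\,---\,has its first $N$ Mahler coefficients agreeing with those of $\mu_{p,C}^\mathfrak{c}$. Using Lemma~\ref{lem:dointeg} together with Corollary~\ref{cor:Npsil}, which gives $|\psi_{\ell,n}|_p\leq p^{-M}$ as soon as $n\geq p^e(pM+\ell)$, the tail
$$\int\psi_\ell\,d\mu_{p,C}^\mathfrak{c} - \int\psi_\ell\,d\mu_N = \sum_{n\geq N}\psi_{\ell,n}\int\tbinom{x}{n}\,d(\mu_{p,C}^\mathfrak{c}-\mu_N)$$
lies in $p^M\mathbb{Z}_p$ for every $\ell<L$, since each bracketed integral is in $\mathbb{Z}_p$ and $|\psi_{\ell,n}|_p \leq p^{-M}$ for $n \geq N$. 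Evaluating $\int\psi_\ell\,d\mu_N$ Dirac-by-Dirac gives $\psi_\ell(a) = a^{-1}\binom{\mathcal{L}_u(a)}{\ell}$ at each $a = \mathcal{N}(\beta+\underline{k}\cdot\underline{\lambda})$, and re-assembling the $\ell$-series into $a^{-1}(1+X)^{\mathcal{L}_u(a)}$ modulo $X^L$ produces the displayed formula.

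For the complexity I would follow the pattern of Theorem~\ref{th:compmeas}. The outer sum has $O(K^d) = O(p^{ed}d^d(pM+L)^d)$ terms. Precomputation of $B_{k,K}(\Xi(\lambda_i))$ via the recurrence of Proposition~\ref{prop:bk} has cost bounded by $\tO(KdMc)$ and is absorbed. Each term then requires a $d\times d$ norm determinant ($\tO(d^3 M)$), one $p$-adic logarithm to get $\mathcal{L}_u(a)$ to the precision $p^{M+O(\log L)}$ demanded by Algorithm~\ref{algo:binom} ($\tO(M^2)$), a truncated binomial expansion $(1+X)^{\mathcal{L}_u(a)}$ modulo $(p^M, X^L)$ ($\tO(LM)$), and multiplication with the $B$-product in $\mathcal{R}[[X]]/(X^L)$ ($\tO(LMc)$). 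Summing these per-term costs and multiplying by $O(K^d)$, then simplifying with the same conventions as in Theorem~\ref{th:compmeas}, yields the claimed bound $\tO(p^{ed}d^{d+3}(pM+L)^d M^2 L c)$. The hard part will be the precision bookkeeping in the second paragraph: one must track simultaneously the truncation in $T$ (controlling the measure approximation), the Mahler-tail decay in $p$ of the functions $\psi_\ell$, and the cut-off in $X$ (only $\ell<L$ contribute), and it is precisely the interplay of these three precisions that dictates the value $K=(p^e(pM+L)-1)d$.
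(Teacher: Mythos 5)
Your proposal is correct and follows essentially the same route as the paper: the paper isolates your tail estimate as a standalone lemma (comparing $\int f\,d\mu$ with the weighted sum of Dirac evaluations when the power series agree modulo $T^N$), applies it to $\psi_\ell$ for $\ell < L$ with $N = p^e(pM+L)$ via Corollary~\ref{cor:Npsil}, and reassembles $\sum_{\ell<L}\psi_\ell(x)X^\ell$ into $x^{-1}(1+X)^{\mathcal{L}_u(x)}$ exactly as you do, with the same per-term complexity accounting. Your inline verification that the truncated measure's coefficients agree with those of $\mu_{p,C}^{\mathfrak{c}}$ and your explicit remark on the support in $1+p^e\mathbb{Z}_p$ are just unpacked versions of what the paper states more compactly.
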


\begin{proof} We start with a useful lemma.

\begin{lemma}\label{lem:diracdec}
Let $f$ be a continuous function on $\mathbb{Z}_p$ with Mahler coefficients $(f_n)_{n \geq 0}$. Let $M$ and $N$ be integers with $M \geq 1$ and $N \geq N_f(M)$. Let $\mu$ be a measure of norm $\leq 1$. Assume there exist a finite set $\mathcal{A}$ of elements of $\mathbb{Z}_p$ and an element $c_a$ in $\mathbb{O}_p$ for each $a \in \mathcal{A}$ such that
\begin{equation*}
F_\mu(T) \equiv \sum_{a \in \mathcal{A}} c_a (1+T)^a \pmod{T^N}.
\end{equation*}
Then
\begin{equation*}
\biggl|\int f \, d\mu - \sum_{a \in \mathcal{A}} c_a f(a)\biggr|_p \leq p^{-M}.
\end{equation*}
\end{lemma}

\begin{proof}[Proof of the lemma]
For each $a \in \mathbb{Z}_p$ let $\delta_a$ be the Dirac measure at $a$, and define the measure $\tmu := \mu - \sum_{a \in \mathcal{A}} c_a \delta_a$. Then
\begin{equation*}
\int g \, d\tmu = \int g \, d\mu - \sum_{a \in \mathcal{A}} c_a g(a)
\end{equation*}
for all $g \in \mathcal{C}(\mathbb{Z}_p, \mathbb{C}_p)$. From this and the facts that $\|\mu\|_p \leq 1$ and $|c_a|_p \leq 1$ for all $a \in \mathcal{A}$ we see that $\tmu$ has norm $\leq 1$. By Lemma~\ref{lem:dirac} the associated power series is divisible by $T^N$, and thus
\begin{equation*}
\biggl|\int f \, d\tmu\biggr|_p = \Biggl| \sum_{n \geq N} f_n \int \binom{x}{n} \, d\tmu\Biggr|_p\leq \sup_{n \geq N} |f_n|_p \leq p^{-M}. \qedhere
\end{equation*}
\end{proof}

We apply the lemma repeatedly with $\mu = \mu_{p,C}^\mathfrak{c}$ and $f = \psi_\ell$ for each $\range{\ell}{0}{L}$ and using \eqref{eq:defFNC} for the definitions of the set $\mathcal{A}$ and the coefficients $c_a$. By Corollary~\ref{cor:Npsil} we can take $N = p^e(pM + L)$. We have 
\begin{multline*}
\mathfrak{I}_p(C, \mathfrak{c}; X) \;=\; \sum_{\ell \geq 0} \int \psi_\ell(x) \, d\mu_{p, C}^\mathfrak{c} \, X^\ell \;\equiv\; \sum_{\ell = 0}^{L-1} \int \psi_\ell(x) \, d\mu_{p, C}^\mathfrak{c} \, X^\ell \tmod{X^L} \\
\;\equiv\; T_\mathcal{R} \Biggl[\;\sum_{\ell = 0}^{L-1} A(C, \Xi) \sum_{k_1, \dots, k_g = 0}^{(N-1)d} \psi_\ell(\mathcal{N}(\beta + \underline{k} \cdot \underline{\lambda})) \prod_{i=1}^g B_{k_i, (N-1)d}\big(\Xi(\lambda_i)\big) X^\ell\;\Biggr] \tmod{p^M\!, X^L},
\end{multline*}
the expression for $\mathfrak{I}_p(C, \mathfrak{c}; X)$ coming from the fact that $\smash[b]{\sum\limits_{\ell = 0}^{L-1}} \psi_\ell(x) X^\ell \equiv x^{-1} (1+X)^{\mathcal{L}_u(x)} \tmod{X^L}$.

We now estimate the cost of computing $\mathfrak{I}_p(C, \mathfrak{c}; X)$ by this formula. As above, precomputation of the $B_{k,K}$'s has a cost of $\tO(p^{e+1} d M^2 Lc)$ binary operations, after which we can compute each inner product in $\tO(d Mc \log p)$ bit operations. The computation of $a := \mathcal{L}_u(\mathcal{N}(\beta + \underline{k} \cdot \underline{\lambda}))$ \toprec{p^M} takes $\tO(M(M + e) \log p + d^3 (M+V) \log p)$ bit operations.\footnote{See the proof of Theorem~\ref{th:compmeas} for the computation of the norm.} Once the norm has been computed the main computation is that of the $\p$-adic logarithm, which must be done \toprec{p^{M+e}} since we will be dividing by $\log_p(u) \in p^e \mathbb{Z}_p$. Since $\langle \mathcal{N}(\beta + \underline{k} \cdot \underline{\lambda}) \rangle \in 1 +p^e\mathbb{Z}_p$ this computation can be done using at most $M$ multiplications of precision $p^{M+e}$. Computing $(1+X)^a$ \toprec{(p^M, X^L)} takes $\tO(ML \log p)$ operations and multiplying by the inner product and the inverse of the norm costs $\tO(MLc \log p)$ bit operations. The result follows from simplifying and noting that the sum has $O(p^{ed}d^d(pM+L)^d)$ terms.\footnote{The costs of computing the product by $A(C, \Xi)$ and the trace are negligible compared to that of the sum.} 
\end{proof}

\begin{corollary}\label{cor:evaliwas}
With a precomputation of cost $\tO(p^{(e+1)d} d^{d+3} M^{d+2} c)$ bit operations, for any given $s$ in $\mathbb{Z}_p$ we can compute $\mathcal{Z}_p(C, \mathfrak{c}; s)$ \toprec{p^M} in $\tO(M^2 \log^2 p)$ bit operations. 
\end{corollary}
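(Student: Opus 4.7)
The plan is to reduce evaluation of $\mathcal{Z}_p(C, \mathfrak{c}; s)$ to one evaluation of the truncated Iwasawa power series at a single point, using the relation
\begin{equation*}
\mathcal{Z}_p(C, \mathfrak{c}; s) = \mathfrak{I}_p(C, \mathfrak{c}; u^{1-s} - 1).
\end{equation*}
The precomputation will be a single call to Theorem~\ref{th:compiwas} with suitably chosen parameters $L, M$, while the per-query work will be dominated by one $\p$-adic exponentiation.

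First I would fix the truncation order $L$. By Corollary~\ref{cor:intcoeff} the measure $\mu_{p,C}^\mathfrak{c}$ takes values in $\mathbb{Z}_p$, and the construction of $\mathfrak{I}_p(C, \mathfrak{c}; X)$ (as in the proof of Theorem~\ref{thm:iwasawaseries}, applied to this $\mathbb{Z}_p$-valued measure) yields a power series whose coefficients all lie in $\mathbb{Z}_p$. Since $u$ is a topological generator of $1 + p^e\mathbb{Z}_p$, for any $s \in \mathbb{Z}_p$ we have $y := u^{1-s} - 1 \in p^e \mathbb{Z}_p$ and hence $|y^\ell|_p \leq p^{-e\ell}$. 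So taking $L := \lceil M/e \rceil$ the tail $\sum_{\ell \geq L} a_\ell y^\ell$ has absolute value at most $p^{-M}$, and it suffices to evaluate $\mathfrak{I}_p(C, \mathfrak{c}; X) \bmod (p^M, X^L)$ at $y$.

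For the precomputation I invoke Theorem~\ref{th:compiwas} with this $L$; since $L \leq M$ we have $(pM + L)^d = O((pM)^d)$, yielding a cost of $\tO(p^{(e+1)d} d^{d+3} M^{d+2} c)$ bit operations, as claimed. For the evaluation, given $s \in \mathbb{Z}_p$ approximated modulo $p^M$, I would compute $u^{1-s} \bmod p^{M+e}$ by fast exponentiation (the precision $M+e$ is needed since the correction factor has valuation $\geq e$, so that $u^{1-s}$ is determined modulo $p^{M+e}$ by $s$ modulo $p^M$). This requires $O(M \log p)$ multiplications modulo $p^{M+e}$, each costing $\tO(M \log p)$, for a total of $\tO(M^2 \log^2 p)$ bit operations. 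Then $y = u^{1-s} - 1$ and evaluation of the truncated series by Horner's scheme uses $O(L)$ multiplications modulo $p^M$ at cost $\tO(M \log p)$ each, totalling $\tO(M^2 \log p)$, which is absorbed by the exponentiation cost.

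The main obstacle is the precision bookkeeping for $u^{1-s}$: one must verify that knowing $s$ only modulo $p^M$ still determines $u^{1-s}$ modulo $p^{M+e}$ (so that $y$ is known modulo $p^{M+e}$, which is enough to make all truncated-series summands correct modulo $p^M$). This follows from the identity $u^{1-s} - u^{1-\widetilde{s}} = u^{1-\widetilde{s}}(u^{\widetilde{s}-s} - 1)$ together with the fact that $\log_p u \in p^e \mathbb{Z}_p$ (by Lemma~\ref{lem:normcong} applied to a principal ideal), which forces $v_p(u^{\widetilde{s}-s} - 1) \geq e + v_p(\widetilde{s} - s) \geq M + e$. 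With this justified, collecting the bounds gives the claimed complexities.
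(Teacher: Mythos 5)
Your proposal is correct and follows essentially the same route as the paper: set $L := \lceil M/e \rceil$, precompute $\mathfrak{I}_p(C, \mathfrak{c}; X)$ to precision $(p^M, X^L)$ via Theorem~\ref{th:compiwas}, then for each $s$ compute $t := u^{1-s}-1 \in p^e\mathbb{Z}_p$ by fast exponentiation in $\tO(M^2\log^2 p)$ bit operations and evaluate the truncated series at $t$ by Horner's scheme. The extra precision bookkeeping you supply for $u^{1-s}$ is a detail the paper leaves implicit, not a departure from its argument.
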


\begin{proof}
With $L := \lceil M/e \rceil$ we precompute $\mathfrak{I}_p(C, \mathfrak{c}; X)$ \toprec{(p^M, X^L)}, using the theorem to estimate the cost. Given $s \in \mathbb{Z}_p$, we compute $t := u^{1-s} - 1 \in p^e \mathbb{Z}_p$ \toprec{p^M} in $\tO(M^2 \log^2 p)$ bit operations and compute $\mathfrak{I}_p(C, \mathfrak{c}; t) \equiv \mathcal{Z}_p(C, \mathfrak{c}; s) \tmod{p^M}$ in $\tO((M^2/e) \log p)$ operations. The corollary follows.
\end{proof}

\begin{remark}
From its definition it would seem more natural to compute $\mathfrak{I}_p(C, \mathfrak{c}; X)$ modulo $(X, p^e)^L$. In particular, it would be enough to compute values of $\mathcal{Z}_p(C, \mathfrak{c}; s)$. This implies that,
for $0 \leq \ell < L$, the coefficient of $X^\ell$ would have to be computed \toprec{p^{e(L-\ell)}}. By Corollary~\ref{cor:Npsil} we can replace $N = p^e(p \lceil L/e \rceil +L)$ with $N = \max_{0 \leq \ell < L} p^e(pe(L-\ell)+\ell) = p^{e+1} Le$ in the formula $K = (N-1)d$. It is clear that this does not give a significant improvement in the estimate of the computation time.
\end{remark}

We finish this subsection with a result on the direct computation of $\mathcal{Z}_p(C, \mathfrak{c}; s)$ for a given $s \in \mathbb{Z}_p$. 

\begin{theorem}\label{th:componeval}
If $s \in \mathbb{Z}_p$ then 
\begin{equation*}
\mathcal{Z}_p(C, \mathfrak{c}; s) \;\equiv\; T_\mathcal{R} \biggl[ A(C, \Xi) \sum_{k_1, \dots, k_g = 0}^{(pM+1)d}\mathcal{N}(\beta + \underline{k} \cdot \underline{\lambda})^{-s} \prod_{i=1}^g B_{k_i, (pM+1)d}\big(\Xi(\lambda_i)\big) \biggr] \tmod{p^M} 
\end{equation*}
and hence we can compute $\mathcal{Z}(C, \mathfrak{c}; s)$ \toprec{p^M} in $\tO(p^d d^{d+3} M^{d+2} c)$ bit operations.
\end{theorem}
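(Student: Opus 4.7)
The plan is to combine Theorem~\ref{th:main}, which approximates $F(C, \mathfrak{c}; T)$ modulo $T^N$ by a $\mathbb{Z}_p$-linear combination of terms $(1+T)^{\mathcal{N}(\beta + \underline{k} \cdot \underline{\lambda})}$, with Lemma~\ref{lem:diracdec}, which bounds the error incurred when such a polynomial approximation is integrated against a continuous function. This will directly produce the congruence displayed in the theorem; the complexity will then follow from a routine (if tedious) accounting.

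First, by Corollary~\ref{cor:Nphis}, the choice $N := pM + 2$ satisfies $N \geq N_{\phi_{-s}}(M)$ in both the $p = 2$ and odd $p$ cases. Setting $K := (N - 1)d = (pM + 1)d$ and using the reformulation~\eqref{eq:defFNC}, we may write
$$F(C, \mathfrak{c}; T) \equiv \sum_{k_1, \dots, k_g = 0}^K \tilde{c}_{\underline{k}}\, (1+T)^{a_{\underline{k}}} \pmod{T^N},$$
where $a_{\underline{k}} := \mathcal{N}(\beta + \underline{k} \cdot \underline{\lambda})$ and $\tilde{c}_{\underline{k}} := T_\mathcal{R}\bigl[A(C, \Xi) \prod_{i=1}^g B_{k_i, K}(\Xi(\lambda_i))\bigr]$. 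The $\tilde{c}_{\underline{k}}$'s lie in $\mathbb{Z}_p$ because each $\Xi(\lambda_i)$ is a non-trivial $c$-th root of unity with $c$ coprime to $p$: thus $1 - \Xi(\lambda_i)$ is a unit in $\mathcal{R}$, each $B_{k_i, K}(\Xi(\lambda_i))$ is integral in $\mathcal{R}$, and the trace of an integral element of $\mathcal{R}$ lands in $\mathbb{Z}_p$. Applying Lemma~\ref{lem:diracdec} to $\mu_{p,C}^\mathfrak{c}$ (which has norm at most $1$ by Corollary~\ref{cor:intcoeff}) and to $f = \phi_{-s}$ then gives
$$\Bigl|\mathcal{Z}_p(C, \mathfrak{c}; s) - \sum_{\underline{k}=0}^K \tilde{c}_{\underline{k}}\, \phi_{-s}(a_{\underline{k}})\Bigr|_p \leq p^{-M}.$$

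To simplify $\phi_{-s}(a_{\underline{k}})$, I appeal to the argument in the proof of Proposition~\ref{prop:propmu}: each $\beta + \underline{k} \cdot \underline{\lambda}$ lies in $E_\mathfrak{m}$, so by the commutative diagram~\eqref{eq:cd} (and Lemma~\ref{lem:normcong}) we have $a_{\underline{k}} \in 1 + p^{e+m_1}\mathbb{Z}_p \subset 1 + q\mathbb{Z}_p$. Hence $\langle a_{\underline{k}} \rangle = a_{\underline{k}}$ and $\phi_{-s}(a_{\underline{k}}) = a_{\underline{k}}^{-s}$. Substituting this back and using the $\mathbb{Q}_p$-linearity of $T_\mathcal{R}$ to pull the trace outside the sum yields the displayed congruence.

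For the complexity, I precompute the values $B_{k, K}(\Xi(\lambda_i))$ for $0 \leq k \leq K$ and $1 \leq i \leq g \leq d$ via the recurrence of Proposition~\ref{prop:bk}, at a cost of $\tO(p d^2 M^2 c)$ bit operations. The outer sum contains $(K+1)^g = O(p^d d^d M^d)$ terms; for each tuple $\underline{k}$, I compute $a_{\underline{k}} \pmod{p^M}$ via a $d \times d$ determinant (cost $\tO(d^3 M)$), then $a_{\underline{k}}^{-s}$ via the rapidly convergent series $\sum_n (a_{\underline{k}} - 1)^n \binom{-s}{n}$ (which terminates in $O(M/(e+m_1))$ terms, giving cost $\tO(M^2)$), assemble the product of the precomputed $B$-values in $\mathcal{R}$ (cost $\tO(d c M)$), multiply by $A(C, \Xi)$ and $a_{\underline{k}}^{-s}$, and apply $T_\mathcal{R}$. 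Summing over all tuples and combining with the precomputation gives the advertised bound $\tO(p^d d^{d+3} M^{d+2} c)$. The main obstacle is not conceptual but bookkeeping: ensuring consistent working precision for the norm, the logarithmic behaviour of the series for $a^{-s}$, and the cost of $\mathcal{R}$-arithmetic all combine cleanly into the claimed estimate.
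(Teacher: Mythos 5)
Your proposal is correct and follows essentially the same route as the paper: the paper's proof likewise applies Lemma~\ref{lem:diracdec} with $\mu = \mu_{p,C}^{\mathfrak{c}}$ and $f = \phi_{-s}$, takes $N = pM+2$ via Corollary~\ref{cor:Nphis}, and estimates the cost as in Theorem~\ref{th:compmeas} with the computation of $(1+T)^{a}$ replaced by that of $a^{-s}$. You merely spell out details (integrality of the coefficients $\tilde{c}_{\underline{k}}$, and why $\phi_{-s}\big(\mathcal{N}(\beta+\underline{k}\cdot\underline{\lambda})\big) = \mathcal{N}(\beta+\underline{k}\cdot\underline{\lambda})^{-s}$ via Proposition~\ref{prop:propmu}) that the paper leaves implicit.
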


\begin{proof}
We use Lemma~\ref{lem:diracdec} again with $\mu = \mu_{p,C}^\mathfrak{c}$ and $f(x) = \phi_{-s}(x)$. By Corollary~\ref{cor:Nphis} we can take $N = pM+2$. This establishes the formula. We estimate the computation cost as in Theorem~\ref{th:compmeas}, replacing the computation of $(1+T)^a$ by that of $a^{-s}$, with $a := \mathcal{N}(\beta + \underline{k}\cdot\underline{\lambda})$, and accounting for the difference in the number of terms in the sum.
\end{proof}

\subsection{Explicit cone decomposition}\label{subsec:conecomp}

The construction of the measure $\mu_{p, \mathfrak{m}}^{\mathfrak{a}, \mathfrak{c}}$ relies on the existence of a cone decomposition of $\mathfrak{a}$ modulo $\mathfrak{m}$. Such a construction exists by a result of Cassou-Noguès \cite{cn:zeta} (see Theorem~\ref{th:conedec}), based on the work of Shintani \cite{shintani}, but the proof is nonconstructive. For $d = 1$ the construction is trivial, and the case $d = 2$ has been well studied (see below). For $d = 3$ an explicit efficient decomposition is given in \cite{diaz-friedman:3}, but quantitative results on the number of discrete cones obtained at the end are missing. A general construction is given in \cite{colmez:residue}, but this construction relies on the existence of a set of units satisfying certain conditions and there does not appear to be any practical way to construct such a set. However, the construction is generalized in \cite{diaz-friedman:general} for any set of units of maximal rank, at the price of using signed cones rather than cones.\footnote{That is, cones together with a $\pm$ sign; the decomposition is obtained by removing the cones with a $-$ sign.} This does not cause any complication in the computations and the changes needed to use signed cones are straightforward.  
 
\begin{remark}\label{rk:choicec} 
The construction given in the present article assumes that we can always find a $\mathfrak{c}$-admissible cone decomposition. Although this is always possible in the case $d = 1$ and $d = 2$ (see below), it is not guaranteed by Theorem~\ref{th:conedec} in general. This is not really a problem however; one can always first construct the several cone decompositions needed, then choose $\mathfrak{c}$ so that these cone decompositions are $\mathfrak{c}$-admissible. This is how it is done in~\cite{cn:zeta}. 
\end{remark}

We start with the case $d = 1$, which is straightforward.
\begin{proposition}
Assume $d = 1$, that is, $E = \mathbb{Q}$. Let $f$ and $a$ be positive integers such that $\mathfrak{m} = f\mathbb{Z} \cdot \infty$ and $\mathfrak{a} = a\mathbb{Z}$ and let $b = a (a^{-1} \smod f)$.\footnote{In particular, by (H1), $q$ divides $f$.} Then a $\mathfrak{c}$-admissible cone decomposition of $\,\mathfrak{a}$ modulo $\mathfrak{m}$ is given by the unique cone $C(b; af)$. \qed
\end{proposition}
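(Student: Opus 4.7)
The plan is to verify the three conditions defining a $\mathfrak{c}$-admissible cone decomposition of $\mathfrak{a}$ modulo $\mathfrak{m}$ as set out in Theorem~\ref{th:conedec} and the paragraph after it. Since the unit group is involved, the first thing to observe is that in the case $E = \mathbb{Q}$ with $\mathfrak{z}$ the infinite place (required by (H2)), the group $U_\mathfrak{m}(\mathbb{Q})$ consists of units of $\mathbb{Z}$ that are totally positive and congruent to $1$ modulo $f$, which forces $U_\mathfrak{m}(\mathbb{Q}) = \{1\}$. Hence the desired cone decomposition is just an enumeration of $\mathfrak{a}\cap E_\mathfrak{m}$ itself.

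Next I would check that the data $(b;af)$ define a legitimate cone of the required type. The integer $a^{-1}\smod f$ is well-defined since $\gcd(a,f)=1$ (because $\mathfrak{a}$ is coprime to $\mathfrak{f}$), so it lies in $\{1,\dots,f-1\}$; therefore $b=a(a^{-1}\smod f)$ is a positive integer in $a\mathbb{Z}$, and moreover $b\equiv 1\pmod{f}$ by construction, so $b\in\mathfrak{a}\cap E_\mathfrak{m}$. The generator $af$ is clearly a positive element of $\mathfrak{a}\mathfrak{f}=af\mathbb{Z}$.

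The heart of the proof is the equality
\begin{equation*}
\mathfrak{a}\cap E_\mathfrak{m} = \{b + n\cdot af : n\in\mathbb{N}\} = C(b;af).
\end{equation*}
For this I would parametrise $\mathfrak{a}\cap E_\mathfrak{m}$: any such element is of the form $ak$ with $k\geq 1$ and $ak\equiv 1\pmod{f}$, which amounts to $k\equiv a^{-1}\pmod f$. Writing $k = (a^{-1}\smod f) + nf$ with $n\in\mathbb{Z}$, positivity of $k$ is equivalent to $n\geq 0$ (because $0<a^{-1}\smod f<f$), and then $ak = b + n\cdot af$. This identifies the two sets.

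Finally, $\mathfrak{c}$-admissibility requires $af\notin\mathfrak{c}$; writing $\mathfrak{c}=c\mathbb{Z}$ for a rational prime $c$, this is just $c\nmid af$, which follows from $\gcd(a,c)=1$ (since $\mathfrak{a}$ and $\mathfrak{c}$ are coprime) and $\gcd(f,c)=1$ (since $\mathfrak{c}$ is coprime to $\mathfrak{f}$). There is no real obstacle here; the proof is essentially a bookkeeping exercise, and the only place a reader might pause is confirming $U_\mathfrak{m}(\mathbb{Q})=\{1\}$, which depends crucially on hypothesis (H2).
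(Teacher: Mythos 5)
Your verification is correct and is exactly the routine check the paper has in mind (the paper marks this proposition \qed{} and omits the proof as immediate): $U_\mathfrak{m}(\mathbb{Q})=\{1\}$ by positivity at the infinite place, the arithmetic progression $\{b+n\,af : n\in\mathbb{N}\}$ enumerates $\mathfrak{a}\cap E_\mathfrak{m}$, and $c\nmid af$ gives $\mathfrak{c}$-admissibility. Nothing is missing.
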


It is also not difficult to construct a cone decomposition in the quadratic case. Indeed, assume $d = 2$; then $E$ is a real quadratic field. For two linearly independent elements $\gamma_0$ and $\gamma_1$ of $E^+$ we define the \emph{half-open rational cone of $\mathfrak{a}$ modulo $\mathfrak{m}$} (or simply the \emph{rational cone} of $\mathfrak{a}$ modulo $\mathfrak{m}$) generated by $\gamma_0$ and $\gamma_1$ to be the following subset of $\mathfrak{a} \cap E^+$:
\begin{equation*}
\RC_\mathfrak{m}(\gamma_0, \gamma_1; \mathfrak{a}) := 
\left\{\,\text{$s\gamma_0 + t\gamma_1$ with $s, t \in \mathbb{Q}$, $0 < s$, $0 \leq t$}\,\right\}
\cap \mathfrak{a} \cap E_\mathfrak{m}. 
\end{equation*}
We go from half-open rational cones to discrete cones using the formula
\begin{equation}\label{eq:convcone}
\RC_\mathfrak{m}(\gamma_0, \gamma_1; \mathfrak{a}) = \smash[b]{\bigcup_{\alpha \in \PC_\mathfrak{m}(\gamma_0, \gamma_1; \mathfrak{a})}}\!C(\alpha; \gamma_0, \gamma_1) \rlap{\quad\textrm{(disjoint union)}}
\end{equation}
where 
\begin{equation*}
\vdepth{6pt}
\PC_\mathfrak{m}(\gamma_0, \gamma_1; \mathfrak{a}) := \left\{\,\text{$s\gamma_0 + t\gamma_1$ with $s, t \in \mathbb{Q}$, $0 < s \leq 1$, $0 \leq t < 1$}\,\right\} \cap \mathfrak{a} \cap E_\mathfrak{m}.
\end{equation*}
Thus from a finite family of disjoint half-open rational cones giving a set of representatives of $\mathfrak{a} \cap E_\mathfrak{m}$ modulo $U_\mathfrak{m}(E)$ we can get a $\mathfrak{c}$-admissible cone decomposition of $\mathfrak{a}$ modulo $\mathfrak{m}$, provided that the generators of the rational cones are in $\mathfrak{af} \setminus \mathfrak{c}$. Let $\epsilon_+$ be the generator of $U_+(E)$, the group of totally positive units of $E$, with $\epsilon_+^{(2)} > 1 > \epsilon_+^{(1)}$. Let $i_\mathfrak{m}$ be the index of $U_\mathfrak{m}(E)$ as a subgroup of $U_+(E)$; thus $\epsilon_\mathfrak{m} := \epsilon_+^{i_\mathfrak{m}}$ is a generator of $U_\mathfrak{m}(E)$. For any totally positive element $\gamma$ of $\mathfrak{af} \setminus \mathfrak{c}$ one can easily check that $\RC_\mathfrak{m}(\gamma, \epsilon_{\mathfrak{m}}\gamma; \mathfrak{a})$ is a set of representatives of $\mathfrak{a} \cap E_\mathfrak{m}$ modulo $U_\mathfrak{m}(E)$, and therefore we can construct from this rational cone a $\mathfrak{c}$-admissible cone decomposition by the formula above.\footnote{We will explain below in the proof of the next proposition how to compute the elements of $\PC_\mathfrak{m}(\gamma_0, \gamma_1; \mathfrak{a})$.} However, the number of points in $\PC_\mathfrak{m}(\gamma, \epsilon \gamma; \mathfrak{a})$ is of the order of $\epsilon_\mathfrak{m}^{(2)} = (\epsilon_+^{(2)})^{i_\mathfrak{m}}$ and therefore much too large for computations in general. We use instead the following algorithm, also used in \cite{RS1}, which is based on the work of Hayes \cite{hayes:brumer}. For two elements $\gamma_0$ and $\gamma_1$ of $E^+$ we set 
\begin{equation*}
b(\gamma_0, \gamma_1) := \big\lceil \gamma_0^{(1)}/\gamma_1^{(1)} \big\rceil 
\quad\text{and}\quad R(\gamma_0, \gamma_1) := -\gamma_0 + b(\gamma_0, \gamma_1) \, \gamma_1.
\end{equation*}

\begin{algo}[Computation of cone decomposition in degree $2$]\label{algo:compdec2}\ 
\begin{description}\itemsep2pt
\item[\textbf{Input:}] Ideal $\mathfrak{a}$ coprime to $\mathfrak{c}$ and $\mathfrak{m}$.
\item[\textbf{Output:}] A $\mathfrak{c}$-admissible cone decomposition of $\mathfrak{a}$ modulo $\mathfrak{m}$.
\item[\textbf{1.}] Compute $g \in \mathbb{N}$ and $h \in \mathbb{Z}_E$ such that $\mathfrak{af} = \mathbb{Z}g + \mathbb{Z}h$.
\item[\textbf{2.}] If $h^{(2)} < h^{(1)}$ then do $h \gets -h$. \\[1pt]
\hspace*{.0cm} Do $h \gets h + \lceil -h^{(1)}/g \rceil g$. Set $(g_0, g_1) \gets (g, h)$.
\item[\textbf{3.}] While $g_1^{(2)} < g_0^{(2)}$, do $(g_0, g_1) \gets (g_1, R(g_0, g_1))$.
\item[\textbf{4.}] If $g_0 \in \mathfrak{c}$ then do $(g_0, g_1) \gets (g_1, R(g_0, g_1))$.
\item[\textbf{5.}] Set $D \gets \emptyset$ and $g_\text{last} \gets g_0\epsilon_\mathfrak{m}$.
\item[\textbf{6.}] While $g_0 \ne g_\text{last}$, do
\begin{itemize}
\smallskip
\item[\textbf{6.1.}] If $g_1 \not\in \mathfrak{c}$ then do
\smallskip
\item[]\quad            $b_0 \gets g_0$, $b_1 \gets g_1$ and $(g_0, g_1) \gets (g_1, R(g_0, g_1))$,
\smallskip
\item[]              Else
\smallskip
\item[]\quad            $g_2 \gets R(g_0, g_1)$,
\smallskip
\item[]\quad            $b_0 \gets g_0$, $b_1 \gets g_2$ and $(g_0, g_1) \gets (g_2, R(g_1, g_2))$.
\smallskip
\item[\textbf{6.2.}] For $a \in \PC_\mathfrak{m}(b_0, b_1; \mathfrak{a})$, do $D \gets D \cup \{C(a; b_0, b_1)\}$.
\end{itemize}
\item[\textbf{7.}] Return $D$.
\end{description}
\end{algo}

\begin{proposition}\label{prop:quadcone}
Let $D_E$ be the discriminant of $E$ and let $\epsilon_+$ be the generator of the group $U_+(E)$ of totally positive units of $E$ such that $\epsilon_+ > 1$.\footnote{To simplify the expressions, we assume from now on that $E$ is embedded into $\mathbb{R}$ by the map $x \mapsto x^{(2)}$.} Then Algorithm~\ref{algo:compdec2} computes a $\mathfrak{c}$-admissible cone decomposition of $\mathfrak{a}$ modulo $\mathfrak{m}$ in $\tO\big(\mathcal{N}(\mathfrak{af})\sqrt{D_E} + \mathcal{N}(\mathfrak{f})^2 \, \epsilon_+ \log D_E\big)$ operations in $E$. Moreover, this cone decomposition contains $\tO\big(\mathcal{N}(\mathfrak{f})\,\epsilon_+ \log D_E\big)$ cones.
\end{proposition}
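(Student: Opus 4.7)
The plan is to show first that Algorithm~\ref{algo:compdec2} produces a disjoint family of half-open rational cones $\RC_\mathfrak{m}(b_0, b_1; \mathfrak{a})$ whose union is a set of representatives of $\mathfrak{a} \cap E_\mathfrak{m}$ under the action of $U_\mathfrak{m}(E)$, then apply the conversion formula \eqref{eq:convcone} to pass to discrete cones.

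The first thing I would analyze is the reduction map $R$. Steps 1--3 produce a $\mathbb{Z}$-basis $(g_0, g_1)$ of $\mathfrak{af}$ with $g_0, g_1 \gg 0$ and with $0 \leq g_1^{(1)} < g_0^{(1)}$ and $g_0^{(2)} \leq g_1^{(2)}$, i.e.~a reduced basis in the sense of Hayes~\cite{hayes:brumer}. Iterating $(g_0, g_1) \mapsto (g_1, R(g_0, g_1))$ on such a basis corresponds to the negative (Hirzebruch--Jung) continued-fraction expansion of $g_0^{(1)}/g_1^{(1)}$, and Hayes' analysis shows that the orbit of $g_0$ under this iteration is purely periodic, with period multiplied by $\epsilon_+$. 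Replacing $\epsilon_+$ by $\epsilon_\mathfrak{m} = \epsilon_+^{i_\mathfrak{m}}$ only multiplies the period by $i_\mathfrak{m}$, which justifies the termination condition $g_0 = g_\text{last}$ in Step 6.

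The core geometric step, which I expect to be the main obstacle, is the following. Writing $(h_0, h_1, \ldots, h_k)$ for the full sequence of vectors visited over one period (so $h_0 = g_0$, $h_k = g_0\epsilon_\mathfrak{m}$), one must show that the rational cones $\RC_\mathfrak{m}(h_j, h_{j+1}; \mathfrak{a})$ are pairwise disjoint and that their union equals $\RC_\mathfrak{m}(g_0, g_0\epsilon_\mathfrak{m}; \mathfrak{a})$, which is itself a set of representatives of $\mathfrak{a} \cap E_\mathfrak{m}$ modulo $U_\mathfrak{m}(E)$. This follows once one checks that $h_{j+1} = -h_{j-1} + b_j h_j$ lies on the next ray of the fan in the counterclockwise direction, so that consecutive half-open cones share only the ray through $h_j$, which the asymmetric $0 < s \leq 1$ versus $0 \leq t < 1$ convention in the definition of $\RC_\mathfrak{m}$ assigns to exactly one of them. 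The delicate point is to pin down the half-open conventions so that both the disjoint-union property and compatibility with \eqref{eq:convcone} hold simultaneously.

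Next, I would verify $\mathfrak{c}$-admissibility. Step~4 arranges that $g_0 \notin \mathfrak{c}$ at the start, and Step~6.1 handles the case $g_1 \in \mathfrak{c}$ by skipping one reduction step and merging two consecutive rational cones into $\RC_\mathfrak{m}(g_0, g_2; \mathfrak{a})$. This is legitimate because the ray through $g_1$ which is ``lost'' in the merge meets $E_\mathfrak{m}$ only in multiples of $g_1$, all of which lie in $\mathfrak{c}$, while $\mathfrak{a}$ is coprime to $\mathfrak{c}$; moreover, since $(g_1, g_2)$ is a $\mathbb{Z}$-basis of $\mathfrak{af}$ which is coprime to $\mathfrak{c}$, $g_1$ and $g_2$ cannot both lie in $\mathfrak{c}$, so the new basis has $g_2 \notin \mathfrak{c}$ and the process never gets stuck.

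For the complexity and cone count, I would argue as follows. The number of iterations of Step~6 is $i_\mathfrak{m}$ times the period length of the negative continued fraction attached to the fundamental unit, and standard estimates on Hayes reduction give $\tO(\log D_E)$ for the latter, so the total number of rational cones produced is $\tO(i_\mathfrak{m}\log D_E)$; using $i_\mathfrak{m} \leq |(\mathbb{Z}_E/\mathfrak{m})^\times|/2 = \tO(\mathcal{N}(\mathfrak{f}))$ yields $\tO(\mathcal{N}(\mathfrak{f})\log D_E)$ rational cones, and each contributes $\tO(\epsilon_+)$ discrete cones via \eqref{eq:convcone} after enumerating $\PC_\mathfrak{m}$. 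Each continued-fraction step costs $\tO(\log D_E)$ operations in $E$, accounting for the term $\tO(\mathcal{N}(\mathfrak{f})^2\epsilon_+\log D_E)$. The enumeration of $\PC_\mathfrak{m}(b_0, b_1; \mathfrak{a})$ can be done by solving the $2\times 2$ linear system to express lattice points in the parallelogram in terms of $(b_0, b_1)$; summing over the period, the total number of points enumerated equals the number of representatives of $(\mathfrak{a}\cap E^+)/\langle\epsilon_\mathfrak{m}\rangle$ in the fundamental parallelotope, which by Minkowski's estimate is $\tO(\mathcal{N}(\mathfrak{af})\sqrt{D_E})$. Combining these bounds gives the claimed complexity and the stated count of cones.
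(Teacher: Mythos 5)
Your outline of why the algorithm is \emph{correct} (Hayes reduction to the convexity polygon, periodicity under $\epsilon_\mathfrak{m}$, disjointness of consecutive half-open rational cones, the merging trick when some intermediate point lies in $\mathfrak{c}$, and conversion to discrete cones via \eqref{eq:convcone}) matches the paper's argument. The gap is in the quantitative part, where two of your key claims are false and the stated cone count comes out right only because the errors cancel. First, the period length $P_0$ of the Hirzebruch--Jung iteration is \emph{not} $\tO(\log D_E)$: partial quotients equal to $2$ (the ``midpoints'') contribute nothing to the growth of the $\gamma_n^{(1)}$, so the period can be exponentially longer than the regulator --- the remark following the proposition makes exactly this point. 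What is actually proved, via Hayes' inequality \eqref{eq:bndP} combined with the bound $\mathcal{N}(\gamma_n) \leq \mathcal{N}(\mathfrak{af})\sqrt{D_E}$ at vertices, is only $P = i_\mathfrak{m}P_0 \in \tO\big(i_\mathfrak{m}\,\epsilon_+\log D_E\big)$. Second, a rational cone $\RC_\mathfrak{m}(b_0,b_1;\mathfrak{a})$ does not contribute $\tO(\epsilon_+)$ discrete cones; it contributes exactly $(\mathfrak{a}:\mathbb{Z}b_0+\mathbb{Z}b_1)/\mathcal{N}(\mathfrak{f})$ of them, namely $1$ in case (I) and $b(\gamma_n,\gamma_{n+1})$ in case (II). The entire reason for walking along consecutive points of the convexity polygon instead of using the single cone $\RC_\mathfrak{m}(\gamma,\epsilon_\mathfrak{m}\gamma;\mathfrak{a})$ is to avoid a factor of order $\epsilon_\mathfrak{m}$ per cone; the contribution of the vertices is then controlled by the product bound \eqref{eq:bndvertex}, $\prod_i(b_i-1) < \epsilon_+$, which gives $\sum b_i \in O(\epsilon_+)$ over one period.

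Consequently your complexity accounting does not go through: ``each continued-fraction step costs $\tO(\log D_E)$ operations'' multiplied by your period bound would give $\tO(\mathcal{N}(\mathfrak{f})\log^2 D_E)$, not $\tO(\mathcal{N}(\mathfrak{f})^2\epsilon_+\log D_E)$, which in the paper arises as $O(\mathcal{N}(\mathfrak{f})\cdot P)$ for the enumeration of $\mathfrak{a}/(\mathbb{Z}b_0+\mathbb{Z}b_1)$ at each of the $P$ midpoint steps. Likewise the $\mathcal{N}(\mathfrak{af})\sqrt{D_E}$ term is not a Minkowski-type count of lattice points in a fundamental domain; it is the cost of the \emph{initial} reduction in Step~3, bounded by $\mathcal{N}(h)/\mathcal{N}(\mathfrak{af})$ with $\mathcal{N}(h) = O(\mathcal{N}(\mathfrak{af})^2\sqrt{D_E})$. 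To repair the proof you need the two ingredients you omitted: the norm bound at vertices feeding into \eqref{eq:bndP} to control $P$, and the bound \eqref{eq:bndvertex} to control $\sum b_i$.
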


\begin{proof}
The pair $(g, h)$ constructed in Step 2 of the algorithm satisfies $\mathfrak{af} = \mathbb{Z}g + \mathbb{Z}h$, $g ^{(1)} > h^{(1)}$ and $1 < h^{(2)}/h^{(1)}$. From this pair we construct in the following steps a sequence $(\tgamma_n)_{n \geq 0}$ with $(\tgamma_0, \tgamma_1) := (g, h)$ and $\tgamma_{n+1} := R(\tgamma_{n-1}, \tgamma_n)$ for $n \geq 1$. One can prove that the elements of this sequence satisfy 
\begin{equation*}
\text{$(1)~\mathfrak{af} = \mathbb{Z}\tgamma_n + \mathbb{Z}\tgamma_{n+1}$, \enspace
      $(2)~\tgamma_n^{(1)} > \tgamma_{n+1}^{(1)}$, \enspace and \enspace
      $(3)~\tgamma_n^{(2)}/\tgamma_n^{(1)} < \tgamma_{n+1}^{(2)}/\tgamma_{n+1}^{(1)}$}.
\end{equation*}
One can also prove (see below) that there exists an integer $N \geq 1$ such that 
\begin{equation*}
\text{$(4)~\tgamma_{n-1}^{(2)} < \tgamma_n^{(2)}$ \enspace for all \enspace $n \geq N$}.
\end{equation*}
We let $\gamma_n := \tgamma_{N+n}$ for $n \geq 0$. This is the sequence that is computed after Step 3. The points $\gamma_n$ are successive points on the \emph{convexity polygon} of $\mathfrak{af}$ as defined in \cite{hayes:brumer}. We can also extend the sequence in the other direction to obtain a sequence $(\gamma_n)_{n \in \mathbb{Z}}$, infinite in both directions, containing all the points on the convexity polygon, and for which we still have $\gamma_{n+1} = R(\gamma_{n-1}, \gamma_n)$ for all $n \in \mathbb{Z}$. It will be necessary to ensure that $\gamma_0 \not\in \mathfrak{c}$. If $\gamma_0 \in \mathfrak{c}$, we iterate one more time in Step 4 to replace $\gamma_0$ by $\gamma_1$ (that is, replacing $N$ by $N+1$). Indeed, by (1) $\gamma_0$ and $\gamma_1$ cannot both be in $\mathfrak{c}$. We can assume from now on that $\gamma_0 \not\in \mathfrak{c}$. The group $U_+(E)$ acts on the set $\{\gamma_n, n \in \mathbb{Z}\}$; thus there exists an integer $P_0 \geq 1$ such that $\gamma_{n+P_0} = \epsilon_+ \gamma_n$ for all $n \geq 0$. Therefore, for any $n \in \mathbb{Z}$ the union of the (disjoint) rational cones $\ldss{\RC_\mathfrak{m}(\gamma_n, \gamma_{n+1}; \mathfrak{a})}{\RC_\mathfrak{m}(\gamma_{n+P-1}, \gamma_{n+P}; \mathfrak{a})}$, with $P := i_\mathfrak{m} P_0$, gives a set of representatives of $\mathfrak{a} \cap E_\mathfrak{m}$ modulo $U_\mathfrak{m}(E)$ with generators in $\mathfrak{af}$. However, although $\gamma_0$, and thus also $\gamma_P$, do not belong to $\mathfrak{c}$, it is possible that $\gamma_n \in \mathfrak{c}$ for some $n$ in the range $1 \leq n \leq P-1$. In that case, $\gamma_{n-1}$ and $\gamma_{n+1}$ do not lie in $\mathfrak{c}$, by (1), and we use in Step 6.1 the fact that $\RC_\mathfrak{m}(\gamma_{n-1}, \gamma_n; \mathfrak{a}) \cup \RC_\mathfrak{m}(\gamma_n, \gamma_{n+1}; \mathfrak{a}) = \RC_\mathfrak{m}(\gamma_{n-1}, \gamma_{n+1}; \mathfrak{a})$ to get rid of cones with $\gamma_n \in \mathfrak{c}$. We end up with a rational cone decomposition having generators suitable for constructing a $\mathfrak{c}$-admissible cone decomposition of $\mathfrak{a}$ modulo $\mathfrak{m}$ using \eqref{eq:convcone}.

We will now estimate the complexity of the algorithm and the number of cones obtained at the end. First we find an upper bound on $N$, the number of iterations in Step~3. Assume $1 \leq t < N-1$. Since $\tgamma_{t+1}^{(1)} < \tgamma_t^{(1)}$ and $\tgamma_{t+1}^{(2)} < \tgamma_t^{(2)}$ we have $\mathcal{N}(\tgamma_{t+1}) < \mathcal{N}(\tgamma_t)$, and since both are divisible by $\mathcal{N}(\mathfrak{af})$ we have $N \leq \mathcal{N}(h)/\mathcal{N}(\mathfrak{af}) + 1$. Now assume, without loss of generality, that $\sqrt{D_E}^{(1)} = \sqrt{D_E} > 0$ and $\sqrt{D_E}^{(2)} = - \sqrt{D_E} < 0$. Write $h = a + b \sqrt{D_E}$ with $a, b \in \frac{1}{2}\mathbb{Z}$. Since $h^{(2)}> h^{(1)}$, it follows that $b < 0$, and also $a > - b \sqrt{D_E} >  0$, as $h \in \mathbb{Z}_E^+$. On the other hand, $a < g - b\sqrt{D_E}$ because $h^{(1)} < g$. From the fact that $\mathfrak{af} = \mathbb{Z}g + \mathbb{Z}h$, we find that $b = -\mathcal{N}(\mathfrak{af})/2g$. We compute $\mathcal{N}(h) =  a^2 - b^2 D_E < (g - b\sqrt{D_E})^2 - b^2D_E = g^2 - 2gb\sqrt{D_E} = g^2 + \mathcal{N}(\mathfrak{af}) \sqrt{D_E} \in O(\mathcal{N}(\mathfrak{af})^2 \sqrt{D_E})$. Thus Step~3 requires $O(\mathcal{N}(\mathfrak{af}) \sqrt{D_E})$ operations in $E$. 

Next we estimate the size of $P = i_\mathfrak{m}P_0$. By \cite[p.~161]{hayes:brumer} we have
\begin{equation}\label{eq:bndP}
\log C + \log \frac{\epsilon_+^{(2)}}{\epsilon_+^{(2)} - \epsilon_+^{(1)}} + \frac{1}{2} \log D_E \geq P \frac{\log \epsilon_+^{(2)}}{\epsilon_+^{(2)}-\epsilon_+^{(1)}}
\end{equation}
where 
\begin{equation*}
\vdepth{14pt}
C := \frac{\epsilon_\mathfrak{m}^{(2)}-\epsilon_\mathfrak{m}^{(1)}}{\sqrt{D_E}} \inf_{n \in \mathbb{Z}} \frac{\mathcal{N}(\gamma_n)}{\mathcal{N}(\mathfrak{af})}.
\end{equation*}
For $n \in \mathbb{Z}$ we see that $b(\gamma_{n-1}, \gamma_n) = 2$ if and only if $\gamma_n$ is midway between $\gamma_{n-1}$ and $\gamma_{n+1}$, in which case we say $\gamma_n$ is a \emph{midpoint}. If $\gamma_n$ is not a midpoint we say $\gamma_n$ is a \emph{vertex}. If $\gamma_n$ is a vertex we have $\mathcal{N}(\gamma_n) \leq \mathcal{N}(\mathfrak{af}) \sqrt{D_E}$ (see \cite[Prop.~5.5]{hayes:brumer}) and it follows that $C \leq \epsilon_{\mathfrak{m}}^{(2)}$. Substitution in \eqref{eq:bndP} gives
\begin{equation*}
P \in O\big(\epsilon_+^{(2)} (\log \epsilon_\mathfrak{m}^{(2)} + \log D_E)\big) \subset \tO\big(i_\mathfrak{m} \epsilon_+^{(2)} \log D_E\big).
\end{equation*}

Lastly, we need to explain how to perform Step 6.2, we need to estimate the cost of the computation, and  we need to estimate the number of points in the sets $\PC_\mathfrak{m}(b_0, b_1; \mathfrak{a})$. There are two cases to consider, (I) $(b_0, b_1) = (\gamma_n, \gamma_{n+1})$ and (II) $(b_0, b_1) = (\gamma_n, \gamma_{n+2})$, for an arbitrary $n \in \mathbb{Z}$. We have the bijection
\begin{equation*}
\mathfrak{a}/(\mathbb{Z} b_0 + \mathbb{Z} b_1) \buildrel 1:1 \over \longrightarrow \left\{ s b_0 + t b_1 \textrm{ with } s, t \in \mathbb{Q},\ 0 < s \leq 1,\ 0 \leq t < 1 \right\} \cap \mathfrak{a},
\end{equation*}
defined as follows. For a given class $\bar\alpha$ in $\mathfrak{a}/(\mathbb{Z} b_0 + \mathbb{Z} b_1)$, lift $\bar\alpha$ to an arbitrary element $\alpha \in \mathfrak{a}$ and write $\alpha = s b_0 + t b_1$ with $s, t \in \mathbb{Q}$. Then the map above sends $\bar\alpha$ to 
\begin{equation*}
[ \alpha ]_{(b_0, b_1)} := (s - \lceil s \rceil + 1) b_0 + (t - \lfloor t \rfloor) b_1.
\end{equation*}
Moreover, since $b_0$ and $b_1$ lie in $\mathfrak{af}$ there is a well-defined map from $\mathfrak{a}/(\mathbb{Z} b_0 + \mathbb{Z} b_1)$ to $\mathfrak{a}/\mathfrak{af}$ which sends $\bar\alpha$ to the class of $\alpha$ modulo $\mathfrak{af}$. The set $\PC_\mathfrak{m}(b_0, b_1; \mathfrak{a})$ consists of precisely those elements $[\alpha]_{(b_0, b_1)}$ for which $\bar\alpha \in \mathfrak{a}/(\mathbb{Z} b_0 + \mathbb{Z} b_1)$ is congruent to $1$ modulo $\mathfrak{f}$. Since $\mathfrak{a}$ and $\mathfrak{f}$ are coprime this map is surjective and therefore $\PC_\mathfrak{m}(b_0, b_1; \mathfrak{a})$ contains exactly $d/\mathcal{N}(\mathfrak{f})$ elements, where $d := (\mathfrak{a}:\mathbb{Z} b_0 + \mathbb{Z} b_1)$. Using the methods of \cite[\S4.1.3]{cohen:book2}, two elements $\alpha_0$ and $\alpha_1$ of $\mathfrak{a}$ can be constructed with $\mathfrak{a} = \mathbb{Z} \alpha_0+\mathbb{Z}\alpha_1$ and $\mathfrak{a}/(\mathbb{Z} b_0 + \mathbb{Z} b_1) = (\mathbb{Z}/d_0\mathbb{Z})\bar\alpha_0 + (\mathbb{Z}/d_1\mathbb{Z})\bar\alpha_1$, where $d_0$ and $d_1$ are positive integers and $d := d_0d_1$. Thus the elements of $\mathfrak{a}/(\mathbb{Z} b_0 + \mathbb{Z} b_1)$ can easily be enumerated in $O(d)$ operations in $E$. For case (I) we have $\mathbb{Z} \gamma_n + \mathbb{Z} \gamma_{n+1} = \mathfrak{af}$ and therefore $d = \mathcal{N}(\mathfrak{f})$ and $\PC_\mathfrak{m}(\gamma_n, \gamma_{n+1}; \mathfrak{a})$ contains only one element. For case (II) we have 
\begin{equation*}
d = (\mathfrak{a} : \mathbb{Z}\gamma_n + \mathbb{Z}\gamma_{n+2}) = (\mathfrak{a} : \mathbb{Z}\gamma_n + \mathbb{Z}\gamma_{n+1}) (\mathbb{Z}\gamma_n + \mathbb{Z}\gamma_{n+1} : \mathbb{Z}\gamma_n + \mathbb{Z}\gamma_{n+2}) = \mathcal{N}(\mathfrak{f}) \, b(\gamma_n, \gamma_{n+1}).
\end{equation*}
If $\gamma_{n+1}$ is a midpoint we have $b(\gamma_n, \gamma_{n+1}) = 2$. We now need to estimate the size of $b(\gamma_n, \gamma_{n+1})$ when $\gamma_n$ is a vertex. Since $b(\gamma_n, \gamma_{n+1}) = b(\gamma_{n+P_0}, \gamma_{n+P_0+1})$ it is enough to look at what happens for the vertices among $\ldss{\gamma_0}{\gamma_{P_0-1}}$. Writing $b_n := b(\gamma_{n-1}, \gamma_n)$ to simplify the notation, we have by construction 
\begin{equation*}
\frac{\gamma_{n-1}^{(1)}}{b_n - 1} > \gamma_n^{(1)} > \frac{\gamma_{n-1}^{(1)}}{b_n} 
\end{equation*}
and therefore
\begin{equation*}
\vdepth{16pt}
\frac{\gamma_0^{(1)}}{(b_{P_0}-1)(b_{P_0-1}-1) \cdots (b_1-1)} > \gamma_{P_0}^{(1)} > \frac{\gamma_0^{(1)}}{b_{P_0} b_{P_0-1} \cdots b_1}. 
\end{equation*}
From the fact that $\gamma_{P_0} = \epsilon_+ \gamma_0$ we find that
\begin{equation}\label{eq:bndvertex}
\epsilon_+^{(2)} > \prod_{i=1}^{P_0} (b_i-1).
\end{equation}
The indices $i$ for which $b_i = 2$, that is, corresponding to the midpoints, do not contribute to the product. For indices corresponding to vertices we get
\begin{equation*}
\sum_{\substack{1 \leq i \leq P_0 \\ \gamma_i \text{ is a vertex }}} b_i \in O(\epsilon_+^{(2)}).
\end{equation*}
We now put everything together to get the result. For Step 1, we assume that an ideal is given by a $2{\times}2$ integral matrix expressing a basis of the ideal of the (fixed) integral basis of $E$. This step amounts to an HNF reduction of a $2{\times}4$ matrix (see \cite[\S 4.7.1]{cohen:book1}). Since we can reduce the entries of this matrix modulo $\mathcal{N}(\mathfrak{af})$ this step takes $\tO(\log(\mathcal{N}(\mathfrak{af})))$ bit operations and hence is negligible. Step 3 takes $O(\mathcal{N}(\mathfrak{af}) \sqrt{D_E})$ operations in $E$. The loop in Step 6 is iterated $P$ times. The most costly operation is in Step 6.2. The cost of computing $\alpha_0$, $\alpha_1$, $d_0$, and $d_1$ is essentially that of an SNF reduction of a $2{\times}2$ matrix with coefficients of size $\leq \mathcal{N}(\mathfrak{af})$ and hence can be neglected. Enumerating the elements of $\PC_\mathfrak{m}(b_0, b_1; \mathfrak{a})$ takes a total of $O(\mathcal{N}(\mathfrak{f})P)$ operations in $E$ for the midpoints and $O(i_\mathfrak{m} \mathcal{N}(\mathfrak{f}) \epsilon_+)$ for the vertices. This gives the estimate on the complexity of the algorithm. To count the cones we observe that the midpoints give $O(P)$ cones and the vertices give $O(i_\mathfrak{m} \epsilon_+)$ cones. The conclusion is established by using the fact that $i_\mathfrak{m} \in O(\mathcal{N}(\mathfrak{f}))$.
\end{proof}

\begin{remark} 
One could ask what would happen if we were first to construct a cone decomposition using the algorithm without any restriction related to $\mathfrak{c}$, that is, deleting Step 4 and always doing the first part in Step 6.1, then choosing the prime ideal $\mathfrak{c}$ so that the decomposition computed is $\mathfrak{c}$-admissible. In fact this would not change the complexity or even the order of the number of cones, since both are dominated by the contributions of the midpoints and, in fact, for these we get the same number of discrete cones in cases (I) and (II). On the other hand, this would probably force the norm of $\mathfrak{c}$ to get significantly larger and that would adversely affect the complexity of the remaining computations.
\end{remark}

\begin{remark}
The complexity and the estimate of the number of cones given by this proposition appear in practice to be very pessimistic as they are of the order of the exponential of $R_E$, the regulator of $E$, whereas computations point towards something of the size of $R_E$. Indeed, one can use \eqref{eq:bndvertex} to show that the number of vertices among $\ldss{\gamma_0}{\gamma_{P_+}}$ is $O(R_E)$. However, it appears difficult to bound the number of midpoints. One can prove that if $\gamma_n$ is a vertex then the number of midpoints following it is $\big\lfloor 1/(1- b_n + \gamma_{n-1}^{(1)}/\gamma_n^{(1)}) \big\rfloor$, and so this problem is related to the question of how close a quadratic irrationality can be to an integer. In order to bound more efficiently the number of cones one would need to bound the size of the $b_n$'s. This could be done for example using \eqref{eq:bndvertex} by finding some non-trivial lower bound on the number of vertices.
\end{remark}

\subsectb{Computations of $\p$-adic L-functions}{Computations of p-adic L-functions}\label{subsec:compL}

We use the results from the preceding subsections to estimate the complexity of computing $L$-functions. We will make certain assumptions. As noted above, we assume we have computed the necessary data to work in $E$: ring of integers, class group, units, etc. We assume also that we have at our disposal a prime ideal $\mathfrak{c}$ satisfying the hypotheses (H1), (H2), and (H3) and the additional hypothesis 
\begin{enumerate}
\item[(H4)] Either $\chi$ is non-trivial and $\chi(\mathfrak{c}) \ne 1$, or $\chi$ is trivial and $\langle c\rangle \not\in 1 + p^{e+1}\mathbb{Z}_p$.\footnote{Note that we always have $\langle c\rangle \in 1 + p^e\mathbb{Z}_p$ by Lemma~\ref{lem:normcong}.}
\end{enumerate}
We assume we have computed a list of integral ideals $\mathfrak{a}_i$, $\range{i}{1}{h_\mathfrak{m}(E)}$, coprime to $\mathfrak{c}$ and $\mathfrak{m}$ and representing all the classes of $\Cl_\mathfrak{m}(E)$. Finally, we assume we have computed a cone decomposition for each ideal $\mathfrak{a}_i$; we will denote by $B$ the maximum number of cones among these decompositions (see the previous subsection). In what follows $\delta$ will denote the degree of $\mathbb{Q}_p(\chi)/\mathbb{Q}_p$.

\begin{lemma}
Assume the ERH.  Then there exists a prime ideal $\mathfrak{c}$ satisfying hypotheses \textup{(H1)} through \textup{(H4)}, with $c \in O(\log^2 (\mathcal{N}(\mathfrak{f}) D_E))$ if $\chi$ is non-trivial and $c \in \tO(p^{2m_0} \log^2 (D_E))$ if $\chi$ is trivial, where $m_0 \geq 0$ is such that $\mathbb{Q}_{m_0} = E \cap \mathbb{Q}_\infty$. 
\end{lemma}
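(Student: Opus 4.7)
The plan is to invoke the effective \v{C}ebotarev density theorem of Lagarias--Odlyzko--Serre which, under ERH for the Dedekind zeta function of an appropriate Galois extension $L/\mathbb{Q}$, yields a prime $c$ unramified in $L$ whose Frobenius lies in a prescribed union of conjugacy classes $C \subset \Gal(L/\mathbb{Q})$ with $c$ of size $O((\log|d_L|)^2)$. In each case the field $L$ will contain $E$ and I will select the Frobenius class inside $\Gal(L/E)$; this guarantees, for any prime $\mathfrak{C}$ of $L$ above $c$ with that Frobenius, that $\mathfrak{c} := \mathfrak{C}\cap \mathbb{Z}_E$ has residue degree $1$ over $\mathbb{Q}$. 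Since the primes produced are unramified in $L$ they are automatically coprime to the conductor, hence to $\mathfrak{f}$.

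For the non-trivial case I quantify Remark~\ref{rk:constc}. Rather than use the full ray class field, I would work with the subfield $E_\chi \subset E(\mathfrak{m})$ cut out by $\ker\chi$: its conductor divides $\mathfrak{f}(\chi)\mid\mathfrak{m}$, and its Galois closure $L$ over $\mathbb{Q}$ has $\log|d_L|$ controlled by $\log(\mathcal{N}(\mathfrak{f}) D_E)$ via the conductor-discriminant formula applied to $E_\chi/E$ combined with the tower formula over $E/\mathbb{Q}$. I choose $\sigma \in \Gal(L/E)$ restricting to an element $\tsigma \in \Gal(E_\chi/E)$ with $\chi(\tsigma) \ne 1$, and apply effective \v{C}ebotarev with $C$ the conjugacy class of $\sigma$. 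The resulting prime $\mathfrak{c}$ satisfies $\chi(\mathfrak{c}) = \chi(\tsigma) \ne 1$ and has norm $O(\log^2(\mathcal{N}(\mathfrak{f}) D_E))$.

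For the trivial case I reinterpret (H4) Galois-theoretically: $\langle c\rangle \not\in 1+p^{e+1}\mathbb{Z}_p$ is equivalent, by extending the vertical identifications in diagram~\eqref{eq:cd} one level further up the cyclotomic $\mathbb{Z}_p$-tower (and using $\mathcal{N}(\mathfrak{c}) = c$, which holds by (H3)), to the Frobenius of $c$ being non-trivial in $\Gal(\mathbb{Q}_{m_0+1}/\mathbb{Q})$. I let $L$ be the Galois closure of $E\cdot\mathbb{Q}_{m_0+1}$ over $\mathbb{Q}$. Because $E \cap \mathbb{Q}_\infty = \mathbb{Q}_{m_0}$, the restriction $\Gal(L/E) \to \Gal(\mathbb{Q}_{m_0+1}/\mathbb{Q}_{m_0}) \simeq \mathbb{Z}/p\mathbb{Z}$ is surjective, so I can pick $\sigma \in \Gal(L/E)$ whose image is a generator. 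Using $\log|d_{\mathbb{Q}_{m_0+1}}| \in O(p^{m_0}\log p)$ together with the standard compositum discriminant estimate gives $\log|d_L| \in O(\log D_E + p^{m_0}\log p)$; squaring and collecting into $\tO$-notation then yields $c \in \tO(p^{2m_0}\log^2 D_E)$.

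The hard part will be controlling the constants in effective \v{C}ebotarev uniformly in the input data: the implicit constants depend on $[L:\mathbb{Q}]$, which in the first case involves the order of $\chi$ and the Galois closure of $E/\mathbb{Q}$, and without care one can pick up spurious factors of $h_\mathfrak{m}(E)$ that would spoil the desired bound. This is typically handled either by using the refined form of the theorem phrased in terms of the root discriminant $|d_L|^{1/[L:\mathbb{Q}]}$, or by a direct application of GRH-based estimates on partial sums of $\chi$ over prime ideals of $E$, which give exactly the shape $O(\log^2(\mathcal{N}(\mathfrak{f}) D_E))$ required.
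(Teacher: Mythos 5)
Your plan correctly identifies what has to be done (produce a degree-one prime of $E$ with prescribed behaviour under $\chi$, respectively under the layer $E_1/E$ of the cyclotomic tower, with ERH supplying the quantitative bound), and your Galois-theoretic reformulation of (H4) in the trivial case is right. But the primary mechanism you propose --- effective \v{C}ebotarev (Lagarias--Odlyzko--Serre) applied to the Galois closure $L$ over $\mathbb{Q}$ of $E_\chi$, resp.\ of $E\cdot\mathbb{Q}_{m_0+1}$ --- does not deliver the stated bounds. The GRH form of that theorem bounds the least prime with prescribed Frobenius by $O(\log^2|d_L|)$, and $\log|d_L|$ for a Galois closure is \emph{not} $O(\log(\mathcal{N}(\mathfrak{f})D_E))$: it carries a factor of $[L:\mathbb{Q}]$, which can be of order $d!$ times the order of $\chi$. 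In particular your estimate $\log|d_L|\in O(\log D_E+p^{m_0}\log p)$ in the trivial case is false for the Galois closure. You flag this yourself as ``the hard part,'' but you do not resolve it, and the ``root discriminant'' variant you allude to is not a statement you can simply quote for this problem. As written, the argument proves existence of a suitable $\mathfrak{c}$ but with a bound far weaker than claimed.

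The second alternative in your closing sentence is the correct repair and is exactly what the paper does: since $\chi$ (resp.\ the generator $\rho$ of the characters of $\Gal(E_1/E)$) is a ray class character of $E$ itself, one never needs to pass to a Galois closure. One applies a GRH bound for Hecke characters of $E$ --- Theorem~1 of Bach's paper, cited in the text --- which produces a degree-one prime $\mathfrak{c}\nmid\mathfrak{f}$ with $\chi(\mathfrak{c})\ne 1$ and $c\in O(\log^2(D_E\,\mathcal{N}(\mathfrak{f}_\chi)))$; the relevant conductor is that of the Hecke $L$-function over $\mathbb{Q}$, namely $D_E\,\mathcal{N}(\mathfrak{f}_\chi)$, with no factorial blow-up. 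For the trivial case one applies the same bound to $\rho$, whose conductor has norm bounded in terms of $v_p(q)+(p^{m_0+1}-1)/(p-1)$, giving $c\in\tO(p^{2m_0}\log^2 D_E)$. To make your proof complete you should replace the \v{C}ebotarev-on-$L$ step by this direct application (and note that degree-$\ge 2$ primes contribute only $O(d\sqrt{x}\log x)$ to the counting functions, so the prime produced can indeed be taken of residual degree one, as (H3) requires).
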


\begin{proof}
We use Theorem~1 of of \cite{bach}. For the case $\chi$ non-trivial the application is direct. For the case $\chi$ trivial we apply the theorem to the character $\rho$ generating the group of characters of $\Gal(E_1/E)$. The absolute norm of the conductor of $\rho$ divides the absolute norm of the conductor of $\mathbb{Q}_{m_0+1}/\mathbb{Q}_{m_0}$, the $\p$-adic valuation of which is $v_p(q) + (p^{m_0+1}-1)/(p-1)$. The result follows. 
\end{proof}

\begin{theorem}\label{thm:xyzxyz}
Let $M$ and $N$ be positive integers. Under the assumptions enumerated at the beginning of this subsection the measures $\mu_{p, \mathfrak{m}}^{\mathfrak{a}_i, \mathfrak{c}}$, for $\range{i}{1}{h_\mathfrak{m}(E)}$, can be computed \toprec{(p^M, T^N)} in $\tO(h_\mathfrak{m}(E) d^{d+3} B N^{d+1} M c \log p)$ bit operations. 
\end{theorem}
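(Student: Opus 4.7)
The plan is essentially to reduce the computation of each $\mu_{p,\mathfrak{m}}^{\mathfrak{a}_i, \mathfrak{c}}$ to the computation of cone-level measures via the decomposition \eqref{eq:defF}, and then invoke Theorem~\ref{th:compmeas} for each cone.

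First I would fix an index $i$ and recall that the $\mathfrak{c}$-admissible cone decomposition $\{C_1,\dots,C_{m_i}\}$ of $\mathfrak{a}_i$ modulo $\mathfrak{m}$ is available by assumption, with $m_i \leq B$. By \eqref{eq:defF} we have
\begin{equation*}
F_\mathfrak{m}(\mathfrak{a}_i,\mathfrak{c};T) \;=\; \sum_{j=1}^{m_i} F(C_j,\mathfrak{c};T),
\end{equation*}
so the measure $\mu_{p,\mathfrak{m}}^{\mathfrak{a}_i,\mathfrak{c}}$ is simply the sum of the measures $\mu_{p,C_j}^{\mathfrak{c}}$ (both sides make sense with coefficients in $\mathbb{Z}_p$ by Corollary~\ref{cor:intcoeff}). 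Then I would apply Theorem~\ref{th:compmeas} to each of the $m_i$ cones to obtain each $\mu_{p,C_j}^{\mathfrak{c}}$ \toprec{(p^M,T^N)} in $\tO(N^{d+1} d^{d+3} M c \log p)$ bit operations.

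Summing the $m_i \le B$ resulting power series of length $N$ with coefficients modulo $p^M$ costs $\tO(B N M \log p)$ bit operations, which is negligible compared with the cost of computing the cone-level power series themselves. Hence the total cost for a single ideal $\mathfrak{a}_i$ is
\begin{equation*}
\tO\bigl(B \cdot N^{d+1} d^{d+3} M c \log p\bigr).
\end{equation*}
Multiplying by the number $h_\mathfrak{m}(E)$ of ideals in our list of representatives yields the stated bound $\tO(h_\mathfrak{m}(E)\, d^{d+3} B N^{d+1} M c \log p)$.

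There is no genuine obstacle: the statement is simply a bookkeeping aggregation of Theorem~\ref{th:compmeas} across the finitely many cones appearing in the decompositions of the finitely many ideal-class representatives. The only point that deserves a brief mention is that the auxiliary data used inside Theorem~\ref{th:compmeas} (in particular the reduced residues $t_1,\dots,t_d$ modulo $\mathfrak{c}$ used in the computation of $\Xi$, and the inverses $u_n$ of Algorithm~\ref{algo:binom}) depend only on $\mathfrak{c}$, $M$ and $N$ and can be precomputed once for all cones; their cost is absorbed in the $\tO$-notation.
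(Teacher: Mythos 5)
Your argument is exactly the paper's (which simply says the result ``follows directly from Theorem~\ref{th:compmeas}''): you decompose each $\mu_{p,\mathfrak{m}}^{\mathfrak{a}_i,\mathfrak{c}}$ via \eqref{eq:defF} into at most $B$ cone measures, apply Theorem~\ref{th:compmeas} to each, and aggregate over the $h_\mathfrak{m}(E)$ representatives. The bookkeeping, including the observation that summing the power series and the $\mathfrak{c}$-dependent precomputations are negligible, is correct.
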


\begin{proof}
This follows directly from Theorem~\ref{th:compmeas}.
\end{proof}

\begin{corollary}\label{cor:compvalues1}
Let $M$ be a positive integer and let $s \in \mathbb{Z}_p$, with $s \ne 1$ if $\chi$ is trivial. Under the assumptions enumerated at the beginning of this subsection and after precomputations of cost $\tO(h_\mathfrak{m}(E) p^{d+1} d^{d+3} B M^{d+2}c)$ bit operations, two algebraic integers $\beta$ and $\gamma$, both belonging to $\mathbb{Z}[\chi]$ and with $\gamma L_{p, \mathfrak{m}}(\chi; s)$ lying in $\mathbb{Z}_p[\chi]$ and $|\gamma L_{p, \mathfrak{m}}(\chi; s) - \beta|_p \leq p^{-M}\!$, can be computed in $\tO(h_\mathfrak{m}(E) (p^2 M^3 + \delta M \log p))$ bit operations. Moreover, $p^{-1/(p-1)} \leq |\gamma|_p \leq 1$ if $\chi$ is non-trivial and $|\gamma|_p = p^{-e} |s-1|_p$ if $\chi$ is trivial.
\end{corollary}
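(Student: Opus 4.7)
The plan is to apply formula \eqref{def:padicL} with $m=1$, which expresses
\[
L_{p,\mathfrak{m}}(\chi;s) \;=\; \gamma_0(s)^{-1}\sum_{i=1}^{h_\mathfrak{m}(E)} \chi(\mathfrak{a}_i^{-1})\,\mathcal{Z}_{p,\mathfrak{m}}(\mathfrak{a}_i^{-1},\mathfrak{c};s), \qquad \gamma_0(s) := \chi(\mathfrak{c})\langle c\rangle^{1-s} - 1,
\]
and then to pick $\gamma \in \mathbb{Z}[\chi]$ close to $\gamma_0(s)$ and $\beta \in \mathbb{Z}[\chi]$ close to $S := \gamma_0(s) L_{p,\mathfrak{m}}(\chi;s)$, both to a working precision $p^{M'}$ with $M' := M + V_0$ and $V_0$ a small correction determined by the analysis below.

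For the precomputation we apply Theorem~\ref{thm:xyzxyz} to compute all the measures $\mu_{p,\mathfrak{m}}^{\mathfrak{a}_i,\mathfrak{c}}$ to precision $(p^{M'},T^N)$ with $N := pM' + O(1)$; substituting this into the cost estimate of that theorem yields exactly the claimed precomputation cost. For a given query $s \in \mathbb{Z}_p$, we then carry out, using \eqref{eq:defzpxi}: a single computation of the first $N$ Mahler coefficients of $\phi_{-s}$ via Algorithm~\ref{algo:mahlercoeff} (cost $\tO(p^2 M^3)$ by Lemma~\ref{lem:costphiscoef}); for each $i$, an integration against the precomputed measure (cost $\tO(pM^2)$ by Lemma~\ref{lem:compint}) and multiplication by the scalar $\omega(\mathcal{N}(\mathfrak{a}_i))\langle\mathcal{N}(\mathfrak{a}_i)\rangle^s \chi(\mathfrak{a}_i^{-1})$ in $\mathbb{Z}_p[\chi]$ (cost $\tO(\delta M\log p)$); and a sum yielding an approximation $\tilde S \in \mathbb{Z}[\chi]$ of $S$ modulo $p^{M'}$. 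In parallel, compute $\langle c\rangle^{1-s} = \exp_p((1-s)\log_p\langle c\rangle)$ to precision $p^{M'}$ at negligible cost, take $t \in \mathbb{Z}$ to be the standard integer representative of this approximation, and set $\gamma := \chi(\mathfrak{c})\,t - 1$ and $\beta := \tilde S$.

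The hard part will be the precision analysis determining $V_0$. For non-trivial $\chi$, decompose $\gamma_0(s) = (\chi(\mathfrak{c})-1) + \chi(\mathfrak{c})(\langle c\rangle^{1-s}-1)$: the second summand lies in $p^e\mathbb{Z}_p[\chi]$, while (H4) forces $\chi(\mathfrak{c}) \ne 1$, yielding $|\gamma_0(s)|_p = |\chi(\mathfrak{c})-1|_p \in [p^{-1/(p-1)},1]$. For trivial $\chi$, (H4) gives $v_p(\log_p\langle c\rangle) = e$ exactly, and since $e > 1/(p-1)$ the identity $|\exp_p(x)-1|_p = |x|_p$ for $v_p(x)>1/(p-1)$, applied to $x := (1-s)\log_p\langle c\rangle$, yields $|\gamma_0(s)|_p = p^{-e}|s-1|_p$. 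Setting $V := v_p(\gamma_0(s))$, we have $V \leq 1/(p-1)$ in the non-trivial case and $V \leq e$ in the trivial case; in either case we may take $V_0 := \lceil V \rceil$, which is absorbed by the $\tO$ notation. Combining Corollary~\ref{cor:boundc}, which yields $|S|_p \leq 1$, with the bounds $|\gamma - \gamma_0(s)|_p \leq p^{-M'}$ and $|\tilde S - S|_p \leq p^{-M'}$ gives $|\gamma L_{p,\mathfrak{m}}(\chi;s) - \beta|_p \leq p^{-M}$. The stated bounds on $|\gamma|_p$ follow from the ultrametric inequality: $|\gamma|_p = |\gamma_0(s)|_p$ since the approximation error is strictly smaller than $|\gamma_0(s)|_p$.
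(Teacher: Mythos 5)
Your proposal follows the same route as the paper's proof: precompute the measures $\mu_{p,\mathfrak{m}}^{\mathfrak{a}_i,\mathfrak{c}}$ via Theorem~\ref{thm:xyzxyz} with $N=pM+O(1)$, evaluate the sum in \eqref{def:padicL} by computing the Mahler coefficients of $\phi_{-s}$ once and integrating against each measure, and take $\gamma$ to be an approximation of $\chi(\mathfrak{c})\langle c\rangle^{1-s}-1$; the cost accounting agrees with the paper's. Your precision analysis is actually \emph{more} careful than the paper's, which computes both $\beta$ and $\gamma$ only modulo $p^M$ and asserts the final bound; the guard digits $V_0$ are genuinely needed to control the cross term $(\gamma-\gamma_0(s))\,L_{p,\mathfrak{m}}(\chi;s)$, whose absolute value is bounded by $p^{-M'}\,|\gamma_0(s)|_p^{-1}$. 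The one slip is in the trivial-character case: your own formula $|\gamma_0(s)|_p=p^{-e}|s-1|_p$ gives $V=v_p(\gamma_0(s))=e+v_p(s-1)\geq e$, not $V\leq e$, so $V$ is unbounded as $s$ approaches $1$ $p$-adically. Consequently the extra precision $V_0=\lceil V\rceil$ is absorbed into the $\tO$ only for fixed $s$ (or for $s$ with $v_p(s-1)$ bounded); as stated, the complexity estimate silently depends on $v_p(s-1)$ when $\chi$ is trivial. This does not affect the correctness of the output $(\beta,\gamma)$ for a given $s$, and it is a point the paper itself glosses over, but the inequality should be corrected.
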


\begin{proof}
We precompute the measures $\mu_{p, \mathfrak{m}}^{\mathfrak{a}_i, \mathfrak{c}}$, for $\range{i}{1}{h_\mathfrak{m}(E)}$, \toprec{(p^M, T^N)}, with $N := pM+2$, the computation cost being given by Theorem~\ref{thm:xyzxyz}. We let $\beta$ be an approximation of the sum in \eqref{def:padicL} \toprec{p^M}. The values of $\mathcal{Z}_{p,\mathfrak{m}}(\mathfrak{a}_i^{-1}, \mathfrak{c}; s)$, for $\range{i}{1}{h_\mathfrak{m}(E)}$, \toprec{p^M} are computed using Corollary~\ref{cor:compzc} \textit{mutatis mutandis} in $\tO(h_\mathfrak{m}(E) p^2M^3)$ operations. The rest of the computation of $\beta$ takes $O(h_\mathfrak{m}(E) \delta M \log p)$ bit operations. We now let $\gamma$ be an approximation of $\chi(\mathfrak{c}) \langle c\rangle^{1-s} - 1$ \toprec{p^M}. The computation of $\gamma$ takes $\tO(M^2 \log^2 p + \delta M \log p)$ bit operations, and it follows from \eqref{def:padicL} that $\gamma L_{p, \mathfrak{m}}(\chi; s)$ lies in $\mathbb{Z}_p[\chi]$ and 
$|\gamma L_{p, \mathfrak{m}}(\chi; s) - \beta|_p \leq p^{-M}$. The assertions concerning the absolute value of $\gamma$ are straightforward (see the proof of Corollary~\ref{cor:boundc}). 
\end{proof}

Recall that $E \cap \mathbb{Q}_\infty = \mathbb{Q}_{m_0}$ and $E(\mathfrak{m}) \cap \mathbb{Q}_\infty = \mathbb{Q}_{m_0+m_1}$, so that $e = m_0 + v_p(q)$. 

\begin{theorem}\label{thm:compiwa}
Let $M$ and $L$ be positive integers. Under the assumptions enumerated at the beginning of this subsection there exist polynomials $B(X)$ and $C(X)$ in $\mathbb{Z}_p[\chi][X]$, with a cost of $\tO(h_\mathfrak{m}(E)(p^{ed} d^{d+3} B (pM+L)^d M^2 Lc + \delta M \log p))$ bit operations to compute, such that 
\begin{equation*}
C(X) \mathfrak{I}_{p,\mathfrak{m}}(\chi; X) - B(X) \in 
\begin{cases}
p^M \mathbb{Z}_p[\chi][[X]] + X^L \mathbb{Z}_p[\chi][[X]]
& \parbox[t]{7em}{if $\chi$ is trivial or \\ not of type $W$,} \\
\dfrac{p^M}{X + \pi} \mathbb{Z}_p[\chi][[X]] + \dfrac{X^L}{X + \pi} \mathbb{Z}_p[\chi][[X]]
& \text{otherwise,}
\end{cases}
\end{equation*}
with $\pi \in \mathbb{O}_p$ satisfying
\begin{equation*}
\frac{1}{p^{m_1-1}(p-1)} \leq v_p(\pi) \leq \frac{1}{p-1}.
\end{equation*}
Moreover, $p^{-1/(p-1)} \leq |C(0)|_p \leq 1$ if $\chi$ is non-trivial and $C(0) = 0$, $|C'(0)|_p = 1$ if $\chi$ is trivial. 
\end{theorem}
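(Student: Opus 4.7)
The plan is to build $B$ and $C$ from the identity
\begin{equation*}
C(\mathfrak{c}, \chi; X)\,\mathfrak{I}_{p,\mathfrak{m}}(\chi; X) \;=\; \sum_{i=1}^{h_\mathfrak{m}(E)} \chi(\mathfrak{a}_i^{-1})\,N(\mathfrak{a}_i; X)\,A(\mathfrak{a}_i, \mathfrak{c}; X)
\end{equation*}
established in the proof of Theorem~\ref{thm:iwasawaseries}. I would take $C(X)$ to be the truncation of $C(\mathfrak{c}, \chi; X) = \chi(\mathfrak{c})(1+X)^{\mathcal{L}_u(c)} - 1$ modulo $(p^M, X^L)$, keeping the exact constant term $\chi(\mathfrak{c}) - 1$ intact, and $B(X)$ to be the corresponding approximation of the right-hand side. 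To compute the latter, observe that for a $\mathfrak{c}$-admissible cone decomposition $\{C_{ij}\}_j$ of $\mathfrak{a}_i$ modulo $\mathfrak{m}$, the additivity $\mu_{p,\mathfrak{m}}^{\mathfrak{a}_i,\mathfrak{c}} = \sum_j \mu_{p, C_{ij}}^{\mathfrak{c}}$ yields $A(\mathfrak{a}_i, \mathfrak{c}; X) = \sum_j \mathfrak{I}_p(C_{ij}, \mathfrak{c}; X)$, since both sides, evaluated at $X = u^s - 1$, compute the same integral $\int \phi_{s-1}\,d\mu_{p,\mathfrak{m}}^{\mathfrak{a}_i,\mathfrak{c}}$. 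Hence Theorem~\ref{th:compiwas} computes each $\mathfrak{I}_p(C_{ij}, \mathfrak{c}; X)$ modulo $(p^M, X^L)$ at a cost of $\tO(p^{ed} d^{d+3} (pM+L)^d M^2 L c)$ bit operations per cone, for at most $h_\mathfrak{m}(E) B$ cones in total. The factor $N(\mathfrak{a}_i; X) = \mathcal{N}(\mathfrak{a}_i)(1+X)^{-\mathcal{L}_u(\mathcal{N}(\mathfrak{a}_i))}$ is then produced via Algorithm~\ref{algo:binom} at negligible additional cost, and the character-weighted sum adds a further $\tO(h_\mathfrak{m}(E)\delta M\log p)$ bit operations, matching the claimed overall complexity.

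For the approximation bound one writes
\begin{equation*}
C(X)\,\mathfrak{I}_{p,\mathfrak{m}}(\chi; X) - B(X) = \bigl(C(X) - C(\mathfrak{c}, \chi; X)\bigr)\,\mathfrak{I}_{p,\mathfrak{m}}(\chi; X) + \bigl(B^{\mathrm{exact}}(X) - B(X)\bigr),
\end{equation*}
where $B^{\mathrm{exact}}$ denotes the true right-hand side. The second summand lies in $p^M\mathbb{Z}_p[\chi][[X]] + X^L\mathbb{Z}_p[\chi][[X]]$ by construction, and so does $C(X) - C(\mathfrak{c}, \chi; X)$. Control of the first summand splits into the three cases of Theorem~\ref{thm:iwasawaseries}: when $\chi$ is non-trivial and not of type~$W$, $\mathfrak{I}_{p,\mathfrak{m}}(\chi; X) \in \mathbb{Z}_p[\chi][[X]]$ and the product stays in that ideal; when $\chi$ is trivial, $\mathfrak{I}_{p,\mathfrak{m}}(\chi; X) \in X^{-1}\mathbb{Z}_p[\chi][[X]]$ but both $C(X)$ and $C(\mathfrak{c}, \chi; X)$ have constant term exactly $0$, so their difference is divisible by $X$ and the pole is cancelled; when $\chi$ is of type $W$, $(X+\pi)\mathfrak{I}_{p,\mathfrak{m}}(\chi; X) \in \mathbb{Z}_p[\chi][[X]]$, producing exactly the quotient form of the conclusion.

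The remaining claims are direct verifications. For non-trivial $\chi$, $C(0) = \chi(\mathfrak{c}) - 1$ with $\chi(\mathfrak{c}) \neq 1$ by (H4); since $\chi(\mathfrak{c})$ is a root of unity one has $|\chi(\mathfrak{c}) - 1|_p \leq 1$, and the worst case of $\chi(\mathfrak{c})$ being a primitive $p$-th root of unity gives $|\chi(\mathfrak{c}) - 1|_p \geq p^{-1/(p-1)}$. For trivial $\chi$, $C(0) = 0$ and $C'(0) = \mathcal{L}_u(c) = \log_p\langle c\rangle/\log_p u$; since $v_p(\log_p u) = e$, hypothesis (H4) is equivalent to $v_p(\log_p\langle c\rangle) = e$, whence $|C'(0)|_p = 1$. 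In the type $W$ case, $\pi = (\xi-1)/\xi$ with $\xi$ the root of unity furnished by Theorem~\ref{thm:iwasawaseries}; since $\xi$ has $p$-power order between $p$ and $p^{m_1}$, $v_p(\pi) = v_p(\xi-1)$ lies between $1/(p^{m_1-1}(p-1))$ and $1/(p-1)$. The main delicate point is keeping the ambient ring consistent across the three cases---in particular, matching the exact constant term of $C(X)$ so that the trivial case genuinely lands in $\mathbb{Z}_p[\chi][[X]]$ rather than $X^{-1}\mathbb{Z}_p[\chi][[X]]$.
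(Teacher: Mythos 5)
Your proposal is correct and follows essentially the same route as the paper: define $C(X)$ and $B(X)$ as the $(p^M, X^L)$-truncations of $C(\mathfrak{c},\chi;X)$ and of $\sum_i \chi(\mathfrak{a}_i^{-1})N(\mathfrak{a}_i;X)A(\mathfrak{a}_i,\mathfrak{c};X)$, reduce the cost to Theorem~\ref{th:compiwas} via $A(\mathfrak{a}_i,\mathfrak{c};X)=\sum_j\mathfrak{I}_p(C_{ij},\mathfrak{c};X)$, and read off the error and the properties of $C$ and $\pi$ from Theorem~\ref{thm:iwasawaseries} and (H4). Your case-by-case error analysis and the verification of $|C(0)|_p$, $|C'(0)|_p$ and $v_p(\pi)$ are in fact more explicit than the paper's, which simply asserts these points.
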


\begin{proof}
We use the notation and results from the proof of Theorem~\ref{thm:iwasawaseries}. The polynomial $C(X)$ is an approximation modulo $(p^M, X^L)$ of the power series $C(\mathfrak{c}, \chi; X)$ and the polynomial $B(X)$ is approximation modulo $(p^M, X^L)$ of the power series 
\begin{equation*}
\sum_{i=1}^{h_\mathfrak{m}(E)} \chi(\mathfrak{a}_i^{-1}) N(\mathfrak{a}_i; X) A(\mathfrak{a}_i, \mathfrak{c}; X). 
\end{equation*}
The first assertion follows by \eqref{eq:compiwaseries} and the integral properties of $ \mathfrak{I}_{p,\mathfrak{m}}(\chi; X)$, with $\pi := (\xi-1)/\xi$. Since $\xi$ has order $p^m$ for some integer $m$ with $1 \leq m \leq m_1$ this proves the inequalities on the $\p$-adic valuation of $\pi$. The properties of $C(X)$ follow from (H4). We now evaluate the complexity of the computation of $B(X)$ and $C(X)$. Let $\mathfrak{a}$ be one of the ideals $\mathfrak{a}_i$ and let $\{C_1, \dots, C_m\}$ be a $\mathfrak{c}$-admissible cone decomposition of $\mathfrak{a}$. Then
\begin{equation*}
A(\mathfrak{a}, \mathfrak{c}; X) = \sum_{j=1}^m \mathfrak{I}_p(C_j, \mathfrak{c}; X) 
\end{equation*}
and the computation cost of $A(\mathfrak{a}, \mathfrak{c}; X)$ \toprec{(p^M, X^L)} follows from Theorem~\ref{th:compiwas}. The computation time of $C(X)$ is negligible compared to that of $B(X)$.
\end{proof}

\begin{corollary}\label{cor:compvalues2}
Let $M$ be a positive integer and let $s \in \mathbb{Z}_p$, with $s \ne 1$ if $\chi$ is trivial. Under the assumptions enumerated at the beginning of this subsection and after precomputations costing $\tO(h_\mathfrak{m}(E)(p^{(e+1)d} d^{d+3} B M^{d+2} Lc + \delta M \log p))$ bit operations, two algebraic integers, $\beta$ and $\gamma$,  both belonging to $\mathbb{Z}[\chi]$ and with $\gamma L_{p, \mathfrak{m}}(\chi; s)$ lying  in $\mathbb{Z}_p[\chi]$ and 
\begin{equation}\label{eq:compvalues2}
\left|\gamma L_{p, \mathfrak{m}}(\chi; s) - \beta\right|_p \leq
\begin{cases}
\,p^{-M}         & \text{if $\chi$ is trivial or not of type $W$}, \\[1\jot]
\,p^{-M+1/(p-1)} & \text{otherwise},
\end{cases}
\end{equation}
can be computed in $\,\tO(M^2 \log p\, (\delta/e + \log p))$ bit operations. Moreover, $p^{-1/(p-1)} \leq |\gamma|_p \leq 1$ if $\chi$ is non-trivial and $|\gamma|_p = p^{-e} |s-1|_p$ if $\chi$ is trivial. 
\end{corollary}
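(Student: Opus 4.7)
The plan is to combine Theorem~\ref{thm:compiwa} (computation of the Iwasawa power series) with an evaluation at the point $t := u^{1-s}-1$, mirroring the proof of Corollary~\ref{cor:evaliwas} but for the full $L$-function. Setting $L := \lceil M/e\rceil$ and invoking Theorem~\ref{thm:compiwa} at this $L$ produces polynomials $B(X), C(X)\in\mathbb{Z}_p[\chi][X]$ of degree less than $L$, at the precomputation cost of $\tO(h_\mathfrak{m}(E)(p^{ed}d^{d+3}B(pM+L)^dM^2Lc+\delta M\log p))$ bit operations. Since $L\le M$ and $e\ge 1$, we have $(pM+L)^d\in O((pM)^d)$, so this precomputation fits in the bound stated in the corollary.

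Given $s\in\mathbb{Z}_p$, I first compute $t=u^{1-s}-1\smod p^M$. Because $u\in 1+p^e\mathbb{Z}_p$, we have $t\in p^e\mathbb{Z}_p$, so $|t|_p\le p^{-e}$ and hence $|t^L|_p\le p^{-eL}\le p^{-M}$. The computation of $t$ by fast exponentiation of $u^{1-s}$ in $\mathbb{Z}_p$ costs $\tO(M^2\log^2 p)$ bit operations. Next, set $\beta:=B(t)\smod p^M$ and $\gamma:=C(t)\smod p^M$. Horner evaluation of a polynomial of degree $<L$ with coefficients in $\mathbb{Z}_p[\chi]$ at a $\mathbb{Z}_p$-point requires $L$ multiplications in $\mathbb{Z}_p[\chi]$, each costing $\tO(\delta M\log p)$, for a total of $\tO(L\delta M\log p)=\tO(\delta M^2\log p/e)$. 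Adding the cost of computing $t$ yields the claimed $\tO(M^2\log p(\delta/e+\log p))$.

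To verify the precision estimate \eqref{eq:compvalues2}, substitute $X=t$ in the congruence of Theorem~\ref{thm:compiwa}. In the non--type-$W$ case the error lies in $p^M\mathbb{Z}_p[\chi][[X]]+X^L\mathbb{Z}_p[\chi][[X]]$, and both summands evaluated at $t$ have $p$-adic absolute value $\le p^{-M}$, giving $|C(t)\mathfrak{I}_{p,\mathfrak{m}}(\chi;t)-B(t)|_p\le p^{-M}$. Using \eqref{eq:iwasawaseries} we replace $\mathfrak{I}_{p,\mathfrak{m}}(\chi;t)$ by $L_{p,\mathfrak{m}}(\chi;s)$. In the type-$W$ case the error carries an extra factor $(X+\pi)^{-1}$. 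Since $|t|_p\le p^{-e}\le p^{-1}$ and $|\pi|_p\ge p^{-1/(p-1)}>|t|_p$ (noting for $p=2$ that $e\ge 2$), we have $|t+\pi|_p=|\pi|_p\ge p^{-1/(p-1)}$, so the error becomes at most $p^{-M+1/(p-1)}$, as stated. The main obstacle is this precision analysis in the type-$W$ branch, where one must control the ultrametric behavior of $t+\pi$ uniformly.

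It remains to justify the size assertions on $\gamma$. A direct computation using $(1+t)^{\mathcal{L}_u(c)}=u^{(1-s)\mathcal{L}_u(c)}=\langle c\rangle^{1-s}$ shows $\gamma=\chi(\mathfrak{c})\langle c\rangle^{1-s}-1$, which is exactly the inverse of the prefactor in \eqref{def:padicL}. For $\chi$ non-trivial, (H4) gives $\chi(\mathfrak{c})\ne 1$, and the bounds $p^{-1/(p-1)}\le|\gamma|_p\le 1$ follow as in the proof of Corollary~\ref{cor:boundc} by writing $\gamma=(\chi(\mathfrak{c})-1)+\chi(\mathfrak{c})(\langle c\rangle^{1-s}-1)$ and noting $v_p(\langle c\rangle^{1-s}-1)\ge e\ge 1/(p-1)$. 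For $\chi$ trivial, (H4) gives $v_p(\log_p\langle c\rangle)=e$, so $v_p(\gamma)=v_p((1-s)\log_p\langle c\rangle)=v_p(1-s)+e$, yielding $|\gamma|_p=p^{-e}|s-1|_p$ as claimed. Finally, $\beta$ and $\gamma$ can be represented by elements of $\mathbb{Z}[\chi]$ after truncation modulo $p^M$, so the output meets all assertions of the corollary.
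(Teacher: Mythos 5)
Your proposal is correct and follows exactly the route of the paper's (much terser) proof: precompute $B(X)$ and $C(X)$ via Theorem~\ref{thm:compiwa} with $L=\lceil M/e\rceil$, evaluate at $t=u^{1-s}-1$, and read off the precision and the valuation of $\gamma$ from the theorem and (H4). Your added details — the bound $|t^L|_p\le p^{-M}$, the ultrametric estimate $|t+\pi|_p=|\pi|_p\ge p^{-1/(p-1)}$ in the type-$W$ case (including the check $e\ge 2$ for $p=2$), and the identification $C(t)=\chi(\mathfrak{c})\langle c\rangle^{1-s}-1$ — are all accurate and merely make explicit what the paper leaves implicit.
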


\begin{proof}
We precompute the polynomials $B(X)$ and $C(X)$ with $L := \lceil M/e \rceil$. The precomputation cost is given by Theorem~\ref{thm:compiwa}. Then we compute $t := u^{1-s} - 1$ \toprec{p^M} in $\tO(M^2 \log^2 p)$ bit operations, and compute $\beta$ to be $B(t)$, respectively $\gamma$ to be $C(t)$, \toprec{p^M} in $\tO(\delta M^2/e \log p)$ bit operations. The result follows from \eqref{eq:iwasawaseries} and the theorem.
\end{proof}

We conclude with the cost of computing a single value of a $\p$-adic $L$-function without precomputations. 

\begin{theorem}
Let $M$ be a positive integer and let $s \in \mathbb{Z}_p$, with $s \ne 1$ if $\chi$ is trivial. Under the assumptions enumerated at the beginning of this subsection, two algebraic integers $\beta$ and $\gamma$, both belonging to $\mathbb{Z}[\chi]$ and with $\gamma L_{p, \mathfrak{m}}(\chi; s)$ lying in $\mathbb{Z}_p[\chi]$ and $|\gamma L_{p, \mathfrak{m}}(\chi; s) - \beta|_p \leq p^{-M}$, can be computed at a cost of $\tO(h_\mathfrak{m}(E) (p^d d^{d+3} M^{d+2} c + \delta M \log p))$ bit operations. Moreover, $ p^{-1/(p-1)} \leq |\gamma|_p \leq 1$ if $\chi$ is non-trivial and $|\gamma|_p = p^{-e} |s-1|_p$ if $\chi$ is trivial. 
\end{theorem}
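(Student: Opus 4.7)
The plan is to bypass all precomputations (measures, Iwasawa power series) and evaluate the cone zeta functions $\mathcal{Z}_p(C,\mathfrak{c};s)$ directly, cone by cone, using Theorem~\ref{th:componeval}. For each representative $\mathfrak{a}_i$ ($\range{i}{1}{h_\mathfrak{m}(E)}$) and each cone $C$ in a precomputed $\mathfrak{c}$-admissible decomposition of $\mathfrak{a}_i$, I would compute $\mathcal{Z}_p(C,\mathfrak{c};s) \smod p^M$ from the formula in that theorem. Summing over cones and multiplying by $\omega(\mathcal{N}(\mathfrak{a}_i))\langle\mathcal{N}(\mathfrak{a}_i)\rangle^s$, as prescribed by \eqref{eq:defzpxi} together with \eqref{eq:defF} and \eqref{eq:intzpC}, produces an integer approximation $\widetilde{\mathcal{Z}}_i$ of $\mathcal{Z}_{p,\mathfrak{m}}(\mathfrak{a}_i^{-1},\mathfrak{c};s)$ to precision $p^M$. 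The factor $\langle\mathcal{N}(\mathfrak{a}_i)\rangle^s$ is evaluated via $\exp_p\bigl(s\log_p\langle\mathcal{N}(\mathfrak{a}_i)\rangle\bigr)$, which converges because Lemma~\ref{lem:normcong} places $\log_p\langle\mathcal{N}(\mathfrak{a}_i)\rangle$ in $p^e\mathbb{Z}_p$, well inside the disk of convergence of $\exp_p$.

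Following the defining formula \eqref{def:padicL} with $m = 1$, I then set
\begin{equation*}
\beta := \sum_{i=1}^{h_\mathfrak{m}(E)} \chi(\mathfrak{a}_i^{-1})\,\widetilde{\mathcal{Z}}_i \smod p^M, \qquad \gamma := \chi(\mathfrak{c}) \langle c\rangle^{1-s} - 1 \smod p^M.
\end{equation*}
Both lie in $\mathbb{Z}[\chi]$ because the character values are algebraic integers and the factors $\widetilde{\mathcal{Z}}_i$ and the integer approximation of $\langle c\rangle^{1-s}$ are ordinary integers lifting elements of $\mathbb{Z}_p$. By \eqref{def:padicL}, $\gamma L_{p,\mathfrak{m}}(\chi;s) = \sum_i \chi(\mathfrak{a}_i^{-1})\mathcal{Z}_{p,\mathfrak{m}}(\mathfrak{a}_i^{-1},\mathfrak{c};s)$, which lies in $\mathbb{Z}_p[\chi]$ because each measure $\mu_{p,\mathfrak{m}}^{\mathfrak{a}_i,\mathfrak{c}}$ is $\mathbb{Z}_p$-valued (Corollary~\ref{cor:intcoeff}) and $\phi_{-s} \in \mathcal{C}(\mathbb{Z}_p,\mathbb{Z}_p)$. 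The bound $|\gamma L_{p,\mathfrak{m}}(\chi;s)-\beta|_p \leq p^{-M}$ is then immediate, and the trichotomy for $|\gamma|_p$ is read off exactly as in the proof of Corollary~\ref{cor:boundc}, using hypothesis (H4) and Lemma~\ref{lem:normcong}.

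For the complexity, Theorem~\ref{th:componeval} contributes $\tO(p^d d^{d+3} M^{d+2} c)$ per cone; summing over the (bounded) number of cones in each decomposition and over the $h_\mathfrak{m}(E)$ decompositions yields the first summand. Computing $\langle c\rangle^{1-s}$ via $\exp_p \circ \log_p$, multiplying by the character values $\chi(\mathfrak{a}_i^{-1}), \chi(\mathfrak{c}) \in \mathbb{Z}[\chi]$, and summing in $\mathbb{Z}[\chi]$ accounts for the second summand $\tO(h_\mathfrak{m}(E)\delta M\log p)$, where $\delta$ controls the cost of arithmetic in $\mathbb{Z}[\chi]$. There is no real obstacle here: the proof is essentially a packaging of Theorem~\ref{th:componeval}, \eqref{def:padicL}, and Lemma~\ref{lem:normcong}, with the only point to verify being that no hidden precision loss arises — which is ensured by the fact that the direct formula involves no division by quantities of positive $\p$-adic valuation.
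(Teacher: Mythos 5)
Your proposal is correct and follows essentially the same route as the paper: the paper likewise defines $\beta$ and $\gamma$ exactly as in Corollary~\ref{cor:compvalues1} and obtains $\beta$ by evaluating the twisted partial zeta functions directly via Theorem~\ref{th:componeval} (\emph{mutatis mutandis}, i.e.\ with the extra factor $\omega(\mathcal{N}(\mathfrak{a}_i))\langle\mathcal{N}(\mathfrak{a}_i)\rangle^{s}$ that you make explicit), with the same complexity accounting. No gaps.
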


\begin{proof}
We proceed as in the proof of Corollary~\ref{cor:compvalues1} with the same definitions for $\beta$ and $\gamma$. The cost of computing $\gamma$ is the same. We construct $\beta$ by computing the values of $\mathcal{Z}_{p,\mathfrak{m}}(\mathfrak{a}_i^{-1}, \mathfrak{c}; s)$, for $\range{i}{1}{h_\mathfrak{m}(E)}$, \toprec{p^M}, using Theorem~\ref{th:componeval} \textit{mutatis mutandis}. The cost of this computation is $\tO(h_\mathfrak{m}(E) (p^d d^{d+3} M^{d+2} c + \delta M \log p))$ bit operations. The result follows.
\end{proof}

\bibliographystyle{plain}
\bibliography{refs}

\end{document}